\documentclass{article}

\usepackage{arxiv}
\usepackage[applemac]{inputenc} 		
\usepackage[T1]{fontenc}    		
\usepackage[colorlinks]{hyperref}      
\usepackage{url}            			
\usepackage{amsthm}
\usepackage{booktabs}       		
\usepackage{multirow}
\usepackage{tabularx}
\usepackage{longtable}
\usepackage{amsfonts}       		
\usepackage{amsmath}
\usepackage{nicefrac}       		
\usepackage{microtype}      		
\usepackage{lipsum}				
\usepackage[square,numbers]{natbib}
\usepackage{mathtools}
\usepackage{algorithm}
\usepackage{algorithmicx}
\usepackage{algpseudocode}
\usepackage{graphicx}
\usepackage{indentfirst,latexsym,bm}
\usepackage{amsmath}
\usepackage{subfigure}
\usepackage{amssymb}
\usepackage{xcolor}
\usepackage{comment}
\usepackage{enumitem}

\usepackage{tikz}
\usetikzlibrary{arrows.meta,positioning}
\usepackage{pgfplots}
\usetikzlibrary{shapes,decorations}
\usetikzlibrary{fit}

\colorlet{color1}{blue}
\colorlet{color2}{red!50!black}

\definecolor{ivory}{RGB}{218,215,203}

\definecolor{cuhkp}{RGB}{98,56,105} 	
\definecolor{cuhkpl}{RGB}{152,24,147} 	
\definecolor{cuhkb}{RGB}{219,160,1} 	
\definecolor{cuhkbd}{RGB}{178,129,0} 	
\definecolor{cuhkr}{RGB}{88,35,155}  	

\definecolor{blackp}{RGB}{0,0,0} 
\definecolor{redp}{RGB}{255,0,0}
\definecolor{orangep}{RGB}{255,128,0}
\definecolor{brownp}{RGB}{128,77,0}
\definecolor{yellowp}{RGB}{255,230,0}
\definecolor{greenp}{RGB}{128,230,0}
\definecolor{bluep}{RGB}{0,128,255}
\definecolor{purplep}{RGB}{152,24,147}
\definecolor{pinkp}{RGB}{230,0,128}        

\usepackage{hyperref}[6.83]
\hypersetup{
  colorlinks=true,
  frenchlinks=false,
  pdfborder={0 0 0},
  naturalnames=false,
  hypertexnames=false,
  breaklinks,
  allcolors = cuhkbd,
  urlcolor = color2,
}

\RequirePackage[capitalize,nameinlink]{cleveref}[0.19]

\crefname{section}{section}{sections}
\crefname{subsection}{subsection}{subsections}

\Crefname{figure}{Figure}{Figures}

\crefformat{equation}{\textup{#2(#1)#3}}
\crefrangeformat{equation}{\textup{#3(#1)#4--#5(#2)#6}}
\crefmultiformat{equation}{\textup{#2(#1)#3}}{ and \textup{#2(#1)#3}}
{, \textup{#2(#1)#3}}{, and \textup{#2(#1)#3}}
\crefrangemultiformat{equation}{\textup{#3(#1)#4--#5(#2)#6}}%
{ and \textup{#3(#1)#4--#5(#2)#6}}{, \textup{#3(#1)#4--#5(#2)#6}}{, and \textup{#3(#1)#4--#5(#2)#6}}

\Crefformat{equation}{#2Equation~\textup{(#1)}#3}
\Crefrangeformat{equation}{Equations~\textup{#3(#1)#4--#5(#2)#6}}
\Crefmultiformat{equation}{Equations~\textup{#2(#1)#3}}{ and \textup{#2(#1)#3}}
{, \textup{#2(#1)#3}}{, and \textup{#2(#1)#3}}
\Crefrangemultiformat{equation}{Equations~\textup{#3(#1)#4--#5(#2)#6}}%
{ and \textup{#3(#1)#4--#5(#2)#6}}{, \textup{#3(#1)#4--#5(#2)#6}}{, and \textup{#3(#1)#4--#5(#2)#6}}

\crefdefaultlabelformat{#2\textup{#1}#3}

\theoremstyle{plain}

\newtheorem{thm}{Theorem}[section]
\newtheorem{lemma}[thm]{Lemma}
\newtheorem{corollary}[thm]{Corollary}
\newtheorem{proposition}[thm]{Proposition}
\newtheorem{remark}[thm]{Remark}
\newtheorem{assumption}[thm]{Assumption}
\theoremstyle{plain}
\newtheorem{defn}[thm]{Definition}

\DeclareMathOperator*{\argmin}{argmin}

\newcommand{\rmn}[1]{\textup{\textrm{#1}}}

\newcommand{\R}{\mathbb{R}}
\newcommand{\N}{\mathbb{N}}
\newcommand{\Rn}{\mathbb{R}^n}
\newcommand{\Rex}{(-\infty,\infty]}
\newcommand{\Rexx}{[-\infty,\infty]}
\newcommand{\Spp}{\mathbb{S}^n_{++}}

\newcommand{\vp}{\varphi}
\newcommand{\veps}{\epsilon}
\newcommand{\dist}{\mathrm{dist}}
\newcommand{\crit}{\mathrm{crit}}
\newcommand{\cS}{\mathcal{S}}
\newcommand{\cL}{\mathcal{L}}

\newcommand{\half}{\frac{1}{2}}
\newcommand{\iprod}[2]{\langle #1, #2 \rangle}
\newcommand{\mer}{H_\tau}

\newcommand{\be}{\begin{equation}}
\newcommand{\ee}{\end{equation}}

\newcommand\prox[1]{\mathrm{prox}_{\lambda\varphi}(#1)}

\newcommand\proxs{\mathrm{prox}_{\lambda\varphi}}

\newcommand\Fnor[1]{F^{\lambda}_{\mathrm{nor}}(#1)}
\newcommand\Fnors{F^{\lambda}_{\mathrm{nor}}}
\newcommand\oFnor{F^{\lambda}_{\mathrm{nor}}}
\newcommand\Fnat[1]{F^{\lambda}_{\mathrm{nat}}(#1)}
\newcommand\oFnat{F^{\lambda}_{\mathrm{nat}}}
\newcommand\dom[1]{\mathrm{dom\\}(#1)}

\newcommand{\aff}{\mathrm{aff}}

\newcommand\oprox{\mathrm{prox}_{\lambda\varphi}}
\newcommand\env[1]{\mathrm{env}_{\lambda\varphi}(#1)}

\newcommand\envs{\mathrm{env}_{\lambda\varphi}}
\newcommand\oenv{\mathrm{env}_{\lambda\varphi}}

\newcommand\tr{\mathrm{tr}}
\newcommand\diag{\mathrm{diag}}

\newcolumntype{C}[1]{>{\centering\arraybackslash}p{#1}}

\title{A trust region-type normal map-based semismooth Newton method for nonsmooth nonconvex composite optimization}

\author{
     Wenqing Ouyang\\
 The Chinese University of Hong Kong, Shenzhen \\
 School of Data Science (SDS) \\
 Shenzhen Research Institute of Big Data (SRIBD) \\
  Shenzhen, Guangdong, China \\
  \texttt{wenqingouyang1@link.cuhk.edu.cn} \\
  \And
   Andre Milzarek\thanks{A. Milzarek is partly supported by the Fundamental Research Fund -- Shenzhen Research Institute of Big Data (SRIBD) Startup Fund JCYJ-AM20190601 and by the Shenzhen Institute of Artificial Intelligence and Robotics for Society (AIRS).} \\
  The Chinese University of Hong Kong, Shenzhen \\ School of Data Science (SDS) \\
  Shenzhen Research Institute of Big Data (SRIBD) \\
  Shenzhen Institute of Artificial Intelligence \\ and Robotics for Society (AIRS) \\
  Shenzhen, Guangdong, China \\
  \texttt{andremilzarek@cuhk.edu.cn} \\
}

\begin{document}
\maketitle

\begin{abstract}
  We propose a novel trust region method for solving a class of nonsmooth, nonconvex composite-type optimization problems. The approach embeds inexact semismooth Newton steps for finding zeros of a normal map-based stationarity measure for the problem in a trust region framework. Based on a new merit function and acceptance mechanism, global convergence and transition to fast local q-superlinear convergence are established under standard conditions. In addition, we verify that the proposed trust region globalization is compatible with the Kurdyka-{\L}ojasiewicz inequality yielding finer convergence results. We further derive new normal map-based representations of the associated second-order optimality conditions that have direct connections to the local assumptions required for fast convergence. Finally, we study the behavior of our algorithm when the Hessian matrix of the smooth part of the objective function is approximated by BFGS updates. We successfully link the KL theory, properties of the BFGS approximations, and a Dennis-Mor{\'e}-type condition to show superlinear convergence of the quasi-Newton version of our method. Numerical experiments on sparse logistic regression, image compression, and a constrained log-determinant problem illustrate the efficiency of the proposed algorithm.
\end{abstract}

\textbf{Keywords}. Normal map, semismooth Newton method, trust region globalization, q-superlinear convergence, BFGS approximations, second-order optimality, Kurdyka-{\L}ojasiewicz framework.

\section{Introduction}

In this paper, we develop and analyze a novel normal-map based second-order approach for the composite optimization problem
\begin{align} \label{eq1-1}
\min\limits_{x\in\mathbb{R}^n}~\psi(x):=f(x)+\varphi(x),
\end{align}
where $f:\mathbb{R}^n\rightarrow\mathbb{R}$ is a continuously differentiable (not necessarily convex) function and $\varphi:\mathbb{R}^n\rightarrow(-\infty,\infty]$ is a convex, lower semicontinuous (lsc), and proper (not necessarily smooth) mapping. 

Composite-type problems of the form \eqref{eq1-1} have become a ubiquitous tool in optimization to model a large variety of applications, including, e.g., sparse $\ell_1$-regularized problems \cite{tibshirani1996regression,shevade2003simple,donoho2006compressed}, group sparse problems \cite{cotter2005sparse,yuan2006model,meier2008group}, structured dictionary learning \cite{mairal2009online,bach2011optimization}, matrix completion \cite{candes2009exact,cai2010singular}, and machine learning tasks \cite{Bis06,shalev2014understanding,BotCurNoc18}.

Many common and recent algorithmic approaches for solving \eqref{eq1-1} are based on classical forward-backward splitting techniques or proximal gradient steps \cite{FukMin81,chen1997convergence,ComWaj05,parikh2014proximal}. Specifically, at iteration $k$, the traditional forward-backward splitting method performs a gradient descent step for the smooth function $f$ followed by a proximal ``backward'' step for the nonsmooth mapping $\vp$, 
\be \label{eq:fbs-intro} x_{k+1} = \prox{x_k - \lambda\nabla f(x_k)}, \ee
where $\oprox : \Rn \to \Rn$, $\prox{x} := \argmin_{y \in \Rn} \varphi(y) + \frac{1}{2\lambda} \|x-y\|^2$ denotes the well-known proximity operator of $\varphi$, \cite{Mor65}, and $\lambda > 0$ is a positive parameter. Alternatively, the forward-backward scheme in \eqref{eq:fbs-intro} can also be interpreted as a fixed-point procedure applied to the nonsmooth equation:
\begin{align} \label{eq:natural-residual}
\oFnat : \Rn \to \Rn, \quad \Fnat{x} := x - \prox{x - \lambda\nabla f(x)} = 0.          
\end{align}
Here, the \textit{natural residual} $\oFnat$ represents the first-order necessary optimality conditions of problem \eqref{eq1-1} (see, e.g., \cref{sec:foc}) which stresses the fundamental role of $\oFnat$ and of the proximal updates \eqref{eq:fbs-intro} in the design of methodologies for solving \eqref{eq1-1}. The second-order approach investigated in this work is based on a different characterization of the associated optimality conditions of problem \eqref{eq1-1} using the so-called \textit{normal map}
\begin{align} \label{eq:normal-map}
\oFnor : \Rn \to \Rn, \quad \Fnor{z}:=\nabla f(\prox{z})+{\lambda^{-1}}(z-\prox{z})=0.
\end{align}
The normal map $\oFnor$ was initially introduced by Robinson in \cite{robinson1992normal} and has been primarily used in the context of classical variational inequalities (VI) and generalized equations for the special case where the proximity operator $\oprox$ reduces to the projection $\mathcal P_C$ onto a closed, convex set $C \subset \Rn$. We refer to \cite{facchinei2007finite} for further background. Similar to the observations in \cite{robinson1992normal,facchinei2007finite} and since the range of $\oprox$ coincides with $\dom{\partial{\varphi}}\subseteq \dom{{\varphi}}$, (see \cite[section 24]{rockafellar1970convex}), the normal map remains well-defined if $\nabla f$ is only defined on the effective domain $\dom{\vp}$. This attractive feature is a distinctive advantage of the normal map and one of the main motivations for developing normal map-based algorithms.

\subsection{Contributions}

Our basic algorithmic idea is to apply a semismooth Newton method, \cite{QiSun93,qi1993convergence}, to solve the nonsmooth equation
\be \label{eq:intro-fnor} \Fnor{z}=0. \ee
In particular, we combine approximate semismooth Newton steps for \eqref{eq:intro-fnor}, generated via an inexact CG-solver, and a globalization technique that is based on a trust region-like mechanism to control the acceptance of the Newton steps. In this way, the resulting trust region-type algorithm can be guaranteed to converge globally and locally at a q-superlinear rate. 

Our algorithmic framework follows a normal map-based approach proposed in \cite{pieper2015finite} by Pieper.
The normal map scheme developed in \cite{pieper2015finite} also uses a trust region-type strategy and has already been successfully applied in a heuristic manner in several other works, see  \cite{kunisch2016time,boulanger2017sparse,RunAigKunSto18a,ManRun20,mannelhybrid}. However, only limited convergence results are available so far. Our goal in this work is to design a modified version of this original normal map-based method that allows to establish full global and local convergence results while maintaining the favorable performance reported in \cite{pieper2015finite,kunisch2016time,boulanger2017sparse,RunAigKunSto18a,ManRun20,mannelhybrid}. We now summarize our main contributions:
\begin{itemize}
\item We propose a new normal map-based merit function for problem \eqref{eq1-1} that has more tractable descent properties than the original objective function $\psi$ and that is compatible with our trust region strategy. By incorporating this novel merit function in the trust region acceptance mechanism, we are able to globalize the semismooth Newton method for \eqref{eq:intro-fnor} and, {in contrast to \cite{pieper2015finite,ManRun20,mannelhybrid}}, a full unified convergence analysis is possible. In particular, we establish regular global convergence in terms of the normal map 
\[ \|\Fnor{z_k}\| \to 0 \quad \text{as} \quad k \to \infty, \]
where $\{z_k\}$ denotes a sequence of iterates generated by the proposed approach. In addition, we verify that the celebrated 
Kurdyka-{\L}ojasiewicz (KL) framework is applicable yielding finer global convergence results. 
\item Under mild local assumptions, we prove that our algorithm can locally turn into a pure (inexact) semismooth Newton method and that a fast q-superlinear rate of convergence can be achieved. This shows that the proposed normal map-based approach does not suffer from a \textit{Maratos-type effect}. Our analysis is based on the local behavior of the merit function allowing to make strong connections to the utilized trust region models.
\item We derive new normal map-based representations of the second-order optimality conditions of problem \cref{eq1-1}. Specifically, we show that many second-order concepts, such as the second-order sufficient conditions, strong metric subregularity, and quadratic growth conditions, have an equivalent (and often simpler) characterization using the normal map which underlines the strong link between the natural residual and $\oFnor$. We further show that some of our assumptions required for local convergence are directly connected to second-order conditions.
 
\item We study the convergence properties of a practical quasi-Newton variant of our method using BFGS updates to approximate the Hessian of $f$. After refining some classical results for BFGS updates, we show that BFGS techniques are fully compatible with the KL theory and some of our earlier convergence results. This allows us to derive a Dennis-Mor{\'e}-type condition yielding local superlinear convergence. In contrast to other nonsmooth quasi-Newton methods and techniques, \cite{ip1992local,CheYam92,Qi97,LewOve13,SteThePat17,TheStePat18}, we do not need to assume differentiability of the normal map $\oFnor$. Instead, since we only approximate the curvature information of the smooth function $f$, we will work with a slightly stronger second-order-type condition involving $\nabla^2 f$.   
 
To the best of our knowledge, this is the first work which fully links the KL theory, boundedness of the BFGS updates, and a Dennis-Mor{\'e}-type condition to establish fast q-superlinear convergence of the semismooth quasi-Newton method.

\item Finally, numerical experiments are conducted on a sparse logistic regression, a nonconvex image compression, and a constrained log-determinant optimization problem which demonstrate the favorable performance of the normal map-based semismooth Newton method.  
\end{itemize}

\subsection{Related Work}
The importance and popularity of semismoothness, \cite{Mif77,QiSun93,qi1993convergence}, and of the semismooth Newton method, \cite{QiSun93,qi1993convergence,QiSun99}, stem from the fact that nonsmooth versions of Newton's method applied to a nonlinear, nonsmooth equation 
\[ F(x) = 0 \] 
are well-defined and can be shown to converge locally at least q-superlinearly under suitable conditions if the mapping $F$ is semismooth. 
While the local convergence of the semismooth Newton method can be established in a broad and universal context, globalization techniques and global convergence results are typically more tailored to the considered application and can depend on the specific problem structure. 
In the last decades, a variety of globalization schemes for semismooth Newton methods have been proposed for different problems. 
This includes line-search based globalization techniques \cite{HanPanRan92,qi1993convergence,MarQi95} (on suitable merit functions, such as, e.g., $\frac12\|F(x)\|^2$), specialized globalization schemes for complementarity problems and KKT systems \cite{DeLFacKan96,FerKanMun99,KanQi99,MunFacFerFisKan01}, projection methods for monotone equations \cite{SolSva01,xiao2018regularized}, and lesser studied trust region-type globalization mechanisms \cite{ulbrich2001nonmonotone}. There is also a vast amount of literature on specialized semismooth Newton methods that are based on the natural residual. For $\ell_1$-problems with $\varphi \equiv \mu \|\cdot\|_1$, $\mu > 0$, various semismooth Newton schemes have been proposed in \cite{GriLor08,milzarek2014semismooth,HanRaa15,ByrChiNocOzt16}. An extension of the work \cite{milzarek2014semismooth} to general composite-type problems can also be found in \cite{milzarek2016numerical}. Moreover, in \cite{PatBem13,PatSteBem14,SteThePat17}, the authors introduce the so-called forward-backward envelope (FBE) as a smooth merit function for \eqref{eq1-1} and different semismooth Newton methods with line search-type globalization are analyzed. 
 
As mentioned, Robinson's normal map has been mainly used for classical variational inequalities and generalized equations. In particular, it is the basis of the path search damped Newton methods investigated in \cite{ralph1994global,dirkse1995path} and of a projected gradient hybrid scheme for nonlinear complementarity problems proposed by Ferris and Ralph in \cite{FerRal95}. In \cite{HanSun97}, Han and Sun discuss the convergence properties of a Newton and quasi-Newton method applied to a normal map formulation of VI problems on polyhedral sets. 
In \cite{ZhoTohSun03}, Zhou, Toh, and Sun propose a normal map-based smoothing Newton method for an $\ell_2$-norm regression problem. 
For more background on the normal map and additional classical normal map-based approaches, let us again refer to \cite{facchinei2007finite}. 

To the best of our knowledge, there are only few works that directly utilize the normal map to solve general composite-type problems of the form \eqref{eq1-1}. Specifically, besides Pieper's PhD thesis \cite{pieper2015finite}, this mainly includes the works \cite{ManRun20,mannelhybrid} by Mannel and Rund where local properties of a quasi-Newton variant of Pieper's normal map-based trust region method using Broyden-like updates are established in a Banach space setting. 
%
%

The semismooth Newton method is also an integral component of various related classes of algorithms for solving \eqref{eq1-1}. In proximal Newton approaches \cite{lee2014proximal}, the semismooth Newton method is used as a subproblem solver to compute the proximal Newton steps. 
We refer to \cite{BecFadOch19,KanLec21} for recent applications of this technique. Furthermore, the semismooth Newton method is the core of several augmented Lagrangian and proximal point algorithms for semidefinite programming and nuclear and spectral norm or Lasso-type  problems, \cite{ZhaSunToh10, JiaSunToh14,YanSunToh15,CheLiuSunToh16,LiSunToh18}. 

Finally, we note that various types of nonsmooth trust region methods have been studied and analyzed for the general optimization problem $\min_x\,\psi(x)$ during the last decades. A majority of these approaches are based on abstract model functions that are often not further specified. In \cite{DenLiTap95}, a nonsmooth trust region method is proposed for $\min_x\,\psi(x)$ under the assumption that $\psi$ is regular. A nonsmooth trust region algorithm with an abstract first-order model is investigated in \cite{QiSun94}. Global convergence properties of nonsmooth trust region methods are typically shown under strong assumptions on the accuracy of the model and rely on the concept of a ``strict model'' introduced by Noll in \cite{Nol10}. This can limit the direct applicability and numerical tractability of nonsmooth trust region approaches. In \cite{ChrDLRMey20}, Christof, De Los Reyes, and Meyer propose a hybrid method that combines simpler quadratic trust region models and a more complicated second model to overcome some of the practical limitations of strict models. More related to our work, Chen, Milzarek, and Wen, \cite{chen2020trust}, propose a normal map-based trust region framework for composite problems. The approach in \cite{chen2020trust} is based on steepest descent-type directions and truncations to control the accuracy of the utilized quadratic models. As a consequence, the convergence analysis in \cite{chen2020trust} requires relatively strong assumptions and is closer to the analyses of some of the other trust region methods mentioned here. A recent nonsmooth trust region scheme with proximal quasi-Newton models is presented in \cite{AraBarOrb21}. 

A more detailed discussion of related literature concerning nonsmooth second-order theory and (nonsmooth) quasi-Newton schemes can be found in \cref{sec:sop} and \cref{sec:qnm}. 
\subsection{Organization}

In \cref{sec:foc}, we introduce several first-order optimality conditions and the normal map and we list required concepts from nonsmooth analysis. In \cref{sec:alf}, we motivate our algorithmic framework. Specifically, we introduce a novel merit function for \eqref{eq1-1} and a new reduction ratio controlling the acceptance of trust region steps. Basic global convergence properties of our algorithm are derived in \cref{sec2}. In \cref{sec:KL_conv}, we investigate convergence of the approach under the KL inequality. 
In \cref{sec:loc-super-conv}, we discuss local convergence properties and transition to fast local q-superlinear convergence. 
In \cref{sec:sop}, we derive a novel representation of the second-order optimality conditions for problem \cref{eq1-1} using the normal map perspective. 
In \cref{sec:qnm}, we  present an in-depth study of a BFGS-type version of our method. 
Finally, in \cref{sec:ne} we illustrate and discuss the numerical performance of our algorithm.

\begin{table}[t]
  \centering
   \begin{tabular}{cll}  
 \cmidrule[1pt](){1-3} 
    Notation & \multicolumn{2}{c}{Description and Reference} \\[0.5ex]  
  \cmidrule(){1-3} \\[-2.5ex]
   $\oFnor$ & normal map & $\oFnor(z) = \nabla f(\prox{z}) + \frac{1}{\lambda}(z-\prox{z})$  \\[0.5ex]
   $H_\tau$ & merit function & $H_\tau(z) = \psi(\oprox(z)) + \frac{\tau\lambda}{2}\|\oFnor(z)\|^2$  \\[0.5ex]
   $\chi$ & criticality measure & $\chi(z) = \|\oFnor(z)\|$ \\[0.5ex]
   $\bar q_k$; $\bar s_k$; $s_k$ & \multicolumn{2}{l}{approximate CG-solution of \eqref{eq3-4}; lifted and rescaled step \eqref{eq:def-sk-bar-sk}}  \\[0.5ex]
   $\mathrm{pred}$ & \multicolumn{2}{l}{predicted reduction $\mathrm{pred}(z,s,\Delta,\nu)$ \eqref{eq:pred}} \\[0.5ex]
   $B$; $D$ & \multicolumn{2}{l}{(Approximation of) $\nabla^2 f$; generalized derivative of $\oprox$} \\[0.5ex]
   $\eta_1$; $\eta_2$ & \multicolumn{2}{l}{trust region parameter}   \\[0.5ex]
   $\gamma_0$; $\gamma_1$; $\gamma_2$; $\Delta_{\min}$ & \multicolumn{2}{l}{trust region parameter} \\[0.5ex]
   $\mathcal S$; $n_{\mathcal S}$ & \multicolumn{2}{l}{successful iterations; $n_{\mathcal S}(k) := \vert \mathcal S \cap \{0,1,...,k-1\}\vert$} \\[0.5ex]
 \cmidrule[1pt](){1-3} \\[-1.5ex]
  \end{tabular}
  \caption{List of Variables, Parameters, and Functions.}
  \label{table_nota}
  \end{table}

\subsection{Notation}
Our notation is standard and follows \cite{rockafellar1970convex,rockafellar2009variational,clarke1990optimization}. 
By $\iprod{\cdot}{\cdot}$ and $\|\cdot\|$ we denote the standard Euclidean inner product and norm. For matrices, the norm $\|\cdot \|$ is the standard spectral norm. The sets of symmetric and symmetric positive definite $n \times n$ matrices are denoted by $\mathbb S^n$ and $\Spp$, respectively. For a given matrix $A \in \Spp$, we define the norm $\|x\|_A := \sqrt{\iprod{x}{Ax}}$. For two matrices $A, B \in \mathbb S^n$, we write $A\succeq B$ if $A-B$ is positive semidefinite. We use $\mathrm{ri}(S)$ to denote the relative interior of a convex set $S \subset \Rn$.

The effective domain of a function $\theta : \Rn \to \Rex$ is defined as $\dom{\theta}=\{x \in \Rn : \theta(x)<\infty\}$. Let $x \in \dom{\theta}$ be given. The {lower directional epi-derivative} or {lower subderivative} of $\theta$ at $x$ in the direction $h \in \Rn$ is defined as follows 
\[ \theta^\downarrow_{-}(x;h) := \liminf_{t \downarrow 0, \, \tilde h \to h}~\Delta_t \;\! \theta(x)(\tilde h), \quad \Delta_t \;\! \theta(x)(h) := \frac{\theta(x+th)-\theta(x)}{t}. \]
We say that $\theta$ is {directionally epi-differentiable} at $x$ in the direction $h \in \Rn$ with {epi-derivative} $\theta^\downarrow(x;h)$ if and only if for every sequence $(t_k)_k$, $t_k \downarrow 0$, it holds that
\be \label{eq:def-epi} \left[ \begin{array}{ll} \displaystyle\liminf_{k \to \infty}~\Delta_{t_k} \;\! \theta(x)(h^k) \,\, \geq \theta^\downarrow(x;h) & \text{for every sequence } h^k \to h, \\ \displaystyle\limsup_{k \to \infty}~\Delta_{t_k} \;\! \theta(x)(h^k) \leq \theta^\downarrow(x;h) & \text{for some sequence } h^k \to h. \end{array} \right. \ee
The function $\theta$ is called {directionally differentiable} at $x$ in the direction $h$ if the limit $\theta^\prime(x;h) = \lim_{t \downarrow 0} \Delta_t \;\! \theta(x)(h)$ exists. Moreover, we say that $\theta$ is {semidifferentiable} or {directionally differentiable in the sense of Hadamard} at $x$ in the direction $h \in \Rn$ if the limit $\lim_{t \downarrow 0, \tilde h \to h} \Delta_t \;\! \theta(x)(\tilde h)$ exists. In this case, we will also use the term $\theta^\prime(x;h) $ to denote its limit. Let us note that the latter two definitions do also make sense for mappings $F : \Rn \to \R^m$. The set $\mathcal{R}(F):=\{y \in \R^m: \exists~x \in \Rn \, \text{with} \, y=F(x)\}$ is the range of the mapping $F : \Rn \to \R^m$. In this paper, $\partial \theta$ denotes Clarke's subdifferential for extended-valued functions or for locally Lipschitz continuous mappings $\theta : \Rn \to \R^m$, see, e.g., \cite[section 8.J]{rockafellar2009variational} or \cite[section 2.4]{clarke1990optimization}. 

Throughout this work, we assume that $f:\mathbb{R}^n\rightarrow \mathbb{R}$ is continuously differentiable and $\varphi:\mathbb{R}^n\rightarrow(-\infty,\infty]$ is a convex, lower semicontinuous, and proper mapping.

\section{First-Order Optimality and Preliminaries} \label{sec:foc}

%
%

 {A point $\bar{x} \in \dom{\vp}$ is a stationary point of \eqref{eq1-1} if $0 \in \partial\psi(\bar x)=\nabla f(\bar{x})+\partial\varphi(\bar{x})$ and we use $\mathrm{crit}(\psi)$ to denote the set of all stationary points of $\psi$. Here, $\partial\vp$ is the standard subdifferential for convex functions.} The optimality condition $0 \in \partial\psi(\bar x)$ can be equivalently represented as a nonsmooth equation:
\[ x \in \mathrm{crit}(\psi) \; \iff \; \Fnat{x} := x - \prox{x - \lambda \nabla f(x)} = 0. \]
As mentioned, $\oprox$ denotes the proximity operator of $\varphi$ with respect to the scalar $\lambda$. 
The proximity operator is a {firmly nonexpansive mapping}, i.e.,
\be \label{eq:prox-nonexp} \|\prox{x}-\prox{y}\|^2\leq\langle x-y,\prox{x}-\prox{y} \rangle \quad \forall~x,y \in \Rn. \ee
In particular, $\oprox$ is globally Lipschitz continuous with constant 1. Moreover, the proximity operator can be characterized by the associated optimality conditions of its underlying optimization problem:
\begin{align}\label{eq1-3}
\prox{x}\in x-\lambda \partial\varphi(\prox{x}). 
\end{align}
We will also work with the Moreau envelope $\env{x}:=\min_y\varphi(y)+\frac{1}{2\lambda}\|x-y\|^2$. It is well known that $\oenv$ is convex and continuously differentiable and its gradient is given by $\nabla\env{x}=\frac{1}{\lambda}(x-\prox{x})$. Let us further note that due to the convexity of $\vp$ and differentiability of $f$, Clarke's subdifferential $\partial \psi$ coincides with the regular and limiting subdifferential of $\psi$. We refer to \cite{Mor65,rockafellar2009variational,BauCom11} for additional details and background. 

Next, we summarize the different stationarity concepts for problem \eqref{eq1-1} and connect them to Robinson's normal map $\oFnor$.
Specifically, we show that every solution $\bar z$ of the nonsmooth equation
\begin{align} \label{normal_eq} \Fnor{z} := \nabla f(\prox{z})+{\lambda^{-1}}(z-\prox{z}) =0 \end{align}
corresponds to a stationary point of the problem \eqref{eq1-1} via $\bar{x}=\prox{\bar{z}}$.

\begin{lemma}
\label{lemma1-1}
Let $\lambda > 0$ be given. The following conditions are mutually equivalent:
\begin{itemize}
\item[\rmn{(i)}] It holds that $0\in\nabla f(\bar{x})+\partial\varphi(\bar{x})$.
\item[\rmn{(ii)}] The point $\bar{x}$ is a solution of the fixed-point type equation $\Fnat{\bar{x}}=0$. 
\end{itemize}
Furthermore, if $\bar x$ is a stationary point of \eqref{eq1-1}, then $\bar{z}=\bar{x}-\lambda \nabla f(\bar{x})$ is a zero of $\oFnor$. Conversely, if $\bar{z}$ is a zero of the normal map $\oFnor$, it holds that $\prox{\bar{z}} \in \crit{(\psi)}$.
\end{lemma}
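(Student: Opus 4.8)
The plan is to treat the biconditional form of the proximal optimality condition \cref{eq1-3} as the single structural tool underlying all of the claimed equivalences. For convex $\vp$ the inclusion in \cref{eq1-3} strengthens to the equivalence
\[
y = \prox{x} \quad \iff \quad \Lambda(x-y) \in \partial\vp(y),
\]
which follows immediately from the first-order optimality conditions of the strongly convex proximal subproblem defining $\oprox$. With this equivalence in hand, the statements (i)--(iii) and the two normal-map assertions all become algebraic rearrangements of the same inclusion.

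First I would prove (i) $\iff$ (ii). Setting $u = \bar x - \Lambda^{-1}\nabla f(\bar x)$, the displayed equivalence gives $\bar x = \prox{u}$ if and only if $\Lambda(u - \bar x) \in \partial\vp(\bar x)$; since $\Lambda(u-\bar x) = -\nabla f(\bar x)$, this reads $-\nabla f(\bar x) \in \partial\vp(\bar x)$, i.e. condition (i) in the form \cref{eq1-4}. For (i) $\iff$ (iii) I would invoke the sum rule for the subderivative: because $f$ is differentiable and $\vp$ is convex, $\psi^\downarrow(\bar x; h) = \iprod{\nabla f(\bar x)}{h} + \vp^\prime(\bar x; h)$, where $\vp^\prime(\bar x;\cdot)$ is the ordinary convex directional derivative. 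The requirement $\psi^\downarrow(\bar x; h) \ge 0$ for all $h$ then reads $\vp^\prime(\bar x; h) \ge \iprod{-\nabla f(\bar x)}{h}$ for all $h$, which by the support-function characterization of the convex subdifferential is equivalent to $-\nabla f(\bar x) \in \partial\vp(\bar x)$, again recovering (i).

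For the normal-map statements I would argue in both directions. If $\bar x \in \crit(\psi)$, the established equivalence (i) $\iff$ (ii) yields $\bar x = \prox{\bar x - \Lambda^{-1}\nabla f(\bar x)} = \prox{\bar z}$ with $\bar z := \bar x - \Lambda^{-1}\nabla f(\bar x)$. Substituting into \cref{normal_eq} and using $\bar z - \prox{\bar z} = \bar z - \bar x = -\Lambda^{-1}\nabla f(\bar x)$ gives $\Fnor{\bar z} = \nabla f(\bar x) + \Lambda(\bar z - \bar x) = \nabla f(\bar x) - \nabla f(\bar x) = 0$. Conversely, given $\Fnor{\bar z} = 0$, I would set $\bar x := \prox{\bar z}$; the defining identity $\nabla f(\bar x) + \Lambda(\bar z - \bar x) = 0$ rearranges to $\Lambda(\bar z - \bar x) = -\nabla f(\bar x)$, while the proximal equivalence applied to $\bar x = \prox{\bar z}$ gives $\Lambda(\bar z - \bar x) \in \partial\vp(\bar x)$. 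Combining the two shows $-\nabla f(\bar x) \in \partial\vp(\bar x)$, i.e. $\prox{\bar z} = \bar x \in \crit(\psi)$.

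The main obstacle is not algebraic but lies in justifying the subderivative step (i) $\iff$ (iii): one must verify the additivity $\psi^\downarrow(\bar x; h) = \iprod{\nabla f(\bar x)}{h} + \vp^\prime(\bar x; h)$ (using differentiability of $f$, so that $f$ contributes only its linear term, together with the epi-differentiability of the convex function $\vp$) and the duality between nonnegativity of the subderivative and the membership $0 \in \partial\psi(\bar x)$. Everything else reduces to repeated use of the single proximal equivalence, so once that biconditional and the subderivative sum rule are in place, the proof is direct.
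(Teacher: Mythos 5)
Your proposal is correct and follows essentially the same route as the paper: both arguments reduce (i)$\iff$(ii) and the two normal-map claims to the characterization \cref{eq1-3} of the proximity operator, and both obtain (i)$\iff$(iii) from the duality between the subderivative of $\psi$ and membership in $\partial\psi(\bar x)$. The only (cosmetic) difference is that you make the subderivative step explicit via the sum rule $\psi^\downarrow(\bar x;h)=\iprod{\nabla f(\bar x)}{h}+\vp^\prime(\bar x;h)$ and the support-function description of $\partial\vp(\bar x)$, whereas the paper cites the corresponding general formula for $\partial\psi$ directly.
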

\begin{proof} The inclusion in (i) is equivalent to $\bar{x}\in\bar{x}-\lambda \nabla f(\bar{x})-\lambda \partial\varphi(\bar{x})$. Since the proximity operator is uniquely determined by \eqref{eq1-3}, this implies that condition (i) is equivalent to $\bar{x}=\prox{\bar{x}-\lambda \nabla f(\bar{x})}$. 
Next, let us suppose that $\bar{x}$ is a zero of the natural residual $\oFnat$. Then, setting $\bar{z}=\bar{x}-\lambda \nabla f(\bar{x})$, it follows 
\[ \Fnor{\bar{z}}= \nabla f(\prox{\bar x - \lambda \nabla f(\bar x)}) + {\lambda^{-1}}(\Fnat{\bar x} - \lambda \nabla f(\bar x)) = 0. \] 
Conversely, let $\bar z$ be a solution of \eqref{normal_eq} and let us set $\bar{x}=\prox{\bar{z}}$. Rearranging the terms in $\oFnor$, this yields $\bar z = \bar x - \lambda \nabla f(\bar x)$ and $\Fnat{\bar{x}} = \bar x - \prox{\bar z} =0$. \end{proof}

The next result establishes a subtler connection between the natural residual and the normal map. 

\begin{lemma} \label{lemma:conn-nat-nor} Let $\lambda > 0$ and $x \in \dom{\partial\vp}$ be given. Then, it holds that
\[ \frac{1}{\lambda} \|\Fnat{x}\| \leq \inf_{v \in \partial\psi(x)} \|v\| = \mathrm{dist}(0,\partial\psi(x)) = \inf_z~\{\|\Fnor{z}\| : x = \prox{z}\}. \]
\end{lemma}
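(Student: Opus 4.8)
The plan is to establish the central equality first and then peel off the one-sided estimate, because the displayed chain contains exactly one substantive identity (beyond the definitional $\inf_{v\in\partial\psi(x)}\|v\|=\dist(0,\partial\psi(x))$) together with a single inequality. The engine for everything is the subdifferential characterization of the proximity operator in \cref{eq1-3}, which I would first restate in the convenient equivalent form: for $x\in\dom{\partial\vp}$ one has $x=\prox{z}$ if and only if $\Lambda(z-x)\in\partial\vp(x)$ (this equivalence is exact because the proximal subproblem is strongly convex, so \cref{eq1-3} characterizes $\prox{z}$ uniquely).

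Next I would prove that $\{\Fnor{z}:x=\prox{z}\}=\partial\psi(x)$ as an \emph{exact} set identity. For the inclusion ``$\subseteq$'', whenever $x=\prox{z}$ the characterization gives $\Lambda(z-x)\in\partial\vp(x)$, so $\Fnor{z}=\nabla f(x)+\Lambda(z-x)\in\nabla f(x)+\partial\vp(x)=\partial\psi(x)$. For ``$\supseteq$'', given any $v\in\partial\psi(x)$ I would introduce the companion point $z:=x+\Lambda^{-1}(v-\nabla f(x))$; then $\Lambda(z-x)=v-\nabla f(x)\in\partial\vp(x)$, whence $x=\prox{z}$ and $\Fnor{z}=v$. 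Taking the infimum of $\|\cdot\|$ over the two identical sets then produces both claimed equalities at once, the nonemptiness of $\partial\psi(x)$ being guaranteed by the hypothesis $x\in\dom{\partial\vp}$.

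For the inequality I would fix $v\in\partial\psi(x)$ together with its companion $z$ from the previous step and set $u:=x-\Lambda^{-1}\nabla f(x)$, so that $\Fnat{x}=x-\prox{u}=\prox{z}-\prox{u}$. A direct computation gives $z-u=\Lambda^{-1}v$, and the $\Lambda$-firm nonexpansiveness \cref{eq:prox-nonexp} (which, via Cauchy--Schwarz, yields $1$-Lipschitzness of $\oprox$ in $\|\cdot\|_\Lambda$) then gives $\|\Fnat{x}\|_\Lambda\le\|z-u\|_\Lambda=\|\Lambda^{-1}v\|_\Lambda$. It remains to convert the weighted norms: using $\|\Fnat{x}\|_\Lambda\ge\lambda_{\min}(\Lambda)^{1/2}\|\Fnat{x}\|$ and $\|\Lambda^{-1}v\|_\Lambda=\sqrt{\iprod{v}{\Lambda^{-1}v}}\le\|\Lambda^{-1}\|^{1/2}\|v\|$, together with the spectral identity $\lambda_{\min}(\Lambda)^{-1}=\|\Lambda^{-1}\|$, I obtain $\|\Lambda^{-1}\|^{-1}\|\Fnat{x}\|\le\|v\|$; taking the infimum over $v\in\partial\psi(x)$ finishes the argument.

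I expect no genuine obstacle here: the structural content lies entirely in the exact set identity of the second paragraph, and the only place that demands care is the bookkeeping among the three norms $\|\cdot\|$, $\|\cdot\|_\Lambda$, and $\|\cdot\|_{\Lambda^{-1}}$, and the correct reading $\|\Lambda^{-1}\|=\lambda_{\min}(\Lambda)^{-1}$ of the spectral norm.
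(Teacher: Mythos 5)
Your proof is correct and complete: the exact set identity $\{\Fnor{z}: x=\prox{z}\}=\partial\psi(x)$, obtained from the characterization $x=\prox{z}\iff\Lambda(z-x)\in\partial\varphi(x)$, gives both equalities at once, and the norm bookkeeping in the final step (using $\|\Lambda^{-1}\|=\lambda_{\min}(\Lambda)^{-1}$) checks out. The paper itself omits the argument and defers to the cited references, where essentially this same route (set identity plus $\Lambda$-nonexpansiveness of the proximity operator) is followed, so your proposal simply supplies the standard proof in self-contained form.
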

\begin{proof} The first inequality is well known, see, e.g., \cite[Theorem 3.5]{DruLew18}. A full proof of \cref{lemma:conn-nat-nor} can be found in \cite[Lemma 4.1.6]{milzarek2016numerical} and will be omitted here. 
\end{proof}

 
Let us recall the definition of semismoothness. Following \cite{QiSun93,ulbrich2011semismooth}, a mapping $F : \Rn \to \Rn$ is said to be semismooth at $x$ if $F$ is Lipschitz continuous in a neighborhood of $x$, directionally differentiable at $x$, and 
\begin{align}
\label{eq2-1}
\sup_{M\in\partial F(x+h)}\| F(x+h)-F(x)-Mh  \| = o(\|h\|)  \quad \text{as} \quad h\rightarrow 0.
\end{align}
If \eqref{eq2-1} holds for all $M\in \mathcal M(x+h)$, where $\mathcal M : \Rn \rightrightarrows \R^{n\times n}$ is a set-valued mapping (that can be different from Clarke's subdifferential $\partial F$), then $F$ is called semismooth at $x$ with respect to $\mathcal M$. 

\begin{lemma}
\label{prop2-6}
Suppose that  $f$ is twice continuously differentiable in a neighborhood of $\prox{\bar{z}}$ and let us assume that $\proxs$ is semismooth at $\bar{z}$. We  define the following set-valued mapping $ \mathcal M^\lambda : \Rn \rightrightarrows \R^{n \times n}$:
\be \label{eq:gen-deriv} \mathcal{M}^{\lambda}(z) := \{M = \nabla^2f(\prox{z})D+\tfrac{1}{\lambda}(I-D): \; D\in\partial\prox{z}\}. \ee
Then, $\oFnor$ is semismooth at $\bar z$ with respect to $\mathcal{M}^{\lambda}$.
\end{lemma}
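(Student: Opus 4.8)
The plan is to treat $\oFnor$ as a combination of the smooth map $\nabla f$, the (semismooth) proximity operator $\proxs$, and the affine correction $z \mapsto \Lambda(z - \prox{z})$, and to verify the three defining requirements of semismoothness in turn: local Lipschitz continuity, directional differentiability, and the first-order approximation estimate \cref{eq2-1} relative to $\mathcal{M}^{\Lambda}$.

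First I would record Lipschitz continuity. By \cref{eq:prox-nonexp} the operator $\proxs$ is nonexpansive in $\|\cdot\|_{\Lambda}$, hence globally Lipschitz in the Euclidean norm with some constant $L$; and since $f$ is twice continuously differentiable near $\prox{\bar z}$, the gradient $\nabla f$ is $C^1$ and thus locally Lipschitz there. As $\proxs$ maps a small ball around $\bar z$ into a small ball around $\prox{\bar z}$, the composition $\nabla f \circ \proxs$ and the affine term are locally Lipschitz, so $\oFnor$ is locally Lipschitz near $\bar z$. Directional differentiability then follows from a chain rule: $\proxs$, being Lipschitz and directionally differentiable (both implied by its assumed semismoothness), is semidifferentiable, and composition with the differentiable map $\nabla f$ preserves semidifferentiability, while adding the smooth affine term keeps the sum directionally differentiable at $\bar z$.

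The core is the estimate \cref{eq2-1}. Writing $x := \prox{\bar z}$ and $\Delta := \prox{\bar z + h} - \prox{\bar z}$, and taking any $M = \nabla^2 f(\prox{\bar z + h})D + \Lambda(I - D)$ with $D \in \partial\prox{\bar z + h}$, I would expand
\[
\Fnor{\bar z + h} - \Fnor{\bar z} - Mh = \big[\nabla f(x + \Delta) - \nabla f(x) - \nabla^2 f(x+\Delta)Dh\big] - \Lambda\big[\Delta - Dh\big].
\]
The second bracket is controlled directly by the assumed semismoothness of $\proxs$, which yields $\sup_{D \in \partial\prox{\bar z + h}} \|\Delta - Dh\| = o(\|h\|)$, hence $\|\Lambda(\Delta - Dh)\| = o(\|h\|)$ uniformly in $D$. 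For the first bracket I would split it as
\[
\big[\nabla f(x+\Delta) - \nabla f(x) - \nabla^2 f(x+\Delta)\Delta\big] + \nabla^2 f(x+\Delta)\big[\Delta - Dh\big].
\]
Using $\nabla f(x+\Delta) - \nabla f(x) = \int_0^1 \nabla^2 f(x + t\Delta)\Delta\,dt$ together with the continuity of $\nabla^2 f$, the first summand is $o(\|\Delta\|)$; since $\|\Delta\| \le L\|h\|$, this is $o(\|h\|)$. The second summand is bounded by $\|\nabla^2 f(x+\Delta)\|\cdot\|\Delta - Dh\|$, where the Hessian norm stays bounded by continuity and $\|\Delta - Dh\| = o(\|h\|)$ again by semismoothness of $\proxs$. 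Combining the three $o(\|h\|)$ contributions and taking the supremum over $D$ (equivalently over $M \in \mathcal{M}^{\Lambda}(\bar z + h)$) establishes \cref{eq2-1}.

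The main obstacle I anticipate is the bookkeeping needed to keep every estimate uniform over $D \in \partial\prox{\bar z + h}$: semismoothness of $\proxs$ supplies a supremum bound over its Clarke subdifferential, and I must ensure that both places where $D$ appears, namely $\Lambda(\Delta - Dh)$ and $\nabla^2 f(x+\Delta)(\Delta - Dh)$, inherit this uniform rate, while the purely smooth remainder $\nabla f(x+\Delta) - \nabla f(x) - \nabla^2 f(x+\Delta)\Delta$ is independent of $D$. A minor technical point is converting the $o(\|\Delta\|)$ rate of the smooth remainder into $o(\|h\|)$, which is exactly where the Lipschitz constant $L$ of $\proxs$ enters.
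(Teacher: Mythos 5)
Your proposal is correct and follows essentially the same route as the paper's proof: local Lipschitz continuity from the nonexpansiveness of $\proxs$ and the local Lipschitz continuity of $\nabla f$, directional differentiability via the chain rule, and the key estimate obtained by combining the Taylor expansion $\nabla f(\prox{z})-\nabla f(\prox{\bar z})=\nabla^2 f(\prox{z})(\prox{z}-\prox{\bar z})+o(\|z-\bar z\|)$ with the semismoothness of $\proxs$. Your version merely spells out the decomposition and the uniformity over $D\in\partial\prox{\bar z+h}$ that the paper leaves implicit.
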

%
%
%

 {\cref{prop2-6} readily follows from existing chain rules for semismooth functions, see, e.g., \cite[Theorem 7.5.17]{facchinei2007finite} or \cite[Proposition 3.8]{ulbrich2011semismooth}. Finally, we state several structural properties of the generalized derivatives of the proximity operator which are used in the subsequent sections. We refer to \cite[Proposition 1]{MenSunZha05} and \cite[Lemma 3.3.5]{milzarek2016numerical} for a detailed derivation of \cref{prop2-5}.}

\begin{lemma} \label{prop2-5} 
Let $D\in\partial\prox{x} \subseteq \R^{n \times n}$, $x \in \Rn$, be an arbitrary generalized derivative. Then, it holds that: 
\begin{itemize}
\item[\rmn{(i)}] Both $D$ and $I-D$ are symmetric and positive semidefinite matrices.
\item[\rmn{(ii)}] The matrix $D(I-D)$ is positive semidefinite.
\end{itemize}
\end{lemma}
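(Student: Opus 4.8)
The plan is to reduce the statement to points at which $\proxs$ is differentiable and then to establish (i)--(ii) there by combining the symmetry furnished by the Moreau envelope with a quadratic inequality extracted from $\Lambda$-firm nonexpansiveness \cref{eq:prox-nonexp}. For the reduction, I would exploit that $\proxs$ is a gradient-type map: from $\nabla\env{x}=\Lambda(x-\prox{x})$ we get $\prox{x}=x-\Lambda^{-1}\nabla\env{x}$, and since the Moreau envelope $\oenv$ is convex and continuously differentiable with Lipschitz gradient, Alexandrov's theorem guarantees that $\oenv$ is twice differentiable (hence $\proxs$ is differentiable) at almost every $x$. At such a point the classical Jacobian is $\nabla\proxs(x)=I-\Lambda^{-1}\nabla^2\env{x}$ with $\nabla^2\env{x}$ symmetric and positive semidefinite. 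Because Clarke's generalized Jacobian $\partial\prox{x}$ can be generated by limits of such Jacobians taken over a set of full measure and then convexified, and because symmetry together with the semidefiniteness conditions in (i)--(ii) is preserved under limits and convex combinations, it suffices to verify the claims for $D=\nabla\proxs(x)$ at a differentiability point.

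At such a point, symmetry is immediate: $\Lambda(I-D)=\nabla^2\env{x}$ is symmetric and positive semidefinite, whence $\Lambda D=\Lambda-\nabla^2\env{x}$ is symmetric as well. This already yields the symmetry assertions in (i) and the semidefiniteness of $\Lambda(I-D)$. To obtain $\Lambda D\succeq0$ and (ii) I would differentiate \cref{eq:prox-nonexp}: setting $y=x+th$ and using $\prox{x+th}-\prox{x}=tDh+o(t)$, dividing by $t^2$ and letting $t\downarrow0$ gives the key inequality
\[ \|Dh\|_\Lambda^2 \le \iprod{\Lambda h}{Dh} \quad \text{for all } h\in\Rn. \]
Writing $\iprod{\Lambda h}{Dh}=h^\top D^\top\Lambda h$, the inequality reads $h^\top D^\top\Lambda(I-D)h\ge0$ for all $h$, which is precisely (ii); combined with the symmetry of $\Lambda D$ it also shows that $D^\top\Lambda(I-D)=\Lambda D(I-D)$ is symmetric. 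Finally, using $D^\top\Lambda=\Lambda D$, the same inequality becomes $\|Dh\|_\Lambda^2\le\iprod{h}{\Lambda Dh}$, and since the left-hand side is nonnegative we conclude $\iprod{h}{\Lambda Dh}\ge0$ for all $h$, i.e. $\Lambda D\succeq0$, completing (i).

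The main obstacle is the symmetry of $\Lambda D$, since Jacobians of gradient maps need not be symmetric at arbitrary points of mere differentiability. I would circumvent this by working on the full-measure set of Alexandrov (twice-differentiability) points of the convex envelope $\oenv$, where the Hessian is a genuine symmetric matrix, and by invoking the standard fact that Clarke's generalized Jacobian may be computed from any such full-measure set. Once symmetry is in hand, the firm-nonexpansiveness step is routine, and the closedness and convexity of the symmetric positive semidefinite cone allow the conclusions to pass back to all $D\in\partial\prox{x}$.
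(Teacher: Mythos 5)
The paper does not actually prove \cref{prop2-5}; it defers entirely to \cite{MenSunZha05} and \cite{milzarek2016numerical}. Your argument is a correct, self-contained proof along the standard route of those references: reduce to points where $\proxs$ is differentiable, get symmetry of $\Lambda(I-D)=\nabla^2\env{x}$ from $\proxs=\mathrm{id}-\Lambda^{-1}\nabla\oenv$ with $\oenv$ convex and $C^{1,1}$, and differentiate the firm nonexpansiveness inequality \cref{eq:prox-nonexp} to obtain $\|Dh\|_\Lambda^2\leq\iprod{\Lambda h}{Dh}$, which yields both $\Lambda D\succeq 0$ and part (ii). Two points in the reduction step deserve to be made explicit. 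First, passing part (ii) through convex combinations is not automatic, since $D\mapsto D^\top\Lambda(I-D)$ is quadratic rather than affine in $D$; the one-line fix is to note that $h^\top D^\top\Lambda(I-D)h=\iprod{\Lambda h}{Dh}-\|Dh\|_\Lambda^2$ is concave in $D$ (a linear term minus the convex quadratic $\|Dh\|_\Lambda^2$), so its nonnegativity for each fixed $h$ survives both limits and convex combinations. Second, Alexandrov's theorem is more than you need and, as stated, does not immediately give Fr\'echet differentiability of the gradient at the Alexandrov points; it is cleaner to apply Rademacher directly to the Lipschitz map $\proxs$ (whose differentiability points coincide exactly with those of $\nabla\oenv$), invoke Young's theorem on mixed partials for the symmetry of the derivative of $\nabla\oenv$ wherever it exists, and use monotonicity of $\nabla\oenv$ for its positive semidefiniteness. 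With these two clarifications the proof is complete and matches the cited arguments in spirit.
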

%


\section{Algorithmic Framework}
\label{sec:alf}
We now develop and motivate our algorithmic approach. 
We split and organize our discussion according to the different main  components of the algorithm. 

\subsection{Semismooth Newton Steps}
Following the original normal map-based approach proposed in \cite{pieper2015finite}, our core idea is to apply the semismooth Newton method, \citep{QiSun93,qi1993convergence}, in order to solve the nonsmooth equation $\Fnor{z}=0$. Specifically, at iteration $k\in\mathbb{N}$, we consider semismooth Newton steps of the form:
\begin{align}\label{eq3-1}
M_ks_k=-\Fnor{z_k}, \quad z_{k+1}=z_k+s_k, \quad M_k\in\mathcal{M}^{\lambda}(z_k),
\end{align}
where the set-valued mapping of generalized derivatives $\mathcal{M}^{\lambda}:\mathbb{R}^n\rightrightarrows\mathbb{R}^{n\times n}$ is given as in \eqref{eq:gen-deriv}. {The method described in \cite{pieper2015finite} embeds this basic step in a trust region-like framework to elegantly unify regularization schemes and inexact solution methods for the linear system of equations \eqref{eq3-1}. 
In this paper, we provide a detailed convergence theory for a modified version of the method developed in \cite{pieper2015finite}. In particular, we utilize a novel merit function and acceptance mechanism which ultimately allows us to derive some of the first full convergence results for this type of methodology. 
 
Let $M_k = \nabla^2 f(\prox{z_k})D_k + \frac{1}{\lambda}(I-D_k)$ with $D_k \in \partial \prox{z_k}$ be given. We first notice that the matrix $M_k$ is typically not symmetric. However, multiplying the linear equation in \eqref{eq3-1} with the symmetric matrix $D_k = D_k^\top$ from the left, we can obtain the following symmetric linear system:
\begin{align}\label{eq3-2}
D_k M_ks=-D_k\Fnor{z_k}.
\end{align}
Since $D_kM_k$ is symmetric, standard approaches for solving \eqref{eq3-2} can be applied. Furthermore, it is often possible to exploit the structure of the generalized derivative $D_k$ to reduce the dimension of the linear system \eqref{eq3-2}, see \cref{lemma3-8}. Following \cite{pieper2015finite}, we integrate this lower dimensional system in a trust region framework and to use the Steihaug-CG method, \cite{steihaug1983conjugate}, to solve it inexactly. 

The following lemma reveals that there is a close relationship between the linear systems in \eqref{eq3-1} and \eqref{eq3-2}. 
\begin{lemma}
\label{lemma2-1} Let $B, D \in \mathbb{S}^n$ be symmetric matrices and let $r \in \Rn$ and $\epsilon \geq 0$ be given. 
Let us set $M := B D + \frac{1}{\lambda}(I-D)$ and assume that $y$ satisfies the condition $\|D(My + r)\| \leq \epsilon$. Then, setting $x = y - \lambda (My + r)$, it follows $\|Mx + r\| \leq \|I-\lambda B\| \epsilon$. 
\end{lemma}
\begin{proof}
Applying the definition of $M$, we directly obtain
\begin{align}\label{eq3-3}
\lambda M =\lambda BD +I-D=(\lambda B -I)D+I
\end{align}
and
%
$ \|Mx+r\|=\|My-\lambda M (My+r)+r\| =\|(I-\lambda B)D(My+r)\| \leq  \|I - \lambda B \| \epsilon$. 
\end{proof}

Hence, solutions of the full system \eqref{eq3-1} can be recovered by solving the reduced and symmetric system \eqref{eq3-2}. 

\subsection{Trust Region Globalization} \label{sec:tr-glob}

We now develop a trust region framework in order to control the quality of the generated inexact semismooth Newton steps and to ensure global convergence of the approach. Formally, we can design a trust region subproblem associated with the linear system \eqref{eq3-2} as follows: 
%
\begin{align}\label{eq3-4}
  \min_{q}~m_k(q):=\iprod{\Fnor{z_k}}{D_kq} + \frac{1}{2} \iprod{M_kq}{D_kq} \quad \text{s.t.} \quad \|q\|\leq\Delta_k,
\end{align}
where $\Delta_k$ is the trust region radius. 
Let $\bar q_k$ denote an approximate solution of the subproblem \eqref{eq3-4} returned by the Steihaug-CG method. Motivated by our previous discussion, we can then generate a lifted and rescaled step $s_k$ via
\be \label{eq:def-sk-bar-sk} \bar s_k = \bar q_k - \lambda  (\Fnor{z_k}+M_k\bar q_k) \quad \text{and} \quad s_k = \min\left\{1,{\Delta_k}/{\|\bar s_k\|}\right\} \bar s_k, \ee
%
which corresponds to an approximate step for the original system \eqref{eq3-1} that additionally satisfies the constraint $\|s_k\| \leq \Delta_k$. Next, we briefly discuss the model $m_k$ and present our acceptance mechanism for $s_k$.

 {\textit{The Trust Region Model $m_k$.} In order to motivate the model $m_k$, let us at this point assume that $f$ and the proximity operator $\proxs$ are sufficiently smooth. Let us consider the auxiliary function $z \mapsto \varrho(z) := (\psi \circ \proxs)(z)$ and let us set $D_k = D\prox{z_k}$. Due to $\psi(\prox{z}) = f(\prox{z}) + \env{z} - \frac{1}{2\lambda} \|z-\prox{z}\|^2$, we obtain
\begin{align*} \nabla \varrho(z_k) & = D_k \nabla f(\prox{z_k}) +\frac{1}{\lambda}(z_k-\prox{z_k}) - \frac{1}{\lambda}(I-D_k)(z_k-\prox{z_k}) = D_k\Fnor{z_k}, \\ \nabla^2 \varrho(z_k)[q,q] & = \iprod{\Fnor{z_k}}{D^2\prox{z_k}[q,q]} + \iprod{D_kq}{[\nabla^2 f(\prox{z_k})D_k + \frac{1}{\lambda}(I-D_k)]q} \quad \forall~q \in \Rn.\end{align*}
%
%

Thus, $m_k$ can be interpreted as a nonsmooth second-order model for the function $\varrho$ that omits the curvature term $q \mapsto \iprod{\Fnor{z_k}}{D^2\prox{z_k}[q,q]}$. Since this term vanishes at solutions of the nonsmooth equation \eqref{normal_eq}, this strategy shares similarities with the traditional Gauss-Newton method applied to nonlinear least-squares problems. In \cref{sec:loc-super-conv}, we will investigate this outlined connection between $\psi$, $\psi\circ\proxs$, and $m_k$ rigorously without requiring differentiability of the proximity operator. Our discussion demonstrates that $m_k$ is not just a by-product of our globalization but it indeed is a proper model for the minimization problem $\min_z\,(\psi\circ\proxs)(z)$. This feature will become important in our local convergence analysis.

\textit{Accepting Trust Region Steps.} As in classical trust region methods, we base the acceptance of the current trust region trial step $z_k + s_k$ on a reduction ratio test ``$\rho_k \geq \eta$'', $\eta > 0$, where 
\[ \rho_k = \frac{\mathrm{ared}(s_k)}{\mathrm{pred}(s_k)} \equiv \frac{\mathrm{ared}_k}{\mathrm{pred}_k}. \]
The ratio $\rho_k$ compares the actual reduction ``$\mathrm{ared}_k$'' (based on the objective function or a suitable merit function)  
with some model-based predicted reduction ``$\mathrm{pred}_k$''. 
We are specifically interested in an acceptance mechanism that can ensure the following global and local features:
\begin{itemize}
\item Accumulation points of a sequence generated by the normal map-based approach should be solutions of the equation $\Fnor{z} = 0$. 
\item The condition ``$\rho_k \geq \eta$'' can be satisfied locally under suitable assumptions and if the involved subproblems are solved with sufficiently high accuracy. This should allow transition to fast local convergence. 
\end{itemize}
%
{Based on the choice of $m_k$ and our previous discussion}, a first potential candidate for the reduction ratio is given by: 
\[ \tilde \rho_k := \frac{\psi(\prox{z_k}) - \psi(\prox{z_k + \bar q_k})}{-m_k(\bar q_k)}. \] 
The ratio $\tilde \rho_k$ compares the reduction of the auxiliary function $\psi\circ\proxs$ with the reduction predicted by $m_k$. Acceptance based on $\tilde \rho_k$ coincides with traditional trust region mechanisms for $\psi\circ\proxs$. In fact, the normal map approaches in \cite{pieper2015finite,kunisch2016time,ManRun20} utilize this criterion. Since $\tilde\rho_k$ only measures the quality of $\bar q_k$ and not of  $s_k$, this trust region globalization generally can not ensure that accumulation points of the generated iterates are solutions of \eqref{normal_eq}. In addition, the predicted model decrease $-m_k(\bar q_k)$ and $\tilde\rho_k$ are meaningless in certain situations, e.g., if $D_k = 0$. We refer to \cite[Remark 3.9]{pieper2015finite} for more comments. 

In this work, we propose to measure the actual reduction based on a novel merit function that combines the function $\psi\circ\oprox$ and the normal map $\oFnor$. 

\begin{defn} \label{def:mer}
Let $\lambda > 0$, $\tau \in (0,1)$ be given. We define the merit function
$$ \mer: \Rn \to \R, \quad \mer(z):=\psi(\prox{z})+\frac{\tau\lambda}{2}\|\Fnor{z}\|^2. $$
\end{defn}

The choice of the merit function is not trivial. In particular, $\mer$ needs to be compatible with the truncated semismooth Newton-type step $s_k$ and $\mer$ should possess certain descent properties that prevent stagnation or failure of the trust region process. 
Based on \cref{def:mer}, we define the actual reduction term $\mathrm{ared}_k$ via:  
\[ \mathrm{ared}_k := \mer(z_k) - \mer(z_k+s_k). \]

Our choice of $\mathrm{pred}_k$ is mainly motivated by the classical Cauchy decrease condition. In order to control the accuracy of the inexact solutions $\bar q_k$ of the subproblem \eqref{eq3-4}, we typically require $\bar q_k$ to satisfy a Cauchy decrease condition:
\[ -m_k(\bar q_k)\geq c \chi(z_k)\min\{1,\Delta_k,\chi(z_k)\}, \quad c > 0. \]
Here, $\chi:\mathbb{R}^n\rightarrow\mathbb{R}_{+}$ is a continuous criticality measure. Throughout this work, we will use with the criticality measure 
\[ \chi(z) := \|\Fnor{z}\|. \]

As the model $m_k$ itself does not capture the reduction achieved by the lifted step $s_k$, we directly define the reduction term $\mathrm{pred}_k := \mathrm{pred}(z_k,s_k,\Delta_k,\nu_k)$ via: 
\begin{align}  \label{eq:pred} \mathrm{pred}(z,s,\Delta,\nu) & := \frac{\tau\chi(z)}{2} \min\{\lambda,\Delta,\lambda\chi(z)\} \\ & \hspace{8ex} + \frac{\nu \chi(z)}{\min\{\Delta,\lambda\chi(z)\}} \|\prox{z+s} - \prox{z}\|^2, \nonumber \end{align}
where $\tau \in (0,1)$ and $\nu \in [0,1)$ are given. In the next sections, we show that this choice of $\rho_k$, $\mathrm{ared}_k$, and $\mathrm{pred}_k$ meets all the mentioned requirements. The full details of the method are presented in \cref{algo2}. Notice that we allow the usage of approximate Hessian information $B_k \approx \nabla^2 f(\prox{z_k})$. We call iteration $k$ successful if $z_k + s_k$ is accepted as new iterate, i.e., $z_{k+1}=z_k+s_k$.

 \begin{algorithm}[t]
        \caption{A Trust Region Normal Map Semismooth Newton Method}
         \label{algo2}
        \begin{algorithmic}[1]
            \Require Choose an initial point $z_0\in\mathbb{R}^n$, $B_0 \in \mathbb{S}^n$, $\lambda>0$, and $\{\epsilon_k\} \subset \R_{+}$. Set iteration $k=0$.
            \While{$F_{\text{nor}}^{\lambda}(z_k)\neq0$}
            \State Choose $D_k\in\partial\text{prox}_{\varphi}^{\lambda}(z_k)$ and set $M_k=B_kD_k+\frac{1}{\lambda}(I-D_k)$.
            \State Run the Steihaug-CG method with $S = D_k M_k$, $g = D_k \Fnor{z_k}$, $\Delta = \Delta_k$, and $\epsilon = \epsilon_k \geq 0$ returning $\bar q_k = q$;
            \State Set $\bar{s}_k=\bar q_k-\lambda (F_{\text{nor}}^{\lambda}(z_k)+M_k\bar q_k)$ and $s_k=\min\{1,\frac{\Delta_k}{\|\bar{s}_k\|}\}\bar{s}_k$;
             \If{$\rho_k=\frac{\mer(z_k)-\mer(z_k+s_k)}{\mathrm{pred}(z_k,s_k,\Delta_k,\nu_k)} < \eta_1$}
             \State Set $z_{k+1}=z_{k}$ and $B_{k+1} = B_k$;
             \Else
             \State Set $z_{k+1}=z_k+s_k$ and choose $B_{k+1} \approx \nabla^2 f(\prox{z_{k+1}})$;
             \EndIf
             \State Set $\Delta_{k+1}$ based on $\rho_k$ by invoking \cref{algo3};
             \State $k\gets k+1$;
            \EndWhile
        \end{algorithmic}
    \end{algorithm}

\textit{Updating the Trust Region Radius}. The trust region radius $\Delta_k$ is updated as usual based on the ratio $\rho_k$. Inspired by \cite{ulbrich2001nonmonotone}, we also consider a strategy that requires the updated trust region radius $\Delta_{k+1}$ to satisfy $\Delta_{k+1} \geq \Delta_{\min} \geq 0$ if the iteration $k$ was successful. In most of our results, we assume $\Delta_{\min}$ to be a positive (small) parameter. Our update scheme is summarized in \cref{algo3}.

\begin{algorithm}[t]
        \caption{Update of the Trust Region Radius}
        \label{algo3}
        \begin{algorithmic}[1]  
            \Require Input: $\Delta_k,\rho_k$. Let $\eta_1\leq\eta_2<1$ and $0<\gamma_0<\gamma_1<1<\gamma_2$, and $\Delta_{\min} \geq 0$ be fixed. \vspace{0.1ex}
           \State Select $\Delta_{k+1}\in \begin{cases} (\gamma_0\Delta_k,\gamma_1\Delta_k] & \text{if }  \rho_k < \eta_1, \\ [\gamma_1\Delta_k,\max\{\Delta_{\min},\Delta_k\}]\cap[\Delta_{\min},\infty) & \text{if } \rho_k\in[\eta_1,\eta_2)\\ (\Delta_k,\max\{\Delta_{\min},\gamma_2\Delta_k\}]\cap[\Delta_{\min},\infty) & \text{if } \rho_k \geq \eta_2 \end{cases}$;
           \State Return $\Delta_{k+1}$;
        \end{algorithmic}
\end{algorithm}

     \begin{algorithm}[th]
        \caption{The Steihaug-CG Method}
          \label{algo1}
        \begin{algorithmic}[1]  
            \Require Input: $S \in \R^{n\times n}$, $g \in \Rn$, $\epsilon, \Delta \geq 0$. Set $q_0=0$, $r_0=g$, $p_0 = -g$ and $i=0$.
            \If{$\|r_0\|<\epsilon$}
            \State return $q = q_0$;
            \EndIf
            \While{$i\leq n-1$}
                    \If{$\iprod{p_i}{Sp_i}\leq0$}
                        \State Compute $\alpha_{i}$ such that $\alpha_i=\argmin\limits_{\|q_i+\alpha_ip_i\|=\Delta}m(q_i+\alpha_ip_i)$ and return $q = q_i+\alpha_ip_i$;
                        \Else
                        \State Set $\alpha_i=\frac{\langle r_i,r_i \rangle}{\langle p_i,Sp_i\rangle}$ and $q_{i+1}=q_i+\alpha_ip_i$;
                        \If{$\|q_{i+1}\|\geq\Delta$}
                        \State Reset $\alpha_i$ such that $\alpha_i\geq0$ and $\|q_i+\alpha_ip_i\|=\Delta$ and return $q = q_i+\alpha_ip_i$;
                        \EndIf
                        \State Set $r_{i+1}=r_i+\alpha_iSp_i$;
                        \If{$\|r_{i+1}\|<\epsilon$}
                        \State Return $q = q_{i+1}$;
                        \EndIf
                        \State Set $\beta_{i+1}=\frac{\|r_{i+1}\|^2}{\|r_i\|^2}$ and $p_{i+1}=-r_{i+1}+\beta_{i+1}p_i$;
                    \EndIf
                    \State $i\gets i+1$
            \EndWhile
        \end{algorithmic}
      \end{algorithm}
    
\subsection{Properties of the CG-Method}

In this subsection, we collect several properties of the Steihaug-CG method and of the linear systems \eqref{eq3-1} and \eqref{eq3-2} and their respective solutions. We consider the general setting of \cref{lemma2-1}, i.e., let {$B, D \in \mathbb{S}^n$ be symmetric matrices} and let $\lambda>0$ be given. Let us then define $M := B D + \frac{1}{\lambda}(I-D)$.

\begin{lemma}
\label{lemma3-7} Suppose that $DM$ is positive semidefinite and $M$ is invertible. Let us set $g = D\Fnor{z}$ and assume $\iprod{g}{DMg} \leq 0$. Then, it holds that $ M \Fnor{z} = \lambda\Fnor{z}$. 
\end{lemma}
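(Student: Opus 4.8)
The plan is to reduce the desired identity to the single statement $g = 0$, and then to derive $g=0$ from the two curvature hypotheses together with the invertibility of $M$. Concretely, I would start from the identity \cref{eq3-3}, which reads $M\Lambda^{-1} = (B\Lambda^{-1}-I)D^\top + I$. Applying both sides to $\Fnor{z}$ and recalling $g = D^\top \Fnor{z}$ gives $M\Lambda^{-1}\Fnor{z} = (B\Lambda^{-1}-I)g + \Fnor{z}$, so the claim $M\Lambda^{-1}\Fnor{z} = \Fnor{z}$ is equivalent to $(B\Lambda^{-1}-I)g = 0$ and is in particular implied by $g = 0$. Thus it suffices to prove that the hypotheses force $g=0$.

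Second, I would establish that $D^\top M$ is in fact symmetric. Since $\Lambda D$ is symmetric and $\Lambda\in\Spp$, we have $D^\top\Lambda = (\Lambda D)^\top = \Lambda D$; combining this with the symmetry of $B$, a direct computation shows $D^\top M = D^\top B D + \Lambda D(I-D) = M^\top D$, so $D^\top M$ coincides with its transpose. Now the hypothesis that $D^\top M$ is positive semidefinite gives $\iprod{g}{D^\top M g}\ge 0$, while the assumption $\iprod{g}{D^\top M g}\le 0$ yields the reverse inequality; hence $\iprod{g}{D^\top M g} = 0$. Because $D^\top M$ is symmetric and positive semidefinite, a vanishing quadratic form forces the vector identity $D^\top M g = 0$.

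Third, I would exploit invertibility. Using $D^\top M = M^\top D$ from the previous step, the relation $D^\top M g = 0$ becomes $M^\top (Dg) = 0$; since $M$ (and hence $M^\top$) is invertible, this gives $Dg = 0$. Finally, I would feed $g = D^\top\Fnor{z}$ back in: the relation $Dg = 0$ reads $DD^\top \Fnor{z} = 0$, so $\|g\|^2 = \iprod{\Fnor{z}}{DD^\top\Fnor{z}} = 0$ and therefore $g = 0$. Combined with the reduction of the first step, this proves $M\Lambda^{-1}\Fnor{z} = \Fnor{z}$.

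The main obstacle I anticipate is the symmetry bookkeeping in the abstract setting: unlike in \cref{prop2-5}, here $D$ is a general matrix with $\Lambda D$ symmetric rather than a generalized derivative of the proximity operator, so I cannot simply quote the semidefiniteness statements there and must verify $D^\top M = M^\top D$ by hand. The second delicate point is recognizing that the scalar condition $\iprod{g}{D^\top M g}=0$ must first be upgraded to the vector equation $D^\top M g = 0$, and then, via invertibility and the elementary identity $\|D^\top\Fnor{z}\|^2 = \iprod{\Fnor{z}}{DD^\top\Fnor{z}}$, all the way to $g = 0$; attempting to prove $(B\Lambda^{-1}-I)g=0$ directly, without first obtaining $g=0$, appears considerably harder since $B$ is an arbitrary symmetric matrix.
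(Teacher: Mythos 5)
Your proposal is correct and follows essentially the same route as the paper's proof: upgrade $\iprod{g}{D^\top M g}\le 0$ to $D^\top M g=0$ via symmetry and positive semidefiniteness, pass through $M^\top Dg=0$ and invertibility to get $Dg=0$, deduce $\|g\|^2=\iprod{\Fnor{z}}{Dg}=0$, and conclude with the identity \cref{eq3-3}. The only difference is that you verify the symmetry $D^\top M=M^\top D$ explicitly, which the paper asserts without proof; this is a harmless (and arguably welcome) addition.
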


\begin{proof} Since $DM$ is positive semidefinite and symmetric, we have:
\begin{align*}
\iprod{g}{DMg} \leq 0 \; \iff \; \|[DM]^\frac12 g\| = 0 \; \iff \;  D M g = 0 \; \iff \; M^\top D g = 0. 
\end{align*}

Consequently, this implies $Dg = 0$ and $0 = \Fnor{z}^\top D g = \|g\|^2$. Hence, using \eqref{eq3-3}, we can infer $\lambda M\Fnor{z} = (\lambda B -I) D \Fnor{z} + \Fnor{z} = \Fnor{z}$. \end{proof}

In the following result, we summarize some of the core properties of \cref{algo1}. In particular, \cref{algo1} will always terminate after a maximum of $m = \mathrm{rank}(D) = \mathrm{dim}~\mathcal R(D)$ iterations. If the generalized derivative $D$ has low rank and satisfies $m \ll n$, this allows to significantly reduce the complexity of computing a semismooth Newton step. 
\begin{lemma}
\label{lemma3-8} Suppose that \cref{algo1} is run with $S = D M$ and $g = D\Fnor{z}$ and define $m = \mathrm{dim}~\mathcal R(D)$. Then, it holds that:
\begin{itemize} 
\item[\rmn{(i)}] \cref{algo1} stops after at most $m\leq n$ iterations with $\|q \| \leq \Delta$.
\item[\rmn{(ii)}] In addition, assume that $D M$ is positive semidefinite and $M$ is invertible and that we have $\|M^{-1}\Fnor{z}\| \leq \Delta$. \cref{algo1} then returns $q$ with $\|D (M q + \Fnor{z})\| \leq \epsilon$.
\end{itemize}
\end{lemma}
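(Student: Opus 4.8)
The plan is to exploit one structural fact that drives both parts: since \cref{algo1} is invoked with $S = D^\top M$ and $g = D^\top \Fnor{z}$, every quantity it produces lives in $\mathcal{R}(D^\top)$. Indeed $r_0 = g \in \mathcal{R}(D^\top)$, and the recursion $r_{i+1} = r_i + \alpha_i S p_i$ together with $S p_i = D^\top M p_i \in \mathcal{R}(D^\top)$ shows inductively that all residuals $r_i$ and all search directions $p_i$ stay in $\mathcal{R}(D^\top)$. Since $\Lambda$ is invertible, $\dim \mathcal{R}(D^\top) = \mathrm{rank}(D) = \dim \mathcal{R}(\Lambda D) = m$, and this confinement to an $m$-dimensional subspace is exactly what forces early termination.

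For part \rmn{(i)}, I would observe that, as long as the method has not exited through the negative-curvature branch or the boundary test, it coincides with standard CG, so the residuals $r_0, r_1, \dots$ are mutually orthogonal. At most $m$ nonzero mutually orthogonal vectors can fit inside the $m$-dimensional space $\mathcal{R}(D^\top)$; hence if $r_0, \dots, r_{m-1}$ are all nonzero then $r_m = 0$, and the residual test returns $q_m$ after at most $m$ iterations (earlier exits through the other branches only help). The bound $\|q\| \le \Delta$ then follows by inspecting the three exit points of \cref{algo1}: in the negative-curvature branch and in the $\|q_{i+1}\| \ge \Delta$ branch the returned iterate is explicitly rescaled to satisfy $\|q\| = \Delta$, whereas in the residual branch the boundary test has just failed, giving $\|q_{i+1}\| < \Delta$.

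For part \rmn{(ii)}, the added hypotheses let me pin down the CG limit. Setting $s^\ast := -M^{-1}\Fnor{z}$, invertibility of $M$ gives $M s^\ast = -\Fnor{z}$, hence $S s^\ast = D^\top M s^\ast = -g$, so $s^\ast$ solves the reduced system $S q = -g$; in particular this system is consistent, $g \in \mathcal{R}(S)$, and $\|s^\ast\| = \|M^{-1}\Fnor{z}\| \le \Delta$. Because $S = D^\top M$ is symmetric positive semidefinite, CG from $q_0 = 0$ keeps its iterates in $\mathcal{K}(S,g) \subseteq \mathcal{R}(S)$ and converges within $m$ steps to the minimum-norm solution $q^\ast$ of $Sq = -g$, which satisfies $\|q^\ast\| \le \|s^\ast\| \le \Delta$. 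Invoking the classical Steihaug property that the iterate norms increase monotonically up to $\|q^\ast\| \le \Delta$, the boundary test is never triggered, while positive semidefiniteness excludes genuine negative curvature; a vanishing curvature $\iprod{p_i}{Sp_i} = 0$ with $p_i \in \mathcal{R}(S)$ forces $p_i = 0$ and thus $r_i = 0$, i.e., the exact solution has already been attained. Hence \cref{algo1} exits through the residual branch (or with an exact zero residual), and the returned $q$ obeys $\|D^\top(Mq + \Fnor{z})\| = \|Sq + g\| = \|r\| \le \epsilon$.

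The hard part, I expect, will be reconciling the Steihaug truncation mechanism with the merely semidefinite — and possibly singular — structure of $S$. One must confirm that the actual CG limit is the minimum-norm solution (so that its norm is controlled by $\|M^{-1}\Fnor{z}\|$ rather than loosely), and that the degenerate exits — hitting the boundary exactly, zero curvature, or the borderline $\epsilon = 0$ with $\|q_i\| < \Delta$ — all still coincide with $r_i = 0$ and therefore return a point with residual $\le \epsilon$ rather than a spurious boundary iterate. Combining the positive semidefiniteness with the range confinement to $\mathcal{R}(D^\top)$ is what renders these degenerate cases harmless.
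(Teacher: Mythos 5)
Your argument is correct and follows essentially the same route as the paper's proof: confinement of all CG quantities to the $m$-dimensional range space plus the standard orthogonality/conjugacy relations gives termination in at most $m$ steps for part \rmn{(i)}, and for part \rmn{(ii)} the consistency of $Sq=-g$ via $s^\ast=-M^{-1}\Fnor{z}$, the identification of the CG limit with the minimum-norm solution, Steihaug's monotone growth of $\|q_i\|$, and positive semidefiniteness on $\mathcal R(S)$ rule out the boundary and negative-curvature exits exactly as in the paper. The only cosmetic difference is that you exclude zero curvature via $p_i\in\mathcal R(S)\cap\ker(S)=\{0\}$ while the paper routes through \cref{lemma3-7} to get $Dp_i=0$ and hence $\|r_i\|=0$; both are valid.
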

A proof of \cref{lemma3-8} is presented in \cref{sec:app:pf-38}.


\section{Global Convergence Analysis}
\label{sec2}

In this section, we investigate the global convergence properties of \cref{algo2}. We start with listing our (additional) assumptions on the functions $f$ and $\vp$. 
\begin{assumption}\label{assum2-1} We consider the conditions:
\begin{enumerate}[label=\textup{\textrm{(A.\arabic*)}},topsep=0pt,itemsep=0ex,partopsep=0ex,leftmargin=8ex]
  \item \label{A1} The gradient $\nabla f$ is Lipschitz continuous on $\dom{\vp}$ with modulus $L$.
  \item \label{A2} The objective function $\psi$ is lower bounded on $\dom{\partial\varphi}$.
\end{enumerate}
\end{assumption}
%

We continue with several assumptions on the choice of the parameters and constants utilized in \cref{algo2}. 
\begin{assumption}
\label{assum2-2}  We assume: 
\begin{enumerate}[label=\textup{\textrm{(B.\arabic*)}},topsep=0pt,itemsep=0ex,partopsep=0ex,leftmargin=8ex]
\item \label{B1} 
The parameters $\tau$ and $\nu_k$ satisfy the conditions: 
\[ \nu \in [0,1),\quad 0\leq\nu_k\leq\nu, \quad \text{and} \quad (L^2\lambda^2+2)\tau< 2(1-\nu). \]
\item \label{B2} It holds that $\sum_{k=0}^\infty ({1 + \|B_k\|})^{-1} = \infty$. 
\item \label{B3} There is $\kappa_B > 0$ such that $\|B_k\| \leq \kappa_B$ for all $k \in \N$.
\end{enumerate}
\end{assumption}
Condition \ref{B2} holds, e.g., if the matrices $\{B_k\}$ satisfy $\|B_k\| \leq c_{B_1} + c_{B_2} k$ for all $k$ and some $c_{B_1}, c_{B_2} > 0$. Hence, \ref{B2} is generally weaker than \ref{B3}. Let $\{j_i\}$ be an increasing sequence enumerating the indices of the accepted iterates in step 8 of \cref{algo2}. We then define the set of all successful iterations as $\mathcal S := \{j_i : i \geq 0\} = \{k \in \N : \rho_k \geq \eta_1\}$. We will also use the notations 
\be \label{eq:muk-xk} {\mu}_k := \nu_k \chi(z_k) \min\{\Delta_k,\lambda\chi(z_k)\}^{-1} \quad \text{and} \quad x_k := \prox{z_k}, \ee
where $\{z_k\}$ and $\{\Delta_k\}$ denote the sequences generated by \cref{algo2}. 

We first study the descent properties of the merit function $\mer$.

\begin{lemma}
\label{lemma2-5}
Suppose that \ref{A1} is satisfied and let $z, e \in \Rn$, $\alpha \in (0,1]$, {$\nu \in [0,1)$, and $\tau \in (0,1-\nu)$} be given. Setting $d:=-\Fnor{z}$ and $x := \prox{z}$, it follows
\begin{align*}
\mer(z+\alpha\lambda (d+e))-\mer(z) & \leq -\frac{\tau\lambda\alpha}{2}\|\Fnor{z}\|^2 - \frac{\nu}{\lambda\alpha}\|p_\alpha - x\|^2 + \frac{\tau\lambda\alpha}{2} \|e\|^2 \\ & \hspace{4ex} + \left[L\tau\lambda + 1 - \tau\right] \alpha \|e\|\|d+e\| + C(\alpha) \|p_\alpha-x\|^2,
\end{align*}
where $p_\alpha := \prox{z+\alpha\lambda (d+e)}$ and $C(\alpha) := (\frac{L^2\lambda^2+2}{2\lambda} \cdot \tau - \frac{1-\nu}{\lambda})\frac{1}{\alpha} + L \tau + \frac{L}{2}-\frac{\tau}{2\lambda}$.
\end{lemma}
\begin{proof} Applying the Lipschitz continuity of $\nabla f$ on $\dom{\vp}$ and $\nabla \env{z+\alpha \lambda (d+e)} = \frac{1}{\lambda}(z+\alpha \lambda  (d+e) - p_\alpha) \in \partial \vp(p_\alpha)$, see \eqref{eq1-3}, we have
\begin{align} \label{eq:here-esti1} 
\psi(p_\alpha) - \psi(x) & \leq  \iprod{\nabla f(x)}{p_\alpha - x}  + \frac{L}{2} \|p_\alpha - x\|^2 + \frac{1}{\lambda}\iprod{z+\alpha \lambda  (d+e) - p_\alpha}{p_\alpha - x} \nonumber \\ & = \iprod{\Fnor{z}+\alpha (d+e)}{p_\alpha - x} + \left[\frac{L}{2}-\frac{1}{\lambda}\right] \|p_\alpha - x\|^2 \nonumber \\ & = (1-\alpha) \iprod{\Fnor{z} - e}{p_\alpha - x} + \iprod{e}{p_\alpha - x} + \left[\frac{L}{2}-\frac{1}{\lambda}\right] \|p_\alpha - x\|^2. \end{align}
Moreover, it holds that
\begingroup
\allowdisplaybreaks
\begin{align*} \frac{\lambda}{2} \|\Fnor{z+\alpha\lambda (d+e)}\|^2 & = \frac{\lambda}{2} \|\nabla f(p_\alpha) + \lambda^{-1}(z+\alpha\lambda(d+e)-p_\alpha)\|^2 \\ & = \frac{\lambda}{2} \|(1-\alpha)\Fnor{z} + \nabla f(p_\alpha) - \nabla f(x) - {\lambda^{-1}}(p_\alpha - x) + \alpha e\|^2 \\ & = \frac{\lambda(1-\alpha)^2}{2} \|\Fnor{z}\|^2 + (1-\alpha)\iprod{\Fnor{z}}{\lambda [\nabla f(p_\alpha)- \nabla f(x)]-[p_\alpha-x]}  \\ & \hspace{4ex} + (1-\alpha)\alpha \iprod{\Fnor{z}}{\lambda e} +  \iprod{\nabla f(p_\alpha)-\nabla f(x)}{x-p_\alpha +  \lambda \alpha e}  \\ & \hspace{4ex} + \frac{\lambda}{2}\|\nabla f(p_\alpha)-\nabla f(x)\|^2 + \frac{\lambda\alpha^2}{2} \|e\|^2  - \alpha \iprod{e}{p_\alpha - x}  + \frac{1}{2\lambda} \|p_\alpha - x\|^2  \\ & \hspace{0ex} \leq \frac{\lambda(1-\alpha)^2}{2} \|\Fnor{z}\|^2 - (1-\alpha)\iprod{\Fnor{z} - e}{p_\alpha - x} - \iprod{e}{p_\alpha - x} \\ & \hspace{4ex}  + (1-\alpha)\alpha \iprod{\Fnor{z}}{\lambda e} + \iprod{(1-\alpha)\Fnor{z}+\alpha e}{\lambda [\nabla f(p_\alpha)-\nabla f(x)]}  \\ & \hspace{4ex} + \frac{\lambda\alpha^2}{2} \|e\|^2 + \left[ \frac{L^2\lambda}{2} + L + \frac{1}{2\lambda}\right] \|p_\alpha - x\|^2 \\ & \hspace{0ex} = \frac{\lambda}{2}(1-\alpha) \|\Fnor{z}\|^2 + (1-\alpha) \iprod{d+e}{p_\alpha - x} - \iprod{e}{p_\alpha - x} + \frac{\lambda\alpha}{2} \|e\|^2 \\ & \hspace{4ex} - \frac{\lambda\alpha}{2}(1-\alpha) \|d+e\|^2  + \left[ \frac{L^2\lambda}{2} + L + \frac{1}{2\lambda}\right] \|p_\alpha - x\|^2 \\ & \hspace{4ex} + \iprod{(1-\alpha)\Fnor{z}+\alpha e}{\lambda [\nabla f(p_\alpha)-\nabla f(x)]}, 
\end{align*}
\endgroup
where we used the Lipschitz continuity of $\nabla f$ and $2 \iprod{\Fnor{z}}{e} =  \|\Fnor{z}\|^2 + \|e\|^2 - \|d+e\|^2$ in the last step. Next, applying Young's inequality, it follows
\begin{align*} \vert\iprod{d+e}{\nabla f(p_\alpha) - \nabla f(x)}\vert & \leq \|d+e\| \cdot L \|p_\alpha - x\|  \leq \frac{\alpha}{2}  \|d+e\|^2 + \frac{L^2}{2\alpha} \|p_\alpha - x\|^2 \end{align*}
%
and we have $\|p_\alpha - x\|\leq \|z+\alpha\lambda (d+e)-z\| = \lambda\alpha \|d+e\|$, $\vert\iprod{e}{p_\alpha - x}\vert \leq \lambda\alpha \|e\|\|d+e\|$, and $\vert\iprod{e}{\nabla f(p_\alpha) - \nabla f(x)}\vert \leq  L\lambda\alpha \|e\| \|d+e\|$. Hence, upon writing $(1-\alpha)\Fnor{z}+\alpha e = -(1-\alpha)(d+e)+e$, this implies
\begin{align} \label{eq:here-esti2}
 \frac{\lambda}{2} \|\Fnor{z+\alpha\lambda (d+e)}\|^2 & \leq \frac{\lambda}{2}(1-\alpha) \|\Fnor{z}\|^2 + (1-\alpha) \iprod{d+e}{p_\alpha - x}  \\ & \hspace{4ex} - \iprod{e}{p_\alpha - x} + \frac{\lambda\alpha}{2} \|e\|^2  + \left[ \frac{L^2\lambda}{2\alpha} + L + \frac{1}{2\lambda}\right] \|p_\alpha - x\|^2 + L\lambda^2\alpha  \|e\|\|d+e\|. \nonumber
 \end{align} 
Furthermore, by the firm nonexpansiveness of the proximity operator, \eqref{eq:prox-nonexp}, we have
\begin{equation} \label{eq:here-esti3} -\iprod{d+e}{p_\alpha - x} = - \frac{1}{\lambda\alpha}\iprod{z + \lambda\alpha (d+e) - z}{p_\alpha - x} \leq - \frac{1}{\lambda\alpha} \|p_\alpha - x\|^2. \end{equation}
Combining the estimates \eqref{eq:here-esti1}, \eqref{eq:here-esti2}, and \eqref{eq:here-esti3}, we finally obtain
\begin{align*}
\mer(z+\alpha\lambda (d+e))-\mer(z) + \frac{\tau\lambda\alpha}{2}\|\Fnor{z}\|^2& \\ &  \hspace{-40ex} =\psi(p_\alpha) - \psi(x) +\frac{\tau\lambda}{2}\|\Fnor{z+\alpha\lambda (d+e)}\|^2-\frac{\tau\lambda}{2}(1-\alpha)\|\Fnor{z}\|^2 \\ & \hspace{-40ex} \leq (1-\tau)\langle e,p_\alpha-x\rangle +  L\tau\lambda^2\alpha \|e\|\|d+e\| +\frac{\tau\lambda\alpha}{2}\|e\|^2 \\ & \hspace{-36ex} + \left[ \frac{L^2\lambda \tau}{2\alpha} + L \tau + \frac{\tau}{2\lambda} + \frac{L}{2}-\frac{1}{\lambda} - \frac{(1-\tau)(1-\alpha)}{\lambda\alpha} \right]\|p_\alpha -x\|^2 \\ & \hspace{-40ex} \leq  - \left[\frac{\nu}{\lambda\alpha} - C(\alpha)\right] \|p_\alpha-x\|^2 + \left[L\tau\lambda + 1-\tau \right]\lambda\alpha \|e\|\|d+e\| + \frac{\tau\lambda\alpha}{2} \|e\|^2,
\end{align*}
as desired. \end{proof}

\begin{remark} \label{remark:lwb-step-size} In \cref{lemma2-5}, suppose that condition \ref{B1} is additionally satisfied. Then, the factor $\frac{L^2\lambda^2+2}{2\lambda} \cdot \tau - \frac{1-\nu}{\lambda}$ in $C(\alpha)$ is negative. In particular, defining
\be \label{eq:lwb-step-size} \bar\alpha = \bar\alpha(L,\lambda,\tau,\nu):= \begin{cases} 1 & \text{if $(1-2L\lambda)\tau \geq L\lambda$,} \\ \min\left\{1,\frac{2(1-\nu)-(L^2\lambda^2+2)\tau}{(1+2\tau)L\lambda+\tau}\right\} & \text{otherwise,} \end{cases} \ee
and for all $\alpha \in [0,\bar\alpha]$, we can infer
\begin{align*} \mer(z+\alpha\lambda (d+e))-\mer(z) & \leq -\frac{\tau\lambda\alpha}{2} \chi(z)^2 - \frac{\nu}{\lambda\alpha} \|p_\alpha - x\|^2 + \frac{\tau\lambda\alpha}{2} \|e\|^2 \\ & \hspace{4ex} +\left[L\tau\lambda  + 1 - \tau \right] \lambda\alpha \|e\| \|d+e\|  \end{align*}
\end{remark}

We now derive a first result that gives insight on the occurrence of successful steps. Specifically, we show that the trial point $z_k + s_k$ is always successful and accepted if the trust region radius $\Delta_k$ is sufficiently small. 

\begin{lemma} \label{lemma2-6}
Let $\{z_k\}$, $\{s_k\}$, and $\{\Delta_k\}$ be generated by \cref{algo2} and suppose that the assumptions \ref{A1} and \ref{B1} are satisfied. Then, there exists a constant $\hat c > 0$ that only depends on $\bar \alpha$, $\eta_2$, $\tau$, $\lambda$, $L$ such that every iteration $k$ with 
\be \label{eq:bd-del} \Delta_k \leq \hat c_k \chi(z_k), \quad \hat c_k := \frac{\hat c}{1+\lambda\|B_k\|}, \ee
is very successful, i.e., the condition \eqref{eq:bd-del} implies $\rho_k \geq \eta_2$ and $k \in \mathcal S$.  
\end{lemma}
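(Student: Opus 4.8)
The plan is to establish the very-successful condition $\rho_k \ge \eta_2$ by verifying the pointwise inequality $\mathrm{ared}_k \ge \eta_2\,\mathrm{pred}_k$; since $\eta_1 \le \eta_2$, this immediately gives $k \in \mathcal{S}$. The starting point is to recast the lifted step in the format of \cref{lemma2-5}. Writing $\Lambda \bar s_k = \Lambda\bar q_k - \Fnor{z_k} - M_k\bar q_k = -\Fnor{z_k} + (\Lambda - M_k)\bar q_k$ and using $\Lambda - M_k = (\Lambda - B_k)D_k$, I obtain $\bar s_k = \Lambda^{-1}(d_k + e_k)$ with $d_k = -\Fnor{z_k}$ and $e_k := (\Lambda - B_k)D_k\bar q_k$. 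Consequently $z_k + s_k = z_k + \alpha_k\Lambda^{-1}(d_k + e_k)$ with $\alpha_k = \min\{1,\Delta_k/\|\bar s_k\|_\Lambda\}$, which is exactly the perturbed step analyzed in \cref{lemma2-5} and \cref{remark:lwb-step-size}, with $p_{\alpha_k} = \prox{z_k+s_k}$ and $x_k = \prox{z_k}$.

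Next I would quantify the error $e_k$. From \cref{lemma3-8}\,(i) the CG output satisfies $\|\bar q_k\| \le \Delta_k$, while \cref{prop2-5}\,(i) (the $\Lambda$-nonexpansiveness of the proximity operator) yields $\|D_k\| \le \sqrt{\kappa_\lambda}$; together with $\|\Lambda - B_k\| \le \lambda_M + \|B_k\|$ this gives $\|e_k\|_{\Lambda^{-1}} \le \lambda_m^{-1/2}\sqrt{\kappa_\lambda}\,(\lambda_M+\|B_k\|)\,\Delta_k$. Under the hypothesis $\Delta_k \le \hat c_k\chi(z_k) = \tfrac{\hat c}{1+\|B_k\|}\chi(z_k)$ and using $(\lambda_M+\|B_k\|)/(1+\|B_k\|) \le \max\{\lambda_M,1\}$, the quantity $\beta_k := \|e_k\|_{\Lambda^{-1}}/\chi(z_k)$ obeys $\beta_k \le C_e\,\hat c$ for a constant $C_e = C_e(\Lambda)$; here I may assume $\chi(z_k) > 0$, as otherwise the algorithm has already terminated. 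Since $\|\bar s_k\|_\Lambda = \|d_k + e_k\|_{\Lambda^{-1}} \in [(1-\beta_k)\chi(z_k),(1+\beta_k)\chi(z_k)]$, choosing $\hat c$ small enough forces $\Delta_k < \chi(z_k)$, forces $\alpha_k = \Delta_k/\|\bar s_k\|_\Lambda < 1$ (so the step saturates the trust-region constraint, $\|s_k\|_\Lambda = \Delta_k$), and forces $\alpha_k \le \bar\alpha$, so that \cref{remark:lwb-step-size} applies.

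I would then invoke \cref{remark:lwb-step-size} with $\alpha = \alpha_k$ to lower-bound $\mathrm{ared}_k$. Using $\|d_k+e_k\|_{\Lambda^{-1}} \le (1+\beta_k)\chi(z_k)$ and $\|e_k\|_{\Lambda^{-1}} = \beta_k\chi(z_k)$, the two error terms of the remark are bounded by $C\beta_k\,\alpha_k\chi(z_k)^2$ for a constant $C$ depending on $L,\tau,\Lambda$; absorbing them into the leading descent term gives, for $\hat c$ small,
\[ \mathrm{ared}_k \ \ge\ \tau\Bigl(\tfrac12 - \veps\Bigr)\alpha_k\,\chi(z_k)^2 + \frac{\nu}{\alpha_k}\|p_{\alpha_k}-x_k\|_\Lambda^2, \]
where $\veps \to 0$ as $\hat c \to 0$. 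On the other hand, $\min\{\Delta_k,\chi(z_k)\} = \Delta_k$ gives $\mathrm{pred}_k = \tfrac{\tau}{2}\chi(z_k)\min\{1,\Delta_k\} + \tfrac{\nu_k\chi(z_k)}{\Delta_k}\|p_{\alpha_k}-x_k\|_\Lambda^2$. Comparing term by term: for the Cauchy-type terms, $\alpha_k \ge \Delta_k/((1+\beta_k)\chi(z_k))$ and $\min\{1,\Delta_k\} \le \Delta_k$ yield a ratio of at least $(1-2\veps)/(1+\beta_k)$; for the second terms, $\Delta_k/\alpha_k = \|\bar s_k\|_\Lambda \ge (1-\beta_k)\chi(z_k)$ together with $\nu \ge \nu_k$ yields a ratio of at least $1-\beta_k$ (and this term is simply nonnegative, hence droppable, when $\nu_k = 0$). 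Both ratios tend to $1$ as $\hat c \to 0$, so for $\hat c$ sufficiently small—depending only on $\bar\alpha,\eta_2,\tau,L,\Lambda$—both exceed $\eta_2 < 1$, whence $\mathrm{ared}_k \ge \eta_2\,\mathrm{pred}_k$ and $\rho_k \ge \eta_2$.

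The main obstacle is precisely this term-by-term comparison against the fixed threshold $\eta_2 < 1$: unlike a crude ``sufficiently small $\Delta_k$'' argument that merely produces some positive ratio, here the Cauchy term of $\mathrm{pred}_k$ must be matched by the descent term of $\mathrm{ared}_k$ with a constant arbitrarily close to $1$. This imposes two quantitative requirements that must hold \emph{uniformly in $k$}: that the step saturates the trust region so that $\alpha_k = \Delta_k/\|\bar s_k\|_\Lambda$ with $\|\bar s_k\|_\Lambda \approx \chi(z_k)$, and that the combined CG/Hessian-approximation error $\|e_k\|_{\Lambda^{-1}}$ be negligible relative to $\chi(z_k)$. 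The latter is exactly what dictates the scaling $\hat c_k = \hat c/(1+\|B_k\|)$, since $\|e_k\|$ grows with $\|\Lambda-B_k\| \lesssim 1+\|B_k\|$; getting the bookkeeping of the constants right—so that the final $\hat c$ depends only on the stated quantities and is independent of $k$—is the delicate part of the argument.
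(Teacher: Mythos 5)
Your proposal is correct and follows essentially the same route as the paper's proof: both decompose the lifted step as $s_k=\alpha_k\Lambda^{-1}(d_k+e_k)$ with $e_k=(\Lambda-M_k)\bar q_k$, bound $\|e_k\|_{\Lambda^{-1}}$ via $\|\bar q_k\|\le\Delta_k$ and the $(1+\|B_k\|)$-scaling, invoke \cref{lemma2-5}/\cref{remark:lwb-step-size}, and compare $\mathrm{ared}_k$ with $\mathrm{pred}_k$ term by term using $\|\bar s_k\|_\Lambda\approx\chi(z_k)$. The only difference is presentational: the paper fixes explicit constants $c_1,\dots,c_5$ up front, whereas you run a ``$\hat c$ sufficiently small'' argument with the ratios $\beta_k$ and $\veps$ tending to the right limits.
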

\begin{proof} First, by \cref{prop2-5}, we have $\|D\|\leq 1$ for all $D \in \partial \prox{z}$ and $z \in \Rn$. Hence, using \eqref{eq3-3}, we obtain
\begin{equation} 
\|I-\lambda M_k\|=\|(I-\lambda B_k)D_k\|\leq 1+\lambda\|B_k\|. \label{eq4-16}
\end{equation}
We now define the constants $c_1 ={\lambda\bar{\alpha}}/{(1+\bar{\alpha})}$, $c_2 = (1-\eta_2)\lambda$, 
\[  c_3 = \frac{\tau\lambda(1-\eta_2)}{8(1+\lambda)(L\tau\lambda +1-\tau)}, \quad c_4 = \frac{\lambda\sqrt{1-\eta_2}}{2}, \quad \text{and} \quad c_5=\frac{(1-\eta_2)\lambda}{2\eta_2}, \]
where $\bar{\alpha}$ was introduced in \cref{remark:lwb-step-size}. Furthermore, let us set 
\be \label{eq:hat-ck} \hat c := \min\left\{\min_{1\leq i \leq 5} c_i,1\right\} \quad \text{and} \quad \hat c_k := \frac{\hat c}{1+\lambda\|B_k\|}. \ee
We consider an iteration $k$ with $\Delta_k \leq \hat c_k \chi(z_k)$. Let us define $s_k^d := -\lambda d = \lambda  \Fnor{z_k}$. We have $\|\bar{s}_k\| \geq\|s^d_k\|-\|(I-\lambda B_k)D_k\bar q_k\|$ and by the algorithmic construction, it holds that $\|\bar q_k\|\leq\Delta_k$. Then, applying $\|s_k^d\| = \lambda\chi(z_k)$, \eqref{eq4-16}, and \eqref{eq:hat-ck}, it follows
\begin{align*}
\|\bar{s}_k\| & \geq\|s_k^d\| - (1+\lambda\|B_k\|)\Delta_k \geq \left[ \frac{\lambda}{\hat c_k}- (1+\lambda\|B_k\|) \right] \Delta_k \geq \left[ \frac{\lambda}{c_1} - 1 \right] (1+\lambda\|B_k\|) \Delta_k \geq \frac{1}{\bar\alpha} \Delta_k, \\ \|\bar s_k\| & \geq \lambda\chi(z_k) - (1+\lambda\|B_k\|)\hat c_k \chi(z_k) \geq [\lambda - c_2] \chi(z_k)  = \lambda\eta_2\chi(z_k),
\end{align*}
and $\frac{\Delta_k}{\|\bar{s}_k\|}\leq \min\{\bar{\alpha},  \frac{\Delta_k}{\lambda\eta_2\chi(z_k)} \} \leq1$. This yields $s_k=\frac{\Delta_k}{\|\bar{s}_k\|}\lambda (-\Fnor{z_k}+(\frac{1}{\lambda} I-M_k)\bar q_k)$. Setting  $\alpha={\Delta_k}/\|\bar{s}_k\| \leq\bar{\alpha}$, $e=(\frac{1}{\lambda} I-M_k)\bar q_k$, and using $\lambda(d + e)=\bar s_k$ (cf. \eqref{eq:def-sk-bar-sk}), \cref{lemma2-5} and \cref{remark:lwb-step-size} imply
\begin{align} \label{eq4-17} \nonumber
\mer(z_k+s_k)-\mer(z_k) & \leq -\frac{\tau\lambda\alpha}{2}\cdot \chi(z_k)^2 - \frac{\nu}{\lambda\alpha} \|\prox{z_k+s_k} - x_k\|^2  \\ & \hspace{9ex} + \left[L \tau\lambda + 1-\tau\right] \lambda\alpha \|e\| \|\bar s_k\| + \frac{\tau\lambda\alpha}{2} \|e\|^2.
\end{align}
Next, we provide additional estimates for $\|e\|$ and $\|\bar s_k\|$. Invoking \eqref{eq4-16}, it holds that $\|e\| = \|(\frac{1}{\lambda} I-B_k)D_k\bar q_k\|\leq \frac{1}{\lambda}(1+\lambda\|B_k\|) \Delta_k\leq \frac{1}{\lambda} \hat c\chi(z_k)$ and
\be \label{eq:upp-sk}
\|\bar s_k\| \leq \|s_k^d\| + \lambda\|e\| \leq ( \lambda + \hat c) \chi(z_k). 
\ee
Thus, we obtain
\begin{align*} 
 \left[ L\tau\lambda  + 1-\tau\right]\lambda\|e\|\|\bar s_k\| \leq \left[ L\tau\lambda  + 1-\tau\right]c_3(\lambda+1)\chi(z_k)^2  = \frac{\tau\lambda}{2} \frac{1-\eta_2}{4} \cdot \chi(z_k)^2 
\end{align*} 
and $\|e\|^2 \leq \frac{c_4^2}{\lambda^2} \chi(z_k)^2 = \frac14{(1-\eta_2)} \chi(z_k)^2$. Using these estimates in \eqref{eq4-17}, we have
\begin{align*} \mer(z_k+s_k)-\mer(z_k) \leq -\frac{\tau\lambda\alpha}{2}\frac{1+\eta_2}{2} \chi(z_k)^2 - \frac{\nu}{\lambda\alpha} \|\prox{z_k+s_k} - x_k\|^2. \end{align*}
Due to $\Delta_k \leq c_2 \chi(z_k) \leq \lambda\chi(z_k)$ and $\nu_k \leq \nu$, we can further infer 
\[ -\frac{\nu}{\lambda\alpha} \leq -\frac{\nu_k}{\lambda}\cdot\frac{\eta_2 \lambda\chi(z_k)}{\Delta_k} = - \frac{\eta_2 \cdot \nu_k\chi(z_k)}{\min\{\Delta_k,\lambda\chi(z_k)\}} = -\eta_2\mu_k. \] 
%
%
By \eqref{eq:upp-sk}, we also have $\|\bar s_k\|\leq (\lambda + c_5) \chi(z_k) = (2\eta_2)^{-1}(1+\eta_2)\lambda \chi(z_k)$ which establishes $-\alpha = - \Delta_k/\|\bar s_k\| \leq - 2\eta_2 \Delta_k((1+\eta_2)\lambda\chi(z_k))^{-1}$. Combining the last steps, it follows
\begin{align*} \mer(z_k+s_k)-\mer(z_k) & \leq-\frac{\eta_2\tau}{2} \Delta_k\chi(z_k) - {\eta_2{\mu}_k} \|\prox{z_k+s_k}-x_k\|^2 \leq - \eta_2 \mathrm{pred}(z_k,s_k,\Delta_k,\nu_k). \end{align*}
Consequently, we have $\rho_k\geq\eta_2$ which concludes the proof. \end{proof}

\begin{remark}
\label{remark:bound_chi_delta} \cref{lemma2-6} implies that the condition $\Delta_k \leq \hat c_k \chi(z_k)$ can not hold for unsuccessful iterations $k \notin \mathcal S$. Let us further consider an iterate $z_k$ with $k\in\mathcal{S}$. If $k-1\in\mathcal{S}$, then we obtain $\Delta_k\geq \Delta_{\min}$. Otherwise, if $k-1\notin\mathcal{S}$, we have $B_k = B_{k-1}$ and we can infer $\Delta_{k-1}>\hat c_{k-1}\chi(z_{k-1})=\hat c_k\chi(z_k)$. Thus, by the algorithmic construction, it follows $\Delta_{k}\geq\gamma_0 \Delta_{k-1}\geq \gamma_0\hat c_k\chi(z_k)$ and for all $k \in \mathcal S$, we have 
\[ \Delta_k\geq\min\{\gamma_0\hat c_k\chi(z_k),\Delta_{\min}\}.  \]
%
\end{remark}

\cref{lemma2-6} allows us to prove that a sequence generated by \cref{algo2} contains infinitely many successful iterates. 
\begin{lemma}
\label{lemma2-7}
Under the assumptions \ref{A1} and \ref{B1}, \cref{algo2} either terminates after finitely many iterations or it generates infinitely many successful steps.
\end{lemma}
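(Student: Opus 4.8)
The plan is to argue by contradiction, with \cref{lemma2-6} as the essential ingredient. Assume that \cref{algo2} does not terminate after finitely many steps; then the while-loop condition guarantees $\Fnor{z_k} \neq 0$, and hence $\chi(z_k) = \|\Fnor{z_k}\|_{\Lambda^{-1}} > 0$, for every $k \in \N$. Suppose, for contradiction, that only finitely many iterations are successful. Then there is an index $K$ such that every iteration $k \geq K$ satisfies $\rho_k < \eta_1$, i.e., $k \notin \mathcal S$.

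First I would record how the relevant quantities freeze along the tail $k \geq K$. In the unsuccessful case, \cref{algo2} sets $z_{k+1} = z_k$ and $B_{k+1} = B_k$. Consequently, $z_k \equiv z_K$ and $B_k \equiv B_K$ are constant for all $k \geq K$, which in turn forces $\chi(z_k) \equiv \chi(z_K) =: \bar\chi > 0$ and $\hat c_k \equiv \hat c/(1+\|B_K\|)$ to be constant as well (recall that $\hat c$ depends only on $\bar\alpha$, $\eta_2$, $\tau$, $L$, and $\Lambda$). Moreover, by \cref{algo3}, each unsuccessful step shrinks the radius according to $\Delta_{k+1} \in (\gamma_0\Delta_k, \gamma_1\Delta_k]$ with $\gamma_1 < 1$, so that $\Delta_k \leq \gamma_1^{\,k-K}\Delta_K \to 0$ as $k \to \infty$.

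The contradiction is then immediate: since $\Delta_k \to 0$ while $\hat c_k \chi(z_k) \equiv \hat c_K \bar\chi$ is a fixed positive constant, there exists some $k \geq K$ with $\Delta_k \leq \hat c_k \chi(z_k)$. By \cref{lemma2-6} (whose hypotheses \ref{A1} and \ref{B1} are exactly those assumed here), this iteration must be very successful, i.e., $\rho_k \geq \eta_2 \geq \eta_1$ and $k \in \mathcal S$, contradicting $k \notin \mathcal S$. Hence the assumption of finitely many successful iterations fails, and \cref{algo2} must generate infinitely many successful steps.

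I do not expect a genuine obstacle here: the analytic heart of the matter has already been carried out in \cref{lemma2-6}, and what remains is the standard trust-region bookkeeping of showing that, once successful steps cease, the iterate, the Hessian surrogate, the criticality measure, and the threshold $\hat c_k$ all remain frozen while the radius decays geometrically to zero. The only point requiring mild care is to verify that $\hat c_k$ is indeed constant on the tail; this hinges on $\|B_k\|$ being frozen, which is guaranteed precisely because unsuccessful iterations leave $B_k$ unchanged.
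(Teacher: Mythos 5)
Your proof is correct and follows essentially the same route as the paper: after the last successful iteration, $z_k$, $B_k$, $\chi(z_k)$, and $\hat c_k$ freeze while $\Delta_k$ decays geometrically, so eventually $\Delta_k \leq \hat c_k \chi(z_k)$ and \cref{lemma2-6} forces a (very) successful step, a contradiction. No gaps.
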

\begin{proof} Conversely, assume that \cref{algo2} generates an infinite sequence $\{z_k\}$ with only finitely many successful steps. Let $k'\in\mathbb{N}$ denote the last successful iteration, i.e., it holds that $z_k = z_{k'}$ for all $k\geq k'$. The update rule for the trust region radius then implies $\Delta_k\rightarrow 0$. Furthermore, since the matrices $B_k$ are no longer updated for all $k \geq k^\prime$, we obtain $\hat c_k = \hat c_{k^\prime}$ for all $k \geq k^\prime$ where the parameter $\hat c_k$ is defined in \eqref{eq:bd-del}. Due to $\Delta_k \to 0$ there then exists $\ell > k^\prime$ with $\Delta_{\ell} \leq \hat c_{k^\prime} \chi(z_{k^\prime}) =  \hat c_{\ell}\chi(z_{\ell})$ which, by \cref{lemma2-6}, yields ${\ell} \in \mathcal S$. However, this is a contradiction to our assumption. 
\end{proof}

We now present our main global convergence result. 
\begin{thm}
\label{thm:global_conv}
Let the conditions \ref{A1}--\ref{A2}, \ref{B1}--\ref{B2}, and $\Delta_{\min} > 0$ hold and assume that \cref{algo2} does not terminate after finitely many steps. Then, we have
\be \label{eq:liminf-conv} \lim_{k\rightarrow\infty}\chi(z_k)=0 \quad \text{and} \quad \sum_{k=0}^\infty {\mu}_k \|x_{k+1}-x_k\|^2 < \infty. \ee
\end{thm}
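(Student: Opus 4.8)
The plan is to run a standard trust region descent argument on the new merit function $\mer$, with the load-bearing step being the passage from summable predicted reductions to $\liminf_k\chi(z_k)=0$ under the weak Hessian-growth condition \ref{B2}. First I would show that $\mer$ is bounded below: since $\prox{z}\in\mathcal R(\proxs)\subseteq\mathrm{dom}(\partial\vp)$, assumption \ref{A2} gives $\psi(\prox{z})\geq\inf_{\mathrm{dom}(\partial\vp)}\psi>-\infty$, and $\frac{\tau}{2}\|\Fnor{z}\|_{\Lambda^{-1}}^2\geq0$, so $\mer(z)\geq\inf_{\mathrm{dom}(\partial\vp)}\psi$. Next, on unsuccessful iterations $z_{k+1}=z_k$ leaves $\mer$ unchanged, while on successful iterations the acceptance test yields $\mer(z_k)-\mer(z_{k+1})=\mathrm{ared}_k=\rho_k\,\mathrm{pred}_k\geq\eta_1\,\mathrm{pred}(z_k,s_k,\Delta_k,\nu_k)\geq0$. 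Hence $\{\mer(z_k)\}$ is non-increasing and, being bounded below, convergent; telescoping gives $\sum_{k\in\cS}\mathrm{pred}(z_k,s_k,\Delta_k,\nu_k)\leq\eta_1^{-1}(\mer(z_0)-\inf\mer)<\infty$. As both summands of $\mathrm{pred}$ are nonnegative, this already delivers $\sum_{k\in\cS}\chi(z_k)\min\{1,\Delta_k,\chi(z_k)\}<\infty$ and $\sum_{k\in\cS}\mu_k\|x_{k+1}-x_k\|_\Lambda^2<\infty$; since $x_{k+1}=x_k$ for $k\notin\cS$, the second series extends over all $k$, which is exactly the second assertion of the theorem.

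For the crux $\liminf_k\chi(z_k)=0$ I would argue by contradiction, assuming $\chi(z_k)\geq\epsilon>0$ for all large $k$. The first summable series forces $\min\{1,\Delta_k,\chi(z_k)\}\to0$ along $\cS$ (which is infinite by \cref{lemma2-7}), hence $\Delta_k\to0$ on $\cS$; combined with the lower bound $\Delta_k\geq\min\{\gamma_0\hat c_k\chi(z_k),\Delta_{\min}\}$ from \cref{remark:bound_chi_delta} and $\Delta_{\min}>0$, this gives $\Delta_k\geq\gamma_0\hat c\epsilon(1+\|B_k\|)^{-1}$ for large $k\in\cS$. Feeding this back into the series yields $\sum_{k\in\cS}(1+\|B_k\|)^{-1}<\infty$. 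The remaining --- and most delicate --- task is to reconcile this with \ref{B2}. Here I would exploit that $B_k$ is held fixed throughout each maximal block of consecutive unsuccessful iterations (step 6 of \cref{algo2}) and that on such a block $\Delta_k$ contracts geometrically by a factor $\leq\gamma_1<1$ while staying above the threshold $\hat c_k\chi(z_k)$ of \cref{lemma2-6}; this bounds each block length and lets me control $\sum_{k\notin\cS}(1+\|B_k\|)^{-1}$ in terms of the successful series, so that $\sum_{k}(1+\|B_k\|)^{-1}<\infty$, contradicting \ref{B2}.

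Finally I would upgrade $\liminf_k\chi(z_k)=0$ to $\lim_k\chi(z_k)=0$ by the usual telescoping/continuity argument. The normal map $\oFnor$ is Lipschitz (the proximity operator is $\Lambda$-nonexpansive and $\nabla f$ is Lipschitz on $\mathrm{dom}(\vp)$ by \ref{A1}), so $\chi$ is Lipschitz, say with modulus $L_\chi$, and $\|z_{k+1}-z_k\|\leq\lm^{-1/2}\Delta_k$. If $\limsup_k\chi(z_k)=c>0$, then $\chi$ would cross any band $[\epsilon_1,\epsilon_2]\subset(0,c)$ infinitely often along successful steps; over each crossing, Lipschitz continuity gives $\sum\|z_{k+1}-z_k\|\gtrsim(\epsilon_2-\epsilon_1)$, while on each such step $\mathrm{pred}_k\gtrsim\chi(z_k)\min\{1,\Delta_k,\chi(z_k)\}\gtrsim\epsilon_1\|z_{k+1}-z_k\|$ (splitting according to whether $\Delta_k\leq\chi(z_k)$ or not). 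Thus each crossing costs a fixed amount of merit decrease, and infinitely many crossings contradict the finiteness of $\sum_{k\in\cS}\mathrm{pred}_k$; hence $\limsup_k\chi(z_k)=0$, completing the proof. I expect the block-length bookkeeping linking the successful series to \ref{B2} to be the main obstacle, since the floor $\Delta_{\min}>0$ lets the radius reset after every successful step and thereby lengthens the unsuccessful blocks precisely when $\|B_k\|$ grows.
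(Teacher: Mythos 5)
Your proposal is correct and follows essentially the same route as the paper: telescoping the merit function over successful iterations to get summability of $\mathrm{pred}_k$ (yielding the second series), a contradiction argument for $\liminf_k\chi(z_k)=0$ that combines the lower bound $\Delta_k\geq\min\{\gamma_0\hat c_k\chi(z_k),\Delta_{\min}\}$ on successful steps with the block structure of unsuccessful iterations (where $B_k$ is frozen and $\Delta_k$ contracts geometrically above the threshold of \cref{lemma2-6}) to contradict \ref{B2}, and finally a band-crossing argument using the Lipschitz continuity of $\chi$ and $\|s_k\|_\Lambda\leq\Delta_k$ to upgrade to the full limit. The only cosmetic difference is that the paper bounds each unsuccessful block's contribution by summing the geometric series $\sum\gamma_1^{k-j_i}\Delta_{j_i}$ rather than bounding the block length itself, but both mechanisms are equivalent here.
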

\begin{proof} \cref{lemma2-7} implies $\vert{\mathcal S}\vert=\infty$ and it holds that
\[ {\sum}_{k\in\mathcal{S}}\mer(z_{k})-\mer(z_{k+1})\geq{\sum}_{k\in\mathcal{S}}\eta_1 \mathrm{pred}(z_k,s_k,\Delta_k,\nu_k). \]
Furthermore, by assumption \ref{A2}, the merit function $\mer$ is bounded from below on $\Rn$. Hence, since the sequence $\{\mer(z_k)\}$ is non-increasing, there exists $\zeta \in \R$ with $\lim_{k \to \infty} \mer(z_k) = \zeta$ and it follows
\be \label{eq:crit-sum} \sum_{k \in \mathcal S} {\mu}_k \|x_{k+1}-x_k\|^2 < \infty \quad \text{and} \quad \sum_{k\in\mathcal{S}}\chi(z_k)\min\{\lambda,{\Delta_k},\lambda\chi(z_k)\}<\infty. \ee
We now proceed as in \cite{Pow84} and first show the weaker condition $\liminf_{k \to \infty} \chi(z_k) = 0$. {Assume that there is $\epsilon \in (0,\min\{1,\lambda^{-1}\Delta_{\min}\})$ with $\chi(z_k) \geq \epsilon$ for all $k$ sufficiently large}. Then, utilizing \cref{remark:bound_chi_delta} and \eqref{eq:crit-sum}, we obtain
\be \label{eq:p-sum-lw} \infty > \sum_{k \in \mathcal S} {\epsilon} \min\{\lambda,\Delta_k,\lambda\epsilon\} \geq {\epsilon} \sum_{k \in \mathcal S} \min\left\{\lambda\epsilon,\gamma_0\hat c_k\chi(z_k)\right\} \geq {\epsilon^2} \sum_{k \in \mathcal S} \frac{\min\{\lambda,\hat c \gamma_0\}}{1+\lambda\|B_k\|}. \ee 
Due to $\vert\mathcal S\vert = \infty$, we have $(1+\lambda\|B_k\|)^{-1} \to 0$ as $\mathcal S \ni k \to \infty$ and hence, it follows $\sum_{k \in \mathcal S} (1+\lambda\|B_k\|)^{-1} \leq \infty$. Let us recall the notation $\mathcal S = \{j_i : i \geq 0\}$ and let us consider an arbitrary unsuccessful iteration $ j_i+1 \leq k \leq j_{i+1}-1$. Using the trust region update mechanism and \cref{lemma2-6}, this yields
\[\frac{\hat c\epsilon}{1+\lambda\|B_k\|} \leq \hat c_k \chi(z_k) \leq \Delta_k \leq \gamma_1^{k-j_i} \Delta_{j_i}. \]
Summing this estimate, we obtain
\[ \sum_{k=j_i+1}^{j_{i+1}-1} \frac{1}{1+\lambda\|B_k\|} \leq \frac{\Delta_{j_i}}{\hat c \epsilon} \sum_{k=j_i+1}^{j_{i+1}-1} \gamma_1^{k-j_i} \leq \frac{\Delta_{j_i}}{\hat c \epsilon} \left[ \sum_{k=0}^{\infty} \gamma_1^{k} - 1 \right] = \frac{\gamma_1}{(1-\gamma_1)\hat c \epsilon} \cdot \Delta_{j_i}. \]
Summing this expression once more for all $i$, we can infer $\sum_{k \notin \mathcal S}(1+\lambda\|B_k\|^{-1}) \leq \frac{\gamma_1}{(1-\gamma_1)\hat c\epsilon} \sum_{k \in \mathcal S} \Delta_k$. As before, the summability condition in \eqref{eq:p-sum-lw} implies $\Delta_k \to 0$ as $\mathcal S \ni k \to \infty$ and hence, combining the last steps, it follows 
\[ \sum_{k=0}^\infty \frac{1}{1+\lambda\|B_k\|} < \infty \quad \implies \quad \sum_{k=0}^\infty \frac{1}{1+\|B_k\|}<\infty. \]
However, this contradicts \ref{B2} and thus, we can establish $\liminf_{k \to \infty} \chi(z_k) = 0$. 
%
Let us now suppose that $\{\chi(z_k)\}$ does not converge. Then there exist $\delta > 0$ and infinite, increasing sequences $\{t_i\}_i,\{\ell_i\}_i \subset \mathcal S$ such that $t_{i+1} \geq \ell_i > t_i$ for all $i \in \N$ and
\[ \chi(z_{t_i}) \geq 2\delta, \quad \chi(z_{\ell_i}) < \delta, \quad \text{and} \quad \chi(z_k) \geq \delta \quad k = t_i+1,...,\ell_i-1. \]
Let us define $\mathcal K := \{k \in \mathcal S: t_i \leq k < \ell_i\}$. Due to $\mathcal K \subseteq \mathcal S$, the second condition in \eqref{eq:crit-sum} implies 
\[ \infty > \sum_{k \in \mathcal K} \chi(z_k) \min\{\lambda,\Delta_k,\lambda\chi(z_k)\} \geq \sum_{k \in \mathcal K} \delta \min\{\lambda,\Delta_k,\lambda\delta\}. \]
Consequently, we have $\Delta_k \to 0$ as $\mathcal K \ni k \to \infty$ and utilizing $\rho_k \geq \eta_1$ for all $k \in \mathcal K$, it follows $\Delta_k \leq \frac{2}{\tau\eta_1\delta}[\mer(z_k)-\mer(z_{k+1})]$ for all $k \in \mathcal K$ sufficiently large. Next, the Lipschitz continuity of $\nabla f$ and $\oprox$ yield
\begin{align} \nonumber \vert\chi(w)-\chi(z)\vert & \leq \|\Fnor{w}-\Fnor{z}\| \\ \nonumber & \hspace{-12ex} \leq \|\nabla f(\prox{w})-\nabla f(\prox{z})\| + {\lambda^{-1}}\|w-\prox{w} - (z-\prox{z})\| \\ & \hspace{-12ex}  \leq L \|\prox{w}-\prox{z}\| + 2\lambda^{-1}\|w-z\| \leq (L+2\lambda^{-1}) \|w-z\| \label{eq:fnor-lipschitz} \end{align}
for all $w, z \in \Rn$. Thus, setting $L_F := L+2\lambda^{-1}$, combining the previous estimates, and using \eqref{eq:def-sk-bar-sk}, we obtain
\begin{align*} \delta & < \vert\chi(z_{t_i}) - \chi(z_{\ell_i})\vert \leq L_F {\|z_{\ell_i}-z_{t_i}\|} \\ & \leq L_F \sum_{k = t_i, \, k \in \mathcal K}^{\ell_i-1} \|s_k\| \leq L_F \sum_{k = t_i, \, k \in \mathcal K}^{\ell_i-1} \Delta_k \leq {\frac{2L_F}{\tau\eta_1\delta}}[\mer(z_{t_i}) - \mer(z_{\ell_i})]. \end{align*}
Due to $\mer(z_k) \to \zeta$, the right hand side of the last inequality has to converge to zero which is a contradiction.
\end{proof}

Related results for classical trust-region methods have been shown in \cite{Pow84,Yua85,Pow10,GraYuaYua15} and \cite[Section 8.4]{ConGouToi00}. Here, based on the Lipschitz assumption \ref{A1} and similar to \cite[Theorem 4.9]{ulbrich2001nonmonotone}, the special definition of our predicted reduction term $\mathrm{pred}(z_k,s_k,\Delta_k,\nu_k)$ allows us to obtain stronger results and convergence of the whole sequence $\{\chi(z_k)\}$ -- even if the matrices $\{B_k\}$ are not bounded.

\begin{remark} \label{remark:csq-stat} \cref{thm:global_conv} has an interesting consequence concerning stationarity properties of the sequence $\{x_k\}$. Let us consider the index set
\[ \mathcal T := \{k \in \N: x_{k+1} \neq x_k \} \]
and let us suppose $\vert\mathcal T\vert < \infty$. Then, there exist $\bar x$ and $k^\prime \in \N$ such that $x_k = \bar x$ for all $k \geq k^\prime$ and we have 
\[ \Fnor{z_k} = \nabla f(x_k) + {\lambda}^{-1} (z_k - x_k) = \nabla f(\bar x) -{\lambda}^{-1} \bar x + {\lambda}^{-1} z_k \quad \forall~k \geq k^\prime. \] 
Applying \cref{thm:global_conv}, we can infer $z_k \to \bar x - \lambda  \nabla f(\bar x) =: \bar z$ and $\Fnor{\bar z} = 0$ and hence, $\bar x$ is a stationary point of problem \eqref{eq1-1}. This observation can also be used algorithmically. In particular, when there is a successful iteration $k \in \mathcal S$ with $x_{k+1} = x_k$ or $x_{k+1} \approx x_k$, we can check if the natural stationarity criterion $\Fnat{x_{k+1}} = x_{k+1} - \prox{z_{k+1}-\lambda \Fnor{z_{k+1}}} \approx 0$ is satisfied to terminate earlier.
\end{remark}

Based on the proof of \cref{thm:global_conv} and using \cref{remark:bound_chi_delta} and the stronger assumption \ref{B3}, we can directly establish square summability of $\chi(z_k)$. 

\begin{corollary} \label{cor:sec04:conv-str} Let the conditions \ref{A1}--\ref{A2}, \ref{B1}, \ref{B3}, and $\Delta_{\min} > 0$ be satisfied and suppose that \cref{algo2} does not terminate after finitely many steps. Then, it holds that
\[ \sum_{k \in \mathcal S}^\infty \chi(z_k)^2 < \infty \quad \text{and} \quad \sum_{k=0}^\infty {\mu}_k \|x_{k+1}-x_k\|^2 < \infty. \]
\end{corollary}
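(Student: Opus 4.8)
The plan is to bootstrap from the machinery already developed in the proof of \cref{thm:global_conv}, so that only the square-summability of $\chi(z_k)$ requires a genuinely new argument. The second summability claim needs no extra work: the first estimate in \cref{eq:crit-sum} already gives $\sum_{k \in \mathcal S} \mu_k \|x_{k+1}-x_k\|_\Lambda^2 < \infty$, and since unsuccessful iterations satisfy $z_{k+1}=z_k$ and hence $x_{k+1}=x_k$, every term with $k \notin \mathcal S$ vanishes. Therefore $\sum_{k=0}^\infty \mu_k \|x_{k+1}-x_k\|_\Lambda^2 = \sum_{k \in \mathcal S} \mu_k \|x_{k+1}-x_k\|_\Lambda^2 < \infty$, which is exactly the second assertion.

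For the square-summability, the starting point is the second estimate in \cref{eq:crit-sum}, namely $\sum_{k\in\mathcal S}\chi(z_k)\min\{1,\Delta_k,\chi(z_k)\}<\infty$. The goal is thus to replace the three-way minimum by a quantity proportional to $\chi(z_k)$ for all large successful indices. Here condition \ref{B3} enters decisively: since $\|B_k\| \le \kappa_B$, the parameter $\hat c_k = \hat c/(1+\|B_k\|)$ from \cref{lemma2-6} admits the uniform positive lower bound $\hat c_k \ge \hat c/(1+\kappa_B) =: \hat c_\infty > 0$. Next I would invoke \cref{thm:global_conv} to get $\chi(z_k) \to 0$; combined with $\Delta_{\min} > 0$, this produces an index $K$ such that for every $k \ge K$ both $\chi(z_k) < 1$ and $\gamma_0 \hat c_\infty \chi(z_k) \le \Delta_{\min}$ hold.

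For successful $k \ge K$, the lower bound from \cref{remark:bound_chi_delta}, $\Delta_k \ge \min\{\gamma_0 \hat c_k \chi(z_k), \Delta_{\min}\}$, then collapses to $\Delta_k \ge \gamma_0 \hat c_\infty \chi(z_k)$. Setting $c := \min\{1, \gamma_0 \hat c_\infty\} > 0$, this yields $\min\{1,\Delta_k,\chi(z_k)\} = \min\{\Delta_k,\chi(z_k)\} \ge c\,\chi(z_k)$, and hence $\chi(z_k)\min\{1,\Delta_k,\chi(z_k)\} \ge c\,\chi(z_k)^2$. Summing this inequality over successful $k \ge K$ and absorbing the finitely many terms with $k < K$, the finiteness of $\sum_{k\in\mathcal S}\chi(z_k)\min\{1,\Delta_k,\chi(z_k)\}$ immediately forces $\sum_{k\in\mathcal S}\chi(z_k)^2 < \infty$.

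The only point that requires genuine care — and the main, though modest, obstacle — is the correct handling of the three-way minimum defining $\mathrm{pred}_k$. One must simultaneously use $\chi(z_k) \to 0$ to neutralize both the constant $1$ and the $\Delta_{\min}$ cap, and exploit $\Delta_{\min}>0$ together with the uniform bound $\hat c_k \ge \hat c_\infty$ to control $\Delta_k$ from below. This is precisely where \ref{B3} is indispensable and the weaker hypothesis \ref{B2} used in \cref{thm:global_conv} would not suffice, since without a uniform lower bound on $\hat c_k$ one cannot guarantee $\Delta_k \gtrsim \chi(z_k)$. Everything else is routine.
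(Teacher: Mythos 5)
Your proposal is correct and follows essentially the same route as the paper: the paper's own proof consists precisely of combining \ref{B3} with \cref{remark:bound_chi_delta} to get $\Delta_k \geq \min\{\gamma_0\hat c(1+\kappa_B)^{-1}\chi(z_k),\Delta_{\min}\}$ for $k \in \mathcal S$ and then reading off square-summability from \cref{eq:crit-sum}, which is exactly your argument written out in more detail (including the observation that $\chi(z_k)\to 0$ neutralizes the $1$ and the $\Delta_{\min}$ cap). The second summability claim is, as you note, already contained in \cref{eq:crit-sum} together with $x_{k+1}=x_k$ for unsuccessful iterations.
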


 

\section{Convergence Properties Under the Kurdyka- {\L}ojasiewicz Inequality} \label{section:local-convergence}
\label{sec:KL_conv}
 
We now investigate additional convergence properties of \cref{algo2} utilizing the Kurdyka-{\L}ojasiewicz (KL) inequality. Our discussion is motivated by the general KL-framework provided in \cite{AttBol09,AttBolSva13,BolSabTeb14} and by the results for the forward- backward quasi-Newton method in \cite[section 3.2]{SteThePat17} and \cite{TheStePat18}. Specifically, we show how these techniques can be transferred to our nonsmooth trust-region method allowing us to establish convergence of the whole sequence $\{z_k\}$ and local rates of convergence.

\subsection{Definitions and Assumptions} In the following, we introduce the class of so-called {desingularizing functions} that will be used in the definition of the KL-property. By $\mathfrak{S}_\eta$ we denote the class of all continuous and concave functions $\varrho : [0,\eta) \to \R_+$ such that 
 \[ \varrho \in C^1((0,\eta)), \quad \varrho(0) = 0, \quad \text{and} \quad \varrho^\prime(x) > 0, \quad \forall~x \in (0,\eta).\]
We also consider the subclass of {{\L}ojasiewicz functions}
 $\mathfrak L := \{ \varrho : \R_+ \to \R_+ : \exists~c > 0, \, \theta \in [0,1): \varrho(x) = c x^{1-\theta} \}$.  
Obviously, it holds that $\mathfrak L \subset \mathfrak S_\eta$ for all $\eta > 0$. For the ease of exposition, we define the KL-property for functions of the form $\psi = f + \varphi$, where $f : \Rn \to \R$ is continuously differentiable and $\vp : \Rn \to \Rex$ is convex, lsc, and proper.
\begin{defn} Let $\psi = f + \vp$ be a proper, lsc function as specified above. We say that $\psi$ has the Kurdyka-{\L}ojasiewicz property at $\bar x \in \dom{\partial\vp}$ if there exist $\eta \in (0,\infty]$, a neighborhood $U$ of $\bar x$, and a function $\varrho \in \mathfrak{S}_\eta$ such that for all $x \in U \cap \{x \in \Rn : 0 < \psi(x)-\psi(\bar x) < \eta\}$ the KL-inequality holds, i.e.,
\be \label{eq:kl-ineq} \varrho^\prime(\psi(x) - \psi(\bar x)) \cdot \dist(0,\partial \psi(x)) \geq 1. \ee
If the mapping $\varrho$ can be chosen from $\mathfrak L$ and satisfies $\varrho(x) = c x^{1-\theta}$ for some $c > 0$ and $\theta \in [0,1)$, then we say that $\psi$ has the KL-property at $\bar x$ with exponent $\theta$. 
\end{defn}
{The KL-property is a powerful concept which is applicable to a vast range of problems. In particular, the KL-inequality holds for the ubiquitous class of subanalytic or semialgebraic functions, \cite{lojasiewicz1963,lojasiewicz1993,kurdyka1998,BolDanLew06}.} Let $\{z_k\}$, $\{x_k\}$, and $\{s_k\}$ be generated by \cref{algo2} and let us introduce the set of accumulation points 
\begin{align*} {\mathfrak A} := \{z \in \Rn: \exists~\text{a subsequence} \,\{k_\ell\}_\ell \, \text{with} \, {z}_{k_\ell} \to z, \ell \to \infty \}. \end{align*} 
Notice that $\mathfrak A$ is closed-valued by definition. Next, we formulate our main assumptions of this section.
\begin{assumption} \label{ass:kl} We consider the conditions: \vspace{.5ex}
\begin{enumerate}[label=\textup{\textrm{(C.\arabic*)}},topsep=0pt,itemsep=0ex,partopsep=0ex,leftmargin=8ex]
\item \label{C1} The merit function $\mer$ satisfies the following KL-type property on $\mathfrak A$: for all $\bar z \in \mathfrak A$ there exist $\eta \in (0,\infty]$, a neighborhood $V$ of $\bar z$, and a function $\varrho \in \mathfrak S_\eta$ such that we have \vspace{1ex}
\end{enumerate}
\be \label{eq:kl-mer} \varrho^\prime(\mer(z) - \mer(\bar z)) \cdot \chi(z) \geq 1 \quad \forall~z \in V \cap \{z \in \Rn: 0 < \mer(z)-\mer(\bar z) < \eta\}. \ee
\begin{enumerate}[label=\textup{\textrm{(C.\arabic*)}},topsep=0pt,itemsep=0ex,partopsep=0ex,leftmargin=8ex] \addtocounter{enumi}{1}
\item \label{C2} The sequence $\{z_k\}$ is bounded.
\item \label{C3} Setting $n_{\mathcal S}(k) := \vert \mathcal S \cap \{0,1,...,k-1\}\vert$, we assume that $\nu$ and $\nu_k$ satisfy
\[ \nu > 0 \quad \text{and} \quad \nu_k\geq \min\{\nu,a_{n_{\cS}(k)}^2\|\prox{z_k+s_k}-x_k\|^{2p}\}, \quad \forall~k,  \]
where $p>0$ is a constant and $\{a_k\} \subset \R_{++}$ is given with $\sum_{k=0}^{\infty}a_k^{-{1}/{p}}<\infty$.
\end{enumerate}
\end{assumption}

Condition \ref{C2} is a typical and ubiquitous prerequisite appearing in the application of the KL-framework, see, e.g., \cite{AttBol09,AttBolRedSou10,AttBolSva13,BolSabTeb14}. Assumption \ref{C3} specifies the growth behavior of the parameters $\{\nu_k\}$. The lower bound in \ref{C3} is motivated by the convergence analysis in \cref{thm:kl-convergence} and in \cref{sec:loc-super-conv} and it will allow us to establish full global-local results. Assumption \ref{C1} can be interpreted as a specialized variant of the usual KL-condition. 
In the following, we will show that this condition is satisfied at a point $\bar z \in \mathfrak A$ when $\psi$ has the KL-property at $\prox{\bar z}$ with exponent $\theta$. In this case, the desingularizing function $\varrho$ in \ref{C1} can also be chosen from the class $\mathfrak L$ with exponent $\max\{\theta,\frac12\}$. Hence, similar to the forward-backward envelope, the merit function $\mer$ can preserve the KL-properties of the original objective function. We refer to \cite{YuLiPon19,SteThePat17,TheStePat18} for comparison and further details.

\begin{lemma} \label{lemma:calc-kl-mer} Suppose that $\psi$ satisfies the KL-property at a stationary point $\bar x = \prox{\bar z} \in \crit(\psi)$ with exponent $\theta$. Then, the merit function $\mer$ satisfies the KL-type property defined in \ref{C1} at $\bar z$ with exponent $\max\{\theta,\frac12\}$. 
\end{lemma}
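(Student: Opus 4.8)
The plan is to reduce \ref{C1} at $\bar z$ to a single pointwise estimate: I would find a neighborhood $V$ of $\bar z$, a threshold $\eta > 0$, and a constant $\kappa > 0$ such that $\chi(z) \geq \kappa\,(\mer(z)-\mer(\bar z))^{\theta'}$ with $\theta' := \max\{\theta,\tfrac12\}$ for all $z \in V$ with $0 < \mer(z) - \mer(\bar z) < \eta$; choosing $\varrho(s) = \frac{1}{(1-\theta')\kappa}\,s^{1-\theta'} \in \mathfrak L$ then gives $\varrho'(\mer(z)-\mer(\bar z))\,\chi(z) \geq 1$, which is exactly \cref{eq:kl-mer} with exponent $\theta'$. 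Since $\bar z$ is a zero of the normal map (so that $\bar x = \prox{\bar z}$ is stationary, cf.\ \cref{lemma1-1}), we have $\Fnor{\bar z} = 0$ and hence $\mer(\bar z) = \psi(\bar x)$. Writing $A := \psi(\prox{z}) - \psi(\bar x)$ and $B := \frac{\tau}{2}\chi(z)^2$, the gap decomposes cleanly as $\mer(z) - \mer(\bar z) = A + B$. Using the identity $\psi(\prox{z}) = f(\prox{z}) + \env{z} - \frac12\|z-\prox{z}\|_\Lambda^2$ together with continuity of $\oenv$, $f\circ\proxs$, $\proxs$, and $\chi$, I would first shrink $V$ so that $A < \eta_0$ (the KL-threshold of $\psi$), $\prox{z} \in U$ (the KL-neighborhood of $\psi$), and $A, B < 1$ for all $z \in V$; this is possible since $A \to 0$ and $B \to 0$ as $z \to \bar z$.

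Next I would assemble two lower bounds on $\chi(z)$. The first comes from the KL-inequality of $\psi$, which with $\varrho(s) = c s^{1-\theta}$ reads $\dist(0,\partial\psi(\prox{z})) \geq \frac{1}{c(1-\theta)}\,A^{\theta}$ whenever $A > 0$; combining \cref{lemma:conn-nat-nor}, which yields $\dist(0,\partial\psi(\prox{z})) \leq \|\Fnor{z}\|$, with the norm equivalence $\|v\| \leq \sqrt{\lM}\,\|v\|_{\Lambda^{-1}}$, I obtain $\chi(z) \geq c_1 A^{\theta}$ for a constant $c_1 > 0$. The second bound is immediate from the definition of $B$, namely $\chi(z) = \sqrt{2/\tau}\,B^{1/2}$.

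The combination then splits into two cases. If $A \leq 0$, then $\mer(z) - \mer(\bar z) = A + B \leq B$, so $\chi(z) = \sqrt{2/\tau}\,B^{1/2} \geq \sqrt{2/\tau}\,(A+B)^{1/2} \geq \sqrt{2/\tau}\,(A+B)^{\theta'}$, where the last step uses $A+B<1$ and $\theta' \geq \tfrac12$. If $A > 0$, I would use $A^{\theta} \geq A^{\theta'}$ and $B^{1/2} \geq B^{\theta'}$ (valid since $A,B<1$ and $\theta,\tfrac12 \leq \theta'$) together with $\max\{A,B\} \geq \tfrac12(A+B)$ to deduce
\[ \chi(z) \geq \max\{c_1 A^{\theta},\, \sqrt{2/\tau}\,B^{1/2}\} \geq \min\{c_1,\sqrt{2/\tau}\}\,(\max\{A,B\})^{\theta'} \geq \tfrac12 \min\{c_1,\sqrt{2/\tau}\}\,(A+B)^{\theta'}. \]
In both cases $\chi(z) \geq \kappa\,(\mer(z)-\mer(\bar z))^{\theta'}$ for a uniform $\kappa>0$, completing the estimate.

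The hard part is genuinely the case $A = \psi(\prox{z}) - \psi(\bar x) \leq 0$: here the KL-inequality of $\psi$ is unavailable (it only controls points with $\psi > \psi(\bar x)$), so the objective part contributes nothing and one must rely entirely on the quadratic normal-map penalty $\frac{\tau}{2}\chi(z)^2$. This penalty yields a square-root relation between $\chi(z)$ and the gap, which is precisely what forces the exponent to be at least $\tfrac12$ and explains the appearance of $\max\{\theta,\tfrac12\}$. The remaining effort is bookkeeping — tracking the equivalence constants between $\|\cdot\|$ and $\|\cdot\|_{\Lambda^{-1}}$ and ensuring the various neighborhood and threshold shrinkings are mutually compatible.
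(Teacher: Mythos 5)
Your proof is correct and follows essentially the same route as the paper's: the same decomposition of $\mer(z)-\mer(\bar z)$ into the objective gap $\psi(\prox{z})-\psi(\bar x)$ plus the quadratic term $\frac{\tau}{2}\chi(z)^2$, the same use of \cref{lemma:conn-nat-nor} to bound $\dist(0,\partial\psi(\prox{z}))$ by $\sqrt{\lM}\,\chi(z)$, and the same mechanism (small quantities raised to the larger exponent $\max\{\theta,\frac12\}$) producing the final exponent. The only difference is presentational: the paper absorbs your case $A\leq 0$ by noting the rearranged KL-inequality of $\psi$ holds trivially there, whereas you split into explicit cases.
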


\begin{proof} Let $\varrho(x) = c x^{1-\theta}$ be the associated desingularizing function. By definition, there exist $\epsilon, \eta > 0$ such that
\begin{align}
    \label{eq:lemma5-3-1}
    [c(1-\theta)]^{\frac{1}{\theta}} \dist(0,\partial\psi(x))^\frac{1}{\theta} \geq \psi(x) - \psi(\bar x) 
\end{align} 
for all $x$ with $\|x - \bar x\| \leq \epsilon$ and $\psi(x) < \psi(\bar x) + \eta$. (The inequality is obviously true in the case $\psi(x) \leq \psi(\bar x)$). Applying \cite[Lemma 4.1.6]{milzarek2016numerical} (with $\Lambda = \frac{1}{\lambda}I$), we obtain
\be \label{eq:dist-nor} \dist(0,\partial\psi(\prox{z})) \leq \|\Fnor{z}\|=\chi(z) \quad \forall~z. \ee
We now choose $\delta \leq  \epsilon$ sufficiently small such that $\chi(z) \leq 1$ for all $z \in B_\delta(\bar z)$. Then, for all $z \in B_\delta(\bar z)$ with $\mer(\bar z) = \psi(\bar x) < \mer(z) < \mer(\bar z) + \eta$, we have $\|\prox{z} - \bar x\| \leq \epsilon$ and $\psi(\prox{z}) < \psi(\bar x) + \eta$ and thus, \eqref{eq:lemma5-3-1} is applicable. Setting $c_\theta := [c(1-\theta)]^{1/\theta}$ and combining \eqref{eq:lemma5-3-1} and \eqref{eq:dist-nor}, we can infer $c_\theta \chi(z)^{{1/\theta}} \geq \psi(\prox{z}) - \psi(\bar x)$ and
\[ \left[ c_\theta + \frac{\tau\lambda}{2} \right]\chi(z)^{\min\left\{\frac{1}{\theta},2\right\}} \geq c_\theta \chi(z)^{\frac{1}{\theta}} + \frac{\tau\lambda}{2} \|\Fnor{z}\|^{2} \geq \mer(z) - \mer(\bar z). \]
This shows that $\mer$ satisfies the KL-type inequality with exponent $\max\{\theta,\frac12\}$. \end{proof}

\subsection{Convergence Results}

In the following, we state properties of the set of accumulation points $\mathfrak A$. 

\begin{lemma} \label{lemma:A-prop} Let $\{z_k\}$ be generated by \cref{algo2} and suppose that the conditions \ref{A1}--\ref{A2}, \ref{B1}--\ref{B2}, \ref{C2}, and $\Delta_{\min} > 0$ are satisfied. Then, we have: 
\begin{itemize}
\item[{\rm(i)}] The set ${\mathfrak A}$ is compact and nonempty and satisfies $\prox{\mathfrak A} \subseteq {\crit}(\psi)$.
\item[{\rm(ii)}] We have $\lim_{k \to \infty} \dist(z_k,{\mathfrak A}) = 0$.
\item[{\rm(iii)}] The functions $\psi \circ \proxs$ and $\mer$ are constant and finite on $\mathfrak A$.
\end{itemize}
\end{lemma}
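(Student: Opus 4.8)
The three parts of Lemma~\ref{lemma:A-prop} are standard consequences of the global convergence result \cref{thm:global_conv} combined with the boundedness assumption \ref{C2}, so my plan is to treat each part in turn, leaning on the structural facts already established.

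For part~(i), I would first note that $\mathfrak A$ is nonempty and compact because $\{z_k\}$ is bounded by \ref{C2}: boundedness guarantees at least one accumulation point, and $\mathfrak A$ is closed by definition (it is an intersection of closures of tails of the sequence), hence compact. For the inclusion $\prox{\mathfrak A} \subseteq \crit(\psi)$, let $\bar z \in \mathfrak A$ with $z_{k_\ell} \to \bar z$ along a subsequence. The key input is $\lim_k \chi(z_k) = 0$ from \cref{thm:global_conv}, i.e.\ $\|\Fnor{z_k}\|_{\Lambda^{-1}} \to 0$. Since $\oFnor$ is continuous (it is the composition of continuous maps $\nabla f$, $\oprox$, and the affine part), continuity gives $\Fnor{\bar z} = \lim_\ell \Fnor{z_{k_\ell}} = 0$. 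By \cref{lemma1-1}, a zero of the normal map satisfies $\prox{\bar z} \in \crit(\psi)$, which is exactly the claimed inclusion.

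Part~(ii) is a soft topological fact that holds for \emph{any} bounded sequence: I would argue by contradiction. If $\dist(z_k,\mathfrak A) \not\to 0$, there is $\veps > 0$ and a subsequence with $\dist(z_{k_\ell},\mathfrak A) \geq \veps$; by boundedness this subsequence has a further convergent subsubsequence whose limit $z^\star$ is, by definition, in $\mathfrak A$, forcing $\dist(z_{k_\ell},\mathfrak A) \to 0$ along it -- a contradiction. This step uses only \ref{C2} and does not invoke the merit function at all.

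Part~(iii) is the delicate one and I expect it to be the main obstacle, since it is the step where one must rule out that $\psi\circ\proxs$ takes different values at different accumulation points. The argument I would use mirrors \cite[Lemma~5]{BolSabTeb14}. First, $\{\mer(z_k)\}$ is nonincreasing (the successful-step acceptance test ensures descent, and on unsuccessful steps $z_{k+1}=z_k$) and bounded below by \ref{A2}, so $\mer(z_k) \to \zeta$ for some finite $\zeta$, as already recorded in the proof of \cref{thm:global_conv}. To pass this limit to $\mathfrak A$, I would exploit continuity of $\mer$: for any $\bar z \in \mathfrak A$ with $z_{k_\ell}\to\bar z$, continuity of $\mer$ (which follows from continuity of $\psi\circ\proxs$ and of $\chi$) yields $\mer(\bar z) = \lim_\ell \mer(z_{k_\ell}) = \zeta$. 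Hence $\mer$ is constant and equal to $\zeta$ on all of $\mathfrak A$. The remaining point is that $\psi\circ\proxs$ is \emph{also} constant on $\mathfrak A$: since $\mer = \psi\circ\proxs + \frac{\tau}{2}\chi(\cdot)^2$ and $\chi(\bar z) = \|\Fnor{\bar z}\|_{\Lambda^{-1}} = 0$ for every $\bar z\in\mathfrak A$ (established in part~(i)), we get $(\psi\circ\proxs)(\bar z) = \mer(\bar z) = \zeta$ for all $\bar z\in\mathfrak A$. The subtlety worth being careful about is the lower semicontinuity versus continuity of $\psi$ along $\proxs$: because $\oprox$ maps into $\dom{\partial\vp}$ and $\psi\circ\proxs(z) = f(\prox{z}) + \env{z} - \frac12\|z-\prox{z}\|_\Lambda^2$ is a sum of continuous functions (using that $\oenv$ is $C^1$ and $f$ is continuous), $\psi\circ\proxs$ is in fact genuinely continuous, so no semicontinuity gap arises and the limit passes cleanly.
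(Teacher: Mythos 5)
Your proposal is correct and follows essentially the same route as the paper: part (i) via $\chi(z_k)\to 0$ from \cref{thm:global_conv}, continuity of $\oFnor$, and \cref{lemma1-1}; part (ii) by the standard subsequence argument from boundedness; and part (iii) by passing the limit $\mer(z_k)\to\zeta$ to $\mathfrak A$ via continuity of $\mer$ and $\psi\circ\proxs$ and then using $\chi\equiv 0$ on $\mathfrak A$. The extra care you take with the continuity of $\psi\circ\proxs$ through the Moreau-envelope decomposition is exactly the identity the paper relies on implicitly, so no gap arises.
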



\cref{lemma:A-prop} can be seen as an analogue of \cite[Lemma 5]{BolSabTeb14}. The derivation of \cref{lemma:A-prop} is based on \cref{thm:global_conv} and on the continuity of $\psi \circ \proxs$ and $\mer$ and closely follows the original proof in \cite{BolSabTeb14}. We will omit details here.

The properties (i)--(iii) in \cref{lemma:A-prop} allow deriving a uniformized version of the KL-type inequality \eqref{eq:kl-mer}. Specifically, suppose that the KL-type property holds on $\mathfrak A$. 
Then, there are $\delta,\eta > 0$ and $\varrho \in \mathfrak S_\eta$ 
such that for all $\bar z \in \mathfrak A$ and $z  \in V_{\delta,\eta} := \{ z \in \Rn : \dist(z,\mathfrak A) < \delta \} \cap \{z \in \Rn : 0 < \mer(z)-\mer(\bar z) < \eta \}$, we have:
\[ \varrho^\prime(\mer(z) - \mer(\bar z)) \cdot \chi(z) \geq 1, \]
see, e.g.,  \cite[Lemma 6]{BolSabTeb14}. Next, we present the main result of this section. 
 
\begin{thm} \label{thm:kl-convergence}
  Let $\{z_k\}$ be generated by \cref{algo2} and assume that the conditions \ref{A1}, \ref{B1}--\ref{B2}, \ref{C1}--\ref{C3}, and $\Delta_{\min} > 0$ are satisfied. Then, it holds that:
  \begin{itemize}
  \item[\rmn{(i)}] The sequence $\{z_k\}$ converges to some $\bar z$ with $\bar x = \prox{\bar z} \in \crit(\psi)$. 
  \item[\rmn{(ii)}] {In addition, suppose that the KL-type inequality \cref{eq:kl-mer} holds at the limit point $\bar z$ of $\{z_k\}$ for some $\varrho \in \mathfrak L$ with exponent $\theta \in [0,1)$ and let \ref{B3} be satisfied.} 
  \begin{itemize}
  \item[$\bullet$] If $\theta \in [0,\frac12)$, the sequence $\{z_k\}$ converges within a finite number of steps. 
  \item[$\bullet$] If $\theta = \frac12$, $\{\chi(z_k)\}_{\mathcal S}$ and $\{z_k\}_{\mathcal{S}}$ converge r-linearly to $0$ and $\bar z$, respectively. 
  \item[$\bullet$] If $\theta \in (\frac12,1)$, then the sequences $\{\chi(z_k)\}_{\mathcal S}$ and $\{z_k\}_{\mathcal S}$ converge with the following rates for $\mathcal S \ni k \to \infty$:
   \[ \chi(z_k) =  O(n_{\mathcal S}(k)^{-\frac{1-\theta}{2\theta-1}}) \quad \text{and} \quad \|z_{k}-\bar z\| =  O(n_{\mathcal S}(k)^{-\frac{(1-\theta)^2}{(1+p)\theta(2\theta-1)}}).  \]
  Moreover, it holds $\liminf_{k\to\infty}n_\cS(k)^{1+q}\chi(z_k)=0$ for all $q \in (0,\frac{1-\theta}{2\theta-1})$.
  \end{itemize}
  \end{itemize}
  \end{thm}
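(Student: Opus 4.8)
The plan is to run the standard Kurdyka--{\L}ojasiewicz descent-telescoping machinery (cf. \cite{AttBolSva13,BolSabTeb14}) along the successful subsequence $\mathcal S$; since $z_{k+1}=z_k$ for $k\notin\mathcal S$, convergence and rates for $\{z_k\}_{\mathcal S}$ transfer verbatim to the full sequence. From \cref{thm:global_conv} and \cref{lemma:A-prop} I already have $\chi(z_k)\to0$, a finite limit $\zeta=\lim_k\mer(z_k)=\psi(\prox{\bar z})$ that is constant on $\mathfrak A$, and $\dist(z_k,\mathfrak A)\to0$; together with \ref{C1} and the uniformized KL-type inequality recalled above, this furnishes a single desingularizing $\varrho$ valid along the tail. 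Writing $r_k:=\mer(z_k)-\zeta\ge0$, concavity of $\varrho$ and \cref{eq:kl-mer} give, for successful $k$, the telescoping estimate $\varrho(r_k)-\varrho(r_{k+1})\ge\varrho^\prime(r_k)(r_k-r_{k+1})\ge\eta_1\,\mathrm{pred}(z_k,s_k,\Delta_k,\nu_k)/\chi(z_k)$.

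For part (i) the goal is $\sum_{k\in\mathcal S}\|s_k\|_\Lambda<\infty$, which makes $\{z_k\}$ Cauchy and collapses $\prox{\mathfrak A}$ to a single critical point. The exact identity $s_k=(x_{k+1}-x_k)+\Lambda^{-1}[\Fnor{z_{k+1}}-\Fnor{z_k}]-\Lambda^{-1}[\nabla f(x_{k+1})-\nabla f(x_k)]$, which follows from the definition of $\oFnor$, together with \ref{A1} yields the free bound $\|s_k\|_\Lambda\le(1+L\lambda_m^{-1})\|x_{k+1}-x_k\|_\Lambda+\chi(z_{k+1})+\chi(z_k)$. Substituting the two parts of $\mathrm{pred}$ into the telescoping estimate and summing produces $\sum_{k\in\mathcal S}\min\{1,\Delta_k,\chi(z_k)\}<\infty$ and $\sum_{k\in\mathcal S}\nu_k\|x_{k+1}-x_k\|_\Lambda^2/\min\{\Delta_k,\chi(z_k)\}<\infty$.

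The step I expect to be the main obstacle is turning these two series into summability of $\|s_k\|_\Lambda$. On indices with $\Delta_k\le\chi(z_k)$ one uses $\|s_k\|_\Lambda\le\Delta_k=\min\{1,\Delta_k,\chi(z_k)\}$ (for $k$ large), controlled by the first series. On indices with $\Delta_k>\chi(z_k)$ the first series instead yields $\sum\chi(z_k)<\infty$, so the two criticality terms of the decomposition are harmless (each $z_{k+1}$ coincides with the next successful iterate, hence $\sum\chi(z_{k+1})<\infty$ as well) and only $\|x_{k+1}-x_k\|_\Lambda$ remains. This is precisely what \ref{C3} is engineered to handle: splitting by which term attains the minimum in \ref{C3}, on $\{a_{\ns(k)}^2\|x_{k+1}-x_k\|_\Lambda^{2p}\le\nu\}$ one has directly $\|x_{k+1}-x_k\|_\Lambda\le\nu^{1/(2p)}a_{\ns(k)}^{-1/p}$, summable because $\sum_k a_k^{-1/p}<\infty$, while on the complement $\nu_k\ge\nu$ and, since $\min\{\Delta_k,\chi(z_k)\}=\chi(z_k)$ there, the second series gives $\sum\|x_{k+1}-x_k\|_\Lambda^2/\chi(z_k)<\infty$, whence Cauchy--Schwarz against $\sum\chi(z_k)<\infty$ closes the estimate. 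The delicate point behind all this is that the $z$-step may be large along directions in $\ker D_k$ where $\proxs$ is locally constant; it is the normal-map decomposition that exposes such components as being only of order $\chi(z_k)$, so that \ref{C3} must tame merely the genuine $x$-displacement.

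For part (ii) I would reduce everything to a scalar recursion in $r_k$. Under \ref{B3}, \cref{remark:bound_chi_delta} forces $\Delta_k\gtrsim\chi(z_k)$, so the first part of $\mathrm{pred}$ gives the sufficient decrease $r_k-r_{k+1}\ge c_1\chi(z_k)^2$ on $\mathcal S$ (as in \cref{cor:sec04:conv-str}), while the KL inequality with $\varrho(x)=cx^{1-\theta}$ gives $\chi(z_k)\ge c_2 r_k^{\theta}$; combining yields $r_k-r_{k+1}\ge c_3 r_k^{2\theta}$. The three regimes then follow from the classical analysis of this recursion: for $\theta\in[0,\tfrac12)$ the estimate $r_k^{2\theta-1}\le c_3^{-1}$ is incompatible with $r_k\downarrow0$ unless $r_k=0$ eventually, i.e. $\Fnor{z_k}=0$ (finite termination); for $\theta=\tfrac12$ one obtains geometric decay $r_{k+1}\le(1-c_3)r_k$ and hence r-linear convergence of $\{\chi(z_k)\}_{\mathcal S}$ and $\{z_k\}_{\mathcal S}$; and for $\theta\in(\tfrac12,1)$ the standard sublinear lemma gives $r_k=O(\ns(k)^{-1/(2\theta-1)})$. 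Translating via $\chi(z_k)\le\sqrt{2r_k/\tau}\le C r_k^{1-\theta}$ yields the stated criticality rate, and summing the tail $\|z_k-\bar z\|\le\lambda_m^{-1/2}\sum_{j\ge k,\,j\in\mathcal S}\|s_j\|_\Lambda$ with the step bound from part (i) yields the $z$-rate. The extra factor $\tfrac{1-\theta}{(1+p)\theta}$ in the $z$-exponent is inherited from the weaker, $(2p+2)$-th-power control of $\|x_{k+1}-x_k\|_\Lambda$ in \ref{C3}: the $x$-displacement is the only part of the step not directly tied to $\chi$, it decays more slowly, and balancing the two branches of \ref{C3} produces the stated exponent; this final bookkeeping is routine but tedious.
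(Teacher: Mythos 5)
Your overall architecture matches the paper's (telescoping the KL inequality along $\mathcal S$, splitting $\mathrm{pred}$ into its two terms, and using \ref{C3} to tame the $x$-displacement), and your case analysis for $\sum_{k\in\mathcal S}\|x_{k+1}-x_k\|_\Lambda<\infty$ is correct and arguably more elementary than the paper's Young-plus-reverse-H\"older route. However, there are two concrete gaps. First, in part (i) you aim for $\sum_{k\in\mathcal S}\|s_k\|_\Lambda<\infty$ via the decomposition $\|s_k\|_\Lambda\leq(1+L\lambda_m^{-1})\|x_{k+1}-x_k\|_\Lambda+\chi(z_{k+1})+\chi(z_k)$, but the criticality terms are \emph{not} harmless: without \ref{B3} the telescoping only yields $\sum_{k\in\mathcal S}\min\{1,\Delta_k,\chi(z_k)\}<\infty$, and on the subset of successful iterations with $\Delta_k\leq\chi(z_k)$ (which can recur infinitely often when $\|B_k\|$ is unbounded, cf.\ \cref{remark:bound_chi_delta}) you only control $\Delta_k$, not $\chi(z_k)$; the index-shifted term $\chi(z_{k+1})$ then lands in this uncontrolled set. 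Finite length of $\{z_k\}$ is not actually claimed by the theorem and likely fails in this generality. The repair is the paper's route: prove $x_k\to\bar x$ from finite length of $\{x_k\}_{\mathcal S}$, then conclude $z_k=x_k-\Lambda^{-1}\nabla f(x_k)+\Lambda^{-1}\Fnor{z_k}\to\bar x-\Lambda^{-1}\nabla f(\bar x)$ using $\chi(z_k)\to0$.

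Second, in part (ii) your scalar recursion $r_k-r_{k+1}\geq c_3r_k^{2\theta}$ is a legitimate alternative to the paper's recursion on the tail sums $\Gamma_\ell=\sum_{i\geq\ell}\chi(z_{j_i})$, and it delivers the $\chi$-rates (note only that $\chi(z_k)\leq\sqrt{2r_k/\tau}$ presumes $\psi(x_k)\geq\zeta$, which is not known; use instead $\chi(z_k)^2\leq c_1^{-1}(r_k-r_{k+1})\leq c_1^{-1}r_k$ from the sufficient decrease). But the $z$-rates --- including r-linearity of $\{z_k\}_{\mathcal S}$ at $\theta=\tfrac12$ and the exponent $\frac{(1-\theta)^2}{(1+p)\theta(2\theta-1)}$ --- require a \emph{quantitative} tail bound of the form $\sum_{i\geq\ell}\|x_{j_{i+1}}-x_{j_i}\|_\Lambda=O(\delta_{j_\ell}^{1/(1+p)})$, which the paper obtains by applying the reverse H\"older inequality to $\sum a_i\|x_{j_{i+1}}-x_{j_i}\|_\Lambda^{1+p}\leq C\delta_{j_\ell}$. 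Your branch-splitting of \ref{C3} cannot produce this: on the branch $a_{n_{\mathcal S}(k)}^2\|x_{k+1}-x_k\|_\Lambda^{2p}\leq\nu$ you bound the displacement by $\nu^{1/(2p)}a_{n_{\mathcal S}(k)}^{-1/p}$, whose tail decay is unquantified (only $\sum a_k^{-1/p}<\infty$ is assumed), so ``balancing the two branches'' yields summability but not the claimed rate. You need to replace that branch bound by the weighted H\"older argument to close part (ii).
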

  
  \begin{proof} Let $\delta, \eta > 0$ be the constants appearing in the definition of the uniformized KL-type inequality. The proof of \cref{thm:global_conv} implies that the sequence $\{\mer(z_k)\}$ is non-increasing and converges to some $\zeta \in \R$. Suppose now there exists $\ell \in \N$ with $\mer(z_\ell) = \zeta$. Then, we necessarily have $\mer(z_k) = \zeta$ for all $k \geq \ell$. By \cref{lemma2-7} there is $\mathcal S \ni k \geq \ell$ with $0 = \mer(z_k) - \mer(z_{k+1}) \geq 0.5\eta_1\tau\chi(z_k)\min\{\lambda,\Delta_k,\lambda\chi(z_k)\}$ which implies $\chi(z_k) = 0$ and hence, the algorithm would terminate after finitely many steps. As a consequence and using \cref{lemma:A-prop} (ii) and (iii), there exist $\hat z \in \mathfrak A$ with $\mer(\hat z) = \zeta$ and $k^\prime$ such that $\mer(z_k) > \mer(\hat z)$ and $z_k \in V_{\delta,\eta}$ for all $k \geq k^\prime$. Recall $\mathcal{S}=\{j_i: i \geq 0\}$ and let $\ell^{\prime}$ be the smallest index such that $j_{\ell^{\prime}}\geq k'$. To prove convergence of $\{z_k\}$, it suffices to prove convergence of $\{z_{j_i}\}$. Since the uniformized KL-type inequality is applicable for the latter iterates, we have
  \be \label{eq:kl-prf} \varrho^\prime(\mer(z_{j_i}) - \mer(\hat z)) \cdot \chi(z_{j_i}) \geq 1 \quad \forall~i \geq \ell^\prime. \ee
  Next, due to the concavity of $\varrho$ and setting $\delta_k := \varrho(\mer(z_k) - \mer(\hat z))$, we obtain
  \begin{align*} \delta_{j_i} - \delta_{j_{i+1}} & \geq \varrho^\prime(\mer(z_{j_i}) - \mer(\hat z)) [\mer(z_{j_i}) - \mer(z_{j_{i+1}})] \\ & \geq \frac{\mer(z_{j_i}) - \mer(z_{j_{i+1}})}{\chi(z_{j_i})}=\frac{\mer(z_{j_i})-\mer(z_{j_i+1})}{\chi(z_{j_i})} \end{align*}
  for all $i \geq \ell^\prime$, where we have used the fact $z_{j_i+1}=z_{j_{i+1}}$ for $j_i\in\mathcal{S}$. Summing this expression for $i \geq \ell^\prime$ {and by the definition of $\mathrm{pred}$, (cf. \eqref{eq:pred})}, it follows
  \be \delta_{j_{\ell^\prime}} =  \sum_{i=\ell^\prime}^{\infty} \delta_{j_i} - \delta_{j_{i+1}} \geq \eta_1\sum_{i=\ell^{\prime}}^{\infty} \left[ \frac{\tau}{2} \min\{\lambda,\Delta_{j_i},\lambda\chi(z_{j_i})\} + \frac{\nu_{j_i}\|x_{j_{i+1}}-x_{j_i}\|^2}{\min\{\Delta_{j_i},\lambda\chi(z_{j_i})\}}  \right] \label{eq:esti-nice} \ee
  where we applied $x_{j_i+1} = x_{j_{i+1}}$ for $j_i \in \mathcal S$, the continuity of $\varrho$, and $\varrho(0) = 0$. Due to Young's inequality, we have
  \begin{align*}  \sqrt{2\tau\nu_{j_i}}\|x_{j_{i+1}}-x_{j_i}\| & = \frac{ \sqrt{2\nu_{j_i}}\|x_{j_{i+1}}-x_{j_i}\|}{\sqrt{\min\{\Delta_{j_i},\lambda\chi(z_{j_i})\}}} \cdot\sqrt{\tau\min\{\Delta_{j_i},\lambda\chi(z_{j_i})\}} \\ & \leq \frac{\nu_{j_i}\|x_{j_{i+1}}-x_{j_i}\|^2}{\min\{\Delta_{j_i},\lambda\chi(z_{j_i})\}} + \frac{\tau}{2} \min\{\Delta_{j_i},\lambda\chi(z_{j_i})\}. \end{align*}
  Thus, utilizing $\lim_{k \to \infty} \chi(z_k) = 0$, we can infer 
  \be \label{eq:esti-xdel} \sqrt{2\tau}\eta_1 \sum_{i=\ell^{\prime\prime}}^{\infty}\sqrt{\nu_{j_i}}  \|x_{j_{i+1}}-x_{j_i}\| \leq \delta_{j_{\ell^{\prime\prime}}} \ee
  for some $\ell^{\prime\prime} \geq \ell^{\prime}$. Notice that the assumptions \ref{B1} and \ref{C3} imply $\nu_k = \nu$ if $a_{n_{\cS}(k)}^2 \|\prox{z_k+s_k}-x_k\|^{2p} \geq \nu$. Hence, introducing the index sets $\mathcal{I}_1 := \{i\geq \ell^{\prime\prime}: \nu_{j_i}=\nu\}$ and $\mathcal{I}_2:=\{i\geq \ell^{\prime\prime}: i\notin\mathcal{I}_1\}$, we obtain
  \[ {\sum}_{i\in\mathcal{I}_1}\|x_{j_{i+1}}-x_{j_i}\|<\infty \quad \text{and} \quad {\sum}_{i\in\mathcal{I}_2}a_{i}\|x_{j_{i+1}}-x_{j_i}\|^{1+p}<\infty, \]  
  where we used $n_{\mathcal S}(j_i) = \vert\{j_0,...,j_{i-1}\}\vert = i$. Next, in order to estimate the second term, we apply the reverse H\"older inequality
  \[ \infty> {\sum}_{i\in\mathcal{I}_2}a_i\|x_{j_{i+1}}-x_{j_i}\|^{1+p}\geq \left[{\sum}_{i\in\mathcal{I}_2} a_i^{-{1}/{p}}\right]^{-p}              
  \left[{\sum}_{i\in\mathcal{I}_2}\|x_{j_{i+1}}-x_{j_i}\|\right]^{1+p} \]
  and consequently, due to $\bar a := \sum_{i} a_i^{-{1}/{p}}<\infty$, we can infer $\sum_{i}\|x_{j_{i+1}}-x_{j_i}\|< \infty$.
  This implies that $\{x_k\}_{\mathcal{S}}$ and $\{x_k\}$ are Cauchy sequences that converge to the (same) limit $\bar x$. By \cref{thm:global_conv}, the convergence of $\{x_k\}$ also yields convergence of $\{z_k\}$:
  \[ z_k = x_k - \lambda \nabla f(x_k) + \lambda  \Fnor{z_k} \to \bar x - \lambda  \nabla f(\bar x) =: \bar z, \quad k \to \infty, \]
  and thus, we have $\bar x = \prox{\bar z} \in \crit(\psi)$. We now continue with the proof of the second part. Using $\varrho(x) = c x^{1-\theta}$ and the KL-type inequality \eqref{eq:kl-prf}, it holds that
  \begin{align} \delta_{j_i} = \varrho(\mer(z_{j_i}) - \mer(\bar z)) & = c \left[ \frac{c(1-\theta)}{\varrho^\prime(\mer(z_{j_i})-\mer(\bar z))} \right]^{\frac{1-\theta}{\theta}} \leq c(c(1-\theta))^{\frac{1-\theta}{\theta}} \chi(z_{j_i})^{\frac{1-\theta}{\theta}} \label{eq:del-kl-chi} \end{align}
  for all $i \geq \ell^{\prime\prime}$. Due to $\lim_{k\rightarrow\infty}\chi(z_{k})=0$, we can assume that $\ell^{\prime\prime}$ is chosen sufficiently large to guarantee $\chi(z_{j_i})< \Delta_{\min}/\lambda$ for all $i \geq \ell^{\prime\prime}$. Then, utilizing \cref{remark:bound_chi_delta}, \ref{B3}, and \eqref{eq:esti-nice}, we have
  \begin{align*}
  2 \delta_{j_\ell} \geq \eta_1  \tau {\sum}_{i=\ell}^{\infty} \min\{\lambda,\Delta_{j_i},\lambda\chi(z_{j_i})\} \geq \eta_1  \tau {\sum}_{i=\ell}^{\infty} \min\{\lambda,\gamma_0\hat{c}(1+\lambda\kappa_B)^{-1}\}\chi(z_{j_i})
  \end{align*}
  for every $\ell\geq\ell^{\prime\prime}$. Setting $C_\theta := 2c(c(1-\theta))^{\frac{1-\theta}{\theta}} [\eta_1\tau \min\{\lambda,\gamma_0\hat{c}(1+\lambda\kappa_B)^{-1}\}]^{-1}$ and $\Gamma_\ell := \sum_{i=\ell}^{\infty}\chi(z_{j_i})$, this yields
  \be \label{eq:tt-kl} {C_\theta}(\Gamma_{\ell}-\Gamma_{\ell+1})^\frac{1-\theta}{\theta} \geq \Gamma_{\ell}. \ee   
  Next, we discuss different cases depending on the KL-exponent $\theta \in [0,1)$. 
  
  In the case $\theta \in (0,\frac12)$, it follows $\frac{1}{\theta}-2 > 0$ and $C_\theta \chi(z_{j_\ell})^{\frac{1}{\theta}-2} \geq 1$. Due to $\chi(z_k) \to 0$, this condition can only hold for finitely many $\ell$ and hence, $\{z_k\}$ has to converge in finitely many steps. Notice that the KL-inequality \eqref{eq:kl-prf} reduces to $c \chi(z_{j_i}) \geq 1$ in the case $\theta = 0$ which again implies finite step convergence. In the case $\theta = \frac12$, if $C_\theta \leq 1$, we obtain $\Gamma_{\ell+1} \leq 0$ and finite step convergence. Otherwise, we have $\Gamma_{\ell+1} \leq [ 1 - {1}/{C_\theta}] \Gamma_\ell$ which proves q-linear convergence of $\{\Gamma_\ell\}$ and hence, $\{\chi(z_{j_i})\}$ converges r-linearly to zero.  
  Combining \eqref{eq:esti-xdel}, \eqref{eq:del-kl-chi}, and the previous estimates and using $\vert x+y \vert^{1+p} \leq 2^p(\vert x\vert^{1+p} + \vert y\vert^{1+p})$, $x,y \in \R$, we obtain
  \begin{align*} \frac{c^2}{2\sqrt{2\tau}\eta_1} \chi(z_{j_\ell}) & \geq \frac{\delta_{j_\ell}}{\sqrt{2\tau}\eta_1} \geq \sum_{i=\ell}^{\infty}\sqrt{\nu_{j_i}}  \|x_{j_{i+1}}-x_{j_i}\| \\ & \geq \sqrt{\nu}\,{\sum}_{i \in \mathcal I_1, i \geq \ell} \|x_{j_{i+1}}-x_{j_i}\| + \bar a^{-p} \left[ {\sum}_{i \in \mathcal I_2, i \geq \ell} \|x_{j_{i+1}}-x_{j_i}\| \right]^{1+p} \\ & \hspace{-0ex} \geq \frac{\min\{\sqrt{\nu},\bar a^{-p}\}}{2^p} \left [ {\sum}_{i \geq \ell} \|x_{j_{i+1}}-x_{j_i}\| \right]^{1+p}  \geq \frac{\min\{\sqrt{\nu},\bar a^{-p}\}}{2^p} \|x_{j_\ell} - \bar x\|^{1+p} \end{align*}
  for all $\ell \geq \ell^{\prime\prime}$ sufficiently large such that $ {\sum}_{i \in \mathcal I_1, i \geq \ell} \|x_{j_{i+1}}-x_{j_i}\|\leq 1$. This proves r-linear convergence of $\{x_{j_i}\}$ to $\bar{x}$. Moreover, we note that
  \begin{align*}  \|z_k-\bar{z}\| & =\|x_k-\bar{x}-\lambda (\nabla f(x_k)-\nabla f(\bar{x}))+\lambda \Fnor{z_k}\|  \leq (1+\lambda L)\|x_k-\bar{x}\|+\lambda\chi(z_k). \end{align*}
  Therefore, the r-linear convergence of $\{x_{j_i}\}$ and $\{\chi(z_{j_i})\}$ imply r-linear convergence of the sequence $\{z_{j_i}\}$. Finally, let us consider the case $\theta \in (\frac12,1)$. Rearranging the terms in \eqref{eq:tt-kl} and using the monotonicity of the function $t \mapsto t^\frac{\theta}{1-\theta}$, we have 
  \[ \Gamma_\ell^{-\frac{\theta}{1-\theta}} (\Gamma_\ell - \Gamma_{\ell+1}) \geq C_\theta^{-\frac{\theta}{1-\theta}}. \]
  Hence, since the mapping $\hat \varrho^\prime(t) := t^{-\frac{\theta}{1-\theta}}$ is monotonically decreasing, we can infer
  \begin{align*} \hat \varrho^\prime(C_\theta) \leq \hat \varrho^\prime(\Gamma_\ell) (\Gamma_\ell - \Gamma_{\ell+1}) & \leq \int_{\Gamma_{\ell+1}}^{\Gamma_\ell} \hat \varrho^\prime(t)\; \mathrm{d}t  = \hat \varrho(\Gamma_\ell) - \hat \varrho(\Gamma_{\ell+1}) = \frac{1-\theta}{1-2\theta}\left[ \Gamma_\ell^{\frac{1-2\theta}{1-\theta}} - \Gamma_{\ell+1}^{\frac{1-2\theta}{1-\theta}}\right].  \end{align*}
  Summing this estimate for $\ell^{\prime\prime} \leq \ell \leq m-1$ and noticing $1-2\theta < 0$, this yields
  \[ \frac{2\theta-1}{1-\theta}  \hat \varrho^\prime(C_\theta) (m-\ell^{\prime\prime}) \leq \Gamma_m^{\frac{1-2\theta}{1-\theta}} - \Gamma_{\ell^{\prime\prime}}^{\frac{1-2\theta}{1-\theta}} \leq \Gamma_m^{\frac{1-2\theta}{1-\theta}}  \quad \text{and} \quad \Gamma_m = O(m^{-\frac{1-\theta}{2\theta-1}}), \]
  %
  as $m \to \infty$. Mimicking the earlier discussion for the case $\theta = \frac12$, we can establish 
  \[\|x_{j_m}-\bar x\| =  O(m^{-\frac{(1-\theta)^2}{(1+p)\theta(2\theta-1)}}) \quad \text{and} \quad \|z_{j_m}-\bar z\| =  O(m^{-\frac{(1-\theta)^2}{(1+p)\theta(2\theta-1)}}), \]
  as desired. To show the last conclusion, assume $\liminf_{k\to\infty} \chi(z_k)n_\cS(k)^{1+q}>0$. Then, it follows $\chi(z_{j_i}) n_\cS(j_i)^{1+q} = \chi(z_{j_i}) i^{1+q} \geq \epsilon$ for some $\epsilon > 0$ and all $i$ sufficiently large. This yields $\Gamma_m = \sum_{i=m}^\infty \chi(z_{j_i}) \geq \sum_{i=m}^\infty \frac{\veps}{i^{1+q}} = \Omega(m^{-q})$ for $m \to \infty$. However, due to $q<\frac{1-\theta}{2\theta-1}$, this contradicts $\Gamma_m=O({m^{-\frac{1-\theta}{2\theta-1}}})$.
  \end{proof}
  

\begin{remark} Let us note that the standard KL analysis framework, \cite{AttBolSva13,BolSabTeb14}, is not directly applicable in our situation since the acceptance criterion $\rho_k \geq \eta_1$ only yields descent of the merit function $\mer$ in terms of $\|x_{k+1}-x_k\|$ and $\chi(z_k)$ and not in terms of $\|z_{k+1}-z_k\|$. Thus, the so-called sufficient decrease condition -- used in \cite{AttBolSva13,BolSabTeb14} -- is not necessarily satisfied. Here, we prove convergence of $\{x_k\}$ and $\{z_k\}$ under the KL inequality for potentially unbounded $\{B_k\}$ without requiring any additional stringent assumptions on $\|z_{k+1}-z_k\|$. We further note that applicability of the KL-theory for classical trust region methods is typically based on a subtle connection between the radius $\Delta_k$, the trust region step $s_k$, and the criticality measure and on a strict descent condition, see, e.g., \cite{AbsMahAnd05,NolRon13}. Such a condition is also not required in our analysis. \end{remark}



\section{Local Superlinear Convergence}
\label{sec:loc-super-conv}
In this section, we present our local convergence theory. Specifically, we will establish that a sequence generated by \cref{algo2} converges q-superlinearly to a solution $\bar z$ of the nonsmooth equation \eqref{normal_eq} under suitable local conditions. We first list our required assumptions for proving fast local convergence:
\begin{assumption} \label{assum3-1} Let $\{z_k\}$ be generated by \cref{algo2} and suppose that $\bar z \in \mathfrak A$ is an accumulation point of the sequence $\{z_k\}$. We then consider: 

\begin{enumerate}[label=\textup{\textrm{(D.\arabic*)}},topsep=0pt,itemsep=0ex,partopsep=0ex,leftmargin=8ex]
\item \label{D1} $\{z_k\}$ converges to $\bar{z}$ and there is $q > 0$ with $\liminf_{k\to\infty}\chi(z_k)n_{\cS}(k)^{1+q}=0$.
\item \label{D2} The function $f$ is twice continuously differentiable on $\dom{\vp}$.
\item \label{D3} The mapping $\proxs$ is semismooth at $\bar z$.
\item \label{D4} There exist $\kappa_M > 0$ and $K \in \N$ such that for all $k \geq K$ the matrix $D_k M_{k}$ is positive semidefinite and $M_k$ is invertible with $\|M_k^{-1}\| \leq \kappa_M$. 
\item \label{D7} The matrices $\{B_k\}$ satisfy the following Dennis-Mor\'{e}-type condition
\[  \lim_{k \to \infty} \frac{\|[B_k - \nabla^2 f(x_k)](x_k - \bar x)\|}{\|z_k - \bar z\|} = 0 \quad \text{where} \quad \bar x = \prox{\bar z}. \]
\item \label{D5}The error threshold $\epsilon_k$ used in the CG method satisfies $\epsilon_k \leq \mathcal E(\Fnor{z_k})$ where $\mathcal E : \Rn \to \R_+$ is continuous with $\mathcal E(0) = 0 $ and $\mathcal E(h) = o(\|h\|)$ as $h \to 0$.
%

\item \label{D6} The parameter $\nu_k$ satisfies $\nu_k\leq a_{n_{\cS}(k)}^2\|\prox{z_k+s_k}-x_k\|^{2p}$, where $\{a_k\} \subset \R_{++}$ is a positive sequence with $\limsup_{k\rightarrow\infty} {a_{k}^{1/p}}/{k^{1+\tilde q}}=0$ for every $\tilde q>0$ and $\limsup_{k\rightarrow\infty} {a_{k+1}}/{a_k}=\tilde{\kappa}_a<\infty$.
\end{enumerate}
\end{assumption}

We continue with several remarks. 
Convergence of the sequence $\{z_k\}$ and the condition on $\chi(z^k)$ -- as stated in assumption \ref{D1} -- can be ensured using the KL-framework, see \cref{thm:kl-convergence}. \ref{D2} and \ref{D3} are standard assumptions that guarantee sufficient smoothness and semismoothness of the normal map. Assumption \ref{D5} implies that the linear systems \eqref{eq3-2} are solved sufficiently accurate as $k$ increases and that the tolerance parameter $\epsilon_k$ is connected to the residual $\Fnor{z_k}$. In \ref{D4}, we formulate our main curvature and boundedness assumptions which are related to (but weaker than) the CD-regularity of the normal map $\oFnor$ at $\bar z$. Notice that positive semidefiniteness of $D_kM_k$ can be ensured if $B_k$ is positive semidefinite. Under certain structural properties on $\varphi$, it is possible to connect positive semidefiniteness of the matrices $D_kM_k$ to a second order optimality condition for $\min_x \, \psi(x)$. We refer to \cref{sec:sop} for further details. We will discuss assumption \ref{D4} in more detail in the next subsections and analyze its connection to second-order conditions for problem \eqref{eq1-1}. 
A similar variant of the Dennis-Mor\'{e} condition in \ref{D7} has been utilized recently in \cite{mannelhybrid}. \ref{D7} is obviously satisfied when we work with the full Hessian $B_k = \nabla^2 f(x_k)$.
In \cref{sec:qnm}, we verify that this form of the Dennis-Mor\'{e} condition does hold under suitable assumptions when the Hessian approximations $B_k$ are built via BFGS updates.

Finally, we propose the following choice for $\{\nu_k\}$ and $\{a_k\}$:  
\be \label{eq:nuk-ak} \nu_k :=\min\{\nu,a^2_{n_\cS(k)}\|\prox{z_k+s_k}-x_k\|^{2p}\} \quad \text{and} \quad a_k := k^{p}\log^{2p}(k). \ee
Then, the assumptions \ref{B1}, \ref{C3}, and \ref{D6} are all satisfied for every $p>0$ (with $\tilde \kappa_a = 1$). Next, we state the main convergence result of this section.

\begin{thm} \label{thm:main-local-conv}
Suppose that the conditions \ref{A1}--\ref{A2},  \ref{B1}, \ref{B3}, and \ref{D1}--\ref{D6} are satisfied. Furthermore, let us assume that \cref{algo2} does not terminate after finitely many steps and we choose $\Delta_{\min}>0$. Then, we have:
\begin{itemize}
\item Every trust region step is eventually successful, i.e., there exists $\bar K \in \N$ such that $k \in \cS$ for all $k \geq \bar K$ and the sequence $\{z_k\}$ converges q-superlinearly to $\bar z$. 
\item In addition, if $\proxs$ is $\beta$-order semismooth at $\bar z$ for $\beta \in (0,1]$, the function $\mathcal E$ in \ref{D5} satisfies $\mathcal E(h) = O(\|h\|^{1+\beta})$, $h \to 0$, and if we choose $B_k = \nabla^2 f(x_k)$ and $\nabla^2 f$ is Lipschitz continuous near $\bar x$, then the rate is of order $1+\beta$. 
\end{itemize}
\end{thm}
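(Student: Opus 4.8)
The plan is to prove that once $z_k$ is close to $\bar z$ the trust region constraint becomes inactive, the iteration reduces to a pure inexact semismooth Newton step, and that step is always accepted. I would first bound the quality of the lifted step, assuming the full step is computed (i.e. that \cref{lemma3-8}\,(ii) applies). Writing $M_k = B_kD_k + \Lambda(I-D_k)$ and $\tilde M_k := \nabla^2 f(x_k)D_k + \Lambda(I-D_k) \in \mathcal M^\Lambda(z_k)$, semismoothness of $\oFnor$ with respect to $\mathcal M^\Lambda$ (\cref{prop2-6}, via \ref{D2}--\ref{D3}) and $\Fnor{\bar z}=0$ give $\|\Fnor{z_k} - \tilde M_k(z_k-\bar z)\| = o(\|z_k-\bar z\|)$. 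Since $x_k - \bar x = D_k(z_k-\bar z) + o(\|z_k-\bar z\|)$, the Dennis--Mor\'{e} condition \ref{D7} and the bound \ref{B3} yield $\|(M_k-\tilde M_k)(z_k-\bar z)\| = \|(B_k-\nabla^2 f(x_k))D_k(z_k-\bar z)\| = o(\|z_k-\bar z\|)$. Combining this with $\|M_k^{-1}\|\le\kappa_M$ (\ref{D4}) and the inexactness estimate from \cref{lemma2-1} and \ref{D5} — which bounds the lifted residual $\|M_k\bar s_k+\Fnor{z_k}\|\le\|I-B_k\Lambda^{-1}\|\epsilon_k = o(\|\Fnor{z_k}\|)$ — I obtain $\bar s_k = -M_k^{-1}\Fnor{z_k} + o(\|z_k-\bar z\|)$ and hence $\|z_k+\bar s_k-\bar z\| = o(\|z_k-\bar z\|)$. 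The bound $\|M_k^{-1}\|\le\kappa_M$ also provides strong metric subregularity $\|z_k-\bar z\| = O(\chi(z_k))$, letting me interchange $\|z_k-\bar z\|$, $\|\Fnor{z_k}\|$, and $\chi(z_k)$ freely.

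Next I would break the circularity between ``the step is accepted'' and ``the radius stays bounded below''. Because $\chi(z_k)\to 0$ and $\|M_k^{-1}\Fnor{z_k}\|\le\kappa_M\|\Fnor{z_k}\|\to0$, there is $K_1$ with $\|M_k^{-1}\Fnor{z_k}\|<\Delta_{\min}$ for $k\ge K_1$. For such $k$ with $\Delta_k\ge\Delta_{\min}$, \cref{lemma3-8}\,(ii) makes the Steihaug--CG solver return $\bar q_k$ with $\|D_k^\top(M_k\bar q_k+\Fnor{z_k})\|\le\epsilon_k$; the previous paragraph then applies, and moreover $\|\bar s_k\|_\Lambda\to0$ forces $s_k=\bar s_k$, so the trust region is inactive. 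By \cref{lemma2-7} there are infinitely many successful iterations, so I may pick a successful $k_0\ge K_1$ with $z_{k_0+1}$ in the neighborhood; \cref{algo3} then gives $\Delta_{k_0+1}\ge\Delta_{\min}$. I would then induct: whenever $k\ge k_0+1$ and $\Delta_k\ge\Delta_{\min}$, the full Newton step is taken and — by the acceptance estimate below — satisfies $\rho_k\ge\eta_1$, so \cref{algo3} again returns $\Delta_{k+1}\ge\Delta_{\min}$. Hence all iterations beyond $\bar K:=k_0+1$ are successful with $s_k=\bar s_k$.

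The crux is the acceptance estimate, which I would establish by showing $\rho_k\to1$ rather than using \cref{lemma2-5} with $\alpha=1$ (whose error terms are of the same order $\chi(z_k)^2$ as the leading descent). Setting $\varrho:=\psi\circ\proxs$, I split
\[
\mer(z) - \mer(\bar z) = \big[\varrho(z) - \varrho(\bar z)\big] + \tfrac{\tau}{2}\chi(z)^2 .
\]
The decisive observation is that the first-order part of $\varrho$ vanishes at $\bar z$: its first-order term is $D^\top\Fnor{\bar z}=0$ for $D\in\partial\prox{\bar z}$, so $\varrho(z)-\varrho(\bar z)=O(\|z-\bar z\|^2)$ near $\bar z$ (using \ref{D2} and the Lipschitz/semismoothness of $\proxs$), and $\varrho(z)-\varrho(\bar z)\ge0$ because \ref{D4} encodes the second-order sufficient condition under which $\bar x$ is a local minimizer with quadratic growth. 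Evaluating at the superlinear iterate $z_k+\bar s_k$ gives $\varrho(z_k+\bar s_k)-\varrho(\bar z)=O(\|z_k+\bar s_k-\bar z\|^2)=o(\chi(z_k)^2)$ and $\chi(z_k+\bar s_k)^2=o(\chi(z_k)^2)$, whence $\mer(z_k+\bar s_k)-\mer(\bar z)=o(\chi(z_k)^2)$, while $\mer(z_k)-\mer(\bar z)\ge\tfrac{\tau}{2}\chi(z_k)^2$; therefore $\mathrm{ared}_k\ge\tfrac{\tau}{2}\chi(z_k)^2(1-o(1))$. On the other side, $\mathrm{pred}_k=\tfrac{\tau}{2}\chi(z_k)^2+\nu_k\|\prox{z_k+s_k}-x_k\|_\Lambda^2$ for $k$ large (since $\chi(z_k)\le\min\{1,\Delta_k\}$), and \ref{D6} together with $\|\prox{z_k+s_k}-x_k\|_\Lambda=O(\chi(z_k))$ and the fast (at least geometric) decay of $\chi$ maintained along the induction forces $\nu_k\|\prox{z_k+s_k}-x_k\|_\Lambda^2=o(\chi(z_k)^2)$, so $\mathrm{pred}_k=\tfrac{\tau}{2}\chi(z_k)^2(1+o(1))$. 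Consequently $\rho_k\to1\ge\eta_2$. Controlling $\varrho$ through the vanishing of $D^\top\Fnor{\bar z}$ and the second-order content of \ref{D4} so that the $\psi$-part does not swamp the $O(\chi(z_k)^2)$ descent of the normal-map part is the main obstacle, and the whole argument is naturally organized as a simultaneous induction with the second paragraph.

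With acceptance and $s_k=\bar s_k$ in force for $k\ge\bar K$, the step-quality bound gives $\|z_{k+1}-\bar z\|=\|z_k+\bar s_k-\bar z\|=o(\|z_k-\bar z\|)$, i.e. q-superlinear convergence. For the order-$(1+\beta)$ refinement I would rerun the step-quality estimate tracking orders: $\beta$-order semismoothness upgrades the semismooth remainder to $O(\|z_k-\bar z\|^{1+\beta})$, the assumption $\mathcal E(h)=O(\|h\|^{1+\beta})$ upgrades the inexactness residual likewise, and with $B_k=\nabla^2 f(x_k)$ (so \ref{D7} holds trivially) the Hessian-mismatch term disappears while Lipschitz continuity of $\nabla^2 f$ contributes only $O(\|x_k-\bar x\|\,\|z_k-\bar z\|)=O(\|z_k-\bar z\|^2)$; since $\beta\le1$, all remainders are $O(\|z_k-\bar z\|^{1+\beta})$ and $\|z_{k+1}-\bar z\|=O(\|z_k-\bar z\|^{1+\beta})$.
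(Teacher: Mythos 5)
Your overall architecture matches the paper's: step quality via semismoothness of $\oFnor$, the Dennis--Mor\'{e} condition, the CG inexactness bound from \cref{lemma2-1}, and bounded invertibility from \ref{D4}; the use of $\Delta_{\min}>0$ after a successful iteration to keep the radius large enough for \cref{lemma3-8}\,(ii); and an acceptance estimate built on the second-order expansion $\psi(\prox{z})-\psi(\prox{\bar z})=\frac12\iprod{z-\bar z}{D^\top M(z-\bar z)}+o(\|z-\bar z\|^2)$ (your $\varrho$-decomposition is essentially \cref{prop3-2} and \cref{lemma3-3} rephrased). One imprecision there: \ref{D4} is a condition along the iterates involving $B_k$, not a second-order sufficient condition at $\bar z$, so it does not give $\varrho(z)\geq\varrho(\bar z)$ for all nearby $z$; what it does give (via \cref{lemma:pos-pos}, \ref{B3}, and \ref{D7}) is $\varrho(z_k)-\varrho(\bar z)\geq\frac{\bar\sigma}{2}\|D_k(z_k-\bar z)\|_\Lambda^2-o(\|z_k-\bar z\|^2)$ along the sequence, which is all your $\mathrm{ared}_k$ bound actually needs.

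The genuine gap is in the treatment of the second term of $\mathrm{pred}_k$. Acceptance reduces to showing $\eta_1\mu_k\leq\frac{\bar\sigma}{2}$, and by \ref{D6} this amounts to requiring $a_{n_\cS(k)}^2\|\prox{z_k+s_k}-x_k\|_\Lambda^{2p}$ (equivalently, a constant times $a_{n_\cS(k)}^2\|\Fnor{z_k}\|^{2p}$) to fall below a threshold determined by the unknown curvature constant $\bar\sigma$. You assert this is forced by ``the fast (at least geometric) decay of $\chi$ maintained along the induction,'' but that decay is only available \emph{after} the induction has begun; your base case (a successful $k_0$ with $\Delta_{k_0+1}\geq\Delta_{\min}$) gives no control on $a_{n_\cS(k_0+1)}^2\|\Fnor{z_{k_0+1}}\|^{2p}$, since $a_j$ may grow like $j^{\alpha/2}$ while nothing yet relates $\|\Fnor{z_k}\|$ to $n_\cS(k)$. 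If this quantity exceeds the threshold, the step is rejected and the induction never starts. The paper closes this hole with a separate argument: \cref{cor:sec04:conv-str} (which is where \ref{B3} enters) gives $\sum_{k\in\cS}\chi(z_k)^2<\infty$, hence $\liminf n_\cS(k)\|\Fnor{z_k}\|^2=0$, and combined with $\limsup_k a_k^2/k^{1+p}<\infty$ from \ref{D6} this yields $\liminf a_{n_\cS(k)}^2\|\Fnor{z_k}\|^{2+2p}=0$, producing one admissible index $\ell$; the superlinear contraction together with $\limsup a_{k+1}/a_k<\infty$ then propagates the smallness condition to all subsequent iterations. Your proposal contains no substitute for this existence argument, and without it the acceptance claim — and hence the whole ``eventually successful'' conclusion — is unproven. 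The remaining parts (superlinear rate once acceptance holds, and the order-$(1+\beta)$ refinement) are fine.
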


The proof of \cref{thm:main-local-conv} is split into several parts. To show acceptance of the trust region steps, we investigate the behavior of the reduction ratio $\rho_k$ and the descent properties of $\mer$ along the directions returned by the Steihaug-CG method. This analysis is carried out in \cref{sec:des-mer}. In particular, we will see that the matrix $D M$ essentially captures the curvature of the nonsmooth mapping $\psi \circ \proxs$. In \cref{sec:d4-alt}, we derive an equivalent formulation of  \ref{D4} that is used as a key tool in \cref{sec:des-mer}. Finally, in \cref{sec:proof-loc}, we combine our observations, assumptions, and results and provide the full proof. 

\subsection{An Alternative Formulation of Condition~\ref{D4}} \label{sec:d4-alt}

We first present an alternative characterization of the positive semidefiniteness and invertibility condition mentioned in assumption~\ref{D4}. 

\begin{lemma} \label{lemma:pos-pos} Let $B, D \in \mathbb{S}^n$ and $\lambda > 0$ be given. Let us set $M := B D + \frac{1}{\lambda} (I-D)$. If $D M$ is positive semidefinite and $M$ is invertible, then it holds that 
\be \label{eq:pos-pos} \iprod{h}{DM h} \geq \sigma \|Dh\|^2 \quad \forall~h \in \Rn, \ee
where $\sigma := \|DM^{-1}\|^{-1}$. Conversely, assume that \eqref{eq:pos-pos} holds for some $\sigma > 0$. Then, $DM$ is positive semidefinite and $M$ is invertible with $\|M^{-1}\| \leq 1+ \sigma^{-1}\|I-\lambda B\|$. 
\end{lemma}

\begin{proof} We start with the proof of the first part. Let $h \in \Rn$ be arbitrary and let us set $s := (I - \sigma M^{-1} D)h$. Using the symmetry of $D, D M$ and $M^{-\top}DM = D$, we have
\begin{align*}
\iprod{s}{DMs} & = \iprod{h - \sigma M^{-1} Dh}{D(M-\sigma D)h}  \\ &= \iprod{h}{D M h} - \sigma \|Dh\|^2- \sigma\iprod{M^{-1}Dh}{DM h} + \sigma^2 \iprod{M^{-1} Dh}{D^2 h} \\ & \leq \iprod{h}{DMh} - 2\sigma \|Dh\|^2 + \sigma^2\|DM^{-1}\| \|Dh\|^2.
\end{align*}
Hence, the positive semidefiniteness of $DM$ implies \eqref{eq:pos-pos}. Now, assume that condition \eqref{eq:pos-pos} holds for some $\sigma > 0$. Then, $D M$ is obviously positive semidefinite. Next, let $y$ be arbitrary with $My = 0$. Then, due to \eqref{eq:pos-pos}, we have $Dy = 0$ and we can infer
\[ M y = 0 \quad \iff \quad (B-\tfrac{1}{\lambda}I)Dy + \tfrac{1}{\lambda} y = 0 \quad \implies \quad y = 0. \]
Thus, $M$ is invertible. Let $y$ and $r$ now be given with $My = -r$. By \cref{lemma2-1}, we know that $\bar y = y - \lambda(My + r) = (I-\lambda B)Dy - \lambda r$ satisfies $M\bar y = - r$. Consequently, it follows $y = \bar y$ and applying \eqref{eq:pos-pos}, we obtain
\[ \|Dy\|^2 \leq {\sigma^{-1}} \iprod{y}{DM y} = - {\sigma^{-1}} \iprod{Dy}{r} \leq {\sigma^{-1}} \|Dy\| \|r\|. \]
This implies $\|Dy\| \leq \sigma^{-1}\|r\|$ and hence, it holds that $\|y\| = \|(I-\lambda B)Dy - \lambda r\| \leq \|I-\lambda B\|\|Dy\| + \lambda \|r\| \leq (1+ \sigma^{-1}\|I-\lambda B\|) \|r\|$
which concludes the proof. \end{proof}

Hence, under condition \ref{D4}, we see that there exists $\bar \sigma > 0$ such that
\be \label{eq:dm-lwb} \iprod{h}{D_k M_k h} \geq \bar \sigma \|D_k h\|^2 \quad \forall~h \in \Rn, \ee
for all $k \geq K$. Furthermore, if assumption \ref{B3} and \eqref{eq:dm-lwb} are satisfied, then \cref{lemma:pos-pos} implies that \ref{D4} has to hold.  

\subsection{Local Descent Properties of the Merit Function $H_\tau$} \label{sec:des-mer}

We start with an expansion result that allows to interpret the matrices $DM$ as curvature terms of $\psi \circ \proxs$. As mentioned in \cref{sec:tr-glob}, this provides a rigorous explanation of the specific choice of our trust region model. 

\begin{proposition}
\label{prop3-2}
Let $\bar{z}$ be a zero of $\oFnor$ and assume that $f$ is twice continuously differentiable in a neighborhood of $\prox{\bar z}$ and $\proxs$ is semismooth at $\bar z$. Then, we have
$$ \psi(\prox{z})-\psi(\prox{\bar{z}})=\frac{1}{2}\langle z-\bar{z}, DM(z-\bar{z})\rangle+o(\|z-\bar{z}\|^2) \quad \text{as} \quad z \to \bar z,$$
for all $D \in \partial \prox{z}$ and $M = \nabla^2 f(\prox{z})D +\frac{1}{\lambda} (I-D) \in \mathcal M^\lambda(z)$. 
\end{proposition}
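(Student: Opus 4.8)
The plan is to work with the identity $\psi(\prox{z}) = f(\prox{z}) + \env{z} - \tfrac12\|z - \prox{z}\|_\Lambda^2$ (already used to motivate the model $m_k$) and to expand each of the three resulting terms to second order around $\bar z$. Throughout I abbreviate $x := \prox{z}$, $\bar x := \prox{\bar z}$, and $w := z - \bar z$. Since $\proxs$ is $\Lambda$-nonexpansive we have $\|x - \bar x\| = O(\|w\|)$, and the assumed semismoothness of $\proxs$ at $\bar z$ yields both $x - \bar x = D w + o(\|w\|)$ for every $D \in \partial\prox{z}$ and $x - \bar x = (\proxs)^\prime(\bar z; w) + o(\|w\|)$, where $(\proxs)^\prime(\bar z; \cdot)$ is the positively homogeneous directional derivative; subtracting, $(\proxs)^\prime(\bar z; w) = Dw + o(\|w\|)$. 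I will use repeatedly that $\bar z$ being a zero of $\oFnor$ is equivalent to $\Lambda(\bar z - \bar x) = -\nabla f(\bar x) = \nabla \env{\bar z}$, and that $\Lambda D$, $\Lambda(I-D)$ are symmetric and bounded by \cref{prop2-5}.

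First I would expand the smooth and quadratic pieces. A second-order Taylor expansion of $f$ (using \ref{D2} and $\|x-\bar x\|=O(\|w\|)$) gives $f(x) - f(\bar x) = \iprod{\nabla f(\bar x)}{Dw + \varepsilon} + \tfrac12\iprod{Dw}{\nabla^2 f(\bar x)Dw} + o(\|w\|^2)$, where $\varepsilon := (x-\bar x) - Dw = o(\|w\|)$. Writing $z - x = (\bar z - \bar x) + (I-D)w - \varepsilon$ and expanding directly, the term $-\tfrac12[\|z-x\|_\Lambda^2 - \|\bar z - \bar x\|_\Lambda^2]$ becomes $-\iprod{\Lambda(\bar z-\bar x)}{(I-D)w} + \iprod{\Lambda(\bar z-\bar x)}{\varepsilon} - \tfrac12\|(I-D)w\|_\Lambda^2 + o(\|w\|^2)$. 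Both expansions produce a first-order-small contribution proportional to $\varepsilon$; since $\nabla f(\bar x) = -\Lambda(\bar z - \bar x)$, these two $\varepsilon$-terms are exactly opposite and cancel.

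The crux is a genuine second-order expansion of the Moreau envelope, which I obtain from its exact gradient $\nabla\env{z} = \Lambda(z - \prox{z})$. Integrating along the segment $z_t := \bar z + tw$ gives $\env{z} - \env{\bar z} = \int_0^1 \iprod{\Lambda(z_t - \prox{z_t})}{w}\,\mathrm dt$. The key step is to control $\prox{z_t} - \prox{\bar z}$ uniformly in $t$: combining the semismoothness estimate along the ray with positive homogeneity of the directional derivative yields $\prox{z_t} - \prox{\bar z} = t\,(\proxs)^\prime(\bar z;w) + o(t\|w\|)$ uniformly on $[0,1]$, so the integrand is affine in $t$ up to a uniform $o(t\|w\|^2)$ error. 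Carrying out the integration, using $\iprod{\Lambda(\bar z-\bar x)}{w}=\iprod{\nabla\env{\bar z}}{w}$, and then replacing $(\proxs)^\prime(\bar z;w)$ by $Dw$ (which costs only $o(\|w\|^2)$) produces $\env{z} - \env{\bar z} = \iprod{\nabla\env{\bar z}}{w} + \tfrac12\iprod{w}{\Lambda(I-D)w} + o(\|w\|^2)$, where the symmetry of $\Lambda D$ is used to write $\iprod{\Lambda Dw}{w}=\iprod{w}{\Lambda D w}$. This is the main obstacle: a plain termwise application of semismoothness inside the integral leaves the generalized derivatives $D_t \in \partial\prox{z_t}$ at interior points, which semismoothness does not directly compare to the endpoint $D$; routing the argument through the directional derivative and its homogeneity is precisely what removes the $D_t$-dependence.

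Finally I would assemble the three expansions. The $\varepsilon$-proportional terms cancel as noted, and collecting the genuinely first-order-in-$w$ terms gives $\iprod{\nabla\env{\bar z}}{w - Dw - (I-D)w} = \iprod{\nabla\env{\bar z}}{0}=0$ by the normal-map optimality of $\bar z$. The surviving second-order part is $\tfrac12\iprod{Dw}{\nabla^2 f(\bar x)Dw} + \tfrac12\iprod{w}{\Lambda(I-D)w} - \tfrac12\iprod{w}{(I-D)^\top\Lambda(I-D)w}$; since $\Lambda$ is symmetric and $I - (I-D)^\top = D^\top$, the last two terms combine into $\tfrac12\iprod{w}{D^\top\Lambda(I-D)w}$, and continuity of $\nabla^2 f$ together with $\|x-\bar x\|=O(\|w\|)$ lets me replace $\nabla^2 f(\bar x)$ by $\nabla^2 f(x)$ modulo $o(\|w\|^2)$. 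This yields exactly $\tfrac12\iprod{w}{D^\top M w} + o(\|w\|^2)$ with $M = \nabla^2 f(\prox{z})D + \Lambda(I-D)$, as claimed. Self-consistency across different choices $D \in \partial\prox{z}$ is automatic, since any two such matrices agree on $w$ up to $o(\|w\|)$ and hence give the same quadratic form modulo $o(\|w\|^2)$.
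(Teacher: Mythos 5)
Your proof is correct and follows essentially the same route as the paper's: the same decomposition $\psi(\prox{z}) = f(\prox{z}) + \env{z} - \tfrac12\|z-\prox{z}\|_\Lambda^2$, the same use of $\Fnor{\bar z}=0$ (i.e.\ $\nabla f(\bar x) = -\Lambda(\bar z - \bar x) = -\nabla\env{\bar z}$) to cancel the first-order and the $o(\|w\|)$-remainder terms, and the same algebra collecting $\tfrac12\iprod{w}{D^\top\Lambda(I-D)w} + \tfrac12\iprod{Dw}{\nabla^2 f(\prox{z})Dw}$ into $\tfrac12\iprod{w}{D^\top M w}$. The only substantive difference is that where the paper invokes \citep[Proposition 7.4.10]{facchinei2007finite} for the second-order expansion of the Moreau envelope (as a $C^1$ function with semismooth gradient), you re-derive that expansion from scratch via the integral representation of $\env{z}-\env{\bar z}$ together with the uniform-in-$t$ semismoothness estimate along the segment and positive homogeneity of $(\proxs)^\prime(\bar z;\cdot)$ — a valid, self-contained inlining of the cited result.
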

\begin{proof} By assumption, $\envs$ is semismooth differentiable at $\bar{z}$, i.e, it is continuously differentiable around $\bar z$ with semismooth gradient at $\bar z$. By \citep[Proposition 7.4.10]{facchinei2007finite}, this implies
\begin{align}  \env{z}-\env{\bar{z}} & =\langle\nabla\env{\bar{z}},z-\bar{z}\rangle +\frac{1}{2\lambda}\langle z-\bar{z},(I-D)(z-\bar{z})\rangle+o(\|z-\bar{z}\|^2) \label{eq5-1} \end{align}
for every $D \in \partial \prox{z}$ and for $z \to \bar z$. Moreover, setting $x = \prox{z}$ and $\bar{x}=\prox{\bar{z}}$, we obtain
\begin{align}\label{eq5-2}
f(x)-f(\bar{x}) & 
=\langle \nabla f(\bar{x}),x-\bar{x} \rangle+\frac{1}{2}\langle x-\bar{x},\nabla^2 f(x)(x-\bar{x})\rangle + o(\|x-\bar{x}\|^2)
\end{align}
as $x \to \bar x$. Due to the Lipschitz continuity of $\proxs$, it follows $\|x-\bar x\| = O(\|z-\bar z\|)$ and $o(\|x-\bar{x}\|^2)=o(\|z-\bar{z}\|^2)$. Using \eqref{eq5-1}, $\nabla \env{\bar z} = \frac{1}{\lambda}(\bar z - \bar x)$, and $\frac12 \|\bar z -\bar x\|^2 + \frac12 \|z - \bar z\|^2 = \frac12 \|z-x\|^2 + \frac{1}{2}\|x-\bar x\|^2 + \iprod{\bar z - z}{\bar z - \bar x} + \iprod{z-x}{x-\bar x}$, we have
\begin{align}\label{eq5-3}
\varphi(x)-\varphi(\bar{x}) & =\env{z}-\env{\bar{z}}-\frac{1}{2\lambda}\|z-x\|^2+\frac{1}{2\lambda}\|\bar{z}-\bar{x}\|^2\notag\\
& \hspace{-0ex}=\frac{1}{\lambda}\langle \bar{z}-\bar{x},z-\bar{z}\rangle+\frac{1}{2\lambda}\|z-\bar{z}\|_{I-D}^2-\frac{1}{2\lambda}\|z-x\|^2 +\frac{1}{2\lambda}\|\bar{z}-\bar{x}\|^2+o(\|z-\bar{z}\|^2)\notag\\
& \hspace{-0ex}=\frac{1}{\lambda}\langle z-x,x-\bar{x}\rangle-\frac{1}{2\lambda}\|z-\bar{z}\|^2_{D}+\frac{1}{2\lambda}\|x-\bar{x}\|^2+o(\|z-\bar{z}\|^2)
\end{align}
for all $D \in \partial\prox{z}$ and $z \to \bar z$. Notice that the semismoothness of $\proxs$ implies $x - \bar x = D(z-\bar z) + o(\|z-\bar z\|)$ for all $D \in \partial \prox{z}$ and $z \to \bar z$. Combining this with \eqref{eq5-2}, \eqref{eq5-3}, and $\nabla f(\bar x) = \frac{1}{\lambda}(\bar x - \bar z)$, it follows
\begin{align}\label{eq5-4}
\psi(x)-\psi(\bar{x})\notag &=\frac{1}{\lambda}\langle z-\bar{z}-(x - \bar{x}),x-\bar{x} \rangle+\frac{1}{2}\langle x-\bar{x},\nabla^2 f(x)(x-\bar{x})\rangle\notag \\ & \hspace{4ex} -\frac{1}{2\lambda}\|z-\bar{z}\|^2_{D}+\frac{1}{2\lambda}\|x-\bar{x}\|^2+o(\|z-\bar{z}\|^2)\notag\\
&=\frac{1}{2\lambda}\|z-\bar{z}\|_{D}^2+\frac{1}{2}\langle D(z-\bar{z}),\nabla^2 f(\prox{z})D(z-\bar{z})\rangle -\frac{1}{2\lambda}\|x-\bar{x}\|^2+o(\|z-\bar{z}\|^2)\notag\\
&=\frac{1}{2}\langle z-\bar{z}, DM(z-\bar{z})\rangle+o(\|z-\bar{z}\|^2)
\end{align}
for all $D \in \partial\prox{z}$, $M = \nabla^2 f(\prox{z})D + \frac{1}{\lambda}(I-D)$ and $z \to \bar z$. \end{proof}
 
Next, we study the descent properties of the merit function $\mer$ along a sequence of directions $\{d_k\}$ that converges superlinearly with respect to $\{z_k\}$.  

\begin{lemma}\label{lemma3-3}
Assume that the conditions \ref{A1}, \ref{B3}, and \ref{D1}--\ref{D7} are satisfied and let $\{d_k\}$ be a superlinearly convergent sequence in the following sense
\be \label{eq:p-sup-dir} \|z_k + d_k - \bar z\| = o(\|z_k - \bar z\|) \quad k \to \infty. \ee
Then, for all $\eta \in (0,1)$ there exist $\bar \sigma > 0$ (which is independent of $\eta$) and $K_\eta \geq K$ such that for all $k \geq K_\eta$
\be \label{eq:p-des} \mer(z_k+d_k)-\mer(z_k) \leq - \tfrac{\bar \sigma}{2\lambda} \|\prox{z_k+d_k} - x_k\|^2 -\tfrac{\eta\tau\lambda}{2}\|\Fnor{z_k}\|^2. \ee
\end{lemma}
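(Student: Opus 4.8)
The plan is to split the merit increment as
$\mer(z_k+d_k)-\mer(z_k) = [\psi(p_k)-\psi(x_k)] + \tfrac{\tau}{2}[\|\Fnor{z_k+d_k}\|_{\Lambda^{-1}}^2-\|\Fnor{z_k}\|_{\Lambda^{-1}}^2]$, where $x_k:=\prox{z_k}$, $p_k:=\prox{z_k+d_k}$, $\bar x:=\prox{\bar z}$, and to show that the smooth-composite part is bounded by $-\tfrac{\bar\sigma}{2}\|p_k-x_k\|_\Lambda^2$ up to a higher-order error, while the normal-map part is $-\tfrac{\tau}{2}\|\Fnor{z_k}\|_{\Lambda^{-1}}^2$ up to a higher-order error; the residual $o(\|z_k-\bar z\|^2)$ is then absorbed into $-\tfrac{\tau}{2}\chi(z_k)^2$ to sharpen the coefficient to $\eta\tau$. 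Since $\chi(z_k)\to 0$ (the global analysis of \cref{thm:global_conv}) and $\chi$ is continuous, $\bar z$ is a zero of $\oFnor$, so \cref{prop3-2} is applicable at $\bar z$.

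First I would extract the consequences of the superlinearity $\|z_k+d_k-\bar z\|=o(\|z_k-\bar z\|)$ together with the semismoothness \ref{D3} of $\proxs$. By $\Lambda$-nonexpansiveness, $\|p_k-\bar x\|_\Lambda\le\|z_k+d_k-\bar z\|_\Lambda=o(\|z_k-\bar z\|)$, and semismoothness gives $x_k-\bar x=D_k(z_k-\bar z)+o(\|z_k-\bar z\|)$. Combining these yields $p_k-x_k=-(x_k-\bar x)+o(\|z_k-\bar z\|)$, so the three quantities $\|p_k-x_k\|_\Lambda^2$, $\|x_k-\bar x\|_\Lambda^2$, and $\|D_k(z_k-\bar z)\|_\Lambda^2$ all coincide up to an $o(\|z_k-\bar z\|^2)$ error and may be freely interchanged at second order.

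For the smooth part, \cref{prop3-2} applied at $z_k$ with the algorithmic $D_k$ gives $\psi(x_k)-\psi(\bar x)=\tfrac12\langle z_k-\bar z, D_k^\top\hat M_k(z_k-\bar z)\rangle+o(\|z_k-\bar z\|^2)$, where $\hat M_k:=\nabla^2 f(x_k)D_k+\Lambda(I-D_k)$ carries the \emph{true} Hessian, while applying it at $z_k+d_k$ (with any generalized derivative, whose quadratic form is bounded via \cref{prop2-5} and continuity of $\nabla^2 f$) gives $\psi(p_k)-\psi(\bar x)=O(\|z_k+d_k-\bar z\|^2)=o(\|z_k-\bar z\|^2)$. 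The pivotal step is to replace $\hat M_k$ by the algorithmic $M_k=B_kD_k+\Lambda(I-D_k)$: since $D_k^\top(M_k-\hat M_k)=D_k^\top(B_k-\nabla^2 f(x_k))D_k$ and $D_k(z_k-\bar z)=(x_k-\bar x)+o(\|z_k-\bar z\|)$, the Dennis--Mor\'e condition \ref{D7} forces $\langle z_k-\bar z, D_k^\top(M_k-\hat M_k)(z_k-\bar z)\rangle=o(\|z_k-\bar z\|^2)$. Subtracting the two expansions and invoking the lower bound \cref{eq:dm-lwb}, namely $\langle z_k-\bar z, D_k^\top M_k(z_k-\bar z)\rangle\ge\bar\sigma\|D_k(z_k-\bar z)\|_\Lambda^2$ (valid for $k\ge K$ under \ref{D4}/\ref{B3}, with $\bar\sigma$ independent of $\eta$), yields $\psi(p_k)-\psi(x_k)\le-\tfrac{\bar\sigma}{2}\|p_k-x_k\|_\Lambda^2+o(\|z_k-\bar z\|^2)$.

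For the normal-map part, the Lipschitz estimate \cref{eq:fnor-lipschitz} with $\Fnor{\bar z}=0$ gives $\chi(z_k+d_k)\le L_\chi\|z_k+d_k-\bar z\|_\Lambda=o(\|z_k-\bar z\|)$, so $\tfrac{\tau}{2}\|\Fnor{z_k+d_k}\|_{\Lambda^{-1}}^2=o(\|z_k-\bar z\|^2)$ and the whole difference equals $-\tfrac{\tau}{2}\|\Fnor{z_k}\|_{\Lambda^{-1}}^2+o(\|z_k-\bar z\|^2)$. Adding the two parts produces the claimed inequality up to a residual $o(\|z_k-\bar z\|^2)$, which I must absorb into $-\tfrac{\tau}{2}\chi(z_k)^2$; this requires the local error bound $\chi(z_k)\ge c\|z_k-\bar z\|$. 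I obtain it from the semismoothness of $\oFnor$ with respect to $\mathcal M^\Lambda$ (\cref{prop2-6}), which gives $\Fnor{z_k}=\hat M_k(z_k-\bar z)+o(\|z_k-\bar z\|)$; applying the Dennis--Mor\'e bound once more to swap $\hat M_k$ for $M_k$ and then using the invertibility $\|M_k^{-1}\|\le\kappa_M$ from \ref{D4} gives $\|\Fnor{z_k}\|\ge\kappa_M^{-1}\|z_k-\bar z\|-o(\|z_k-\bar z\|)$, hence $\chi(z_k)^2\ge c^2\|z_k-\bar z\|^2$ for large $k$. Consequently $o(\|z_k-\bar z\|^2)=o(\chi(z_k)^2)\le\tfrac{(1-\eta)\tau}{2}\chi(z_k)^2$ for all $k\ge K_\eta$, which closes the argument. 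The main obstacle is the twofold bookkeeping around \ref{D7}: it is needed both to pass from the exact-Hessian form in \cref{prop3-2} to $D_k^\top M_k$ and to transfer the invertibility/error bound from the exact normal map to $M_k$, and each use hinges on $D_k(z_k-\bar z)$ and $x_k-\bar x$ agreeing to first order, so the residuals must be tracked carefully to remain genuinely $o(\|z_k-\bar z\|^2)$.
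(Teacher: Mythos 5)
Your proposal is correct and follows essentially the same route as the paper's proof: the same decomposition of $\mer$, the two applications of \cref{prop3-2} (at $z_k+d_k$ and at $z_k$), the Hessian swap $\hat M_k \to M_k$ via \ref{D7}, the curvature bound \cref{eq:dm-lwb} from \ref{D4}, the Lipschitz estimate for $\chi(z_k+d_k)$, and the final absorption of the $o(\|z_k-\bar z\|^2)$ residual through the error bound $\|z_k-\bar z\| \lesssim \kappa_M \chi(z_k)$ obtained from semismoothness, \ref{D7}, and the bounded invertibility of $M_k$ (the paper's \cref{eq:use-later}). No gaps.
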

\begin{proof} We first set $\bar{x}=\prox{\bar{z}}$ and $\hat x_{k+1} := \prox{z_k+d_k}$. Then, by applying \cref{prop3-2} and \eqref{eq:p-sup-dir}, it follows
\[ \psi(\hat x_{k+1}) - \psi(\bar x) = \half \iprod{z_k + d_k - \bar z}{\hat D_k \hat H_k (z_k+d_k - \bar z)} + o(\|z_k - \bar z\|^2), \quad k \to \infty, \]
where $\hat H_k := \nabla^2 f(\hat x_{k+1})\hat D_k + \frac{1}{\lambda}(I-\hat D_k)$ and $\hat D_k \in \partial\prox{z_k+d_k}$. Since $\{z_k+d_k\}$ converges and $f$ is twice continuously differentiable on $\dom{\vp}$, the matrices $\{\hat H_k\}$ need to be bounded. Hence, we can infer $\psi(\hat x_{k+1}) - \psi(\bar x) = o(\|z_k - \bar z\|^2)$ as $k \to \infty$. We further note that the semismoothness of the proximity operator implies
\be \label{eq:little-semi} x_k - \bar x = D_k(z_k - \bar z) + o(\|z_k - \bar z \|) \quad \text{as} \quad k \to \infty, \ee
where $D_k \in \partial \prox{z_k}$ is the generalized derivative chosen in step 2 of \cref{algo2}. Due to condition \ref{D4} and as shown in \eqref{eq:dm-lwb}, there exists $\bar \sigma > 0$ with $\iprod{h}{D_k M_k h} \geq \bar \sigma\lambda^{-1} \|D_k h\|^2$ for all $h \in \Rn$ and $k \geq K$. Using \cref{prop3-2} a second time, \eqref{eq:little-semi}, \ref{B3}, the boundedness of $\{\nabla^2 f(x_k)\}$, and \ref{D7}, this yields 
\begin{align} \psi(\hat x_{k+1}) - \psi(x_k) & = - \frac12 \iprod{z_k-\bar z}{D_k H_k(z_k - \bar z)} + o(\|z_k - \bar z\|^2) \notag \\ 
& = - \frac12 \iprod{z_k-\bar z}{D_k M_k(z_k - \bar z)} + \frac12 \iprod{z_k-\bar z}{D_k[B_k-\nabla^2f(x_k)]D_k(z_k-\bar z)} + o(\|z_k-\bar z\|^2) \notag \\ & \leq - \frac{\bar \sigma}{2\lambda} \|D_k(z_k-\bar z)\|^2 + o(\|z_k - \bar z\|^2) \label{eq:esti_1000} \end{align}
for $k \geq K$ and $k \to \infty$, where $H_k := \nabla^2 f(x_k)D_k + \frac{1}{\lambda}(I-D_k)$. Furthermore, due to the nonexpansiveness of $\proxs$, we have $\|\hat x_{k+1}-\bar{x}\|= O(\|z_{k}+d_k-\bar{z}\|)=o(\|z_k-\bar{z}\|)$ and $\|x_k - \bar x\| = O(\|z_k - \bar z\|)$. Combining this with \eqref{eq:little-semi}, we then obtain
\[ \|\hat x_{k+1} - x_k\|^2 = \|x_k - \bar x\|^2 + o(\|z_k - \bar z\|^2)  = \|D_k(z_k - \bar z)\|^2 + o(\|z_k-\bar z\|^2). \]
As verified in \eqref{eq:fnor-lipschitz} in the proof of \cref{thm:global_conv}, the normal map is Lipschitz continuous with constant $L_F$. Thus, it holds that $\|\Fnor{z_k+d_k}\|^2 =O(\|z_k+d_k-\bar{z}\|^2) = o(\|z_k-\bar{z}\|^2) $ and using \eqref{eq:esti_1000}, we can infer 
%
%
%
\begin{align*} \mer(z_k+d_k)-\mer(z_k) \leq - \frac{\bar \sigma}{2\lambda} \|\hat x_{k+1} - x_k\|^2 -\frac{\tau\lambda}{2}\|\Fnor{z_k}\|^2+o(\|z_k-\bar{z}\|^2)  \end{align*}
for $k \to \infty$. Let us define $\kappa_H := \sup_k \|\nabla^2 f(x_k)\| < \infty$. We now choose $K_\eta \geq K$ sufficiently large, such that $\|\Fnor{z_k} - \Fnor{\bar z} - H_k(z_k-\bar z) \| \leq (6\kappa_M)^{-1} \|z_k - \bar z\|$, $\|[\nabla^2 f(x_k)-B_k](x_k-\bar x) \| \leq (6\kappa_M)^{-1} \|z_k - \bar z\|$, and 
\begin{align*} \|x_k-\bar x - D_k(z_k-\bar z)\| & \leq (6\kappa_M[\kappa_B+\kappa_H])^{-1} \|z_k - \bar z\|, \\  \mer(z_k + d_k) - \mer(z_k) & \leq - \frac{\bar\sigma}{2\lambda} \|\hat x_{k+1}-x_k\|^2 -\frac{\tau\lambda}{2}\|\Fnor{z_k}\|^2 + c \|z_k - \bar z\|^2 \end{align*}
%
%
with $c = (1-\eta)\tau\lambda / (8\kappa_M^2)$ for all $k \geq K_\eta$. This is possible due to the semismoothness of $\oFnor$ and $\proxs$ at $\bar z$ (see \cref{prop2-6}) and \ref{D7}. The bounded invertibility of the matrices $\{M_k\}$ then implies
\begin{align} \nonumber \|z_k - \bar z\| & \leq \kappa_M [\|\Fnor{z_k}-\Fnor{\bar z} - H_k(z_k - \bar z)\| + \|\Fnor{z_k}\| \\ & \hspace{2ex}+ \|\nabla^2 f(x_k)-B_k\| \|x_k - \bar x - D_k(z_k-\bar z)\| \nonumber   \nonumber  + \|[\nabla^2 f(x_k)-B_k](x_k-\bar x)\| ] \\ &\leq \tfrac12 \|z_k - \bar z\| + \kappa_M \cdot \chi(z_k).  \label{eq:use-later} \end{align}
and consequently, we have $\|z_k - \bar z\|^2 \leq 4 \kappa_M^2\cdot \chi(z_k)^2$. Finally, the choice of $c$ ensures that the bound stated in \cref{lemma3-3} holds for all $k \geq K_\eta$. \end{proof}

Notice that the result in \cref{lemma3-3} remains valid if condition \eqref{eq:p-sup-dir} is only satisfied on a subsequence. In that case, the descent property \eqref{eq:p-des} still holds if it is restricted to such subsequence. 


\subsection{Acceptance of TR Steps and Proof of \cref{thm:main-local-conv}} \label{sec:proof-loc}

In the following, we provide a technical lemma that bounds the norm of a trust region step $s_k$ in terms of $\|\Fnor{z_k}\|$.

\begin{lemma}
\label{lemma6-6} Suppose that  \ref{B3} and \ref{D4} are satisfied. Then, it holds that
\[ \|s_k\| \leq \|\bar{s}_k\| \leq \kappa_s \|\Fnor{z_k}\| \quad \forall~k \geq K, \]
where $\kappa_s :=\max\{\lambda,\kappa_M\} + \kappa_M (1+ \lambda\kappa_B)$. 
\end{lemma}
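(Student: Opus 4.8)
The plan is to handle the two inequalities separately, the first being essentially free and the second being the real content. For $\|s_k\| \le \|\bar s_k\|$, note that by construction $s_k = \theta_k \bar s_k$ with the scalar $\theta_k = \min\{1,\Delta_k/\|\bar s_k\|_\Lambda\} \in [0,1]$, so that $\|s_k\| = \theta_k\|\bar s_k\| \le \|\bar s_k\|$. Everything else concerns bounding $\|\bar s_k\|$ by $\kappa_s\|\Fnor{z_k}\|$.

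First I would rewrite the lifted step in a form that isolates the role of $\bar q_k$. Starting from $\bar s_k = \bar q_k - \Lambda^{-1}(M_k\bar q_k + \Fnor{z_k})$ and inserting $M_k = B_kD_k + \Lambda(I-D_k)$, a short computation analogous to the one behind \cref{eq3-3} yields the clean identity $\bar s_k = (I - \Lambda^{-1}B_k)D_k\bar q_k - \Lambda^{-1}\Fnor{z_k}$, which shows that $\bar s_k$ depends on $\bar q_k$ only through $D_k\bar q_k$. The triangle inequality then gives $\|\bar s_k\| \le \|(I - \Lambda^{-1}B_k)D_k\bar q_k\| + \lambda_m^{-1}\|\Fnor{z_k}\|$, and the last term is absorbed into the factor $\max\{\lambda_m^{-1},\kappa_M\}$ of $\kappa_s$.

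The core of the argument is the bound $\|\bar q_k\| \le \kappa_M\|\Fnor{z_k}\|$ for $k \ge K$. The key observation is that, since $M_k$ is invertible by \ref{D4}, the full Newton step $\hat q_k := -M_k^{-1}\Fnor{z_k}$ satisfies $D_k^\top M_k\hat q_k = -D_k^\top\Fnor{z_k}$; that is, $\hat q_k$ is a solution of the very reduced system $Sq = -g$ with $S = D_k^\top M_k \succeq 0$ and $g = D_k^\top\Fnor{z_k}$ to which \cref{algo1} is applied, and it obeys $\|\hat q_k\| \le \kappa_M\|\Fnor{z_k}\|$. Because $S$ is positive semidefinite, \cref{algo1} encounters no genuine negative curvature and hence either returns an interior conjugate-gradient iterate or a point on the trust-region boundary. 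Using the monotone growth of the Steihaug-CG iterate norms \cite{steihaug1983conjugate} and the convergence of CG from the origin to the minimum-norm solution $q^\dagger$ of $Sq=-g$, one obtains $\|\bar q_k\| \le \|q^\dagger\| \le \|\hat q_k\| \le \kappa_M\|\Fnor{z_k}\|$ in the interior case; in the boundary case $\|\bar q_k\| = \Delta_k$, and truncation is triggered only once the untruncated iterate norm exceeds $\Delta_k$, which forces $\Delta_k \le \|q^\dagger\| \le \kappa_M\|\Fnor{z_k}\|$ as well.

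With this in hand I would assemble the remaining term. Using $\|\Lambda^\half D_k\Lambda^{-\half}\| \le 1$ from \cref{prop2-5}, I get $\|D_k\bar q_k\|_\Lambda \le \sqrt{\lambda_M}\|\bar q_k\| \le \sqrt{\lambda_M}\,\kappa_M\|\Fnor{z_k}\|$. Writing $\Lambda^\half(I-\Lambda^{-1}B_k) = (I-\Lambda^{-\half}B_k\Lambda^{-\half})\Lambda^\half$ and estimating $\|I-\Lambda^{-\half}B_k\Lambda^{-\half}\| \le 1 + \lambda_m^{-1}\kappa_B$ via \ref{B3} then gives $\|(I-\Lambda^{-1}B_k)D_k\bar q_k\| \le \lambda_m^{-\half}(1+\lambda_m^{-1}\kappa_B)\|D_k\bar q_k\|_\Lambda \le \kappa_M\sqrt{\kappa_\lambda}(1+\lambda_m^{-1}\kappa_B)\|\Fnor{z_k}\|$, which combines with $\lambda_m^{-1}\|\Fnor{z_k}\|$ to produce exactly $\kappa_s\|\Fnor{z_k}\|$. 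I expect the main obstacle to be the step $\|\bar q_k\| \le \kappa_M\|\Fnor{z_k}\|$: it is tempting to invoke the residual estimate of \cref{lemma3-8}(ii), but that presupposes $\|M_k^{-1}\Fnor{z_k}\| \le \Delta_k$ and is silent once the trust-region constraint truncates the CG step, so the bound must instead rest on the monotonicity of the Steihaug-CG iterate norms together with the recognition of $-M_k^{-1}\Fnor{z_k}$ as a solution of the reduced normal system.
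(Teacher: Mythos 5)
Your proof is correct, and it reaches the same constant (in fact a marginally tighter one, since your first term is $\lambda_m^{-1}$ rather than $\max\{\lambda_m^{-1},\kappa_M\}$), but it is organized quite differently from the paper's argument. The paper splits into three cases according to whether $\epsilon_k > \sqrt{\kappa_\lambda}\|\Fnor{z_k}\|$ and whether $\Delta_k \geq \|M_k^{-1}\Fnor{z_k}\|$: in the first case $\bar q_k = 0$; in the second it invokes the residual guarantee of \cref{lemma3-8}(ii) together with \cref{lemma2-1} to get $\|M_k\bar s_k + \Fnor{z_k}\| \leq \|I-B_k\Lambda^{-1}\|\epsilon_k$ and then bounds $\|\bar s_k\| \leq \kappa_M\|M_k \bar s_k\|$; in the third it uses $\|\bar q_k\| \leq \Delta_k < \|M_k^{-1}\Fnor{z_k}\| \leq \kappa_M\|\Fnor{z_k}\|$ and an estimate essentially identical to your final assembly. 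You instead prove the single uniform bound $\|\bar q_k\| \leq \kappa_M\|\Fnor{z_k}\|$ by recognizing $-M_k^{-1}\Fnor{z_k}$ as a solution of the reduced system and exploiting the monotone growth of the Steihaug-CG iterate norms toward the minimum-norm solution, which collapses all three cases into one algebraic estimate via the identity $\bar s_k = (I-\Lambda^{-1}B_k)D_k\bar q_k - \Lambda^{-1}\Fnor{z_k}$. The trade-off is that your route leans on the internal CG monotonicity argument that the paper only develops inside the appendix proof of \cref{lemma3-8} (and there under the extra hypothesis $\|M^{-1}\Fnor{z}\| \leq \Delta$, which the monotonicity itself does not need but which you would have to extract and restate), whereas the paper's case split lets it cite the stated conclusions of \cref{lemma3-8} and \cref{lemma2-1} directly; your version in exchange avoids the residual estimate entirely and is cleaner to read. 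You correctly identified the pitfall of naively applying \cref{lemma3-8}(ii) when the trust-region constraint is active.
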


\begin{proof} We discuss three different cases.

\textbf{Case 1:} $\epsilon_k > \|\Fnor{z_k}\|$. Then, we have $\|D_k\Fnor{z_k}\| \leq \|D_k\| \|\Fnor{z_k}\| <\epsilon_k$,
where we used $\|D_k\|\leq 1$. Hence, the CG-method terminates in the first step with $\bar q_k = q_0 = 0$ and $\|s_k\| \leq \|\bar{s}_k\| =\lambda\|\Fnor{z_k}\|$.

\textbf{Case 2:} $\epsilon_k\leq \|\Fnor{z_k}\|$ and $\Delta_k \geq \|M_k^{-1} \Fnor{z_k}\|$. 
In this case, \cref{lemma3-8} is applicable and it follows $\|D_k (M_k \bar q_k+ \Fnor{z_k})\|\leq\epsilon_k$. Using \cref{lemma2-1}, we  have $\|M_k\bar{s}_k+\Fnor{z_k}\| \leq \|I-\lambda B_k  \|\epsilon_k$ and
\begin{align*}  \|s_k\|\leq\|\bar{s}_k\|\leq \kappa_{M}\|M_k\bar s_k\| & \leq \kappa_{M}(\|\Fnor{z_k}\|+\|I-\lambda B_k \|\epsilon_k)\\ & \leq\kappa_{M}(2+\lambda\kappa_B) \|\Fnor{z_k}\|. \end{align*}

\textbf{Case 3:} $\epsilon_k \leq \|\Fnor{z_k}\|$ and $\Delta_k < \|M_k^{-1} \Fnor{z_k} \|$. In this case, due to \eqref{eq3-3} and $\|D_k\| \leq 1$, we obtain
\begin{align*}
\|(I-\lambda  M_k)\bar q_k\| \leq \|(I-\lambda  B_k)D_k \| \cdot \Delta_k \leq \kappa_M(1+\lambda\kappa_B) \|\Fnor{z_k}\|    
\end{align*}
and $\|s_k\|  \leq\|\bar{s}_k\|  \leq \|\lambda  \Fnor{z_k}\|+\|(I-\lambda  M_k)\bar q_k \| \leq [\lambda+\kappa_M(1+ \lambda\kappa_B)] \|\Fnor{z_k}\|$.

Combining the different cases, it follows $\|s_k\|\leq\|\bar{s}_k\|\leq \kappa_s \|\Fnor{z_k}\|$. 
\end{proof}

We now present the proof of \cref{thm:main-local-conv}. Our overall strategy is to show that the directions $\{s_k\}$ are superlinearly convergent with respect to $\{z_k\}$. As in the proof of \cref{lemma3-3}, this is mainly a consequence of the semismoothness of $\proxs$ and $\oFnor$ and of the Dennis-Mor\'{e} condition \ref{D7}. The derivation utilizes the properties of the Steihaug-CG method, $\Delta_{\min} > 0$, and \ref{B3}, \ref{D4}, and \ref{D5}. In the second part of the proof, we then discuss the behavior of the sequences $\{\nu_k\}$ and $\{\mu_k\}$ to ensure $\rho_k \geq \eta_1$ for all $k$ sufficiently large. 

\begin{proof} \cref{thm:global_conv} ensures that $\bar z$ is a solution of \eqref{normal_eq} and that the sequences $\{\chi(z_k)\}$ and $\{\Fnor{z_k}\}$ converge to zero. Furthermore, as a consequence of the assumptions \ref{D1}, \ref{D4}, and \ref{D6}, there exists $K^\prime \geq K$ such that the conditions
\begin{itemize}
\item[(a)] $\Delta_{\min}\geq \max\{\lambda,\kappa_{M},\kappa_s\}\|\Fnor{z_k} \|$ and $\chi(z_k) \leq 1$;
\item[(b)] $D_k M_k\succeq 0$ and $\|M_k^{-1}\|\leq \kappa_{M}$;
\item[(c)] ${a_{n_\cS(k)+1}}/{a_{n_\cS(k)}}\leq 2\tilde{\kappa}_{a}$
\end{itemize}
hold for all $k \geq K^\prime$. By the algorithmic construction and \cref{lemma6-6}, we then have $\Delta_k \geq \Delta_{\min} \geq \kappa_M \|\Fnor{z_k} \| \geq \|M_k^{-1} \Fnor{z_k}\|$ and $\Delta_k \geq \Delta_{\min} \geq \|\bar s_k\|$ for all $k \geq K^\prime$ and $k-1 \in \cS$. This establishes $s_k = \bar s_k$ for all $k \geq K^\prime$, $k-1 \in \cS$. Moreover, \cref{lemma3-8} (ii) is applicable and thus the CG-method will return an $\veps_k$-accurate solution $\bar q_k$ of the linear system \eqref{eq3-2}. Similar to \eqref{eq:use-later} and utilizing \cref{lemma2-1}, \ref{D4}, and \ref{D5}, this implies 
\begin{align} \label{eq:th-sq} \|z_k+ s_k - \bar z \|  & = \|z_k+ \bar s_k - \bar z \| \\ \nonumber & \hspace{-10ex} \leq \kappa_M [\|\Fnor{z_k}-\Fnor{\bar z} - M_k(z_k - \bar z)\| + \|\Fnor{z_k} + M_k \bar s_k\|] \\ \nonumber & \hspace{-10ex} \leq \kappa_M [\|\Fnor{z_k}-\Fnor{\bar z} - H_k(z_k - \bar z)\|  + (1+\lambda\kappa_B)\mathcal E(\Fnor{z_k}) \\ \nonumber & \hspace{-6ex}+ \|\nabla^2 f(x_k)-B_k\| \|x_k - \bar x - D_k(z_k-\bar z)\| + \|[\nabla^2 f(x_k)-B_k](x_k-\bar x)\| ], \end{align}
for all $k \geq K^\prime$, $k-1\in\cS$, where $H_k = \nabla^2 f(x_k)D_k + \frac{1}{\lambda}(I-D_k)$. Next, the Lipschitz continuity of $\oFnor$ (cf. \eqref{eq:fnor-lipschitz}) and condition \ref{D5} readily yield $\mathcal E(\Fnor{z_k}) = o(\|z_k - \bar z\|)$ as $k \to \infty$. The assumptions \ref{D2}--\ref{D3} then imply that $\oFnor$ is semismooth at $\bar z$ and as before due to \ref{B3}, \ref{D1}, \ref{D3}, and \ref{D7}, we have
\be \label{eq:supi} \|z_k+ s_k - \bar z \| = o(\|z_k - \bar z\|) \quad \cS^\prime \ni k \to \infty, \ee
where $\cS^\prime := \{k: k-1 \in \cS\}$. Notice that \cref{lemma2-7} ensures $\vert\cS\vert, \vert\cS^\prime\vert = \infty$. Let $L_F$ denote the Lipschitz constant of $\oFnor$. Using \cref{lemma3-3} (on the subsequence defined by $\cS^\prime$) and \eqref{eq:supi}, there are $K^{\prime\prime} \geq K^\prime$ and $\bar \sigma > 0$ such that the conditions (a)--(c), and 
\begin{itemize}

\item[(d)] $\mer(z_k+s_k)-\mer(z_k) \leq - \frac{\bar \sigma}{2\lambda}\|\prox{z_k+s_k} - \prox{z_k}\|^2 -\frac{\eta_1\tau\lambda}{2}\|\Fnor{z_k}\|^2 $;
\item[(e)] $\|z_k + s_k - \bar z\| \leq \|z_k - \bar z\| / \kappa_b$ where $\kappa_b := 2 L_F \kappa_M (2\tilde \kappa_a)^{\frac{1}{p}}$; 
\item[(f)] and $\|z_k - \bar z\| \leq 2 \kappa_M \|\Fnor{z_k}\|$
\end{itemize}
are satisfied for all $k \in \cS^{\prime\prime} := \{k: k \in \cS^\prime, \, k \geq K^{\prime\prime}\}$. Notice that condition (f) can be shown as in \eqref{eq:use-later} and is a consequence of the semismoothness of $\oFnor$. In addition, the Lipschitz continuity of $\oFnor$ and (e)--(f) imply
\be \label{eq:p-lip-bnd} \|\Fnor{z_k+s_k}\| \leq L_F \|z_k + s_k - \bar z\|  \leq \frac{L_F}{\kappa_b} \|z_k-\bar z\| \leq (2\tilde \kappa_a)^{-\frac{1}{p}} \|\Fnor{z_k}\| \ee
for all $k \in \cS^{\prime\prime}$. We now assume that there exists an index $\ell \in \N$ with 
\be \label{eq:ind-conv} \ell \in \cS^{\prime\prime}, \quad a^2_{n_\cS(\ell)}\|\Fnor{z_{\ell}}\|^{2p} \leq {\bar{\sigma}}/{(2\eta_1\kappa_s^{2p})}. \ee
Let us first estimate the parameter ${\mu}_{\ell}$. Due to condition (a) and $\ell-1\in\cS$, we have $\min\{\Delta_{\ell},\lambda\chi(z_{\ell})\}=\lambda\chi(z_{\ell})$. Hence, using \ref{D6}, \cref{lemma6-6}, \eqref{eq:ind-conv}, and (d), it follows
\begin{align*} 
 {\mu}_{\ell} & \leq \frac{\chi(z_{\ell}) \cdot a_{n_\cS(\ell)}^2\|\prox{z_{\ell}+s_{\ell}} - \prox{z_{\ell}}\|^{2p}}{\min\{\Delta_{\ell},\lambda\chi(z_{\ell})\}}  = a_{n_\cS(\ell)}^2\lambda^{-1}\cdot\|\prox{z_{\ell}+s_{\ell}} - \prox{z_{\ell}}\|^{2p} \\ & \leq a_{n_\cS(\ell)}^2\lambda^{-1}\cdot\|s_{\ell}\|^{2p} \leq \lambda^{-1}\cdot \kappa_s^{2p} a_{n_\cS(\ell)}^2 \|\Fnor{z_\ell}\|^{2p} \leq {\bar{\sigma}}/{(2\lambda\eta_1)}
\end{align*}
and $\mer(z_\ell) - \mer(z_\ell+s_\ell) \geq \eta_1\mathrm{pred}(z_{\ell},s_{\ell},\Delta_{\ell},\nu_{\ell})$ which shows $\ell \in \cS$ and $\ell+1 \in \cS^{\prime\prime}$. Moreover, by \eqref{eq:p-lip-bnd} and (c), we have 
\begin{align*} a^2_{n_\cS(\ell+1)} \|\Fnor{z_{\ell+1}}\|^{2p} & =  a^2_{n_\cS(\ell)+1} \|\Fnor{z_{\ell+1}}\|^{2p}  \leq 4\tilde \kappa_a^2 a^2_{n_\cS(\ell)} \|\Fnor{z_{\ell+1}}\|^{2p} \leq a^2_{n_\cS(\ell)} \|\Fnor{z_{\ell}}\|^{2p}. \end{align*}
Thus, the conditions in \eqref{eq:ind-conv} are also satisfied for the iteration $\ell+1$ and inductively (since the conditions (a)--(f) hold for all $k \in \cS^{\prime\prime}$), we obtain $k \in \cS$ for all $k \geq \ell - 1$. The q-superlinear convergence of $\{z_k\}$ then follows from \eqref{eq:supi} (which now holds for all $k \geq \ell$). To complete the proof, we need to verify the existence of an index $\ell$ as in \eqref{eq:ind-conv}. By \ref{D6}, we have $\limsup_{\mathcal{S}^{\prime\prime}\ni k\rightarrow\infty} {a_{n_\cS(k)}^{2}}/{n_\cS(k)^{2p(1+q)}}=0$ and it follows
\begin{align*} \liminf\limits_{\mathcal{S}^{\prime\prime}\ni k\rightarrow\infty}a_{n_\cS(k)}^2 \|\Fnor{z_k}\|^{2p} & \leq \limsup\limits_{\mathcal{S}^{\prime\prime}\ni k\rightarrow\infty}\frac{a_{n_\cS(k)}^2}{n_\cS(k)^{2p(1+q)}} \cdot \liminf\limits_{\mathcal{S}^{\prime\prime}\ni k\rightarrow\infty} [n_\cS(k)^{1+q} \|\Fnor{z_k}\|]^{2p} = 0, \end{align*}
which proves the existence of such an index $\ell$ {(where we also applied \ref{D1})}. The additional conditions in the second statement of \cref{thm:main-local-conv} imply that $\oFnor$ is $\beta$-order semismooth at $\bar z$. The estimate in \eqref{eq:th-sq} can then be improved to $\|z_k+s_k - \bar z\| \leq O(\|z_k-\bar z\|^{1+\beta})$ which proves convergence of order $1+\beta$. \end{proof}


\begin{remark} As seen in \cref{lemma3-3}, the achievable descent of the merit function $\mer$ largely depends on the curvature constant $\bar \sigma$ which is typically unknown. Hence, in order to ensure $\eta_1\mu_k \leq 0.5 \bar \sigma$, the sequences $\{\nu_k\}$ and $\{\mu_k\}$ have to converge to zero. By contrast and as verified in \cref{sec:KL_conv}, the parameters $\{\nu_k\}$ should also not decrease too quickly to still allow applicability of the KL-theory. The implementable choices of $\{\nu_k\}$ and $\{\mu_k\}$ presented in \eqref{eq:nuk-ak} and \eqref{eq:muk-xk} balance these requirements and allow establishing unified global-local convergence results. Moreover, \cref{thm:main-local-conv} shows that \cref{algo2} does not suffer from ``Maratos-type effect'' that would prevent transition to fast local convergence and acceptance of the semismooth Newton steps.
\end{remark}

\section{Second-Order Properties} \label{sec:sop}

The goal of this section is to study second-order properties of the functions $\psi \circ \oprox$ and $\mer$ and to investigate different second-order optimality conditions and concepts for the original minimization problem \eqref{eq1-1} and for the auxiliary problem $\min_{z}\, (\psi \circ \oprox)({z})$. In particular, our results will allow us to discuss the conditions for superlinear convergence stated in \cref{assum3-1} in more detail and to connect them to second-order optimality conditions. 

\subsection{Preliminaries and Basic Differentiability Properties}

Due to the intrinsic nonsmoothness of the proximity operator, we can not expect that $\psi\circ\oprox$ or $\mer$ are differentiable everywhere like the forward-backward envelope introduced in \cite{PatBem13,PatSteBem14}. However, in the following proposition, we show that the mapping $\psi\circ\oprox$ enjoys stronger differentiability properties than $\psi$ at a stationary point.

\begin{proposition} \label{proposition:strict-diff}
Let $\bar{z}$ be a given solution of the nonsmooth equation \eqref{normal_eq}. Then, both $\psi\circ\oprox$ and $\mer$ are strictly differentiable at $\bar{z}$ with $\nabla (\psi \circ \oprox)(\bar z) = 0$. 
\end{proposition}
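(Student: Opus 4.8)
The plan is to verify strict differentiability directly from its definition: a function $g$ is strictly differentiable at $\bar z$ with derivative $A$ precisely when $g(y)-g(y')-\iprod{A}{y-y'} = o(\|y-y'\|)$ as $y,y' \to \bar z$. Since the claimed gradient is $0$ for $\psi \circ \oprox$, I would aim to show $|(\psi\circ\proxs)(y)-(\psi\circ\proxs)(y')| = o(\|y-y'\|)$, and likewise that the extra term in $\mer$ contributes an $o(\|y-y'\|)$ increment. Throughout I set $\bar x = \prox{\bar z}$, $x = \prox{y}$, $x' = \prox{y'}$, and I record the two standing facts that drive everything: the optimality condition \cref{eq1-3} gives $\Lambda(y-x) \in \partial\varphi(x)$ and $\Lambda(y'-x') \in \partial\varphi(x')$, while $\Fnor{\bar z}=0$ means exactly $\nabla f(\bar x) + \Lambda(\bar z - \bar x) = 0$. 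The $\Lambda$-nonexpansiveness of $\proxs$ additionally yields $x,x' \to \bar x$ and $\|x-x'\| = O(\|y-y'\|)$.

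For $\varrho := \psi\circ\proxs = f\circ\proxs + \varphi\circ\proxs$ the key idea is to sandwich the nonsmooth increment $\varphi(x)-\varphi(x')$ between two first-order expressions using convexity: with the subgradients above,
\[ \iprod{\Lambda(y'-x')}{x-x'} \leq \varphi(x)-\varphi(x') \leq \iprod{\Lambda(y-x)}{x-x'}. \]
For the smooth part, continuity of $\nabla f$ near $\bar x$ gives $f(x)-f(x') = \iprod{\nabla f(x')}{x-x'} + o(\|x-x'\|)$, uniformly as $x,x'\to\bar x$. Adding these, the increment $\varrho(y)-\varrho(y')$ is trapped between $\iprod{\nabla f(x')+\Lambda(y'-x')}{x-x'}$ and $\iprod{\nabla f(x')+\Lambda(y-x)}{x-x'}$, up to an $o(\|y-y'\|)$ error. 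The decisive observation is that both coefficient vectors converge to $\nabla f(\bar x)+\Lambda(\bar z-\bar x) = \Fnor{\bar z} = 0$; hence each bracket is bounded by $o(1)\cdot\|x-x'\| = o(\|y-y'\|)$, giving $|\varrho(y)-\varrho(y')| = o(\|y-y'\|)$ and strict differentiability of $\psi\circ\proxs$ at $\bar z$ with vanishing gradient.

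It then remains to handle the residual term $G(z) := \frac{\tau}{2}\|\Fnor{z}\|_{\Lambda^{-1}}^2$ in the merit function. Here I would use the polarization identity $\|u\|_{\Lambda^{-1}}^2 - \|v\|_{\Lambda^{-1}}^2 = \iprod{\Lambda^{-1}(u+v)}{u-v}$ together with $\Fnor{\bar z}=0$ and the Lipschitz continuity of the normal map (established under \ref{A1} in \cref{eq:fnor-lipschitz}): this bounds $|G(y)-G(y')|$ by a constant times $(\|\Fnor{y}\|+\|\Fnor{y'}\|)\|\Fnor{y}-\Fnor{y'}\| = O(\|y-\bar z\|+\|y'-\bar z\|)\cdot O(\|y-y'\|) = o(\|y-y'\|)$. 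Thus $G$ is strictly differentiable at $\bar z$ with gradient $0$, and since strict differentiability is preserved under sums, $\mer = \varrho + G$ inherits the property with $\nabla\mer(\bar z)=0$ as well.

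Finally, the main obstacle — and the reason this is not immediate from \cref{prop3-2} — is that a pointwise second-order expansion at $\bar z$ does not control the symmetric difference quotient $[\varrho(y)-\varrho(y')]/\|y-y'\|$ when $y,y'$ are close to each other but not to $\bar z$ (e.g. both at distance $t$ from $\bar z$ but only $s\ll t$ apart, where the remainder $o(\|\cdot-\bar z\|^2)=O(t^2)$ need not be $o(s)$). Strict differentiability is genuinely stronger, and the crux is to replace the expansion argument by the convexity sandwich above, whose two one-sided bounds carry the \emph{same} linear coefficient that happens to equal the normal-map residual and therefore vanishes at $\bar z$. I expect the only delicate bookkeeping to be making the $o(\cdot)$ terms uniform in the pair $(y,y')$, which follows from local uniform continuity of $\nabla f$ and the global Lipschitz bound on $\oFnor$; note that the $\psi\circ\proxs$ part needs only continuity of $\nabla f$, with \ref{A1} entering solely through the merit-function term.
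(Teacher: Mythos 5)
Your proof is correct, and it reaches the conclusion by a somewhat different route than the paper. For the $\psi\circ\proxs$ part, the paper expands the Moreau envelope to first order at $z'$ (using that $\nabla\env{z'}=\Lambda(z'-\prox{z'})$) and recombines terms into the exact asymptotic identity $\psi(\prox{z})-\psi(\prox{z'})=\iprod{\Fnor{z'}}{\prox{z}-\prox{z'}}+o(\|z-z'\|)$, after which continuity of $\oFnor$ and $\Fnor{\bar z}=0$ finish the job; your convexity sandwich replaces that expansion by the two subgradient inequalities built from \cref{eq1-3}, which reproduce the paper's coefficient $\Fnor{y'}$ as the lower bound and add the symmetric upper bound with the mixed coefficient $\nabla f(x')+\Lambda(y-x)$, both of which vanish in the limit for the same reason. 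The underlying mechanism is identical — the envelope's gradient \emph{is} the subgradient you invoke — but your version is a bit more elementary, since it never appeals to the $C^1$ property of $\oenv$ and needs only the uniform first-order expansion of $f$. For the residual term in $\mer$, the paper applies Clarke's chain rule to $h\circ\oFnor$ with $\partial h(\Fnor{\bar z})=\{0\}$, whereas you estimate the increment directly via polarization and the local Lipschitz continuity of $\oFnor$ from \cref{eq:fnor-lipschitz}; the two arguments consume exactly the same Lipschitz input, which you correctly isolate as the only place where \ref{A1} (or local Lipschitz continuity of $\nabla f$ near $\bar x$) is needed. Your closing observation that the pointwise second-order expansion of \cref{prop3-2} cannot control the symmetric difference quotient, so strict differentiability genuinely requires a two-point argument, is also accurate.
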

\begin{proof}
We first prove the conclusion for $\psi\circ\oprox$. By the definition of strict differentiability, see, e.g., \citep[Definition 9.13]{rockafellar2009variational}, we need to show:
\begin{align}\label{eq6-1}
  \lim\limits_{z,z'\rightarrow\bar{z}}\frac{\psi(\prox{z})-\psi(\prox{z'})}{\|z-z'\|}=0.            
\end{align}
Setting $x=\prox{z},x'=\prox{z'}$, we have:
\begin{align*}
\env{z}-\env{z'} &=\langle\nabla\env{z'},z-z'\rangle+o(\|z-z'\|), \; f(x)-f(x') &=\langle \nabla f(x'),x-x'\rangle+o(\|x-x'\|),
\end{align*}
as $z,z^\prime \to \bar z$. These two expansions are uniform near $\bar{z}$ because both $\nabla\oenv$ and $\nabla f$ are Lipschitz continuous near $\bar{z}$. As in the proof of \cref{prop3-2} and by the definition of the Moreau envelope $\oenv$, we can infer
\begin{align*}
   \psi(x)-\psi(x')&=f(x)+\env{z}-f(x')-\env{z'}-\frac{1}{2\lambda}\|x-z\|^2+\frac{1}{2\lambda}\|x'-z'\|^2\\
   & = \iprod{\nabla f(x^\prime)}{x-x^\prime} + \frac{1}{2\lambda} \|z-x^\prime\|^2 - \frac{1}{2\lambda} \|x-z\|^2 + o(\|z-z^\prime\|) 
   \\&= \langle \Fnor{z'},x-x' \rangle+o(\|z-z'\|),
  \end{align*}
where we have used the fact that $o(\|x-x'\|)=o(\|z-z'\|)$. Then, \eqref{eq6-1} follows from the continuity of $\oFnor$. To prove the claim for $\mer$, it suffices to show that $g:\mathbb{R}^n\rightarrow\mathbb{R}$, $g(z)=\frac{\lambda}{2}\|\Fnor{z}\|^2$ is strictly differentiable at $\bar{z}$ with zero gradient. Setting $h(z)=\frac{\lambda}{2}\|z\|^2$, we can write $g=h\circ\oFnor$. Moreover, we have $\partial h(\Fnor{\bar{z}})= \{\nabla h(\Fnor{\bar z})\} = \{\lambda\Fnor{\bar z}\} = \{0\}$. Hence, applying \citep[Proposition 2.2.4]{clarke1990optimization}, we get the conclusion.
\end{proof}

In the following, we briefly introduce different notions of generalized second-order differentiability that will be the basis of our analysis. We will mainly work with the second-order epi-derivative
\begin{align*}
  \mathrm{d}^2_-\theta(x\vert \alpha)(h) &:= \liminf_{t\downarrow 0, \, \tilde h \to h}~\Delta_t^2\theta(x\vert \alpha)(\tilde h), \quad \Delta_t^2  \;\! \theta(x\vert \alpha)(h) := \frac{\theta(x+th)-\theta(x)-t \cdot \iprod{\alpha}{h}}{\half t^2}.
\end{align*}
Here, $\mathrm{d}^2_-\theta(x\vert \alpha)(h)$ denotes the {lower second-order subderivative} of $\theta$ at $x$ relative to $\alpha$ in the direction $h \in \Rn$. 
We say that $\theta$ is {twice epi-differentiable} at $x$ for $\alpha \in \Rn$ if the second-order difference quotients $ \Delta_t^2  \;\! \theta(x\vert \alpha)(h)$ epi-converge in the sense of \cite[Definition 7.1]{rockafellar2009variational}. We will use $\mathrm{d}^2\theta(x\vert \alpha)$ to denote the corresponding epi-limit. The mapping $\theta$ is called {twice semidifferentiable} at $x$ if it is semidifferentiable and if the limit 
\[ \lim_{t\downarrow 0, \, \tilde h \to h} \frac{\theta(x+t\tilde h) - \theta(x) - t \cdot \theta^\prime(x;\tilde h)}{\frac12 t^2} \]
exists for all $h \in \Rn$. The limiting function will then be denoted by $\theta^{\prime\prime}(x; \cdot)$. The interested reader is referred to \cite[Chapter 13]{rockafellar2009variational} and \cite[Sections 2.2 and 3.3.5]{BonSha00} for a thorough discussion of these second-order concepts. Let us recall that a function $\varrho : \Rn \to \Rexx$ is called proper if $\varrho(x) > -\infty$ and $\dom{\varrho} \neq \emptyset$.  
We now collect several useful properties of the lower second-order subderivative which have been established in \cite[Proposition 13.5 and 13.20]{rockafellar2009variational}.
\begin{lemma} \label{lemma:prop-subderiv} Let $\theta : \Rn \to \Rex$ and $(x,\alpha) \in \dom{\theta} \times \Rn$ be given.
\begin{itemize}
\item[\rmn{(i)}] The subderivative $\mathrm{d}^2_-\theta(x\vert \alpha) : \Rn \to \Rexx$ is lower semicontinuous and positively homogeneous of degree 2, i.e., $\mathrm{d}^2_-\theta(x\vert \alpha)(th) = t^2  \mathrm{d}^2_-\theta(x\vert \alpha)(h)$ for all $t > 0$ and $h \in \Rn$.
\item[\rmn{(ii)}] If $\mathrm{d}^2_-\theta(x\vert \alpha)$ is a proper function, then it follows $\dom{\mathrm{d}^2_-\theta(x\vert \alpha)} \subset \mathcal C(x;\alpha) := \{h : \iprod{\alpha}{h} = \theta_-^\downarrow(x;h)\}$.
\end{itemize}
Suppose that $\theta$ is convex and let $\alpha \in \partial \theta(x)$ be given. Then, it additionally holds that:
\begin{itemize}
\item[\rmn{(iii)}] We have $\mathrm{d}^2_-\theta(x\vert \alpha)(h) \geq 0$ for all $h \in \Rn$. If $\theta$ is twice epi-differentiable at $x$ for $\alpha$, then $h \mapsto \mathrm{d}^2\theta(x\vert \alpha)(h)$ is a convex function. 
\end{itemize}
\end{lemma}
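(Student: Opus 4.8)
The plan is to verify the three parts directly from the definition, essentially reproducing the arguments behind \cite[Proposition 13.5 and 13.20]{rockafellar2009variational}. Throughout I write $g_t := \Delta_t^2\theta(x|\alpha)$ for the second-order difference quotient functions, so that $\mathrm{d}^2_-\theta(x|\alpha)$ is precisely their lower epi-limit as $t \downarrow 0$. For part \rmn{(i)}, lower semicontinuity is automatic, since any function of the form $h \mapsto \liminf_{t\downarrow 0,\,\tilde h \to h} g_t(\tilde h)$ is an epi-liminf and epi-liminfs are always lsc. Positive homogeneity of degree $2$ I would obtain by a change of variables: for $\lambda > 0$, substituting $\tilde w = \lambda \tilde h$ and $s = r/\lambda$ gives $\Delta_s^2\theta(x|\alpha)(\tilde w) = \lambda^2 \Delta_r^2\theta(x|\alpha)(\tilde h)$, because $s\tilde w = r\tilde h$, $s\iprod{\alpha}{\tilde w} = r\iprod{\alpha}{\tilde h}$, and $\half s^2 = \half r^2/\lambda^2$. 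Since $\tilde w \to \lambda h$ corresponds bijectively to $\tilde h \to h$ while $s \downarrow 0 \iff r \downarrow 0$, taking the $\liminf$ yields $\mathrm{d}^2_-\theta(x|\alpha)(\lambda h) = \lambda^2 \mathrm{d}^2_-\theta(x|\alpha)(h)$.

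For part \rmn{(ii)}, fix $h \in \dom{\mathrm{d}^2_-\theta(x|\alpha)}$; by properness the value $\mathrm{d}^2_-\theta(x|\alpha)(h)$ is finite. I would prove $\iprod{\alpha}{h} = \theta^\downarrow_-(x;h)$ via two inequalities. Choosing $t_k \downarrow 0$, $\tilde h_k \to h$ realizing the finite liminf shows the numerators satisfy $\theta(x+t_k\tilde h_k) - \theta(x) - t_k\iprod{\alpha}{\tilde h_k} = O(t_k^2)$; dividing by $t_k$ and passing to the limit gives a first-order quotient converging to $\iprod{\alpha}{h}$, whence $\theta^\downarrow_-(x;h) \leq \iprod{\alpha}{h}$. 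For the reverse inequality I argue by contradiction: if $\theta^\downarrow_-(x;h) < \iprod{\alpha}{h}$, pick $t_k \downarrow 0$, $\tilde h_k \to h$ realizing $\theta^\downarrow_-(x;h)$ and set $q_k := [\theta(x+t_k\tilde h_k)-\theta(x)]/t_k - \iprod{\alpha}{\tilde h_k}$; then $q_k \to \theta^\downarrow_-(x;h) - \iprod{\alpha}{h} < 0$, while $g_{t_k}(\tilde h_k) = 2q_k/t_k \to -\infty$, forcing $\mathrm{d}^2_-\theta(x|\alpha)(h) = -\infty$ and contradicting properness. Hence $h \in \mathcal C(x;\alpha)$.

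For part \rmn{(iii)}, nonnegativity is immediate: the subgradient inequality $\theta(x+t\tilde h) \geq \theta(x) + t\iprod{\alpha}{\tilde h}$, valid since $\alpha \in \partial\theta(x)$ and $\theta$ is convex, makes every numerator nonnegative, so $g_t \geq 0$ and therefore $\mathrm{d}^2_-\theta(x|\alpha) \geq 0$. For convexity of the epi-limit under twice epi-differentiability, I would use that each $g_t$ is convex in its argument (it is $h \mapsto \theta(x+th)$, an affine reparametrization of the convex $\theta$, minus an affine term, scaled by the positive factor $2/t^2$), and that the epigraph of the epi-limit $\mathrm{d}^2\theta(x|\alpha)$ equals the set-limit of the epigraphs of $\{g_t\}$. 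Since the inner Painlev\'e--Kuratowski limit of convex sets is convex and, under epi-convergence, coincides with the full set-limit, $\epi \mathrm{d}^2\theta(x|\alpha)$ is convex, i.e., $\mathrm{d}^2\theta(x|\alpha)$ is a convex function.

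The main obstacle is the reverse inequality in part \rmn{(ii)}: one must handle the joint $\liminf$ over $(t,\tilde h)$ carefully and exploit properness to exclude the $-\infty$ blow-up. The convexity argument in \rmn{(iii)} is subtle in a different way, as it relies on the equivalence between epi-convergence of the quotients and Painlev\'e--Kuratowski convergence of their epigraphs. Since all of these facts are packaged cleanly in \cite[Chapter 13]{rockafellar2009variational}, the most economical route for the final write-up is simply to invoke Propositions 13.5 and 13.20 there, and I would sketch the direct arguments above only for completeness.
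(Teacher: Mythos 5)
Your proposal is correct, and it matches the paper's treatment: the paper gives no proof of this lemma beyond citing \cite[Propositions 13.5 and 13.20]{rockafellar2009variational}, which is precisely the route you recommend at the end. The direct arguments you sketch (change of variables for degree-2 homogeneity, the two-sided first-order quotient argument for part (ii) with the $-\infty$ blow-up excluded by properness, and convexity of epi-limits of convex difference quotients for part (iii)) are faithful reconstructions of the cited results and contain no gaps.
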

 
We call the pair $(\bar x,\bar z) \in \dom{\vp} \times \Rn$ a criticality pair of problem \eqref{eq1-1}, if $\Fnat{\bar x} = \Fnor{\bar z} = 0$ and if $\bar x = \prox{\bar z}$ or, equivalently, $\bar z = \bar x - \lambda\nabla f(\bar x)$. Next, we formulate our main second-order differentiability assumptions. 

\begin{assumption}
\label{assum:epi-diff} Let $(\bar x,\bar z) \in \dom{\vp} \times \Rn$ be a criticality pair of problem \eqref{eq1-1}. We assume: 
\begin{enumerate}[label=\textup{\textrm{(E.\arabic*)}},topsep=0pt,itemsep=0ex,partopsep=0ex]
\item \label{E1} The mapping $\vp$ is twice epi-differentiable at $\bar x$ for $-\nabla f(\bar x)$. 
\item \label{E2} The function $\vp$ is twice epi-differentiable at  $\bar x$ for $-\nabla f(\bar x)$ and it holds that
\[ \mathrm{d}^2\vp(\bar x\vert -\nabla f(\bar x))(h) = \iprod{h}{Qh} + \iota_{S}(h) \quad \forall~h,\] 
where $Q \in \R^{n \times n}$ is some symmetric, positive semidefinite matrix and $S \subset \Rn$ is a linear subspace. 
\end{enumerate}
\end{assumption}

Using the correspondence between $\bar x$ and $\bar z$, condition \ref{E1} coincides with assuming twice epi-differentiability of $\vp$ at $\prox{\bar z}$ for $\nabla \env{\bar z}$. Furthermore, due to $-\nabla f(\bar x) \in \partial \vp(\bar x)$ and \cref{lemma:prop-subderiv} (i) and (iii), the second-order subderivative $h \mapsto \Upsilon(h) := \mathrm{d}^2\vp(\bar x\vert -\nabla f(\bar x))(h)$ is a proper function. Assumption \ref{E2} additionally requires that $\Upsilon$ is a generalized quadratic. 
Hence, condition \ref{E2} essentially coincides with the second-order assumptions stated and utilized in \cite{SteThePat17,TheStePat18,mai2019anderson}. 

The class of functions and applications for which the second-order subderivative $\Upsilon$ is a generalized quadratic and satisfies the structural property stated in condition \ref{E2} is rather rich and encompasses (fully) amenable mappings, see, e.g., \cite{PolRoc92,PolRoc93} or \cite[Chapter 10 and 13]{rockafellar2009variational}, $C^{2}$-cone reducible constraints \cite[Section 3.4.4]{BonSha00}, and decomposable functions \cite{Sha03,milzarek2016numerical}. More specific examples and related references are discussed, e.g., in \cite[Section 5.3]{milzarek2016numerical}. 

Twice epi-differentiability is a powerful tool and allows to characterize differentiability properties of the Moreau envelope and proximity operator. In the following and based on the pioneering observations in \cite{PolRoc96,PolRoc96-2,rockafellar2009variational}, we briefly state some of these fundamental connections for our special situation.  

\begin{thm} \label{theorem:epi-semi} The following conditions are equivalent:
\begin{itemize}
\item[\rmn{(i)}] The mapping $\vp$ satisfies assumption \ref{E1}.
\item[\rmn{(ii)}] The proximity operator $\proxs$ is semidifferentiable at $\bar z$.
\item[\rmn{(iii)}] The Moreau envelope $\envs$ is twice semidifferentiable at $\bar z$.
\end{itemize}
Furthermore, in this case, it follows $(\envs)^{\prime\prime}(\bar z;h) = \min_{y \in \Rn} \mathrm{d}^2\vp(\prox{\bar z}\vert \nabla \env{\bar z})(y) + \frac{1}{\lambda}\|h-y\|^2$ and we have $(\proxs)^{\prime}(\bar z;h) = \argmin_{y \in \Rn} \mathrm{d}^2\vp(\prox{\bar z}\vert \nabla \env{\bar z})(y) + \frac{1}{\lambda}\|h-y\|^2$. In addition, assumption \ref{E2} is equivalent to the condition
\be \label{eq:prox-diff} \proxs \; \text{is differentiable at} \; \bar z. \ee 
\end{thm}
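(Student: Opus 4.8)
The plan is to reduce the three conditions to the classical interplay between twice epi-differentiability of $\vp$ and the convergence of Moreau envelopes and proximal maps under epi-convergence, in the spirit of \cite{PolRoc96,PolRoc96-2} and \cite[Chapter 13]{rockafellar2009variational}. Writing $\bar v := -\nabla f(\bar x) = \nabla\env{\bar z} \in \partial\vp(\bar x)$ and letting $\Upsilon := \mathrm{d}^2\vp(\bar x|\bar v)$ denote the (common) second-order epi-limit whenever \ref{E1} holds, the starting point is the observation that the second-order difference quotients of the envelope are themselves rescaled Moreau envelopes of the second-order difference quotients of $\vp$. Concretely, I would substitute $w = \bar x + ty$ in $\env{\bar z + th} = \min_w \vp(w) + \frac{1}{2}\|\bar z + th - w\|_\Lambda^2$, use $\bar z - \bar x = \Lambda^{-1}\bar v$ and $\env{\bar z} = \vp(\bar x) + \frac{1}{2}\|\bar v\|_{\Lambda^{-1}}^2$, and expand the quadratic to obtain, for all $h$ and $t>0$,
\[ \frac{\env{\bar z + th}-\env{\bar z} - t\iprod{\bar v}{h}}{\frac{1}{2} t^2} = \min_{y}\; \Delta_t^2\vp(\bar x|\bar v)(y) + \|h-y\|_\Lambda^2, \]
together with the companion fact that the associated unique minimizer equals the first-order difference quotient $\Delta_t\,\proxs(\bar z)(h) = t^{-1}[\prox{\bar z + th} - \bar x]$. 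Since $\vp$ is convex, each $g_t := \Delta_t^2\vp(\bar x|\bar v)$ is a proper, lsc, convex function that is nonnegative (because $\bar v\in\partial\vp(\bar x)$) and vanishes at $0$; hence $\{g_t\}$ is uniformly prox-bounded, and the right-hand side above is exactly the Moreau envelope $E[g_t](h):=\min_y g_t(y)+\|h-y\|_\Lambda^2$ (with respect to $2\Lambda$) together with its proximal map $P[g_t](h):=\argmin_y g_t(y)+\|h-y\|_\Lambda^2$.

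With this identity in hand, I would invoke the fact that for convex functions epi-convergence is equivalent to pointwise convergence of the Moreau envelopes and to continuous convergence of the proximal maps \cite[Theorem 7.37]{rockafellar2009variational}. For (i)$\,\Rightarrow\,$(ii),(iii): assumption \ref{E1} means $g_t \to \Upsilon$ epi-graphically as $t\downarrow 0$; since the maps $P[g_t]$ are equi-firmly-nonexpansive in $\|\cdot\|_{2\Lambda}$ and the convex functions $E[g_t]$ converge locally uniformly, we get $E[g_t](\tilde h)\to E[\Upsilon](h)$ and $P[g_t](\tilde h)\to P[\Upsilon](h)$ whenever $t\downarrow 0$ and $\tilde h\to h$. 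By the identity, these are precisely the Hadamard limits defining twice semidifferentiability of $\envs$ and semidifferentiability of $\proxs$ at $\bar z$, and they yield the stated formulas $(\envs)''(\bar z;h) = \min_y \Upsilon(y)+\|h-y\|_\Lambda^2$ and $(\proxs)'(\bar z;h) = \argmin_y \Upsilon(y)+\|h-y\|_\Lambda^2$.

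The converses run the same equivalence backwards. For (iii)$\,\Rightarrow\,$(i), twice semidifferentiability of $\envs$ gives in particular pointwise convergence of the envelopes $E[g_t]$, which forces epi-convergence of the convex functions $\{g_t\}$, i.e. twice epi-differentiability of $\vp$. For (ii)$\,\Rightarrow\,$(i), semidifferentiability of $\proxs$ gives convergence of the resolvents $P[g_t]$, which is equivalent to graphical convergence of the maximal monotone operators $\partial g_t$; combined with the normalization $g_t(0)=0$ and $0\in\partial g_t(0)$ (pinning point, value, and subgradient simultaneously), Attouch's theorem \cite[Theorem 12.35]{rockafellar2009variational} then yields epi-convergence of $\{g_t\}$, hence \ref{E1}.

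For the last equivalence I would argue that \ref{E2} holds if and only if $(\proxs)'(\bar z;\cdot) = P[\Upsilon]$ is linear, which—because a locally Lipschitz semidifferentiable map is differentiable exactly when its semiderivative is linear—is equivalent to differentiability of $\proxs$ at $\bar z$. The forward direction is direct: if $\Upsilon(h)=\iprod{h}{Qh}+\iota_{S}(h)$ with $S$ a subspace and $Q\succeq 0$, then $P[\Upsilon](h)=\argmin_{y\in S}\iprod{y}{Qy}+\|h-y\|_\Lambda^2$ solves a strictly convex quadratic program over a subspace and is therefore a linear function of $h$. The reverse direction is where I expect the main obstacle: from linearity of $P[\Upsilon]$ one reads off, through the optimality relation $2\Lambda(h - P[\Upsilon]h)\in\partial\Upsilon(P[\Upsilon]h)$, that the graph of the maximal monotone operator $\partial\Upsilon$ is a linear subspace of $\Rn\times\Rn$; together with the convexity, lower semicontinuity, and degree-two positive homogeneity of $\Upsilon$ (Lemma \ref{lemma:prop-subderiv}), this must force $\dom{\Upsilon}$ to be a subspace $S$ on which $\Upsilon$ is a purely quadratic form, which is exactly the generalized-quadratic structure of \ref{E2}. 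The careful proof that a convex, lsc, degree-two homogeneous function with a linear subgradient relation is necessarily generalized quadratic, and the consistent bookkeeping of convergence modes (epi- versus Hadamard/continuous) in the equivalences above, are the two points that will demand the most care.
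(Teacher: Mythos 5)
Your proposal is correct and follows essentially the same route as the paper: the paper's proof of this theorem is a two-line citation to \cite[Theorems 3.5 and 3.8]{PolRoc96-2} and \cite[Exercise 13.45]{rockafellar2009variational}, and what you have written is precisely a reconstruction of the Poliquin--Rockafellar argument behind those results (the identity expressing second-order difference quotients of $\envs$ as Moreau envelopes of the $\Delta_t^2\vp(\bar x|\bar v)$, the epi-convergence/envelope-convergence/resolvent-convergence equivalences, and Attouch's theorem for the converse). The one step you flag as delicate --- that a convex, lsc, degree-two homogeneous function whose subdifferential graph is a linear subspace must be a generalized quadratic --- is exactly the content of \cite[Theorem 3.8]{PolRoc96-2}, which the paper invokes for the final equivalence.
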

\begin{proof} The equivalence of the first three conditions and the formulae for $(\envs)^{\prime\prime}$ and $(\proxs)^\prime$ can be shown as in \cite[Theorem 3.5]{PolRoc96-2} or \cite[Exercise 13.45]{rockafellar2009variational}. The equivalence of \ref{E2} and \eqref{eq:prox-diff} essentially follows from \cite[Theorem 3.8]{PolRoc96-2}. 
\end{proof}
 
Let us notice that the results in \cref{theorem:epi-semi} do also hold in a much more general setting when $\vp$ is only assumed to be prox-bounded and prox-regular. However, since our algorithmic framework currently relies on the (uniform) $\Lambda$-firm nonexpansiveness of the proximity operator, we will concentrate on the convex case. 

\begin{remark} \label{rem:crit} It is possible to connect the linear subspace $S$ introduced in \ref{E2} to the associated critical cone 
\[ \mathcal C(\bar x) := \{h : \psi^\downarrow(\bar x; h) = 0 \} = \{h: \vp^\downarrow(\bar x;h) = -\iprod{\nabla f(\bar x)}{h}\} = N_{\partial\vp(\bar x)}(-\nabla f(\bar x)), \]
where $N_{\partial\vp(\bar x)}(-\nabla f(\bar x)) = \{v : \iprod{v}{y+\nabla f(\bar x)} \leq 0, \, \forall~y \in \partial\vp(\bar x)\}$ denotes the standard normal cone. Using this representation of the critical cone and applying \cite[Proposition 2.2]{LemSag96}, it can be shown that the strict complementarity condition
\[ -\nabla f(\bar x) \in \mathrm{ri}(\partial\vp(\bar x)) \] 
is equivalent to saying that $\mathcal C(\bar x)$ is a subspace. Moreover, differentiability of the proximity operator $\oprox$ will remain fully equivalent to assumption \ref{E2} with $S = \mathcal C(\bar x)$ under an additional parabolic derivability condition. We refer to \cite[Definition 13.11 and Example 13.62]{rockafellar2009variational} and \cite{MohMorSar19,MohSar20} for more details and novel results on parabolic derivability and parabolic epi-differentiability. 
\end{remark}

\subsection{Second-Order Optimality and Strong Metric Subregularity of $\oFnat$ and $\oFnor$ }
 
Based on \cref{theorem:epi-semi}, we first express and calculate the second-order derivatives of $\psi \circ \proxs$.

\begin{lemma} \label{lemma:psip-sec} Let $\bar z$ be a solution of \eqref{normal_eq} at which \ref{E1} is satisfied and let $f : \Rn \to \R$ be twice continuously differentiable in a neighborhood of $\prox{\bar z}$. Then, the mapping $\psi \circ \proxs$ is twice semidifferentiable at $\bar z$ and we have
\begin{align*} (\psi\circ \proxs)^{\prime\prime}(\bar z;h) & = \iprod{(\proxs)^\prime(\bar z;h)}{(F^\lambda_{\mathrm{nor}})^\prime(\bar z;h)} \\ &= \iprod{(\proxs)^\prime(\bar z;h)}{\nabla^2 f(\prox{\bar z})(\proxs)^\prime(\bar z;h) + \frac{1}{\lambda}(h - (\proxs)^\prime(\bar z;h))} \quad \forall~h. \end{align*}
\end{lemma}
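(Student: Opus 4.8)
The plan is to reduce everything to the Moreau envelope identity $\psi(\prox{z}) = f(\prox{z}) + \env{z} - \half\|z-\prox{z}\|_\Lambda^2$, which follows from $\env{z} = \vp(\prox{z}) + \half\|z-\prox{z}\|_\Lambda^2$, and then to differentiate each of the three pieces twice in the Hadamard (semiderivative) sense. Write $\bar x = \prox{\bar z}$ and $w := (\proxs)'(\bar z;h)$; this semiderivative exists by \cref{theorem:epi-semi} under \ref{E1}. Since $f$ is $C^2$ near $\bar x$ and $\prox{\bar z+t\tilde h} - \bar x = tw + o(t)$ in the joint limit $(t,\tilde h)\to(0^+,h)$, a second-order Taylor expansion of $f$ produces the clean quadratic term $\half t^2 \iprod{w}{\nabla^2 f(\bar x) w} + o(t^2)$ together with the linear term $\iprod{\nabla f(\bar x)}{\prox{z}-\bar x}$. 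For the envelope I would use that $\envs$ is twice semidifferentiable at $\bar z$ (again \cref{theorem:epi-semi}): semidifferentiating $\nabla\env{z} = \Lambda(z-\prox{z})$ gives $(\nabla\envs)'(\bar z;h) = \Lambda(h-w)$, and since $\envs \in C^1$ with semidifferentiable gradient one has $(\envs)''(\bar z;h) = \iprod{(\nabla\envs)'(\bar z;h)}{h} = \iprod{\Lambda(h-w)}{h}$, obtained from $\env{\bar z+t\tilde h}-\env{\bar z}-t(\envs)'(\bar z;\tilde h) = \int_0^t \iprod{\nabla\env{\bar z+s\tilde h}-\nabla\env{\bar z}}{\tilde h}\,\mathrm{d}s$.

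Next I would exploit criticality. By \cref{proposition:strict-diff}, $\psi\circ\proxs$ is strictly differentiable at $\bar z$ with vanishing gradient, so the second-order semiderivative equals $\lim_{t\downarrow 0,\tilde h\to h}[\psi(\prox{\bar z+t\tilde h})-\psi(\bar x)]/(\half t^2)$ and I only need the quadratic coefficient of the increment. Collecting the three expansions, the linear-in-$(\prox{z}-\bar x)$ contributions from $f$ and from the cross term of the squared $\Lambda$-norm combine with coefficient $\nabla f(\bar x) + \Lambda(\bar z-\bar x) = \Fnor{\bar z} = 0$ and hence drop out, while the remaining order-$t$ terms (from $\env$ and from the other cross term of the norm) cancel against each other; this is precisely what makes the second-order quotient converge and is the place where the merely semidifferentiable operator $\proxs$ is rescued, since after the cancellation it only survives through quadratic forms. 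Using $\bar z+t\tilde h-\prox{z} = (\bar z-\bar x)+t(\tilde h-w)+o(t)$ one computes $-\half\|\bar z+t\tilde h-\prox{z}\|_\Lambda^2 + \half\|\bar z-\bar x\|_\Lambda^2 = -t\iprod{\Lambda(\bar z-\bar x)}{\tilde h} + \iprod{\Lambda(\bar z-\bar x)}{\prox{z}-\bar x} - \half t^2\|h-w\|_\Lambda^2 + o(t^2)$, and reading off the $\half t^2$-coefficient of the total yields $(\psi\circ\proxs)''(\bar z;h) = \iprod{w}{\nabla^2 f(\bar x) w} + \iprod{\Lambda(h-w)}{h} - \|h-w\|_\Lambda^2$.

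The final step is purely algebraic: since $\iprod{\Lambda(h-w)}{h} - \|h-w\|_\Lambda^2 = \iprod{\Lambda(h-w)}{h-(h-w)} = \iprod{\Lambda(h-w)}{w}$, the second-order semiderivative collapses to $\iprod{w}{\nabla^2 f(\bar x) w + \Lambda(h-w)}$. Semidifferentiating $\Fnor{z} = \nabla f(\prox{z}) + \Lambda(z-\prox{z})$ (with $f\in C^2$ and $\proxs$ semidifferentiable) gives $(\oFnor)'(\bar z;h) = \nabla^2 f(\bar x) w + \Lambda(h-w)$, which identifies the expression as $\iprod{(\proxs)'(\bar z;h)}{(\oFnor)'(\bar z;h)}$ and delivers both displayed formulas. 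I expect the main obstacle to lie in the second paragraph, namely establishing twice semidifferentiability rigorously: verifying that every remainder is genuinely $o(t^2)$ uniformly as $\tilde h\to h$ and that all order-$t$ terms cancel, so that the limit defining $(\psi\circ\proxs)''(\bar z;h)$ actually exists. This bookkeeping mirrors the semismooth computation in \cref{prop3-2} (the expansions for $\env$, $f$, and $\vp$), which I would use as a template; the difference is that here the $\Lambda(I-D)$-type Hessian of \cref{prop3-2} is replaced by the genuinely nonlinear maps $h\mapsto(\proxs)'(\bar z;h)$ and $h\mapsto(\oFnor)'(\bar z;h)$.
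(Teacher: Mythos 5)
Your proposal is correct and follows essentially the same route as the paper's proof: the same decomposition of $\psi\circ\proxs$ via the Moreau envelope, the same cancellation of the first-order terms through $\Fnor{\bar z}=0$, the same integral/dominated-convergence argument giving $(\envs)^{\prime\prime}(\bar z;h)=\iprod{\Lambda(h-(\proxs)^\prime(\bar z;h))}{h}$, and the same final algebraic collapse to $\iprod{(\proxs)^\prime(\bar z;h)}{(\oFnor)^\prime(\bar z;h)}$. The bookkeeping you flag as the remaining obstacle is exactly what the paper carries out, using the $\Lambda$-nonexpansiveness of $\proxs$ to control the remainders uniformly as $\tilde h\to h$.
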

\begin{proof} Let $\tilde h \in \Rn$ be given and let us define $p_t := \prox{\bar z + t \tilde h}$ and $\bar x = \prox{\bar z}$. Similar to the analysis in \cref{sec:loc-super-conv}, it follows
\begingroup
\allowdisplaybreaks
  \begin{align*}& \hspace{-4ex} \psi(\prox{\bar z + t \tilde h}) - \psi(\prox{\bar z})  \\ & = f(p_t) - f(\bar x) + \env{\bar z + t \tilde h} - \env{\bar z} + \frac{1}{2\lambda}\|\bar z - \bar x\|^2 -\frac{1}{2\lambda}\|\bar z + t \tilde h - p_t\|^2 \\ &   = \iprod{\nabla f(\bar x)}{p_t - \bar x} + \frac12 \iprod{p_t - \bar x}{\nabla^2 f(\bar x)(p_t - \bar x)} + \env{\bar z + t \tilde h} - \env{\bar z} + \frac{1}{2\lambda} \|\bar z - \bar x\|^2 \\ &  \hspace{4ex}  - \frac{1}{2\lambda} [ \|\bar z - \bar x\|^2 + 2 \iprod{(\bar z - \bar x)}{t\tilde h + \bar x - p_t} + \|t\tilde h + \bar x - p_t\|^2 ] + o(\|p_t - \bar x\|^2) \\ &  = \frac12 \iprod{p_t - \bar x}{\nabla^2 f(\bar x)(p_t - \bar x)} + [\env{\bar z + t \tilde h} - \env{\bar z} - t \iprod{\nabla\env{\bar z}}{\tilde h}] \\ & \hspace{4ex}  - \frac{1}{2\lambda} \|t\tilde h + \bar x - p_t\|^2 + o(\|p_t - \bar x\|^2), 
  \end{align*}
\endgroup
as $t \downarrow 0$. By \cref{theorem:epi-semi}, the proximity operator $\proxs$ is semidifferentiable at $\bar z$ and hence, due to the continuous differentiability of $\envs$ around $\bar z$, we obtain  
\begin{align*}  \lim_{t \downarrow 0} \frac{\env{\bar z + t h} - \env{\bar z} - t \iprod{\nabla\env{\bar z}}{h}}{\half t^2}  &= \lim_{t \downarrow 0} \int_{0}^1 \frac{2}{t} \iprod{\nabla \env{\bar z+\tau t h} - \nabla \env{\bar z}}{h} \, \mathrm{d}\tau \\ & \hspace{-8ex}= \int_{0}^1 2  \left\langle  \lim_{t \downarrow 0} \frac{\nabla \env{\bar z+\tau th} - \nabla \env{\bar z}}{t}, h \right\rangle \, \mathrm{d}\tau  \\ &  \hspace{-8ex} = \int_0^1 2 \iprod{(\nabla \envs)^\prime(\bar z;\tau h)}{h} \, \mathrm{d}\tau = \frac{1}{\lambda}\iprod{h - (\proxs)^\prime(\bar z;h)}{h}. \end{align*}
Here, we used $(\nabla \envs)^\prime(\bar z;h) = \frac{1}{\lambda}( h - (\proxs)^\prime(\bar z;h))$, the positive homogeneity of $(\nabla \envs)^\prime$, and the dominated convergence theorem to change the order of integration and directional differentiation. More specifically, due to the nonexpansiveness of $\proxs$, it holds
\begin{align*} \frac{2}{t}\vert  \iprod{\nabla \env{\bar z+\tau th} - \nabla \env{\bar z})}{h}\vert & \leq \frac{2 \|h\|}{\lambda t} \| (\tau th - \prox{\bar z+\tau th} + \prox{\bar z})\| \\ & \leq \frac{2 \|h\|}{\lambda t} (\tau t \|h\| + \|\tau th\|) = \frac{4\tau}{\lambda} \|h\|^2 \end{align*}
for all $t > 0$. Hence, we can infer $(\envs)^{\prime\prime}(\bar z; h) = \frac{1}{\lambda}\iprod{h - (\proxs)^\prime(\bar z;h)}{ h}$. (Notice that this limit has to coincide with the second-order semiderivative of $\envs$ at $\bar z$). Since $\psi \circ \proxs$ is strictly differentiable at $\bar z$ with $\nabla(\psi \circ \proxs)(\bar z) = 0$, it now follows
\begin{align*}
  (\psi \circ \proxs)^{\prime\prime}(\bar z;h) &= \iprod{(\proxs)^\prime(\bar z;h)}{\nabla^2 f(\bar x)(\proxs)^\prime(\bar z;h)} + (\envs)^{\prime\prime}(\bar z; h) - \frac{1}{\lambda}\|h - (\proxs)^{\prime}(\bar z; h)\|^2.
\end{align*}
This finishes the proof of \cref{lemma:psip-sec}.
\end{proof}

We now establish second-order optimality conditions and several second-order properties. Specifically, we will derive a sufficient condition for strong metric subregularity of the functions $\oFnor$ and $\oFnat$. Here, we say that a set-valued mapping $F : \Rn \rightrightarrows \R^m$ is strongly metrically subregular at $\bar \xi$ for $\bar y$ if $\bar y \in F(\bar \xi)$ and there exists a constant $\kappa > 0$ and a neighborhood $U$ of $\bar \xi$ such that
\[ \|x - \bar \xi\| \leq \kappa \dist(\bar y, F(x)) \quad \forall~x \in U. \]
If $F$ is single-valued, then strong metric subregularity implies that $\bar \xi$ is an isolated solution of the equation $F(x) = \bar y$. 
\begin{proposition} \label{prop:soc} Assume that condition {\ref{E1}} is satisfied at the criticality pair $(\bar x,\bar z) \in \dom{\vp} \times \Rn$ and let $f$ be twice continuously differentiable in a neighborhood of $\bar x$.
\begin{itemize} 
\item[\rmn{(i)}] Suppose that $\bar z \in \Rn$ is a local minimum of the mapping $\psi\circ \proxs$. Then, we have \vspace{-1ex}
\end{itemize}
\be \label{eq:soc-nec}\begin{aligned}
\max_{D \in \partial \prox{\bar z}} &\iprod{Dh}{\nabla^2f(\prox{\bar z})Dh + \frac{1}{\lambda}(I-D)h} \geq \iprod{(\proxs)^\prime(\bar z;h)}{(F_{\mathrm{nor}}^\lambda)^\prime(\bar z;h)} \geq 0 \quad \forall~h \in \Rn.
\end{aligned}
 \ee
\begin{itemize}
\item[\rmn{(ii)}] The quadratic growth condition: there exists $\sigma,\delta>0$ such that for all $z \in B_\delta(\bar z)$
\be \label{eq:soc-growth1}  \psi(\prox{z}) \geq \psi(\prox{\bar z}) + \frac{\sigma}{2\lambda} \|{\prox{z} - \prox{\bar z}}\|^2  \ee
implies the following second-order optimality condition:
\be \label{eq:soc-suff} \iprod{(\proxs)^\prime(\bar z;h)}{(F_{\mathrm{nor}}^\lambda)^\prime(\bar z;h)}  > 0 \quad \forall~h \; \text{with} \; (\proxs)^\prime(\bar z;h) \neq 0. \ee
Moreover, if condition \eqref{eq:soc-suff} is satisfied, then the mappings $\oFnor$ and $\oFnat$ are strongly metrically subregular at $\bar z$ and $\bar x$ for $0$, respectively. 
\end{itemize}
\end{proposition}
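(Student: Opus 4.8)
The plan is to channel both parts through the twice-semidifferentiability of $\psi\circ\proxs$ provided by \cref{lemma:psip-sec}, together with the semiderivative identity $(\Fnor)^\prime(\bar z;h)=\nabla^2 f(\bar x)w_h+\Lambda(h-w_h)$, where I abbreviate $w_h:=(\proxs)^\prime(\bar z;h)$ and $\bar x=\prox{\bar z}$. Recall that under \ref{E1} the semiderivative $w_h$ exists for every $h$ by \cref{theorem:epi-semi}, that $\nabla(\psi\circ\proxs)(\bar z)=0$ by \cref{proposition:strict-diff}, and that \cref{lemma:psip-sec} reads $(\psi\circ\proxs)^{\prime\prime}(\bar z;h)=\iprod{w_h}{(\Fnor)^\prime(\bar z;h)}$.

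For \rmn{(i)}, the right inequality in \cref{eq:soc-nec} is just the second-order necessary condition at a local minimizer: since $\psi\circ\proxs$ is twice semidifferentiable at $\bar z$ with vanishing gradient, the expansion $(\psi\circ\proxs)(\bar z+th)-(\psi\circ\proxs)(\bar z)=\tfrac12 t^2 (\psi\circ\proxs)^{\prime\prime}(\bar z;h)+o(t^2)\ge 0$ for small $t$ forces $(\psi\circ\proxs)^{\prime\prime}(\bar z;h)=\iprod{w_h}{(\Fnor)^\prime(\bar z;h)}\ge 0$. For the left inequality I would exhibit a single generalized derivative $D\in\partial\prox{\bar z}$ with $Dh=w_h$; the summand $\iprod{Dh}{\nabla^2 f(\bar x)Dh+\Lambda(I-D)h}$ then equals $\iprod{w_h}{(\Fnor)^\prime(\bar z;h)}$ and is dominated by the maximum. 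I expect this representation of the semiderivative by a single Jacobian element to be the main obstacle. When \ref{E2} holds it is immediate, since $\proxs$ is then differentiable at $\bar z$ by \cref{theorem:epi-semi} and $\partial\prox{\bar z}$ is a singleton; in the general case I would use that $s\mapsto\prox{\bar z+sh}$ is Lipschitz, hence absolutely continuous, write $w_h=\lim_{t\downarrow 0}t^{-1}\!\int_0^t \tfrac{d}{ds}\prox{\bar z+sh}\,\mathrm{d}s$, note that each ray-derivative lies in $\partial\prox{\bar z+sh}h$ by the generalized mean value theorem, and invoke outer semicontinuity of $\partial\proxs$ together with closedness and convexity of the set $\{Dh:D\in\partial\prox{\bar z}\}$ to place the limit $w_h$ into it.

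For \rmn{(ii)}, the implication from the quadratic growth condition \cref{eq:soc-growth1} to the second-order condition \cref{eq:soc-suff} follows by evaluating the growth inequality at $z=\bar z+th$, dividing by $\tfrac12 t^2$, and letting $t\downarrow 0$. The left-hand side converges to $(\psi\circ\proxs)^{\prime\prime}(\bar z;h)=\iprod{w_h}{(\Fnor)^\prime(\bar z;h)}$ and the right-hand side to $\sigma\|w_h\|_\Lambda^2$, so $\iprod{w_h}{(\Fnor)^\prime(\bar z;h)}\ge\sigma\|w_h\|_\Lambda^2$, which is strictly positive exactly when $w_h\ne 0$; this is \cref{eq:soc-suff}.

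Finally, to derive strong metric subregularity from \cref{eq:soc-suff}, I would first show that the directional derivatives are nonsingular and then run a standard blow-up argument. For $\oFnor$: if $w_h=0$ then $(\Fnor)^\prime(\bar z;h)=\Lambda h\ne 0$ for $h\ne 0$, while if $w_h\ne 0$ then $\iprod{w_h}{(\Fnor)^\prime(\bar z;h)}>0$ forces $(\Fnor)^\prime(\bar z;h)\ne 0$; hence $(\Fnor)^\prime(\bar z;\cdot)$ vanishes only at $0$. Since $\oFnor$ is semidifferentiable at $\bar z$ (as $\proxs$ is and $f\in C^2$), assuming subregularity fails produces $z_k\to\bar z$ with $\|\Fnor{z_k}\|/\|z_k-\bar z\|\to 0$; normalizing $h_k=(z_k-\bar z)/\|z_k-\bar z\|\to h$ and using Hadamard directional differentiability gives $(\Fnor)^\prime(\bar z;h)=0$ with $\|h\|=1$, a contradiction, so $\|z-\bar z\|\le\kappa\|\Fnor{z}\|$ near $\bar z$. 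For $\oFnat$ at $\bar x$ I would verify nonsingularity of $(\Fnat)^\prime(\bar x;\cdot)$ by reduction to $\oFnor$: with $G(x)=x-\Lambda^{-1}\nabla f(x)$ and $G(\bar x)=\bar z$, the chain rule yields $(\Fnat)^\prime(\bar x;h)=h-(\proxs)^\prime(\bar z;G^\prime(\bar x)h)$; if this vanishes then $h=(\proxs)^\prime(\bar z;k)$ with $k=G^\prime(\bar x)h=h-\Lambda^{-1}\nabla^2 f(\bar x)h$, and a direct computation gives $(\Fnor)^\prime(\bar z;k)=\nabla^2 f(\bar x)h+\Lambda(k-h)=0$, so $k=0$ by the nonsingularity already shown, whence $h=(\proxs)^\prime(\bar z;0)=0$. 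The same blow-up argument, now using semidifferentiability of $\oFnat$ at $\bar x$, then yields strong metric subregularity of $\oFnat$ at $\bar x$ for $0$.
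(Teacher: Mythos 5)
Your proposal is correct and follows essentially the same route as the paper: the second-order necessary condition and the growth condition are both channeled through the twice-semidifferentiability of $\psi\circ\proxs$ from \cref{lemma:psip-sec}, strong metric subregularity of $\oFnor$ is obtained by the identical blow-up argument, and $\oFnat$ is handled via the same identity $(\oFnor)^\prime(\bar z;Vh)=\Lambda V(\oFnat)^\prime(\bar x;h)$ with $V=I-\Lambda^{-1}\nabla^2 f(\bar x)$. The only difference is that for the left inequality in \cref{eq:soc-nec} the paper simply cites \cite[Lemma 2.2]{QiSun93} to produce $D\in\partial\prox{\bar z}$ with $Dh=(\proxs)^\prime(\bar z;h)$, whereas you re-derive this fact by a mean-value/outer-semicontinuity argument, which is a valid standard proof of that lemma.
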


\begin{proof} We first verify part (i). By \cite[Theorem 13.24]{rockafellar2009variational}, the local minimum $\bar z$ satisfies the second-order necessary condition $\mathrm{d}^2(\psi\circ\proxs)(\bar z\vert 0)(h) \geq 0$ for all $h \in \Rn$. Due to $\Fnor{\bar z} = 0$, \cref{proposition:strict-diff}, and \cref{lemma:psip-sec}, the function $\psi \circ \proxs$ is twice semidifferentiable at $\bar z$ with $\nabla (\psi\circ\proxs)(\bar z) = 0$ and thus, it follows
\begin{align*}
  &\mathrm{d}^2(\psi\circ\proxs)(\bar z\vert 0)(h) = (\psi\circ\proxs)^{\prime\prime}(\bar z;h) =\iprod{(\proxs)^\prime(\bar z;h)}{(\oFnor)^\prime(\bar z;h)} \geq 0 \quad \forall~h \in \Rn.
\end{align*}
Applying \cite[Lemma 2.2]{QiSun93}, for every $h \in \Rn$ there exists $D \in \partial\prox{\bar z}$ such that $Dh = (\proxs)^\prime(\bar z;h)$ which establishes the maximum expression in \eqref{eq:soc-nec}. We now continue with the proof of the second part. Using \cref{proposition:strict-diff} and \cref{lemma:psip-sec}, it is easy to show that the second-order growth condition implies 
\[ \iprod{(\proxs)^\prime(\bar z;h)}{(F_{\mathrm{nor}}^\lambda)^\prime(\bar z;h)} \geq \frac{\sigma}{\lambda} \|(\proxs)^\prime(\bar z;h)\|^2 \quad \forall~h \in \Rn. \]
Next, let the second-order optimality condition \eqref{eq:soc-suff} be satisfied and suppose that the normal map is not strongly metrically subregular at $\bar z$ for $0$. Then there exist sequences $\{z_k\}$ and $\{\sigma_k\}$ with $z_k \to \bar z$ and $\sigma_k \to 0$ such that
\[ \|\Fnor{z_k}\| \leq \sigma_k\|{z_k}-{\bar z}\|. \]
Let us define $t_k = \|z_k - \bar z\|$ and $h_k = t_k^{-1}(z_k - \bar z)$. Without loss of generality we may assume that the sequence $\{h_k\}$ converges to some $h$ with $\|h\| = 1$. Using the semidifferentiability of $\oFnor$ and $\proxs$, this yields
\[ \|(\oFnor)^\prime(\bar z; h)\| = \lim_{k \to \infty} \frac{\|\Fnor{\bar z + t_k h_k} - \Fnor{\bar z}\|}{t_k} \leq \lim_{k \to \infty} {\sigma_k} = 0. \]
By the second-order condition \eqref{eq:soc-suff} this can only happen in the case $(\proxs)^\prime(\bar z;h) = 0$. However, we then obtain $0 = (\oFnor)^\prime(\bar z;h) = \frac{1}{\lambda} h$ which is a contradiction to $\|h\| = 1$. Similarly, if $\oFnat$ is not strongly metrically subregular at $\bar x$ for $0$, there exists $h \in \Rn$ with $\|h\| = 1$ and $(\oFnat)^\prime(\bar x;h) = 0$. Setting $V = I - \lambda\nabla^2 f(\bar x)$ and utilizing $\bar x - \lambda\nabla f(\bar x) = \bar z$, it follows
\begin{align} \nonumber 0 =  V (\oFnat)^\prime(\bar x;h) &=  V[h - (\proxs)^\prime(\bar z; Vh)] \\ &= [Vh - (\proxs)^\prime(\bar z;Vh)] + \lambda\nabla^2 f(\bar x) (\proxs)^\prime(\bar z;Vh) = \lambda(\oFnor)^\prime(\bar z;Vh). \label{eq:nor-nat} \end{align}

Again, by \eqref{eq:soc-suff}, this implies $(\proxs)^\prime(\bar z;Vh) = 0$ and $h = (\oFnat)^\prime(\bar x;h) + (\proxs)^\prime(\bar z;Vh) = 0$ which is a contradiction. This concludes the proof of \cref{prop:soc}. 
\end{proof}

In the following, we discuss connections between second-order optimality conditions and several second-order concepts for the problems $\min_x\,\psi(x)$ and $\min_z\,(\psi\circ\proxs)(z)$.

\begin{thm} \label{thm:conn-sec} Let $(\bar x,\bar z) \in \dom{\vp} \times \Rn$ be a given criticality pair and let $f$ be twice continuously differentiable around $\bar x$. Suppose that assumption {\ref{E1}} is satisfied.  
Then, the following conditions are equivalent:
\begin{itemize}
\item[\rmn{(i)}] The second-order sufficient condition holds at $\bar x$:
\be \label{eq:soc-org} \mathrm{d}^2\psi(\bar x\vert 0)(h) = \iprod{h}{\nabla^2 f(\bar x) h} + \mathrm{d}^2\vp(\bar x\vert -\nabla f(\bar x))(h) > 0 \quad \forall~h \in \Rn\backslash\{0\}. \ee
\item[\rmn{(ii)}] There exists $\sigma, \delta > 0$ such that $\psi(x) \geq \psi(\bar x) + \frac{\sigma}{2\lambda} \|x-\bar x\|^2$ for all $x \in B_\delta(\bar x)$.
\item[\rmn{(iii)}] The mapping $\oFnat$ is strongly metrically subregular at $\bar x$ for $0$ and the second-order necessary condition $\mathrm{d}^2\psi(\bar x\vert 0)(h) \geq 0$ holds for all $h \in \Rn$. 
\item[\rmn{(iv)}] The subdifferential $\partial \psi$ is strongly metrically subregular at $\bar x$ for $0$ and the necessary condition $\mathrm{d}^2\psi(\bar x\vert 0)(h) \geq 0$ is satisfied for all $h \in \Rn$. 
\item[\rmn{(v)}] The second-order sufficient optimality condition formulated in \eqref{eq:soc-suff} is fulfilled, i.e., we have
\[ \iprod{(\proxs)^\prime(\bar z;h)}{(\oFnor)^\prime(\bar z;h)} > 0 \quad \forall~h \; \text{with} \; (\proxs)^\prime(\bar z;h) \neq 0. \] 
\item[\rmn{(vi)}] The quadratic growth condition \eqref{eq:soc-growth1} holds at $\bar z$.
\item[\rmn{(vii)}] The normal map $\oFnor$ is strongly metrically subregular at $\bar z$ for $0$ and the second-order necessary optimality condition $\iprod{(\proxs)^\prime(\bar z;h)}{(\oFnor)^\prime(\bar z;h)} \geq 0$ is satisfied for all $h \in \Rn$.
\end{itemize}
\end{thm}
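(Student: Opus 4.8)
The plan is to reduce all seven conditions to statements about a single \emph{tangent problem} and to exploit two structural identities. Under \ref{E1}, \cref{theorem:epi-semi} gives semidifferentiability of $\proxs$ at $\bar z$, while \cref{proposition:strict-diff} and \cref{lemma:psip-sec} give twice semidifferentiability of $\psi\circ\proxs$ at $\bar z$ with $\nabla(\psi\circ\proxs)(\bar z)=0$ and $(\psi\circ\proxs)^{\prime\prime}(\bar z;h)=\iprod{(\proxs)^\prime(\bar z;h)}{(\oFnor)^\prime(\bar z;h)}$. Writing $\Upsilon:=\mathrm{d}^2\vp(\bar x|{-\nabla f(\bar x)})$ (convex, proper, positively homogeneous of degree two by \cref{lemma:prop-subderiv}), the operator $P:=(\proxs)^\prime(\bar z;\cdot)$ is a proximal operator associated with $\Upsilon$ (so $\mathcal R(P)=\mathrm{dom}\,\partial\Upsilon$), and $(\oFnat)^\prime(\bar x;\cdot)$, $(\oFnor)^\prime(\bar z;\cdot)$ are exactly the natural residual and the normal map of $\min_h g(h):=\half\iprod{h}{\nabla^2 f(\bar x)h}+\Upsilon(h)$ at the criticality pair $(0,0)$; moreover $\mathrm{d}^2\psi(\bar x|0)(h)=2g(h)$. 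All of these second-order objects are positively homogeneous of degree two, which is the feature that upgrades pointwise positivity to uniform growth.

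Two elementary facts about $g$ drive the argument. First, the Euler relation for the degree-two convex $\Upsilon$ gives $\iprod{v}{y}=2\Upsilon(y)$ for every $v\in\partial\Upsilon(y)$; applied at a critical point of $g$ (a zero of $(\oFnat)^\prime(\bar x;\cdot)$) this forces $g=0$ there, so a \emph{nonzero} critical point of $g$ violates the sufficient condition and, together with $g\ge0$, any quadratic-growth or strong-subregularity property. Second, the linearization underlying \cref{eq:nor-nat} provides a kernel bijection: $(\oFnat)^\prime(\bar x;\cdot)$ has a nonzero kernel element iff $(\oFnor)^\prime(\bar z;\cdot)$ does (the correspondence being $h\mapsto Vh$ with $V=I-\Lambda^{-1}\nabla^2 f(\bar x)$, and inverse $w\mapsto Pw$), so the strong metric subregularity of $\oFnat$ at $\bar x$ and of $\oFnor$ at $\bar z$---each equivalent, by semidifferentiability, to triviality of the corresponding semiderivative's kernel---are equivalent and both mean ``$g$ has no nonzero critical point.'' Using that $\Upsilon$ is continuous along segments reaching into $\mathrm{ri}(\mathrm{dom}\,\Upsilon)\subseteq\mathrm{dom}\,\partial\Upsilon=\mathcal R(P)$, one also checks that $g\ge0$ on $\mathcal R(P)$ already forces $g\ge0$ on all of $\Rn$, so the two second-order \emph{necessary} conditions in \rmn{(iii)} and \rmn{(vii)} coincide. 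With these facts, the strict conditions \rmn{(i)} and \rmn{(v)} (``$g>0$ off the origin'' on $\Rn$, resp.\ on $\mathcal R(P)$) and the subregularity-plus-necessary packages \rmn{(iii)}, \rmn{(vii)} all collapse to the single statement ``$g>0$ on $\Rn\setminus\{0\}$'': $g\ge0$ turns any nonzero zero of $g$ into a minimizer, hence a critical point, which strong subregularity forbids.

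It remains to attach the three uniform conditions \rmn{(ii)}, \rmn{(iv)}, \rmn{(vi)}. For \rmn{(i)}$\Leftrightarrow$\rmn{(ii)} I would combine twice epi-differentiability \cite[Theorem 13.24]{rockafellar2009variational} with compactness of the unit sphere: ``$g>0$ off the origin'' gives, by lower semicontinuity and degree-two homogeneity, $g(h)\ge\frac{\sigma}{2}\|h\|^2$, and epi-convergence of the second-order difference quotients transfers this into the quadratic growth of $\psi$ (the converse being the easy direction). The equivalence \rmn{(ii)}$\Leftrightarrow$\rmn{(iv)} is the known characterization of quadratic growth through strong metric subregularity of the subdifferential (in the spirit of \cite{DruLew18}), and \rmn{(iii)}$\Leftrightarrow$\rmn{(iv)} follows because near $\bar x$ the quantities $\|\Fnat{x}\|$ and $\dist(0,\partial\psi(x))$ are equivalent up to positive constants---the lower bound is \cref{lemma:conn-nat-nor} and the upper bound is the standard proximal-gradient estimate---so strong metric subregularity transfers between $\oFnat$ and $\partial\psi$. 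Finally, \rmn{(v)}$\Leftrightarrow$\rmn{(vi)}$\Leftrightarrow$\rmn{(vii)} is obtained from \cref{prop:soc} (which already yields \rmn{(vi)}$\Rightarrow$\rmn{(v)} and \rmn{(v)}$\Rightarrow$ strong subregularity of $\oFnor$ and $\oFnat$), supplemented by the same homogeneity-based uniformization applied to the twice semidifferentiable $\psi\circ\proxs$ to recover \rmn{(vi)} from \rmn{(v)}.

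The main obstacle is exactly this uniformization step: converting the purely directional, liminf-type positivity encoded in a second subderivative or second semiderivative into a genuine, single-constant quadratic growth estimate for the nonsmooth and nonlinear functions $\psi$ and $\psi\circ\proxs$. This is where \ref{E1} is indispensable, since it promotes one-sided difference-quotient bounds to honest epi-limits, after which degree-two homogeneity and compactness of the sphere furnish a uniform constant. A second, subtler point is that the range $\mathcal R(P)=\mathrm{dom}\,\partial\Upsilon$ may omit relative-boundary directions of $\mathrm{dom}\,\Upsilon$ and that $V=I-\Lambda^{-1}\nabla^2 f(\bar x)$ need not be invertible; both threaten a naive transfer between the natural-residual and normal-map formulations, and are neutralized by the Euler identity, the explicit kernel bijection, and the segment-continuity of $\Upsilon$, which together show that no information is lost in passing between $\Rn$ and $\mathcal R(P)$.
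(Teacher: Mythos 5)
Your overall architecture---reducing everything to a degree-two-homogeneous tangent problem, using the Euler relation for $\Upsilon$ to show that a nonzero critical direction forces $\mathrm{d}^2\psi(\bar x|0)$ to vanish there, transferring kernels between $(\oFnat)^\prime(\bar x;\cdot)$ and $(\oFnor)^\prime(\bar z;\cdot)$ via $V=I-\Lambda^{-1}\nabla^2 f(\bar x)$, and extending the necessary condition from $\mathcal R((\proxs)^\prime(\bar z;\cdot))=\dom{\partial\Upsilon}$ to all of $\Rn$ by continuity---is essentially the paper's proof with the tangent problem made explicit. Your segment-continuity argument is a legitimate substitute for the paper's density argument from \cite{BauCom11}, and the Euler identity is an equivalent packaging of the paper's use of $0\in 2\nabla^2 f(\bar x)\bar h+\partial\Upsilon(\bar h)$ together with \cref{eq:prox-dir-sub}.

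There are, however, concrete gaps. First, your route for \rmn{(iii)}$\Leftrightarrow$\rmn{(iv)} rests on a false claim: $\dist(0,\partial\psi(x))$ and $\|\Fnat{x}\|$ are \emph{not} equivalent up to constants near $\bar x$. \cref{lemma:conn-nat-nor} gives only $\dist(0,\partial\psi(x))\geq \lm\|\Fnat{x}\|$, and the reverse bound fails already for $f\equiv 0$, $\vp=\mu|\cdot|$ on $\R$, where $\dist(0,\partial\psi(x))=\mu$ while $\Fnat{x}=x\to 0$ as $x\downarrow 0$; the ``standard proximal-gradient estimate'' controls the subdifferential at the \emph{updated} point $\prox{x-\Lambda^{-1}\nabla f(x)}$, not at $x$. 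So only \rmn{(iii)}$\Rightarrow$\rmn{(iv)} comes for free, and \rmn{(iv)} must be fed back through the tangent problem (as in the paper, subregularity of $\partial\psi$ yields $\|(\oFnor)^\prime(\bar z;h)\|\geq\sigma_3\|(\proxs)^\prime(\bar z;h)\|$, which then excludes the nonzero critical direction); your fallback citation in the spirit of \cite{DruLew18} does not close this either, since those results presuppose that $\bar x$ is a local minimizer rather than merely $\mathrm{d}^2\psi(\bar x|0)\geq 0$. Second, a normalization error: since $(\proxs)^\prime(\bar z;\cdot)$ is the $\Lambda$-prox of $\tfrac12\Upsilon$ (the argmin in \cref{theorem:epi-semi} carries $\|h-y\|_\Lambda^2$, not $\tfrac12\|h-y\|_\Lambda^2$), the tangent objective whose natural residual and normal map are $(\oFnat)^\prime(\bar x;\cdot)$ and $(\oFnor)^\prime(\bar z;\cdot)$ is $\tfrac12\mathrm{d}^2\psi(\bar x|0)=\tfrac12\iprod{\cdot}{\nabla^2 f(\bar x)\,\cdot}+\tfrac12\Upsilon$, not your $g=\tfrac12\iprod{\cdot}{\nabla^2 f(\bar x)\,\cdot}+\Upsilon$; with your $g$ neither $\mathrm{d}^2\psi(\bar x|0)=2g$ nor the identification of critical points with zeros of $(\oFnat)^\prime(\bar x;\cdot)$ holds, and the sign of $g$ is not the sign of $\mathrm{d}^2\psi(\bar x|0)$. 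Finally, the ``homogeneity plus compactness'' uniformization you invoke for \rmn{(v)}$\Rightarrow$\rmn{(vi)} is not automatic, because $\{h:\|h\|=1,\ (\proxs)^\prime(\bar z;h)\neq 0\}$ is not compact; the paper sidesteps this by routing \rmn{(v)}$\Rightarrow$\rmn{(vii)}$\Rightarrow$\rmn{(i)}$\Rightarrow$\rmn{(ii)}$\Rightarrow$\rmn{(vi)}, where compactness of the full unit sphere is available for the lower semicontinuous, homogeneous function $\mathrm{d}^2\psi(\bar x|0)$.
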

\begin{proof} 
By \cite[Theorem 13.24]{rockafellar2009variational} and \cref{lemma:prop-subderiv}, the assertions (i) and (ii) are equivalent. Notice that the representation of the subderivative ${\mathrm d}^2\psi(\bar x\vert 0)$ in (i) can be shown by applying a second-order Taylor expansion of $f$. The implications ``(ii)$\implies$(vi)'', ``(vi)$\implies$(v)'', and ``(v)$\implies$(vii)'' are an immediate consequence of the $\Lambda$-nonexpansiveness of the proximity operator and of \cref{prop:soc}. Next, we verify that the two second-order necessary conditions stated in (iii), (iv), and (vii) are actually equivalent. Due to \cref{theorem:epi-semi} and \cref{lemma:psip-sec}, it holds that
\begin{align*}
  \mathrm{d}^2\psi(\bar x\vert 0)((\proxs)^\prime(\bar z;h))&=\mathrm{d}^2(\psi \circ \proxs)(\bar z\vert 0)(h) = \iprod{(\proxs)^\prime(\bar z;h)}{(\oFnor)^\prime(\bar z;h)} \quad \forall~h \in \Rn.
\end{align*}
Thus, the standard necessary optimality conditions for the original problem $\min_x\,\psi(x)$ in (iii) are generally stronger and imply \eqref{eq:soc-nec}. To show full equivalence, we now verify $\mathcal R((\proxs)^\prime(\bar z;\cdot)) = \dom{\partial\Upsilon}$ where $\Upsilon(h) := \mathrm{d}^2\vp(\bar x\vert -\nabla f(\bar x))(h)$. By \cref{lemma:prop-subderiv}, the mapping $\Upsilon$ is convex, lower semicontinuous, nonnegative, positively homogeneous of degree $2$, and proper. Thus, utilizing \cref{theorem:epi-semi}, we have the following characterization
\[ p^\prime = (\proxs)^\prime(\bar z;h) \quad \iff \quad 0 \in \partial\Upsilon(p^\prime) + \frac{2}{\lambda}(p^\prime - h). \]
Let $h \in \dom{\partial\Upsilon}$ with $y \in \partial\Upsilon(h)$ be arbitrary. Then, we obtain 
\be \label{eq:prox-dir-sub} 0 \in \lambda\partial\Upsilon(h) + 2(h - [h + {\textstyle\frac12}\lambda y]) \quad \implies \quad h = (\proxs)^\prime(\bar z; h + {\textstyle\frac12}\lambda y) \ee
which yields $\mathcal R((\proxs)^\prime(\bar z;\cdot)) = \dom{\partial\Upsilon}$. Therefore, the second-order necessary condition \eqref{eq:soc-nec} implies
\[  \mathrm{d}^2\psi(\bar x\vert 0)(h) \geq 0 \quad \forall~h \in \dom{\partial\Upsilon}. \]
By \cite[Proposition 16.28 and Corollary 16.29]{BauCom11}, $\dom{\partial\Upsilon}$ is a dense subset of $\dom{\Upsilon}$ and for every $h \in \dom{\Upsilon}$ there exists a sequence $\{h_k\} \subset \dom{\partial\Upsilon}$ with $h_k \to h$ and $\Upsilon(h_k) \to \Upsilon(h)$. Due to $\dom{\mathrm{d}^2\psi(\bar x\vert 0)} = \dom{\Upsilon}$, this finally establishes $\mathrm{d}^2\psi(\bar x\vert 0)(h) \geq 0$ for all $h \in \Rn$ and proves the implication ``(v)$\implies$(iii)''. We now continue with the verification of ``(iii), (iv), (vii)$\implies$(i)''. We mimic the strategy in the proof of \cite[Theorem 9.2]{MohMorSar19} and assume that the second-order sufficient conditions are not satisfied, i.e., there exists $\bar h \neq 0$ with
\[ \mathrm{d}^2\psi(\bar x\vert 0)(\bar h) = 0. \]
By the second-order necessary optimality conditions, this implies that $\bar h$ is a solution of the minimization problem $\min_h \, \mathrm{d}^2\psi(\bar x\vert 0)(h)$. Using the representation \eqref{eq:soc-org} and the calculus mentioned in \cref{sec:foc}, $\bar h$ then has to satisfy the first-order optimality condition
\[ 0 \in \partial[\mathrm{d}^2\psi(\bar x\vert 0)](\bar h) = 2 \nabla^2 f(\bar x)\bar h + \partial \Upsilon(\bar h). \]
Moreover, due to \eqref{eq:prox-dir-sub}, we can infer $\bar h = (\proxs)^\prime(\bar z; \bar h - \lambda\nabla^2 f(\bar x)\bar h)$ or equivalently $(\oFnat)^\prime(\bar x;\bar h) = 0$. Since the strong metric subregularity of $\oFnat$ and $\oFnor$ imply 
\be \label{eq:sms-dir} \exists~\sigma_1, \sigma_2 > 0: \quad \|(\oFnat)^\prime(\bar x;h)\| \geq \sigma_1 \|h\| \quad \text{and} \quad \|(\oFnor)^\prime(\bar z;h)\| \geq \sigma_2 \|h\| \quad \forall~h \in \Rn, \ee
the implication ``(iii)$\implies$(i)'' follows immediately from \eqref{eq:sms-dir} and from the resulting contradiction $\bar h = 0$. Furthermore, if $\oFnor$ is strongly metrically subregular, we can use \eqref{eq:nor-nat} and \eqref{eq:sms-dir} to obtain $V\bar h = 0$ and $\bar h = (\proxs)^\prime(\bar z; V\bar h) = (\proxs)^\prime(\bar z; 0) = 0$ which is again a contradiction and yields ``(vii)$\implies$(i)''. Finally, the strong metric subregularity of the subdifferential $\partial\psi$, \eqref{eq:dist-nor}, and the nonexpansiveness of proximity operator guarantee the existence of $\sigma_3, \delta_3 > 0$ such that
\[ \|\Fnor{z}\| \geq \dist(0,\partial\psi(\prox{z})) \geq \sigma_3 \|\prox{z} - \prox{\bar z}\| \quad \forall~z \in B_{\delta_3}(\bar z). \]
As before this implies $\|(\oFnor)^\prime(\bar z;h)\| \geq \sigma_3 \|(\proxs)^\prime(\bar z;h)\|$ for all $h$ and by \eqref{eq:nor-nat} we conclude $(\oFnor)^\prime(\bar z;V\bar h) =  (\proxs)^\prime(\bar z;V\bar h) = 0$. This is a contradiction and establishes ``(iv)$\implies$(i)''. The remaining implication ``(iii)$\implies$(iv)'' is a simple consequence of \cref{lemma:conn-nat-nor}. 
 
\end{proof}
The results in \cref{thm:conn-sec} are quite satisfactory and provide a precise characterization of the gap between second-order necessary and sufficient optimality conditions as well as a strong connection between the different optimality concepts involving the natural residual $\oFnat$ and the normal map $\oFnor$. Let us note that a similar result for prox-regular and subdifferentially continuous problems was recently established in \cite{ChiHieNghTua19} using the subgradient graphical derivative. Further related second-order results based on parabolic epi-differentiability and parabolic regularity can be found in \cite{MohSar20}. We also refer to \cite{ArtGeo14,DruMorNhg14,DruIof15,DinSunZha17,MohMorSar19,MohSar20} for more discussions. A possible extension of \cref{thm:conn-sec} to the fully nonconvex, prox-regular setting as in \cite[Theorem 3.8]{ChiHieNghTua19} and \cite[Theorem 6.1 and 6.3]{MohSar20} is left for future work. The novel normal map-based second-order conditions in part (v) and (vii) of \cref{thm:conn-sec} complement the results in \cite{MohSar20} and are an appealing alternative to the classical conditions in (i) or (iii), since they only depend on (the existence of) the directional derivative of the proximity operator and can be formulated without requiring more involved geometrical or variational tools.
 
\begin{remark} \label{remark:soc-kl} The conditions in \cref{thm:conn-sec} imply that the merit function $\mer$ satisfies the KL-type inequality stated in assumption \ref{C1} with exponent $\frac12$. Specifically, using the Lipschitz continuity of the mappings $\proxs$ and $\oFnor$, the twice semidifferentiability of $\psi\circ\proxs$, and \cite[Exercise 13.7]{rockafellar2009variational}, we have
\[ \psi(\prox{z}) - \psi(\prox{\bar z}) = \frac12 (\psi \circ\oprox)^{\prime\prime}(\bar z; z-\bar z) + o(\|z-\bar z\|^2) \leq L_p \|z-\bar z\|^2 \]  
for $z \to \bar z$ and for some constant $L_p > 0$. Our claim then follows easily from the strong metric subregularity of the normal map $\oFnor$, see also \cref{lemma:calc-kl-mer} for comparison.
\end{remark}

\subsection{Second-Order Conditions and Bounded Invertibility under Assumption \ref{E2}}

Next, we present a special case of \cref{thm:conn-sec} under the stronger condition \ref{E2}. 
\begin{corollary} \label{cor:soc-diff} Let $f : \Rn \to \R$ be twice continuously differentiable and suppose that assumption \ref{E2} is satisfied at a criticality pair $(\bar x,\bar z) \in \dom{\vp}\times \Rn$. Then, the conditions \rmn{(i)}--\rmn{(vi)} in \cref{thm:conn-sec} are further equivalent to
\begin{itemize}
\item $D\Fnor{\bar z}$ is invertible and $D\prox{\bar z}^\top D\Fnor{\bar z}$ is positive semidefinite,
\end{itemize}
where $D\Fnor{\bar z}$ and $D\prox{\bar z}$ denote the (Fr\'echet) derivative of $\oFnor$ and $\proxs$ at $\bar z$, respectively. 
\end{corollary}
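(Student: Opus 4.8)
The plan is to reduce the entire statement to condition~(v) of \cref{thm:conn-sec}, which under \ref{E2} collapses to a clean linear-algebraic assertion about $D\prox{\bar z}$ and $D\Fnor{\bar z}$. First I would invoke \cref{theorem:epi-semi}: assumption \ref{E2} is equivalent to differentiability of $\proxs$ at $\bar z$, so the Fr\'echet derivative $D := D\prox{\bar z}$ exists. Since $f$ is $C^2$ near $\bar x = \prox{\bar z}$, the chain rule yields differentiability of $\oFnor$ at $\bar z$ with $M := D\Fnor{\bar z} = \nabla^2 f(\bar x)D + \Lambda(I-D)$, which is exactly the matrix appearing in \cref{lemma:pos-pos} for $B = \nabla^2 f(\bar x)$. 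Because $D \in \partial\prox{\bar z}$, \cref{prop2-5}(i) guarantees that $\Lambda D$ is symmetric, so the hypotheses of \cref{lemma:pos-pos} are in force and $D^\top M$ is symmetric. Crucially, differentiability linearizes the directional derivatives, i.e. $(\proxs)^\prime(\bar z;h) = Dh$ and $(\oFnor)^\prime(\bar z;h) = Mh$ for all $h \in \Rn$.

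With this identification, condition~(v) of \cref{thm:conn-sec} reads $\iprod{h}{D^\top M h} = \iprod{Dh}{Mh} > 0$ for every $h$ with $Dh \neq 0$, and it therefore suffices to prove that this is equivalent to the target condition that $M$ is invertible and $D^\top M$ is positive semidefinite. For the forward implication I would argue directly: whenever $Dh = 0$ one has $\iprod{h}{D^\top M h} = \iprod{Dh}{Mh} = 0$, so together with~(v) the quadratic form $h \mapsto \iprod{h}{D^\top M h}$ is nonnegative on all of $\Rn$, i.e. $D^\top M \succeq 0$; and if $Mh = 0$, then $\iprod{Dh}{Mh} = 0$ forces $Dh = 0$ by~(v), whence $Mh = \nabla^2 f(\bar x)Dh + \Lambda(I-D)h = \Lambda h = 0$ and thus $h = 0$, giving invertibility of $M$.

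The converse is precisely the content of \cref{lemma:pos-pos}: invertibility of $M$ together with $D^\top M \succeq 0$ produces a constant $\sigma > 0$ with $\iprod{h}{D^\top M h} \geq \sigma\|Dh\|_\Lambda^2$ for all $h$, and strict positivity of $\|Dh\|_\Lambda^2$ on $\{h : Dh \neq 0\}$ then recovers~(v). Combining both implications shows the displayed condition is equivalent to~(v), and since \cref{thm:conn-sec} already establishes the mutual equivalence of (i)--(vi) with~(v), the corollary follows. The only place that requires genuine care, rather than routine bookkeeping, is the treatment of $\ker D$: one must verify that $\iprod{h}{D^\top M h}$ vanishes there (so that strict positivity on the complement upgrades to global semidefiniteness) and that the same kernel computation closes the invertibility of $M$. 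Both rest on the identities $\iprod{h}{D^\top M h} = \iprod{Dh}{Mh}$ and $Mh = \Lambda h$ on $\ker D$, after which \cref{lemma:pos-pos} supplies the quantitative bound needed for the reverse direction.
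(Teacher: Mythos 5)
Your proof is correct and follows essentially the same route as the paper's: identify $(\proxs)^\prime(\bar z;h) = Dh$ and $(\oFnor)^\prime(\bar z;h) = Mh$ via \cref{theorem:epi-semi} and reduce the claimed equivalence to \cref{lemma:pos-pos}. The only minor difference is that for the direction ``(i)--(vi) $\implies$ displayed condition'' you argue elementarily from condition (v) of \cref{thm:conn-sec}, handling $\ker D$ by hand, whereas the paper extracts the quantitative bound $\iprod{Dh}{D\Fnor{\bar z}h} \geq \sigma \|Dh\|_\Lambda^2$ from the quadratic growth condition (vi) and then applies the converse part of \cref{lemma:pos-pos}; both arguments are valid and rest on the same identities.
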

\begin{proof} The differentiability of $\proxs$ and $\oFnor$ is shown in \cref{theorem:epi-semi}. Defining $D := D\prox{\bar z}$  
and following the proof of \cref{prop:soc}, we also have
\[ \iprod{Dh}{D\Fnor{\bar z}h} \geq  \sigma\|Dh\|^2 \quad \forall~h \in \Rn, \]
for some $\sigma > 0$ which is a consequence of the growth condition \eqref{eq:soc-growth1}. Hence, the assertion in \cref{cor:soc-diff} now directly follows from \cref{lemma:pos-pos}.
\end{proof}

We conclude this section and show that assumption \ref{E2} implies CD-regularity of $\oFnor$. This further allows us to fully connect the second-order results in \cref{cor:soc-diff} and condition \ref{D4}.

\begin{proposition} \label{prop:diff-d4} In addition to the assumptions stated in \cref{cor:soc-diff}, let us assume that $\proxs$ is semismooth at $\bar z$. Then, the second-order conditions in \cref{cor:soc-diff} imply that $\oFnor$ is CD-regular at $\bar z$. Moreover, there exist $\bar \sigma, \bar \delta  > 0$ such that we have
\[ \iprod{Dh}{Mh} \geq \bar\sigma \|Dh\|^2 \quad \forall~h \in \Rn, \]
for all $z \in B_{\bar\delta}(\bar z)$ and $M \in \mathcal M^\lambda(z)$ with $M = \nabla^2 f(\prox{z})D + \frac{1}{\lambda}(I-D)$ and $D \in \partial\prox{z}$.
\end{proposition}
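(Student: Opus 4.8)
The plan is to recognize that the displayed coercivity estimate is exactly the hypothesis \cref{eq:pos-pos} of \cref{lemma:pos-pos}, so that the converse direction of that lemma immediately yields both the invertibility of every $M\in\mathcal{M}^{\Lambda}(z)$ (hence CD-regularity at $z=\bar z$) and a uniform bound $\|M^{-1}\|\leq\kappa_M$ via continuity of $z\mapsto\nabla^2 f(\prox{z})$ and boundedness of $\nabla^2 f$ near $\bar x$; this is precisely what connects the statement to condition~\ref{D4}. It therefore suffices to establish $\iprod{h}{D^\top M h}\geq\bar\sigma\|Dh\|_\Lambda^2$ uniformly for $z\in B_{\bar\delta}(\bar z)$, $D\in\partial\prox{z}$, and $M=\nabla^2 f(\prox{z})D+\Lambda(I-D)$. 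As a first anchor I would record the estimate at $\bar z$ for the Fr\'echet derivative: by \cref{cor:soc-diff} the second-order conditions are equivalent to $D\Fnor{\bar z}$ being invertible and $D\prox{\bar z}^\top D\Fnor{\bar z}$ positive semidefinite, so applying the first part of \cref{lemma:pos-pos} with $B=\nabla^2 f(\bar x)$ and $D=P:=D\prox{\bar z}$ (so that $M=\nabla^2 f(\bar x)P+\Lambda(I-P)=D\Fnor{\bar z}$) gives $\iprod{h}{P^\top D\Fnor{\bar z}h}\geq\sigma_0\|Ph\|_\Lambda^2$ for an explicit $\sigma_0>0$.

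Next I would exploit the curvature decomposition. Writing $D^\top M=D^\top\nabla^2 f(\prox{z})D+D^\top\Lambda(I-D)$, \cref{prop2-5} shows that $D^\top\Lambda(I-D)$ is symmetric and positive semidefinite and that $D^\top M$ is itself symmetric, so the target estimate reduces to controlling the Hessian part on the range of $D$. Passing to the scaled coordinates $\tilde D:=\Lambda^{1/2}D\Lambda^{-1/2}$ (symmetric with $0\preceq\tilde D\preceq I$, again by \cref{prop2-5}) and $\tilde B:=\Lambda^{-1/2}\nabla^2 f(\prox{z})\Lambda^{-1/2}$, the desired matrix inequality $D^\top M\succeq\bar\sigma\, D^\top\Lambda D$ becomes $\tilde D(I-\tilde D)+\tilde D\tilde B\tilde D\succeq\bar\sigma\,\tilde D^2$. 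Since $\tilde D(I-\tilde D)\succeq 0$, positivity can only fail on the eigenvalue-one (fixed) subspace of $\tilde D$, where the estimate demands that $\tilde B$ be positive definite. This is exactly the information carried by the second-order condition \cref{eq:soc-suff}, which forces $\tilde B$ to be positive definite on the active subspace $S$ of \ref{E2} (in the $\Lambda^{1/2}$-scaled coordinates).

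To obtain the estimate uniformly I would argue by contradiction: if it fails, there exist $z_k\to\bar z$, $D_k\in\partial\prox{z_k}$, and unit vectors $h_k$ with $\iprod{h_k}{D_k^\top M_k h_k}<\sigma_k\|D_k h_k\|_\Lambda^2$ and $\sigma_k\downarrow 0$; note $D_k h_k\neq0$ at any genuine violation. Using the boundedness $0\preceq\tilde D_k\preceq I$, the outer semicontinuity of $\partial\prox{\cdot}$, and the continuity of $\nabla^2 f$, I would pass to limits $D_k\to\bar D\in\partial\prox{\bar z}$, $h_k\to h$, $M_k\to\bar M$. When $\bar D h\neq 0$ this contradicts the pointwise estimate, once the latter is known for $\bar D$. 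In the degenerate regime one divides by $\|\tilde D_k\Lambda^{1/2}h_k\|^2$ and uses the decomposition: the positive-semidefinite term $\tilde D_k(I-\tilde D_k)$ then forces the normalized limiting direction $\hat u$ to concentrate on the fixed subspace of $\tilde{\bar D}$, while the Hessian term yields $\iprod{\hat u}{\tilde B\hat u}\leq 0$, again reducing the contradiction to positive definiteness of $\tilde B$ there.

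The hard part is therefore the extension of the pointwise coercivity estimate from the Fr\'echet derivative $P=D\prox{\bar z}$ to every $\bar D\in\partial\prox{\bar z}$, i.e.\ showing that the fixed subspace $\{w:\bar D w=w\}$ of any such $\bar D$ is contained in the active subspace $S$ of \ref{E2} on which \cref{eq:soc-suff} guarantees positive curvature of $\tilde B$. To settle this I would combine the directional-derivative characterization $(\proxs)^\prime(\bar z;\cdot)=P$ with range in $S$ from \cref{theorem:epi-semi}, the semismoothness of $\proxs$ (which relates the action of elements of $\partial\prox{z}$ to $(\proxs)^\prime(\bar z;\cdot)$ as $z\to\bar z$; see also \cref{prop2-6}), and the structural constraints of \cref{prop2-5}, so as to confine the ranges and fixed spaces of all limiting generalized derivatives inside $S$. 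Once this containment is in place, \cref{lemma:pos-pos} applied at $z=\bar z$ gives invertibility of every $M\in\mathcal{M}^{\Lambda}(\bar z)$ and hence CD-regularity, while the uniform estimate together with the boundedness of $\nabla^2 f$ near $\bar x$ reproduces the quantitative bound of \ref{D4} through the converse part of \cref{lemma:pos-pos}.
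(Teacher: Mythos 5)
Your high-level skeleton matches the paper's: reduce everything to the uniform coercivity estimate $\iprod{h}{D^\top M h}\geq\bar\sigma\|Dh\|_\Lambda^2$ near $\bar z$ and then invoke \cref{lemma:pos-pos} for invertibility. However, there are two genuine gaps. First, you have missed the key structural observation that makes the proof go through: under \ref{E2} the operator $\proxs$ is differentiable at $\bar z$ (\cref{theorem:epi-semi}), and differentiability combined with the assumed semismoothness yields \emph{strict} differentiability, so that $\partial\prox{\bar z}$ collapses to the singleton $\{D\prox{\bar z}\}$ and $\partial\Fnor{\bar z}=\mathcal M^\Lambda(\bar z)$ is a singleton as well. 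This simultaneously (a) dissolves what you call ``the hard part'' --- there are no other elements $\bar D\in\partial\prox{\bar z}$ to extend the estimate to, so the containment argument you only sketch is not needed --- and (b) supplies the continuity $\sup_{D(z)\in\partial\prox{z}}\|D(z)-D\prox{\bar z}\|\to 0$ as $z\to\bar z$ (via upper semicontinuity of Clarke's subdifferential), which is the indispensable input for any perturbation argument for nearby $z$. Without it, your plan to ``pass to limits $D_k\to\bar D\in\partial\prox{\bar z}$'' does not by itself control the generalized derivatives at nearby points, and your treatment of CD-regularity implicitly assumes $\partial\Fnor{\bar z}\subseteq\mathcal M^\Lambda(\bar z)$, which is false in general (see the remark after \cref{prop2-6}) and only holds here because of the singleton property.

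Second, your contradiction argument fails in the degenerate regime. From $\iprod{h_k}{D_k^\top M_k h_k}<\sigma_k\|D_kh_k\|_\Lambda^2$ you obtain, after normalization, $a_k+b_k<\sigma_k$ with $a_k\geq 0$ the $\tilde D_k(I-\tilde D_k)$-term and $b_k$ the Hessian term. You claim the nonnegative term ``forces'' concentration of the limiting direction on the fixed subspace of $\tilde{\bar D}$, i.e.\ $a_k\to 0$; but since $f$ is nonconvex, $b_k$ can be negative and bounded away from zero, so from $a_k+b_k\to 0^-$ you may only conclude $a_k$ is bounded, not that it vanishes. Relatedly, your assertion that positivity ``can only fail on the eigenvalue-one subspace of $\tilde D$'' is inaccurate: on an eigenvector with eigenvalue $\lambda\in(0,1)$ close to $1$ the inequality reads $\tfrac{1-\lambda}{\lambda}+\iprod{v}{\tilde Bv}/\|v\|^2\geq\bar\sigma$, which can fail when $\tilde B$ has negative curvature there. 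The paper instead proceeds quantitatively: it diagonalizes $\tilde D(z)$, uses an eigenvector-perturbation bound to align the eigenbases with that of $\tilde D=\Lambda^{1/2}D\prox{\bar z}\Lambda^{-1/2}$, derives from \cref{cor:soc-diff} the coercivity $\iprod{y}{\tilde Gy}\geq\tilde\sigma\|y\|^2$ on the \emph{positive}-eigenvalue subspace of $\tilde D$ (not merely the eigenvalue-one subspace), and then absorbs the cross terms and the small-eigenvalue block by Young's inequality together with the fact that the residual eigenvalues $q_i(z)$ can be made smaller than an explicit threshold $\tilde\epsilon$ depending on $\tilde\sigma$ and a bound on the scaled Hessian. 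You would need an argument of this quantitative type (or a substantially repaired compactness argument) to close the uniform estimate.
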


Thus, the second-order optimality condition \eqref{eq:soc-suff}, \ref{E2}, and the semismoothness of $\proxs$ are sufficient to guarantee the invertibility assumption \ref{D4}. 

\begin{proof} The differentiability and semismoothness of $\proxs$ imply that $\proxs$ and $\oFnor$ are strictly differentiable at $\bar z$ and Clarke's subdifferential $\partial \prox{\bar z}$ reduces to a singleton $\partial \prox{\bar z} = \{D\prox{\bar z}\} =: \{\bar D\}$, see, e.g., \cite[Theorem 2.6.7]{milzarek2016numerical} and \cite[Exercise 9.25 and Theorem 9.62]{rockafellar2009variational}. In addition, $\partial \Fnor{\bar z}$ coincides with $\mathcal M^\lambda(\bar z)$ and both sets reduce to the singleton $\{\nabla^2 f(\bar x)\bar D + \frac{1}{\lambda}(I-\bar D)\}$ where $\bar x = \prox{\bar z}$. We now show the following continuity property: 
\be \label{eq:dd-cont} \forall~\epsilon > 0 \quad \exists~\delta > 0 \quad \text{such that} \quad \|D(z) - \bar D\| < \epsilon \quad \forall~D(z) \in \partial \prox{z} \quad \text{and} \quad z \in B_\delta(\bar z). \ee
Suppose that this assertion is wrong, i.e., there exists $\epsilon > 0$ and sequences $\{z_k\}$ and $\{D(z_k)\}$, $D(z_k) \in \partial\prox{z_k}$, with $z_k \to \bar z$ and $\|D(z_k) - \bar D\| \geq \epsilon$ for all $k$. Due the local boundedness and upper semicontinuity of Clarke's subdifferential, there then exists a subsequence $\{k_\ell\}$ and $\mathcal D \in \partial \prox{\bar z}$ such that $D(z_{k_\ell}) \to \mathcal D$. Utilizing the strict differentiability of $\proxs$ at $\bar z$ this yields the contradiction $\mathcal D = \bar D$. 

Next, let $z \in B_{\delta}(\bar z)$ and $D(z) \in \partial\prox{z}$ be arbitrary. \cref{prop2-5} implies that the matrices $D(z)$ and $\bar D$ are positive semidefinite with eigenvalues in $[0,1]$. Let $ D(z) = P(z) Q(z) P(z)^\top$ be an eigenvalue decomposition of $D(z)$ with $Q(z) = \diag(q_1(z),...,q_n(z))$ and $q_1(z) \geq ... \geq q_n(z)$. Thanks to the continuity property \eqref{eq:dd-cont} and \cite[Lemma 4.3]{SunSun02} there then exists an orthogonal matrix $P$ such that
\[  \tilde D=PQP^\top, \quad Q = \diag(\lambda_1,...,\lambda_n), \quad \text{and} \quad \|P(z)-P\| = O(\|D(z)-\bar D\|) \]
(after possibly reducing $\delta > 0$).
Let $h\in\R^n$ be arbitrary and let us set $\tilde h=Ph$. Then, our assumption implies
\be \label{eq:lb-tildeG} \langle \tilde h, \tilde G\tilde h \rangle         \geq  \sigma  \|Q\tilde h \|^2, \quad \tilde G := P^\top\bar D^\top  D\Fnor{\bar z}  P,         \ee
for some $ \sigma > 0$. Without loss of generality, let us assume that the eigenvalues of $\tilde D$ satisfy $\lambda_1\geq ...\geq \lambda_\ell >0=\lambda_{\ell+1}=...=\lambda_n$ for some $\ell \geq 1$. (The following proof will also work in the case $\ell=0$). Then there is a constant $\tilde\sigma>0$ such that for any $y \in \mathrm{span}(\{e_1,...,e_{\ell}\})$ we have
\be \label{eq:lb-tildeG-2} \langle y, \tilde G y \rangle \geq \tilde\sigma \|y \|^2. \ee
Here, $\{e_i\}_i$ denotes the standard Euclidean basis. Notice that this property and the constant $\tilde \sigma$ do not depend on the choice of $P$, i.e., condition \eqref{eq:lb-tildeG-2} holds for all $P$ in
\[\mathcal P := \{P \in \R^{n \times n}: P \text{ is orthogonal}; Q = P^\top \tilde D P \text{ is diagonal with } Q_{11} \geq ... \geq Q_{nn}\}.\]  
We now set $M(z) := \nabla^2 f(\prox{z})D(z) + \frac{1}{\lambda}(I-D(z)) \in \mathcal M^\lambda(z)$ and $\tilde G(z) := P(z)^\top D(z)^\top M(z)  P(z)$. Reducing $\delta$ if necessary (this might change $P$), we obtain
\[     \langle  y,\tilde G(z) y \rangle\geq \frac{\tilde \sigma}{2}\|y\|^2 \quad \forall~y \in \mathrm{span}(\{e_1,...,e_{\ell}\}).      \]
Next, let $\tilde h=P(z)h$ with $h \in \Rn$ be arbitrary. As before, we then have $\iprod{h}{D(z)^\top M(z) h} = \langle \tilde h, \tilde G(z) \tilde h \rangle$. Setting $h_1=\sum_{i=1}^\ell \langle \tilde h, e_i \rangle e_i$ and $h_2=\tilde h-h_1$, we can deduce that:
\[     \langle h_1,\tilde G(z)  h_1\rangle \geq \frac{\tilde \sigma}{2}\|h_1\|^2.      \]
Furthermore, setting $B(z) = P(z)^\top\nabla^2f(\prox{z})P(z)$, we have
\begin{align*}
  \iprod{h_2}{\tilde G(z) h_2} &=  \iprod{Q(z)h_2}{B(z)Q(z)h_2} + \frac{1}{\lambda}\iprod{h_2}{Q(z)h_2} - \frac{1}{\lambda}\|Q(z)h_2\|^2
  \\&\geq \frac{1}{\lambda}\left[\iprod{h_2}{Q(z)h_2} - (1 + \lambda\|B(z)\|)\|Q(z)h_2\|^2\right].  
\end{align*}
Since the mappings $z \mapsto \partial\prox{z}$ and $z \mapsto \nabla^2 f(\prox{z})$ are uniformly bounded on $B_\delta(\bar z)$ and $\lambda$ is fixed, there exists $C_B > 0$ such that
\[     \max\{\lambda\|B(z)\|, \|Q(z)B(z)+\frac{1}{\lambda}(I-Q(z))\| \} \leq C_B \quad \forall~z \in {B}_\delta(\bar z).    \]
Let us define $\tilde \epsilon := 1/(1+C_B + \frac{4C_B^2}{\tilde \sigma}+\frac{\tilde\sigma}{4})$. Using the continuity of eigenvalues, $\lambda_i = 0$ for all $i = \ell+1,...,n$, and \eqref{eq:dd-cont}, we can decrease $\delta$ (if necessary) to guarantee 
\[   q_{i}(z) \leq \tilde \epsilon, \quad \forall~i \in \{\ell+1,...,n\}, \quad \forall~z \in B_\delta(\bar z).              \]
Hence, due to $h_2\in\mathrm{span}(\{e_{\ell+1}, ...,e_n\})$, this implies $\iprod{h_2}{Q(z)h_2} \geq \frac{1}{\tilde\epsilon} \|Q(z)h_2\|^2$ and Young's inequality yields
\begin{align*}
  \iprod{h_1}{\tilde G(z)h_2} &= \iprod{h_1}{[Q(z)B(z)+\frac{1}{\lambda}(I-Q(z))]Q(z)h_2} \geq - C_B\|h_1\|\|Q(z)h_2\| \geq -\frac{\tilde \sigma}{8}\|h_1\|^2-  \frac{2C_B^2}{\tilde\sigma}\|Q(z)h_2\|^2.    
\end{align*}
We now obtain
\begin{align*}
  \langle  \tilde h,\tilde G(z) \tilde h \rangle&=  \iprod{h_1}{\tilde G(z)h_1} +2 \iprod{h_1}{\tilde G(z)h_2}+\iprod{h_2}{\tilde G(z)h_2} \geq \frac{\tilde\sigma}{4}(\|h_1\|^2+\|Q(z) h_2\|^2) \geq \frac{\tilde\sigma}{4}\|Q(z)\tilde h\|^2, 
\end{align*}
where we have used the fact $0 \preceq Q(z)\preceq I$. The conclusion follows from the observation $\|D(z)h\|^2=\|Q(z)\tilde h\|^2$.           
\end{proof}
 
\section{A Quasi-Newton Variant of \cref{algo2}}
\label{sec:qnm}

In this section, we discuss a variant of our main algorithm that utilizes quasi-Newton updates to generate approximate and potentially cheaper second-order information while maintaining many of the convergence properties derived in the previous sections. Based on the structure of our approach, there are two different options on how such quasi-Newton updates can be built and used within the algorithm:
\begin{itemize}
\item[{\sf I}.] Approximation of the full (nonsmooth) curvature: $B_k \approx D_k M_k =  D_k [\nabla^2 f(\prox{z_k})D_k + \frac{1}{\lambda}(I-D_k)]$ where $D_k \in \partial \prox{z_k}$. 
\item[{\sf II}.] Approximation of the (smooth) Hessian information: $B_k \approx \nabla^2 f(\prox{z_k})$. 
\end{itemize}
Approximations of type {\sf I} basically lead to nonsmooth quasi-Newton methods which have been studied extensively in the literature, see, e.g., \cite{ip1992local,CheYam92,Qi97,LewOve13,SteThePat17,TheStePat18}. More specialized quasi-Newton techniques have also been investigated for the Moreau envelope $\envs$ and proximal point approaches in \cite{CheFuk99,RauFuk00,BurQia00} and for nonsmooth reformulations of KKT systems in, e.g., \cite{QiJia97,LiYamFuk01}. 

In this section, we propose a quasi-Newton variant of \cref{algo2} that follows the second strategy and only approximates the Hessian $\nabla^2f$. Motivated by its convincing practical performance and high relevance, we will focus on Broyden-Fletcher-Goldfarb-Shanno (BFGS) updates to build the quasi-Newton approximations of $\nabla^2 f$. The full modified algorithm is presented in \cref{algo-4}. Approximations of type {\sf II} exploit the structure of the underlying nonsmooth equation and problem. Related methods that apply similar strategies to smooth components of an underlying problem or smoothing techniques are developed and discussed in, e.g., \cite{Sac85,CheQi94,Che97,SunHan97,ArtBelDonLop14}. In addition, in \cite{HanSun97,ManRun20,mannelhybrid} local convergence properties of two related normal map-based approaches using Broyden-like updates are analyzed.

Our aim in this section is to prove q-superlinear convergence of \cref{algo-4}. Specifically, we want to show that the quasi-Newton approximations generated by \cref{algo-4} are uniformly bounded and satisfy the Dennis-Mor{\'e} condition formulated in \ref{D7}. This then allows us to apply \cref{thm:main-local-conv} to establish fast local convergence. We continue with several more detailed remarks on the Dennis-Mor{\'e} condition and on the boundedness of the matrices $\{B_k\}$:
\begin{itemize}
\item In the nonsmooth setting, Dennis-Mor{\'e} conditions of the form \ref{D7} typically rely on strict differentiability and Lipschitz properties of the underlying nonsmooth equation, see, e.g., \cite{ip1992local,QiJia97,LiYamFuk01,SteThePat17,TheStePat18}. However, as discussed in \cref{rem:crit}, differentiability of $\oFnor$ or $\oprox$ essentially requires the strict complementarity condition to hold. Here, we want to verify the Dennis-Mor{\'e} condition without utilizing strict complementarity and hence, we work with a quasi-Newton scheme that only approximates the smooth Hessian $\nabla^2 f$. 
\item Boundedness of the BFGS updates $\{B_k\}$ is a classical topic that has been investigated thoroughly in the last 50 years, see, e.g., \cite{DenMor74,DenMor77,ByrNocYua87,byrd1989tool}. In order to establish boundedness, the Hessian $\nabla^2 f(\bar x)$ is typically assumed to be positive definite at the limit point $\bar x = \lim_{k \to \infty} x_k$ and one of the following two conditions has to hold: 
\[ {\sum}_{k} \|x_k - \bar x\| < \infty \quad \text{or} \quad \|B_0 - \nabla^2 f(\bar x)\| \; \text{is sufficiently small}. \]
The latter condition requires the initial estimate $B_0$ to be sufficiently close to the true Hessian and is known as a bounded deterioration property. We note that the alternative summability condition ${\sum}_{k} \|x_k - \bar x\| < \infty$ is certainly satisfied if the sequence $\{x_k\}$ converges r-linearly to $\bar x$ which usually can be ensured in the strongly convex case, see, e.g., \cite{Wer79,Rit79,Rit81,ByrNocYua87,byrd1989tool}. Furthermore, in \cite{SteThePat17}, Stella et al., utilize KL-results (for the forward-backward envelope with KL exponent $\frac12$) to justify the assumption ${\sum}_{k} \|x_k - \bar x\| < \infty$. However, their framework requires the strict complementarity condition to hold and the generated quasi-Newton directions need to be gradient-related (in a uniform way). Unfortunately, it is not clear how the latter condition can be verified a priori if the quasi-Newton approximations $\{B_k\}$ are not known to be bounded.  
\end{itemize}

Overall, full global-to-local convergence results for BFGS-type approaches still seem to be fairly limited -- especially in the nonsmooth setting we are considering in this paper -- and require strong and additional assumptions. In the following, we will derive new KL-based results for the BFGS scheme used in \cref{algo-4} that, to some extent, can overcome the mentioned limitations. Let us further note that the framework and results by Li et al., \cite{LiYamFuk01}, are probably closest to our style of analysis. (However, the local convergence results in \cite{LiYamFuk01} are again based on the strict complementarity condition).

\begin{algorithm}[t]
        \caption{A Trust Region-type Normal Map-based Quasi-Newton Method}
         \label{algo-4}
        \begin{algorithmic}[1]  
            \Require Choose an initial point $z_0\in\mathbb{R}^n$ and positive definite matrices $B_0$, and positive number $\lambda$. Choose $\xi > 0$ and sequences $\{\xi_k\} \subset \R_{++}$, $\{\epsilon_k\} \subset \R_{+}$ and set $k=0$.
            \While{$F_{\text{nor}}^{\Lambda}(z_k)\neq0$}
            \State Choose $D_k\in\partial\text{prox}_{\varphi}^{\Lambda}(z_k)$. and set $M_k=B_kD_k+\frac{1}{\lambda}(I-D_k)$. ,                  			\State Run \cref{algo1} with $S = D_k M_k$, $g = D_k \Fnor{z_k}$, $\Delta = \Delta_k$, and $\epsilon = \epsilon_k \geq 0$ returning $\bar q_k = q$;
             \State Set $\bar{s}_k=\bar q_k-\lambda(F_{\text{nor}}^{\lambda}(z_k)+M_k\bar q_k)$ and $s_k=\min\{1,\frac{\Delta_k}{\|\bar{s}_k\|}\}\bar{s}_k$;
             \If{$\rho_k=\frac{\mer(z_k)-\mer(z_k+s_k)}{\mathrm{pred}(z_k,s_k,\Delta_k,\nu_k)} < \eta_1$}
             \State Set $z_{k+1}=z_{k}+s_k$ and update the BFGS approximation
             $$ B_{k+1} = \begin{cases} B_k & \text{if} \; \|d_k\| = 0 \; \text{or} \; d_k^\top y_k < \min\{\xi,\xi_k \|d_k\|^2\}, \\ B_k-\frac{B_kd_kd_k^\top B_k}{d_k^\top B_k d_k}+\frac{y_ky_k^\top}{d_k^\top y_k} & \text{otherwise,} \end{cases} $$ 
$~~~~~~~~~$where $d_k=\prox{z_{k+1}}-\prox{z_k}$ and $y_{k}=\nabla f(\prox{z_{k+1}})-\nabla f(\prox{z_k})$.
             \Else
             \State Set $z_{k+1}=z_k$ and $B_{k+1}=B_k$;
             \EndIf
             \State Update $\Delta_{k+1}$ based on $\rho_k$ by invoking \cref{algo3};
             \State $k\gets k+1$;
            \EndWhile
        \end{algorithmic}
\end{algorithm}

\subsection{Refined Properties of BFGS-Updates and the Dennis-Mor\'{e} Condition} \label{sec:bfgs-dm}

In this subsection, we investigate the BFGS scheme utilized in \cref{algo-4} and show that it can indeed satisfy the Dennis-Mor\'{e} condition \ref{D7} which was an essential component of our convergence analysis in \cref{sec:loc-super-conv}. We first introduce several additional notations in the next definition. 

\begin{defn}
\label{defn8-1}
We define the following terms:
\begin{itemize}
\item[\rmn{(i)}] Let $\mathcal X :=\{k\in\mathbb{N}:k\in\mathcal{S},~\|d_k\|\neq0\} = \mathcal S \cap \mathcal T$ denote the set of all successful iterates with $x_{k+1} \neq x_k$.
\item[\rmn{(ii)}] For any set $S \subset \Rn$, we define $\Delta S:=S-S$. Moreover, $\aff(S)$ denotes the affine hull of $S$ in $\Rn$.
\item[\rmn{(iii)}] For a given set $S \subset \Rn$ and a matrix $A \in \R^{n \times n}$, we define $\mathcal H(S,A):= \inf_{x \in S, \, \|x\| = 1}~\iprod{x}{Ax}$. (Notice that we set $\mathcal H(S,A) = \infty$ in the case $S \cap \{x: \|x\| = 1\} = \emptyset$). 
\end{itemize}
\end{defn}
Let $\{k_i : i \geq 0\}$ enumerate the indices in the set $\mathcal X$. By the definition of $\mathcal X$, it follows
\be \label{eq:sec08:exx} x_{j}=x_{k_{i}+1} \quad \text{and} \quad B_{j}=B_{k_{i}+1} \quad \forall~i, \quad \forall~j \; \text{with} \; k_i+1 \leq j \leq k_{i+1}. \ee 
This structural property will be used frequently throughout this section. In the following, we formulate our main conditions which allow a refined analysis of the BFGS updates.
 
\begin{assumption}
\label{assum8-2} Let $(\bar x,\bar z) \in \dom{\vp} \times \Rn$ be a criticality pair of \eqref{eq1-1}. We assume:
\begin{enumerate}[label=\textup{\textrm{(F.\arabic*)}},topsep=0pt,itemsep=0ex,partopsep=0ex]
\item \label{F1} The sequence $\{x_k\}$ converges to $\bar x$ and has finite length, i.e., $\sum_{k=0}^\infty \|x_{k+1}-x_k\| < \infty$. 
\item \label{F2} The mapping $f$ is twice continuously differentiable near $\bar x$ and $\nabla^2f$ is Lipschitz continuous near $\bar x$ with modulus $L_H$.
\item \label{F3} We have $\liminf_{\epsilon\to0}\mathcal H(\aff(\Delta S_\epsilon),\nabla^2 f(\bar x))>0$
where $S_\epsilon:=\{d\in\mathbb{R}^n: \exists~z\in{B}_\epsilon(\bar z) \, \text{with} \, d=\prox{z}-\prox{\bar z}\}$.
\item \label{F4} The parameters $\xi_k$ satisfy $\xi_k > 0$, $\lim_{k \to \infty} \xi_k = 0$, and $\liminf_{k \to \infty} \xi_k \ln(k) > 0$. 
\end{enumerate}
\end{assumption}

As we have seen in the proof of \cref{thm:kl-convergence}, convergence and finite length of $\{x_k\}$ can be guaranteed under the standard KL-framework.  
Assumption \ref{F3} can be interpreted as a curvature condition. If $\proxs$ is directionally differentiable at $\bar z$, then \ref{F3} clearly implies
\[ \iprod{(\proxs)^\prime(\bar z;h)}{\nabla^2 f(\bar x)(\proxs)^\prime(\bar z;h)} \geq \delta \|(\proxs)^\prime(\bar z;h)\|^2 \]
for all $h \in \Rn$ and for some $\delta > 0$. Thus, \ref{F3} is generally stronger than the second-order sufficient conditions studied in the last section. If $\varphi$ is polyhedral, a stronger connection between \ref{F3} and the second-order optimality conditions can be established and we can demonstrate that the curvature assumption in \ref{F3} can indeed be weaker than positive definiteness of $\nabla^2 f(\bar x)$. A detailed discussion of these observations can be found in \cref{sec:f3}. 

Next, we collect some basic properties of the BFGS updates and show that the skipping mechanism in step 6 of \cref{algo-4} and assumption \ref{F4} can ensure condition \ref{B2}. 

\begin{lemma} \label{lemma:sec08:bfgs} Let the sequence $\{B_k\}$ be generated by \cref{algo-4} and suppose that the initial matrix $B_0 \in \R^{n \times n}$ is symmetric and positive definite. Then, we have: 
\begin{itemize}
\item[\rmn{(i)}] The matrix $B_k$ is symmetric and positive definite for all $k \geq 0$. 
\item[\rmn{(ii)}] In addition, if the assumptions \ref{A1}, \ref{C2}, and \ref{F4} are satisfied, then it follows $\sum_{k=0}^\infty (1+\|B_k\|)^{-1} = \infty$. 
\end{itemize} 
\end{lemma}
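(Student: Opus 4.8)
The plan is to establish (i) by induction on $k$, exploiting the skipping test in step 6 of \cref{algo-4}, and to deduce (ii) from a trace bound that controls the growth of $\|B_k\|$. For part (i), symmetry of each $B_k$ is immediate from the rank-two structure of the update, so I would focus on positive definiteness. The base case is the hypothesis $B_0 \in \Spp$. In the inductive step, if the skipping branch is taken then $B_{k+1}=B_k$ and there is nothing to show; otherwise the full update is performed only when $d_k^\top y_k \geq \min\{\xi,\xi_k\|d_k\|^2\} > 0$, so the curvature condition $d_k^\top y_k > 0$ is guaranteed. Writing $B=B_k$, $d=d_k$, $y=y_k$ and using Cauchy--Schwarz in the inner product $\langle a,b\rangle_B := a^\top B b$, I would verify for every $v\neq 0$ that
\[ v^\top B_{k+1} v = v^\top B v - \frac{(d^\top B v)^2}{d^\top B d} + \frac{(y^\top v)^2}{d^\top y} > 0, \]
distinguishing whether $v$ is a multiple of $d$ (then the first two terms cancel and the last equals $\lambda^2 d^\top y>0$) or not (then Cauchy--Schwarz makes the first two terms strictly positive). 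This is the classical BFGS preservation-of-definiteness argument and is routine once the curvature test is invoked.

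For part (ii) the key point is that, by (i), each $B_k$ is positive definite, so $\|B_k\|\leq \tr(B_k)$ and it suffices to bound the trace. Taking traces in the BFGS update and discarding the nonpositive term $-\|B_kd_k\|^2/(d_k^\top B_kd_k)$ gives, at each index $k\in\mathcal X$ where an actual update occurs,
\[ \tr(B_{k+1}) \leq \tr(B_k) + \frac{\|y_k\|^2}{d_k^\top y_k}, \]
while $B_{k+1}=B_k$ at all other indices. By \ref{A1} we have $\|y_k\| = \|\nabla f(x_{k+1})-\nabla f(x_k)\| \leq L\|d_k\|$, and combining this with the curvature bound $d_k^\top y_k\geq\min\{\xi,\xi_k\|d_k\|^2\}$ yields
\[ \frac{\|y_k\|^2}{d_k^\top y_k} \leq \frac{L^2\|d_k\|^2}{\min\{\xi,\xi_k\|d_k\|^2\}} \leq \frac{L^2R^2}{\xi} + \frac{L^2}{\xi_k}, \]
where the two summands cover the two branches of the minimum and $R := 2\sup_k\|x_k\| < \infty$ is finite because \ref{C2} together with continuity of $\proxs$ makes $\{x_k\}$ bounded. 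Extending the summation from update indices to all indices (the increments being nonnegative) then gives $\tr(B_k) \leq \tr(B_0) + \sum_{j=0}^{k-1}\bigl(L^2R^2\xi^{-1}+L^2\xi_j^{-1}\bigr)$.

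Finally I would invoke \ref{F4}: from $\liminf_{k\to\infty}\xi_k\ln(k)>0$ there exist $c_0>0$ and $K_0$ with $\xi_k^{-1}\leq c_0^{-1}\ln(k)$ for all $k\geq K_0$. Splitting the sum at $K_0$ and using $\sum_{j=K_0}^{k-1}\ln(j)\leq k\ln(k)$ produces constants $C,C'>0$ such that $1+\|B_k\|\leq 1+\tr(B_k)\leq C + C'k\ln(k)$ for all $k$, whence
\[ \sum_{k=0}^\infty \frac{1}{1+\|B_k\|} \geq \sum_{k\geq 2}\frac{1}{C+C'k\ln(k)} = \infty, \]
since $\sum_k (k\ln k)^{-1}$ diverges. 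The main obstacle I anticipate is calibrating precisely this divergence: because $\xi_k\to 0$, the curvature lower bound degenerates and the trace is allowed to grow, so the only thing preventing summability of $(1+\|B_k\|)^{-1}$ is the borderline $O(k\ln k)$ growth rate of $\tr(B_k)$. It is exactly the logarithmic lower bound $\liminf_{k\to\infty}\xi_k\ln(k)>0$ in \ref{F4} that pins this rate down. The boundedness of $\{\|d_k\|\}$ and the two-branch estimate of the curvature term are the remaining points to check, but they are comparatively straightforward.
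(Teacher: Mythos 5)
Your proposal is correct and follows essentially the same route as the paper's proof: part (i) via the classical curvature-condition argument enabled by the skipping test, and part (ii) via the increment bound $\|y_k\|^2/(d_k^\top y_k) \leq L^2\max\{\|d_k\|^2/\xi, 1/\xi_k\}$, boundedness of $\{d_k\}$ from \ref{C2}, the $O(k\ln k)$ growth of $\|B_k\|$ forced by \ref{F4}, and divergence of $\sum (k\ln k)^{-1}$. The only cosmetic difference is that you control $\|B_k\|$ through $\tr(B_k)$ while the paper bounds the quadratic form $d^\top B_{k+1}d$ over unit vectors directly; both yield the same additive recursion and the same conclusion.
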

\begin{proof} Notice that the matrix $B_k$ is only updated in the case $d_k^\top y_k > 0$. Due to $B_0 \succ 0$, the positive definiteness and symmetry of the matrices $\{B_k\}$ then follows from classical results, see, e.g., \cite{DenMor77}. We continue with a verification of part (ii). In the case $\|d_k\| = 0$ or $d_k^\top y_k < \min\{\xi,\xi_k\|d_k\|^2\}$, we obtain $\|B_{k+1}\| = \|B_k\|$. Otherwise, we have
\begin{align*}d^\top B_{k+1} d & = d^\top B_k d - \frac{(d^\top B_k d_k)^2}{d_k^\top B_k d_k} + \frac{(d^\top y_k)^2}{d_k^\top y_k} \\ & \leq \|B_k\| + \max\left\{\frac{1}{\xi},\frac{1}{\xi_k\|d_k\|^2} \right\} L^2 \|d_k\|^2 = \|B_k\| + L^2 \max\left\{\frac{\|d_k\|^2}{\xi},\frac{1}{\xi_k} \right\} \end{align*}
for all $d \in \Rn$ with $\|d\| = 1$. The continuity of $\proxs$ and assumption \ref{C2} imply that $\{\|d_k\|\}$ is bounded. Hence, utilizing $\liminf_{k \to \infty} \xi_k \ln(k) > 0$, there exist constants $c_D > 0$ and $k^\prime \in \N$ such that $\|B_{k+1}\| \leq \|B_k\| + c_D \ln(k)$ for all $k \geq k^\prime$. Inductively, this yields 
\[ \|B_{k}\| \leq \|B_{k^\prime}\| + c_D \sum_{j = k^\prime}^{k-1} \ln(j) \leq  \|B_{k^\prime}\| + c_D \int_{j = 1}^{k} \ln(j) = \|B_{k^\prime}\| + c_D [k (\ln(k)-1) +1] \]
for all $k \geq k^\prime$. Consequently, we can now select $k^{\prime\prime} \geq k^\prime$ such that $\|B_k\| \leq c_D \cdot k \ln(k) - 1$ for all $k \geq k^{\prime\prime}$ and it follows $\sum_{k=0}^\infty (1+\|B_k\|)^{-1} \geq \sum_{k\geq k^{\prime\prime}}^\infty (1+\|B_k\|)^{-1} \geq \frac{1}{c_D}\sum_{k\geq k^{\prime\prime}}^\infty \frac{1}{k\ln(k)} = \infty$. This finishes the proof of \cref{lemma:sec08:bfgs}. 
\end{proof}

In order to satisfy \ref{F4}, we can simply set $\xi_k = 1/\ln(k)$. Other choices and different skipping techniques are of course possible. \cref{lemma:sec08:bfgs} implies that the adaptive skipping strategy in \cref{algo-4} ensures the non-summability condition \ref{B2} which has been used in our global convergence analysis and in \cref{thm:kl-convergence} (i). This result can be seen as a first building block  allowing us to derive unified global and local results for \cref{algo-4} and transition to fast local convergence without requiring global convexity of the problem. Next, we present a technical proposition that will be used in the proof of \cref{thm8-3}. A proof of \cref{lemma:matrix_extension} can be found in \cref{sec:app:mat-ex}. 

\begin{proposition} \label{lemma:matrix_extension}
Let $\mathcal W \subseteq \Rn$ be a linear subspace and let $H : \Rn \to \mathbb S^n$ be a given continuous function. Suppose there exist $w \in \Rn$ and $\epsilon, \delta > 0$ such that 
\[ d^\top H(x) d \geq \delta \|d\|^2 \quad \forall~d \in \mathcal W, \quad \forall~x \in B_\epsilon(w). \]
Then there is a (not necessarily unique) continuous extension $G : \Rn \to \mathbb S^n$ of $H$ satisfying $G(x)d = H(x)d$ for all $d \in \mathcal W$ and $x \in B_\epsilon(w)$ and
\[ d^\top G(x) d \geq \frac{\delta}{2} \|d\|^2 \quad \forall~d \in \Rn, \quad \forall~x \in B_\epsilon(w). \]
Furthermore, if the function $H$ is Lipschitz continuous on $B_\epsilon(w)$, then $G$ can be chosen as a Lipschitz continuous mapping on $B_\epsilon(w)$ as well. 
\end{proposition}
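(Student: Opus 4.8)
The plan is to avoid any external extension theorem (Tietze or Kirszbraun) and instead exhibit an explicit global formula. Let $\Pi \in \mathbb S^n$ denote the orthogonal projection onto the orthogonal complement $\mathcal W^\perp$, and set
\[ G(x) := H(x) + t_0 \Pi, \]
where $t_0 > 0$ is a constant to be fixed below. The crucial observation is that adding the fixed matrix $t_0\Pi$ does not disturb the action of $H$ on $\mathcal W$: for every $d \in \mathcal W$ we have $\Pi d = 0$, hence $G(x)d = H(x)d$ for all $x \in \Rn$ and in particular on $B_\epsilon(w)$. Moreover, $G$ is symmetric (a sum of symmetric matrices), it is continuous on all of $\Rn$ because $H$ is and $\Pi$ is constant, and in the Lipschitz case $G$ inherits the Lipschitz constant of $H$ on $B_\epsilon(w)$ verbatim since it differs from $H$ by a constant. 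Thus the continuity/Lipschitz and agreement requirements are essentially immediate from the construction, and no extension machinery is needed.

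The only substantial step is to choose $t_0$ so that the lower bound $d^\top G(x)d \geq \tfrac{\delta}{2}\|d\|^2$ holds on $B_\epsilon(w)$. For this I would decompose an arbitrary $d \in \Rn$ orthogonally as $d = u + v$ with $u \in \mathcal W$, $v \in \mathcal W^\perp$, so that $\|d\|^2 = \|u\|^2 + \|v\|^2$ and $\Pi d = v$. Expanding,
\[ d^\top G(x) d = u^\top H(x) u + 2 u^\top H(x) v + v^\top H(x) v + t_0\|v\|^2. \]
I would then apply the hypothesis $u^\top H(x)u \geq \delta\|u\|^2$, bound the cross term by Young's inequality via $2\,u^\top H(x) v \geq -\tfrac{\delta}{2}\|u\|^2 - \tfrac{2}{\delta}\|H(x)\|^2\|v\|^2$, and estimate $v^\top H(x) v \geq -\|H(x)\|\,\|v\|^2$. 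Combining these gives
\[ d^\top G(x)d \geq \tfrac{\delta}{2}\|u\|^2 + \Big(t_0 - \tfrac{2}{\delta}\|H(x)\|^2 - \|H(x)\|\Big)\|v\|^2, \]
so it suffices to choose $t_0 \geq \tfrac{\delta}{2} + \sup_{x \in B_\epsilon(w)}\big(\tfrac{2}{\delta}\|H(x)\|^2 + \|H(x)\|\big)$ to conclude $d^\top G(x)d \geq \tfrac{\delta}{2}(\|u\|^2+\|v\|^2) = \tfrac{\delta}{2}\|d\|^2$.

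The one point requiring a short justification is that this supremum is finite, i.e.\ that $\|H\|$ is bounded on $B_\epsilon(w)$. In the continuity case this follows from boundedness of the continuous map $H$ on the compact closed ball $\overline{B_\epsilon(w)}$; in the Lipschitz case it follows directly from $\|H(x)\| \leq \|H(w)\| + L_H\|x-w\| \leq \|H(w)\| + L_H\epsilon$. I expect this boundedness bookkeeping to be the only mild obstacle, and it is entirely routine. Conceptually, the essential idea is simply recognizing that the admissible modifications of $H$ that preserve its action on $\mathcal W$ are exactly those supported on $\mathcal W^\perp$, and that a single constant shift $t_0\Pi$ in that subspace is enough to dominate both the indefinite curvature of $H$ on $\mathcal W^\perp$ and the cross terms, yielding the claimed uniform positive definiteness on the ball.
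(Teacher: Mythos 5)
Your proof is correct, and it takes a somewhat different route from the paper's. The paper writes $H(x)$ in block form with respect to orthonormal bases $U$ of $\mathcal W$ and $V$ of $\mathcal W^\perp$ and \emph{replaces} the block $V^\top H(x)V$ by $\gamma(x)I$ with the pointwise, $x$-dependent choice $\gamma(x)=\frac{2}{\delta}\|U^\top H(x)V\|^2+\frac{\delta}{2}$; you instead keep $H(x)$ intact and \emph{add} the constant rank-deficient shift $t_0\Pi$ with $\Pi=VV^\top$. Both constructions exploit the same two facts — that any modification supported on $\mathcal W^\perp$ leaves $G(x)d=H(x)d$ for $d\in\mathcal W$, and that Young's inequality on the cross term $2u^\top H(x)v$ absorbs the off-diagonal coupling at the cost of $\frac{\delta}{2}\|u\|^2$ — so the analytical core is shared. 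The trade-offs are as follows. Your version needs $\sup_{x\in B_\epsilon(w)}\|H(x)\|<\infty$ to fix $t_0$, which you correctly supply via compactness of the closed ball (the paper's pointwise $\gamma(x)$ sidesteps this for the definiteness bound, though it too invokes a uniform bound $C_{\mathcal H}$ for the Lipschitz estimate). In return, your Lipschitz claim is immediate — $G-H$ is constant, so $G$ inherits the Lipschitz constant $L_{\mathcal H}$ verbatim — whereas the paper must separately estimate the Lipschitz modulus of $\gamma$ and ends up with the larger constant $(2+4C_{\mathcal H}/\delta)L_{\mathcal H}$. Your additive construction is arguably the cleaner of the two, and also dominates the possibly indefinite block $V^\top H(x)V$ rather than discarding it, which is why your $t_0$ carries the extra $\|H(x)\|$ term that has no counterpart in the paper's $\gamma(x)$.
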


We now show that many classical properties of the BFGS update can be transferred to the nonsmooth setting considered in this paper under the weaker conditions formulated in \cref{assum8-2}. Our proof is an extension of the seminal analysis presented in \cite{byrd1989tool}.

\begin{thm}
\label{thm8-3}
Suppose that the conditions \ref{F1}--\ref{F3} are satisfied and let the initial matrix $B_0 \in \R^{n \times n}$ be symmetric and positive definite. Then we have:
\begin{itemize}
\item[\rmn{(i)}] The sequences $\{\|B_k\|\}$ and $\{\|B_k^{-1}\|\}$ are both uniformly bounded.
\item[\rmn{(ii)}] Setting $E_k :=B_k-\nabla^2 f(x_k)$, it holds that $\sum_{k\in\mathcal X}{\|E_kd_k\|^2}/{\|d_k\|^2}<\infty$.  
\item[\rmn{(iii)}] It holds that $\sum\|B_{k+1}-B_{k}\|^2<\infty$. 
\end{itemize}
\end{thm}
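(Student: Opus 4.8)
The plan is to adapt the trace--determinant potential-function machinery of \cite{byrd1989tool} to the present restricted, nonsmooth setting. The first step is to reduce the analysis to the ``effective'' subspace of directions actually explored by the iterates. For $k \in \mathcal{X}$ we have $d_k = x_{k+1}-x_k = \prox{z_{k+1}} - \prox{z_k}$, and by \ref{F1} these differences eventually lie in $\Delta S_\epsilon$ for every fixed $\epsilon > 0$ once $k$ is large. Since the affine hulls $\aff(\Delta S_\epsilon)$ are nested linear subspaces, they stabilize to some subspace $\mathcal{W}$ as $\epsilon \downarrow 0$, and \ref{F3} together with the continuity of $\nabla^2 f$ (from \ref{F2}) yields $\delta, \epsilon_1 > 0$ with $\iprod{d}{\nabla^2 f(x)d} \geq \delta\|d\|^2$ for all $d \in \mathcal{W}$ and all $x \in B_{\epsilon_1}(\bar{x})$. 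Invoking \cref{lemma:matrix_extension} with $H = \nabla^2 f$, the subspace $\mathcal{W}$, and $w = \bar{x}$, I would replace $\nabla^2 f$ by a globally uniformly positive definite, Lipschitz surrogate $G$ that agrees with $\nabla^2 f$ on $\mathcal{W}$ near $\bar{x}$. Writing $y_k = \bar{G}_k d_k$ with the averaged Hessian $\bar{G}_k := \int_0^1 \nabla^2 f(x_k + t d_k)\,\mathrm{d}t$ and using $d_k \in \mathcal{W}$, one may replace $\bar{G}_k d_k$ by the corresponding averaged surrogate applied to $d_k$, which is driven by a uniformly positive definite matrix; this delivers constants $0 < m \leq M$ with $m \leq d_k^\top y_k / \|d_k\|^2$ and $\|y_k\|^2/(d_k^\top y_k) \leq M$ for all large $k \in \mathcal{X}$. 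Because $\xi_k \to 0$ (from \ref{F4}) and $\|d_k\| \to 0$, the skipping test $d_k^\top y_k < \min\{\xi,\xi_k\|d_k\|^2\}$ eventually fails, so the genuine BFGS update is performed at every sufficiently large $k \in \mathcal{X}$.

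Next I would introduce the potential $\Phi(B) := \tr(B) - \ln\det(B)$, which satisfies $\Phi(B) = \sum_i(\lambda_i - \ln\lambda_i) \geq n$ for positive definite $B$, so that a uniform upper bound on $\Phi(B_k)$ forces all eigenvalues of $B_k$ into a fixed compact subinterval of $(0,\infty)$ and hence proves part~(i). Using the standard trace and determinant update identities for BFGS and the congruence transformation $\tilde{B}_k := \hat{G}^{-\frac12} B_k \hat{G}^{-\frac12}$ with the fixed reference $\hat{G} := G(\bar{x})$, the one-step change $\Phi(\tilde{B}_{k+1}) - \Phi(\tilde{B}_k)$ splits into a nonnegative ``secant'' term $\tilde{a}_k = \|\tilde{y}_k\|^2/(\tilde{y}_k^\top \tilde{s}_k) - \ln[\tilde{y}_k^\top \tilde{s}_k/\|\tilde{s}_k\|^2] - 1$ and a nonpositive ``geometry'' term built from $\cos\tilde\theta_k$ and $\tilde{q}_k$. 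The crucial refinement is that, since $\tilde{y}_k = \tilde{G}_k \tilde{s}_k$ with $\|\tilde{G}_k - I\| = O(\sigma_k)$ and $\sigma_k := \max\{\|x_k - \bar{x}\|,\|x_{k+1}-\bar{x}\|\} \to 0$ by the Lipschitz continuity of $\nabla^2 f$ in \ref{F2}, the linear contributions to $\tilde{a}_k$ cancel and one is left with the quadratic bound $\tilde{a}_k = O(\sigma_k^2)$.

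The main obstacle is precisely the control of the accumulated drift $\sum_k \tilde{a}_k$: the nonpositive geometry terms telescope harmlessly, so a uniform bound on $\Phi(\tilde{B}_k)$ reduces to summability of $\{\tilde{a}_k\}$, and this is where the quadratic Lipschitz estimate must be married to the finite-length property \ref{F1}. Establishing that the combination of \ref{F1} and the $O(\sigma_k^2)$ drift indeed keeps $\Phi(\tilde{B}_k)$ bounded --- rather than merely growing slowly --- is the delicate heart of the argument and the point at which the analysis genuinely extends \cite{byrd1989tool}. Granting this, part~(i) follows immediately, since bounded $\Phi$ confines the spectrum of $\tilde{B}_k$, and $\hat{G}$ is fixed.

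Finally, parts~(ii) and~(iii) would be read off from the same telescoping sum. The nonpositive geometry terms, being summable once $\Phi(\tilde{B}_k)$ is bounded, force the Dennis--Mor\'e secant residual $\sum_{k \in \mathcal{X}}\|B_k d_k - y_k\|^2/\|d_k\|^2 < \infty$; replacing $y_k$ by $\nabla^2 f(x_k)d_k$ costs only an error of order $\sigma_k\|d_k\|$ by \ref{F2}, which is summable after squaring, and thus yields $\sum_{k\in\mathcal{X}}\|E_k d_k\|^2/\|d_k\|^2 < \infty$, i.e.\ part~(ii). For part~(iii), I would bound the rank-two difference $B_{k+1}-B_k = -B_kd_kd_k^\top B_k/(d_k^\top B_k d_k) + y_ky_k^\top/(d_k^\top y_k)$ directly: the uniform bounds on $\|B_k\|$ and $\|B_k^{-1}\|$ from~(i), the secant bounds $m, M$, and the Dennis--Mor\'e summability from~(ii) combine to give $\|B_{k+1}-B_k\|^2 \lesssim \|E_kd_k\|^2/\|d_k\|^2 + O(\sigma_k^2)$ on the update steps and $B_{k+1}=B_k$ otherwise, whence $\sum_k\|B_{k+1}-B_k\|^2 < \infty$.
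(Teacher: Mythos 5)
Your overall architecture (restriction to the effective subspace via \cref{lemma:matrix_extension}, the trace--log-determinant potential, eventual failure of the skipping test, and the derivation of (ii) and (iii) from the telescoped geometry terms) matches the paper's proof. However, there is a genuine gap at exactly the point you flag as ``the delicate heart of the argument,'' and it is not a technicality that can be granted: with your \emph{fixed} reference rescaling $\tilde B_k = \hat G^{-\frac12}B_k\hat G^{-\frac12}$, $\hat G = G(\bar x)$, the secant mismatch is $\|\tilde y_k - \tilde s_k\|/\|\tilde s_k\| = O(\|\bar G_k - G(\bar x)\|) = O(\sigma_k)$ with $\sigma_k = \max\{\|x_k-\bar x\|,\|x_{k+1}-\bar x\|\}$, so even after the quadratic cancellation you correctly identify, the accumulated drift is $\sum_k \sigma_k^2$. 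Assumption \ref{F1} only gives $\sum_k\|x_{k+1}-x_k\|<\infty$, and finite length does \emph{not} control $\sum_k\sigma_k^2$: if $\|x_{j+1}-x_j\| = j^{-1}(\ln j)^{-2}$ then the steps are summable but $\sigma_k \sim 1/\ln k$ and $\sum_k \sigma_k^2 = \infty$. So your potential $\Phi(\tilde B_k)$ may genuinely drift to infinity under the stated hypotheses, and part (i) does not follow. This is precisely the obstruction that confines the classical analysis of Byrd--Nocedal to the setting $\sum_k\|x_k-\bar x\|<\infty$.

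The paper's proof resolves this with one additional idea that your proposal is missing: an \emph{adaptive} rescaling $\tilde B_k = G_k^{-\frac12}B_kG_k^{-\frac12}$ with $G_k = G(x_k)$ rather than $G(\bar x)$. With this choice the secant error becomes $\|\tilde y_k - \tilde d_k\|/\|\tilde d_k\| \le \frac{L_H}{2\delta}\|x_{k+1}-x_k\|$, i.e.\ it is governed by the \emph{step length} $\epsilon_k = \|x_{k+1}-x_k\|$ rather than by the distance to the limit. The price is an extra congruence factor $P_{k_i} = G_{k_{i+1}}^{-\frac12}G_{k_i}^{\frac12}$ in the update of $\tilde B$, but its deviation from orthogonality, $\Xi_{k_i}$, is also $O(\epsilon_{k_i})$ by the Lipschitz continuity of $G$ and Banach's perturbation lemma. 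Both error sources are then summable directly from \ref{F1}, which turns the potential recursion into a supermartingale-type inequality $\Psi(\tilde B_{k_{i+1}}) \le (1+2\bar\xi\epsilon_{k_i})\Psi(\tilde B_{k_i}) + C\epsilon_{k_i} + \ln(\cos^2\vartheta_{k_i}) - \omega_{k_i}$ whose boundedness follows from \cite[Proposition A.31]{Ber16}. Once you replace your fixed reference by this moving one, the rest of your outline for (i)--(iii) goes through essentially as you describe.
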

\begin{proof}
Applying \ref{F2} and \ref{F3}, there are $\epsilon,\delta>0$ such that for every $d \in \aff(\Delta S_\epsilon)$ and $x\in B_\epsilon(\bar x)$, it holds that:
\[     d^\top \nabla^2 f(x) d \geq \delta\|d\|^2.                    \]
In addition, we can assume that $\nabla^2 f$ is Lipschitz continuous on $B_\epsilon(\bar x)$ with constant $L_H$. Hence, by \cref{lemma:matrix_extension} there exists a Lipschitz continuous extension $G$ of $\nabla^2 f$ satisfying $\lambda_{\min}(G(x)) \geq \frac{\delta}{2}$ for all $x \in B_\epsilon(\bar x)$ and 
\[ G(x) d = \nabla^2 f(x) d \quad \forall~d \in \aff(\Delta S_\epsilon), \quad \forall~x \in B_\epsilon(\bar x). \]
Since the sequence $\{x_k\}$ converges to $\bar x$ and we have $\xi_k \to 0$ and $d_k \to 0$, there exists $k_0\in\mathbb{N}$ with 
\[ x_k\in{B}_\epsilon(\bar x), \quad \xi_k \leq \delta, \quad \text{and} \quad \xi_k \|d_k\|^2 \leq \xi \quad \forall~k \geq k_0. \] 
In this case, we further have $d_k \in \Delta S_\epsilon$, and hence, setting $G_k := G(x_k)$, it follows 
\begin{align} \nonumber \|y_k-G_kd_k\| = \|y_k-\nabla^2 f(x_k)d_k\| & = \left\| \int_0^1 [\nabla^2 f(x_{k}+t(x_{k+1}-x_{k})) - \nabla^2 f(x^k)]d_k \, \mathrm{dt} \right\| \\ &  \leq \frac{L_H}{2} \|x_{k+1}-x_k\| \|d_k\| \label{eq:sec08:bd-gk}\end{align}
for all $k \geq k_0$ and $k \in \mathcal X$. This also shows $y_k^\top d_k \geq \delta \|d_k\|^2 \geq \min\{\xi,\xi_k \|d_k\|^2\}$ and thus, we can infer that the full BFGS update is performed for all $k \in \mathcal X \cap [k_0,\infty)$. We now follow the proof of \cite[Theorem 3.2]{byrd1989tool}. Let us define
\begin{align*}
    &\tilde B_k := G_k^{-\half} B_k G_k^{-\half}, \quad \tilde y_k := G_k^{-\half}y_k, \quad \tilde d_k := G_k^{\half}d_k, 
    \quad \cos(\vartheta_k) := \frac{\tilde d_k^\top \tilde B_k \tilde d_k}{\|\tilde B_k \tilde d_k\|\|\tilde d_k \|}, \quad q_k := \frac{\tilde d_k^\top \tilde B_k \tilde d_k}{\|\tilde d_k\|^2}.
\end{align*}
Recall that $\{k_i : i \geq 0\}$ enumerates the indices in the set $\mathcal X \cap [k_0,\infty)$. Then, we have
\begin{align*} \tilde B_{k_{i+1}} & = G_{k_{i+1}}^{-\half} \left[ B_{k_i}  - \frac{B_{k_i}d_{k_i}d_{k_i}^\top B_{k_i}}{d_{k_i}^\top B_{k_i}d_{k_i}} + \frac{y_{k_i}y_{k_i}^\top}{d_{k_i}^\top y_{k_i}} \right] G_{k_{i+1}}^{-\half} \\ & = \underbracket{\begin{minipage}[t][5.7ex][t]{10ex}\centering$\displaystyle G_{k_{i+1}}^{-\half} G_{k_i}^\half$\end{minipage}}_{=: \, P_{k_i}} \underbracket{\begin{minipage}[t][7ex][t]{35ex}\centering$\displaystyle\left[ \tilde B_{k_i}  - \frac{\tilde B_{k_i}\tilde d_{k_i}\tilde d_{k_i}^\top \tilde B_{k_i}}{\tilde d_{k_i}^\top \tilde B_{k_i}\tilde d_{k_i}} + \frac{\tilde y_{k_i} \tilde y_{k_i}^\top}{\tilde d_{k_i}^\top \tilde y_{k_i}} \right]$\end{minipage}}_{=: \, Q_{k_i}} G_{k_i}^\half G_{k_{i+1}}^{-\half}, \end{align*}
where we used $B_{k+1} = B_k$, $d_{k+1} = d_k$, and $y_{k+1} = y_k$ for all $k \in \mathcal X^{\mathsf C} \cap [k_0, \infty)$, see \eqref{eq:sec08:exx}. As in \cite{byrd1989tool}, we will now bound the eigenvalues of $B_k$ (and $\tilde B_k$) using the mapping $\Psi(B) := \tr(B) - \ln(\det(B))$. Here, we obtain
\[ \Psi(\tilde B_{k_{i+1}}) = \tr(P_{k_i}^\top P_{k_i} \cdot Q_{k_i}) - \ln(\det(P_{k_i}^\top P_{k_i})) - \ln(\det(Q_{k_i})). \]
for all $i \geq 0$. Setting $\Xi_{k_i} := G_{k_i}^\half [G_{k_{i+1}}^{-1} - G_{k_i}^{-1}]G_{k_i}^\half$ and using $P_{k_i}^\top P_{k_i} = I + \Xi_{k_i}$ and $Q_{k_i} \succ 0$ and Neumann's trace inequality, it follows  
\[ \Psi(\tilde B_{k_{i+1}}) \leq  (1+ \|\Xi_{k_i}\|) \tr(Q_{k_i}) - \ln(\det(P_{k_i}^\top P_{k_i})) - \ln(\det(Q_{k_i})) \quad \forall~i \geq 0. \]
As in \cite[Theorem 3.2]{byrd1989tool}, it holds that 
\begin{align*} \tr(Q_{k_i}) & = \tr(\tilde B_{k_i}) - \frac{\|\tilde B_{k_i}\tilde d_{k_i}\|^2}{\tilde d_{k_i}^\top \tilde B_{k_i}\tilde d_{k_i}} + \frac{\|\tilde y_{k_i}\|^2}{\tilde y_{k_i}^\top \tilde d_{k_i}} = \tr(\tilde B_{k_i}) + \frac{\|\tilde y_{k_i}\|^2}{\tilde y_{k_i}^\top \tilde d_{k_i}} - \frac{q_{k_i}}{\cos^2(\vartheta_{k_i})}, \\ \ln(\det(Q_{k_i})) & = \ln(\det(\tilde B_{k_i})) + \ln\left(\frac{\tilde y_{k_i}^\top \tilde d_{k_i}}{\tilde d_{k_i}^\top \tilde B_{k_i}\tilde d_{k_i}}\right) = \ln(\det(\tilde B_{k_i})) + \ln\left(\frac{\tilde y_{k_i}^\top \tilde d_{k_i}}{\|\tilde d_{k_i}\|^2}\right) - \ln(q_{k_i}). \end{align*}
Defining $\omega_1(t) := t - \ln(t)$, we have $\omega_1(t) \geq \ln(t)$ for all $t > 0$. Let $\lambda_1,...,\lambda_n > 0$ denote the eigenvalues of $\tilde B_{k_i}$, then it holds that $\Psi(\tilde B_{k_i}) = \sum_{j=1}^n \omega_1(\lambda_j)$ and we can infer
\begin{align*} \Psi(\tilde B_{k_{i+1}}) & \leq (1+\|\Xi_{k_i}\|) \cdot \Psi(\tilde B_{k_i}) + \|\Xi_{k_i}\|\sum_{j=1}^n \ln(\lambda_j) -  \ln\left(\frac{\tilde y_{k_i}^\top \tilde d_{k_i}}{\|\tilde d_{k_i}\|^2}\right) + (1+\|\Xi_{k_i}\|)\left[ \frac{\|\tilde y_{k_i}\|^2}{\tilde y_{k_i}^\top \tilde d_{k_i}} - \frac{q_{k_i}}{\cos^2(\vartheta_{k_i})} \right] \\ & \hspace{4ex} + \ln(q_{k_i}) - \ln(\det(P_{k_i}^\top P_{k_i}))  \\ & \leq (1+2\|\Xi_{k_i}\|) \cdot \Psi(\tilde B_{k_i}) + (1+\|\Xi_{k_i}\|) \cdot \frac{\|\tilde y_{k_i}\|^2}{\tilde y_{k_i}^\top \tilde d_{k_i}} -1 - \ln\left(\frac{\tilde y_{k_i}^\top \tilde d_{k_i}}{\|\tilde d_{k_i}\|^2}\right) + \ln(\cos^2(\vartheta_{k_i})) \\& \hspace{4ex}+ \left[ 1 - \frac{q_{k_i}}{\cos^2(\vartheta_{k_i})} + \ln\left( \frac{q_{k_i}}{\cos^2(\vartheta_{k_i})}\right) \right] - \ln(\det(P_{k_i}^\top P_{k_i})). \end{align*}
Next, the estimate in \eqref{eq:sec08:bd-gk} yields
\be \label{eq:yd-1}\frac{\|\tilde y_k - \tilde d_k\|}{\|\tilde d_k\|} = \frac{\|G_k^{-\frac12}[y_k - G_k d_k]\|}{\|\tilde d_k\|} \leq L_H \|G_k^{-\frac12}\| \|x_{k+1}-x_k\| \cdot \frac{\|d_k\|}{\|\tilde d_k\|} \leq \frac{L_H}{2\delta} \|x_{k+1}-x_k\| \ee
for all $k \in \mathcal X \cap [k_0, \infty)$. Hence, following the proof of \cite[Theorem 3.2]{byrd1989tool} and setting $\epsilon_k := \|x_{k+1}-x_k\|$, we can show
\[ \frac{\tilde y_{k}^\top \tilde d_{k}}{\|\tilde d_{k}\|^2} \geq 1 - \frac{L_H\epsilon_k}{2\delta} \quad \text{and} \quad \frac{\|\tilde y_k\|^2}{\tilde y_k^\top \tilde d_k} \leq \left[1+\frac{L_H\epsilon_k}{2\delta} \right]^2 \frac{\|\tilde d_k\|^2}{\tilde y_k^\top \tilde d_k}. \]
Due to $\epsilon_k \to 0$ there then exists a constant $\bar L > (2\delta)^{-1}L_H$ with ${\|\tilde y_k\|^2}/{(\tilde y_k^\top \tilde d_k)}  \leq 1+\bar L\epsilon_k$ for all $k \in \mathcal X \cap [k_0, \infty)$ sufficiently large. In addition, using the standard logarithm inequality $\ln(1-t) \geq -\frac{t}{1-t}$, $t < 1$, it follows $\ln(1-(2\delta)^{-1}L_H \epsilon_k) \geq -2\bar L\epsilon_k$ for all $k$ with $\bar L\epsilon_k < \half$. Next, we derive estimates for the terms $\|\Xi_{k_i}\|$ and $\ln(\det(P_{k_i}^\top P_{k_i}))$. Let $L_G$ denote the Lipschitz constant of the mapping $G$, then by Banach's perturbation lemma, we have
\begin{align*} \|\Xi_{k_i}\| \leq \|G_{k_i}\| \|G_{k_{i+1}}^{-1} - G_{k_{i}}^{-1}\| & \leq [\|G(\bar x)\| + L_G \|x_{k_i}-\bar x\|] \cdot \frac{\|G_{k_i}^{-1}\|^2\|G_{k_{i+1}}-G_{k_i}\|}{1-\|G_{k_i}^{-1}[G_{k_{i+1}}-G_{k_i}]\|} \\ & \leq \frac{4L_G[\|G(\bar x)\| + L_G \epsilon]}{\delta^2} \cdot \frac{\|x_{k_{i+1}}-x_{k_i}\|}{1-\|G_{k_i}^{-1}[G_{k_{i+1}}-G_{k_i}]\|}. \end{align*}
Using $\|x_{k_{i+1}}-x_{k_i}\| = \|x_{k_i+1}-x_{k_i}\| = \epsilon_{k_i} \to 0$ there exists another constant $\bar\xi$ such that $\|\Xi_{k_i}\| \leq \bar \xi \epsilon_{k_i}$ for all $i$ sufficiently large. Furthermore, let $\sigma_1,...,\sigma_n$ denote the eigenvalues of $P_{k_i}^\top P_{k_i}$. Then using $P_{k_i}^\top P_{k_i} = I + \Xi_{k_i}$, it holds that $\sigma_j \geq 1 - \|\Xi_{k_i}\|$ for all $j$. Hence, applying Bernoulli's inequality, we obtain
\[ \det(P_{k_i}^\top P_{k_i}) \geq (1-\|\Xi_{k_i}\|)^n \geq 1 - n\|\Xi_{k_i}\| \geq 1- n \bar \xi \epsilon_{k_i} > 0 \]
for all $i$ sufficiently large. As before this implies $\ln(\det(P_{k_i}^\top P_{k_i})) \geq -2n\bar \xi \epsilon_{k_i}$ for all $i$ with $n\bar \xi \epsilon_{k_i} \leq \frac12$. Combining the last estimates and results, there exists $i_0$ such that we have
\[ \Psi(\tilde B_{k_{i+1}}) \leq (1+2\bar\xi\epsilon_{k_i}) \cdot \Psi(\tilde B_{k_i}) + (3\bar L + (2n+2)\bar\xi) \epsilon_{k_i} + \ln(\cos^2(\vartheta_{k_i})) - \omega_{k_i} \]
for all $i \geq i_0$, where $\omega_{k_i} = \omega_2(q_{k_i}/\cos^2(\vartheta_{k_i}))$ and $\omega_2(t) := t - 1 - \ln(t)$. Using the non-negativity of the mapping $\omega_2$ and the summability of $\{\epsilon_{k_i}\}$, this shows that $\{\Psi(\tilde B_{k_i})\}$ is a deterministic supermartingale-type sequence. In particular, due to \cite[Proposition A.31]{Ber16} and $\Psi(B) \geq n$ (see, e.g., \cite{byrd1989tool}), it follows that $\{\Psi(\tilde B_{k_i})\}$ is bounded and we have
\be \label{eq:sec08:sum-qn} \sum_{i = i_0}^\infty \omega_{k_i} - \ln(\cos^2(\vartheta_{k_i})) < \infty. \ee
As a consequence, the eigenvalues of the matrices $\{\tilde B_{k_i}\}$ are contained in a compact interval $J \subset \R_{++}$. Since the matrices $\{G_k\}$ satisfy $G_k \succeq \frac{\delta}{2}I$ and $\|G_k\| \leq \|G(\bar x)\| + L_G \epsilon =: c_{\bar G}$ for all $k \geq k_0$, this establishes uniform boundedness of $\{\|B_k\|\}$ and $\{\|B_k^{-1}\|\}$. We further note that the condition \eqref{eq:sec08:sum-qn}, implies $\cos(\vartheta_{k_i}) \to 1$ and $q_{k_i} \to 1$ as $i \to \infty$. Moreover, following equation (3.27) in \cite{byrd1989tool}, it holds that
\begin{align*} e_k^2 := \frac{\|E_kd_k\|^2}{\|d_k\|^2} = \frac{\|[B_k - G_k]d_k\|^2}{\|G_k^{-\half}\tilde d_k\|^2} \leq c_{\bar G}^2 \cdot \frac{\|[\tilde B_{k}-I]\tilde d_{k}\|^2}{\|\tilde d_k\|^2} = c_{\bar G}^2 \cdot \left[ \frac{q_k^2}{\cos^2(\vartheta_k)} - 2q_k + 1 \right]. \end{align*}
Since we have $q_{k_i}^2 / \cos^2(\vartheta_{k_i}) - 2q_{k_i}+1 \to 0$ and $q_{k_i} / \cos^2(\vartheta_{k_i}) \geq q_{k_i}$, we can apply Lemma 2.3 and the estimate (2.16) in \cite{RodNes21} to obtain 
\[ \omega_{k_i} - \ln(\cos^2(\vartheta_{k_i})) = \frac{q_{k_i}}{\cos^2(\vartheta_{k_i})} - \ln(q_{k_i}) - 1 \geq \frac14 \left[ \frac{q_{k_i}^2}{\cos^2(\vartheta_{k_i})} - 2q_{k_i} + 1 \right] \geq \frac{e_{k_i}^2}{4c_{\bar G}^2} \]
for all $i$ sufficiently large. Hence, part (ii) follows from \eqref{eq:sec08:sum-qn}. Next, the uniform boundedness of $\{\|\tilde B_k^{-1}\|\}$ ensures the existence of a constant $\sigma_{\tilde B}$ such that $\tilde d_k^\top \tilde B_k \tilde d_k \geq \sigma_{\tilde B}\|\tilde d_k\|^2$ (for all $k$ sufficiently large. Let us consider an arbitrary index $k \in \mathcal X \cap [k_0, \infty)$. Then, reusing \eqref{eq:yd-1} and our earlier estimates, we have
\begin{align*}
\mathcal E_{1,k}^2 & := \left  \| \frac{\tilde y_k\tilde y_k^\top}{\tilde d_k^\top \tilde y_k} - \frac{\tilde y_k\tilde y_k^\top}{\tilde d_k^\top \tilde B_k \tilde d_k} \right\|_F^2 = \frac{\|\tilde y_k\|^4}{(\tilde d_k^\top \tilde y_k)^2} \frac{((\tilde B_k \tilde d_k-\tilde y_k)^\top \tilde d_k)^2}{(\tilde d_k^\top \tilde B_k \tilde d_k)^2} \\ & \leq \frac{2(1+\bar L\epsilon_k)^2}{\sigma_{\tilde B}^2} \left [ \frac{((\tilde d_k - \tilde y_k)^\top \tilde d_k)^2}{\|\tilde d_k\|^4} +  \frac{(\tilde d_k^\top[\tilde B_k - I] \tilde d_k)^2}{\|\tilde d_k\|^4} \right] \leq \frac{4}{\sigma^2_{\tilde B}} \left[ \frac{L_H^2}{4\delta^2} \epsilon_k^2 + \frac{4}{\delta^2} e_k^2 \right] \leq \frac{\max\{L_H^2,16\}}{\sigma_{\tilde B}^2 \delta^2} [\epsilon_k^2 + e_k^2] \end{align*} 
for all $k \in \mathcal X$ sufficiently large. Furthermore, due to $\|\tilde y_k\tilde y_k^\top-\tilde d_k\tilde d_k^\top\|_F^2 \leq (\|(\tilde y_k-\tilde d_k)(\tilde y_k-\tilde d_k)^\top\|_F + 2 \|(\tilde y_k-\tilde d_k)\tilde d_k^\top\|_F)^2 \leq 2 \|\tilde y_k-\tilde d_k\|^4 + 4 \|\tilde y_k-\tilde d_k\|^2 \|\tilde d_k\|^2$, it holds that
\[ \mathcal E_{2,k}^2 := \left \|\frac{\tilde y_k\tilde y_k^\top}{\tilde d_k^\top \tilde B_k \tilde d_k} -  \frac{\tilde d_k\tilde d_k^\top}{\tilde d_k^\top \tilde B_k \tilde d_k}  \right\|_F^2 \leq \frac{2}{\sigma_{\tilde B}^2} \left[ \frac{\|\tilde y_k - \tilde d_k\|^4}{\|\tilde d_k\|^4} + 2 \frac{\|\tilde y_k - \tilde d_k\| ^2}{\|\tilde d_k\|^2} \right] = O(\epsilon_k^2) \]
and similarly, we obtain
\begin{align*}
    \mathcal E_{3,k}^2 &:= \left \|\frac{\tilde d_k\tilde d_k^\top}{\tilde d_k^\top \tilde B_k \tilde d_k} -  \frac{\tilde B_k\tilde d_k\tilde d_k^\top \tilde B_k}{\tilde d_k^\top \tilde B_k \tilde d_k}  \right\|_F^2 \leq \frac{2}{\sigma_{\tilde B}^2} \left[ \frac{\|[\tilde B_k - I] \tilde d_k\|^4}{\|\tilde d_k\|^4} + 2 \frac{\|[\tilde B_k - I] \tilde d_k\| ^2}{\|\tilde d_k\|^2} \right]  \leq \frac{2}{\sigma^2_{\tilde B}} \left[ \frac{16}{\delta^4} e_k^4 + \frac{8}{\delta^2} e_k^2\right].
\end{align*}
Finally, combining the last results, we can establish  
\begin{align*} \|B_{k+1}-B_k\|_F^2 = \left \| G_k^\half \left[ \frac{\tilde y_k \tilde y_k^\top}{\tilde d_k^\top \tilde y_k} - \frac{\tilde B_k\tilde d_k \tilde d_k^\top \tilde B_k}{\tilde d_k^\top \tilde B_k \tilde d_k} \right] G_k^\half \right \|_F^2 & \leq \tr(G_k)^2 [\mathcal E_{1,k} + \mathcal E_{2,k} + \mathcal E_{3,k}]^2 \\ &  \leq 3nc_{\bar G}^2 \cdot [\mathcal E_{1,k}^2 + \mathcal E_{2,k}^2 + \mathcal E_{3,k}^2]. \end{align*}
Utilizing $B_{k+1} = B_k$ for all $k \notin \mathcal X$, the statement in part (iii) now follows from (ii) and \ref{F1}. 
\end{proof}
\begin{remark} Our analysis extends the existing classical results for BFGS updates (for smooth problems) provided in \cite{DenMor77,byrd1989tool} that are based on the stronger convergence condition 
\[ \sum \|x_k - \bar x\| < \infty. \] 
In contrast, in \cref{thm8-3} we have shown that many fundamental properties of the BFGS scheme still hold under the significantly weaker finite length assumption $\sum \|x_{k+1}-x_k\| < \infty$. This generalization is mainly achieved by considering an adaptive rescaling of $B_k$ based on $G_k = G(x_k)$ rather than on the fixed matrix $G_* := G(\bar x)$. 
\end{remark} 

Next, we establish the key result of this section. Specifically, based on the structural properties derived in \cref{thm8-3}, we show that that the Dennis-Mor{\'e}-type condition formulated in assumption \ref{D7} is satisfied when using BFGS approximations of the Hessian $\nabla^2 f$ as in \cref{algo-4}. Our result will allow us to link KL-theory to superlinear convergence of the quasi-Newton method. 

Recall that the indices $\{j_i\}$ enumerate the elements of the set $\cS$. Furthermore, the term $\Gamma_\ell$, which appeared in the proof of \cref{thm:kl-convergence}, was defined as follows $\Gamma_\ell := \sum_{i = \ell}^\infty \chi(z_{j_i})$. 

\begin{thm}
\label{thm8-5} Suppose that the conditions \ref{E1} and \ref{F1}--\ref{F3} are satisfied and assume that the sequence $\{\Gamma_\ell\}$ converges q-linearly to zero. Then, we have
\[ \sum_{k \in \cS} \frac{\|E_k(x_k-\bar x)\|^2}{\|z_k - \bar z\|^2} < \infty \quad \text{and} \quad \lim_{k \to \infty} \frac{\|E_k(x_k-\bar x)\|}{\|z_k - \bar z\|} = 0. \]
\end{thm}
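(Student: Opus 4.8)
The plan is to prove the stronger square-summability claim $\sum_{k\in\cS}\|E_k(x_k-\bar x)\|^2/\|z_k-\bar z\|^2<\infty$ and then deduce the limit from it: a square-summable sequence tends to zero along $\cS$, and since $z_k,x_k,B_k$ (hence $E_k$ and the whole ratio) are constant on each block of consecutive unsuccessful iterations, convergence to $0$ along $\cS$ upgrades to convergence over all $k$. Throughout I would rely on \cref{thm8-3}: uniform boundedness of $\{\|B_k\|\}$, $\{\|B_k^{-1}\|\}$ (part (i)), the stepwise bound $\sum_{k\in\mathcal X}\|E_kd_k\|^2/\|d_k\|^2<\infty$ (part (ii)), and $\sum_k\|B_{k+1}-B_k\|^2<\infty$ (part (iii)). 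The hypotheses of \cref{thm8-3} hold here, since \ref{F1}--\ref{F3} are assumed.

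First I would set up the comparisons that make the estimate tractable. The denominator is handled from below: since $\oFnor$ is Lipschitz (cf.\ \cref{eq:fnor-lipschitz}) and $\Fnor{\bar z}=0$, one has $\chi(z_k)\le L_\chi\sqrt{\lM}\,\|z_k-\bar z\|$, so $\|z_k-\bar z\|\ge c_0\,\chi(z_k)$ and it suffices to bound $\sum_{k\in\cS}\|E_k(x_k-\bar x)\|^2/\chi(z_k)^2$. Next I would note that \cref{thm8-3}(i) gives a uniform lower bound $\lambda_{\min}(B_k)\ge c_B>0$; combined with $D_k^\top M_k=D_k^\top B_kD_k+D_k^\top\Lambda(I-D_k)$ and \cref{prop2-5}(ii) this yields the uniform coercivity $\langle h,D_k^\top M_kh\rangle\ge c_B\lM^{-1}\|D_kh\|_\Lambda^2$, so \cref{lemma:pos-pos} furnishes assumption \ref{D4} with a uniform $\kappa_M$. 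Then \cref{lemma6-6} applies and gives $\|d_k\|\le\lm^{-1/2}\|s_k\|_\Lambda\le\kappa_d\,\chi(z_k)$ for all large $k\in\cS$. Finally I would exploit the q-linear decay of $\Gamma_\ell$: writing $\gamma_\ell:=\Gamma_\ell$ and $\hat\chi_\ell:=\chi(z_{j_\ell})$, one has $\gamma_{i}\le r^{\,i-\ell}\gamma_\ell$ for $i\ge\ell$, $\hat\chi_\ell=\gamma_\ell-\gamma_{\ell+1}\ge(1-r)\gamma_\ell$, and the backward bound $\sum_{\ell\le i}\gamma_\ell^{-1}\le[(1-r)\gamma_i]^{-1}$.

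The heart of the argument is the telescoping identity $x_k-\bar x=-\sum_{j\ge k}d_j$ (only $j\in\mathcal X$ contribute) together with the decomposition $E_kd_j=E_jd_j+(B_k-B_j)d_j-(\nabla^2 f(x_k)-\nabla^2 f(x_j))d_j$ for $j\ge k$. Using $\|B_k-B_j\|\le\sum_{i=k}^{j-1}b_i$ with $b_i:=\|B_{i+1}-B_i\|$, and the Lipschitz bound $\|\nabla^2 f(x_k)-\nabla^2 f(x_j)\|\le L_H\sum_{i=k}^{j-1}\|d_i\|$ from \ref{F2}, this produces $\|E_k(x_k-\bar x)\|\le T_1(k)+T_2(k)+T_3(k)$ with $T_1(k)=\sum_{j\ge k}e_j\|d_j\|$ ($e_j:=\|E_jd_j\|/\|d_j\|$), $T_2(k)=\sum_{j\ge k}\|d_j\|\sum_{i=k}^{j-1}b_i$, and $T_3(k)=L_H\sum_{j\ge k}\|d_j\|\sum_{i=k}^{j-1}\|d_i\|$. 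Reindexing along $\cS$ and using $\|d_i\|\le\kappa_d\hat\chi_i$, each tail step-sum is dominated by $\kappa_d\gamma_\ell$. For $T_1$ I would apply the weighted Cauchy--Schwarz $T_1(j_\ell)^2\le(\sum_{i\ge\ell}\|d_{j_i}\|)(\sum_{i\ge\ell}e_{j_i}^2\|d_{j_i}\|)$, divide by $\hat\chi_\ell^2\ge(1-r)^2\gamma_\ell^2$, sum over $\ell$, interchange the order of summation, and invoke $\sum_{\ell\le i}\gamma_\ell^{-1}\le[(1-r)\gamma_i]^{-1}$ and $\|d_{j_i}\|/\gamma_i\le\kappa_d$ to reduce the whole sum to a constant times $\sum_i e_{j_i}^2<\infty$ (\cref{thm8-3}(ii)). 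The term $T_2$ is treated identically after interchanging to $\sum_{m\ge\ell}b_m(\sum_{i>m}\|d_i\|)$, leading to a constant multiple of $\sum_m b_m^2<\infty$ (\cref{thm8-3}(iii)); and $T_3$ is the easiest, since $T_3(j_\ell)\le L_H\big(\sum_{i\ge\ell}\|d_{j_i}\|\big)^2\le L_H\kappa_d^2\gamma_\ell^2$, whence $T_3(j_\ell)/\hat\chi_\ell\le C\gamma_\ell$ and the geometric decay of $\gamma_\ell$ gives square-summability directly.

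The step I expect to be the main obstacle is precisely this last bookkeeping for $T_2$. The naive route---pulling the tail $\big(\sum_{m\ge\ell}b_m^2\big)^{1/2}$ out of the inner sum---forces a factor $(i-\ell)^{1/2}$ and ultimately demands $\sum_m m\,b_m^2<\infty$, which does \emph{not} follow from $\sum_m b_m^2<\infty$. The remedy, and the delicate point to get right, is to keep the geometric weight $\gamma_{m+1}\le r^{\,m+1-\ell}\gamma_\ell$ \emph{inside} the Cauchy--Schwarz (weighting by the tail step-sum $\sum_{i>m}\|d_i\|$ rather than factoring it out), so that the summation-interchange collapses against the geometric series and leaves only $\sum_m b_m^2$. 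Getting the two nested geometric estimates and the order-of-summation swap to align correctly---so that every divergent $(i-\ell)$-type factor is absorbed by a convergent geometric sum coming from the q-linear decay of $\{\Gamma_\ell\}$---is the technical crux; the remaining pieces (\ref{D4} from \cref{thm8-3}(i), the bound $\|d_k\|\le\kappa_d\chi(z_k)$, and the denominator reduction) are routine once assembled.
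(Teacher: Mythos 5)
Your proposal is correct, and it reaches the conclusion by a genuinely different route than the paper. The paper works with the normalized quantity $r_k:=\|E_k(x_k-\bar x)\|/\Gamma_{n_\cS(k)}$ and derives a \emph{one-step} recursion $r_{j_\ell}^2\le\bar\gamma\, r_{j_{\ell+1}}^2+C\,[\,e_{j_\ell}^2+\|B_{j_\ell+1}-B_{j_\ell}\|^2+L_H^2\|x_{j_\ell+1}-x_{j_\ell}\|^2\,]$ from the single-step splitting $E_{j_\ell}(x_{j_\ell}-\bar x)=E_{j_\ell}(x_{j_\ell}-x_{j_\ell+1})+(E_{j_\ell}-E_{j_\ell+1})(x_{j_\ell+1}-\bar x)+E_{j_\ell+1}(x_{j_\ell+1}-\bar x)$; it then needs an a priori bound $r_{j_\ell}\le\bar r$ (to control the boundary term when telescoping the recursion), which it obtains from strong metric subregularity of $\oFnor$ — this is where \ref{E1} and \ref{F3} enter, via \cref{thm:conn-sec}, to give the two-sided estimate $\kappa_\chi\|z-\bar z\|\le\chi(z)\le L_\chi\|z-\bar z\|$ and hence $\|d_{j_\ell}\|\lesssim\Gamma_\ell$. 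Your argument is essentially the fully unrolled form of that recursion: the telescoping $x_k-\bar x=-\sum_{j\ge k}d_j$ and the splitting of $E_kd_j$ produce exactly the same three error sources, but you dispose of them by weighted Cauchy--Schwarz and a summation interchange rather than by a supermartingale-type recursion. Two consequences of your organization are worth noting. First, you correctly identified the genuine trap in $T_2$: factoring the tail $(\sum_m b_m^2)^{1/2}$ out would require $\sum_m m\,b_m^2<\infty$, which \cref{thm8-3}(iii) does not provide, and keeping the geometric weight $\Gamma_{m+1}$ inside the Cauchy--Schwarz before swapping the order of summation is precisely what makes the estimate close, using $\sum_{\ell\le m}\Gamma_\ell^{-1}\le[(1-r)\Gamma_m]^{-1}$. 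Second, you replace the SMS-based step bound by $\|d_k\|\le\kappa_d\chi(z_k)$ from \cref{lemma6-6}, with \ref{D4} supplied by the uniform positive definiteness of the BFGS matrices from \cref{thm8-3}(i) together with \cref{prop2-5}(ii) and \cref{lemma:pos-pos}; this avoids both strong metric subregularity and the a priori boundedness of the normalized ratio, at the price of invoking the trust-region step-length machinery (which the paper establishes anyway in its global-to-local summary). Both proofs rest on the same pillars — \cref{thm8-3}(i)--(iii), the finite-length condition \ref{F1}, and the q-linear decay of $\{\Gamma_\ell\}$ — and your final passage from summability along $\cS$ to the full limit, via constancy of $z_k$, $x_k$, $B_k$ on blocks of unsuccessful iterations, matches the paper's closing observation.
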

\begin{proof} First, the q-linear convergence of $\{\Gamma_\ell\}$ yields $\chi(z_k) \to 0$ as $\mathcal S \ni k \to \infty$. As in \cref{remark:csq-stat}, condition \ref{F1} then allows us to infer $z_k \to \bar z$ as $k \to \infty$. Furthermore and as discussed earlier, the assumptions \ref{E1} and \ref{F3} imply that the second-order sufficient conditions hold at $\bar z$ and by \cref{thm:conn-sec}, the normal map $\oFnor$ is strongly metrically subregular at $\bar z$ for $0$. Since $\nabla f$ is locally Lipschitz continuous near $\bar x$ by \ref{F2}, $\chi$ is also locally Lipschitz continuous near $\bar z$. Thus, there are constants $\kappa_\chi, L_{\chi} > 0$ such that we have
\be \label{eq:sec08:chi-met} \kappa_\chi \|z_k - \bar z\| \leq \chi(z_k) \leq L_\chi \|z_k-\bar z\| \ee
for all sufficiently large $k$. In addition, there exists $\bar \gamma \in (0,1)$ with $\Gamma_{\ell+1} \leq \bar \gamma \Gamma_\ell$ for all $\ell$ sufficiently large. Let us now define $r_k := \|E_k(x_k-\bar x)\| / \Gamma_{n_\cS(k)}$. Let us first consider an index $j_\ell \notin \mathcal X$. Then, by \eqref{eq:sec08:exx}, we have $x_{j_{\ell+1}}= x_{j_\ell+1}=x_{j_\ell}$ and $B_{j_{\ell+1}}=B_{j_{\ell}+1} = B_{j_\ell}$ and it follows
\[r_{j_\ell}^2 = \frac{\|E_{j_\ell}(x_{j_\ell}-\bar x)\|^2}{\Gamma_{n_{\mathcal S}(j_\ell)}^2} = \frac{\|E_{j_{\ell+1}}(x_{j_{\ell+1}}-\bar x)\|^2}{\Gamma_\ell^2} = \frac{\Gamma_{\ell+1}^2}{\Gamma_\ell^2} \cdot r_{j_{\ell+1}}^2 \leq \bar \gamma^2 r_{j_{\ell+1}}^2. \]
For the case $j_\ell \in \mathcal X$, we first establish
\be \label{eq:sec08:chi-02} \Gamma_\ell \geq \kappa_\chi [\|z_{j_{\ell+1}}-\bar z\| + \|z_{j_\ell}-\bar z\|] \geq \kappa_\chi \|z_{j_{\ell+1}}-z_{j_\ell}\| =  \kappa_\chi \|z_{j_{\ell}+1}-z_{j_\ell}\| \geq  \kappa_\chi \|x_{j_\ell+1}-x_{j_\ell}\|, \ee
which follows from \eqref{eq:sec08:chi-met} (and also holds for $j_\ell \notin \mathcal X$) provided $\ell$ is sufficiently large. Using $x_{j_\ell+1} = x_{j_{\ell+1}}$, $E_{j_\ell+1} = E_{j_{\ell+1}}$, and Young's inequality, we then obtain 
\begin{align} \nonumber r_{j_\ell}^2 & \leq \left[    \frac{\|E_{j_\ell}(x_{j_\ell}-x_{j_\ell+1})\|}{\kappa_\chi\|x_{j_\ell+1}-x_{j_\ell}\|}+\frac{\|(E_{j_\ell}-E_{j_\ell+1})(x_{j_\ell+1}-\bar x)\|}{\kappa_\chi \|z_{j_\ell+1}-\bar z\|} +\frac{\|E_{j_\ell+1}(x_{j_\ell+1}-\bar x)\|}{\Gamma_\ell} \right]^2 \\ \nonumber & \leq \left[ \frac{1}{\kappa_\chi} (e_{j_\ell} + \|B_{j_\ell+1}-B_{j_\ell}\| + L_H\| x_{j_\ell+1}-x_{j_\ell} \|) + \bar \gamma r_{j_{\ell+1}} \right]^2 \\ \label{eq:sec08:chi-03} & \leq \frac{3}{\kappa_\chi^2} \frac{1}{1-\bar\gamma} [e_{j_\ell}^2 + \|B_{j_\ell+1}-B_{j_\ell}\|^2 + L_H^2\| x_{j_\ell+1}-x_{j_\ell} \|^2] + \bar\gamma r_{j_{\ell+1}}^2, 
\end{align}
for all $\ell$ sufficiently large, where $L_H$ denotes the local Lipschitz constant of the Hessian $\nabla^2 f$. Moreover, applying \eqref{eq:sec08:chi-02}, we have
\[ r_{j_\ell} \leq \frac{\|E_{j_\ell}(x_{j_\ell}-\bar x)\|}{\kappa_\chi \|z_{j_\ell}-\bar z\|} \leq \frac{1}{\kappa_\chi} \|E_{j_{\ell}}\| \leq \frac{1}{\kappa_\chi} [\|B_{j_{\ell}}\| + \|\nabla^2 f(x_{j_\ell})\|]. \]
Hence, using \cref{thm8-3} (i), $x_k \to \bar x$, and the continuity of $\nabla^2 f$, the sequence $\{r_{j_\ell}\}$ needs to be bounded from above, i.e., there exists $\bar r$ such that $r_{j_\ell} \leq \bar r$ for all $\ell$. Next, let us choose a sufficiently large index $\ell^\prime \in \N$ such that the latter estimates hold for all $\ell \geq \ell^\prime$. Summing the expression \eqref{eq:sec08:chi-03} for $\ell^\prime \leq \ell \leq m-1$, it follows
\begin{align*} (1-\bar\gamma) \sum_{\ell=\ell^\prime}^{m-1} r_{j_{\ell}}^2 + \bar \gamma [ r_{j_{\ell^\prime}}^2 - \bar r^2]  & \leq \sum_{\ell=\ell^\prime}^{m-1} [r_{j_{\ell}}^2 -\bar \gamma  r_{j_{\ell+1}}^2] \\ & \leq \frac{3}{\kappa_\chi^2(1-\bar\gamma)} \sum_{\ell \geq \ell^\prime, \, j_\ell \in \mathcal X} [e_{j_\ell}^2 + \|B_{j_\ell+1}-B_{j_\ell}\|^2 + L_H^2\| x_{j_\ell+1}-x_{j_\ell} \|^2].   \end{align*}
Consequently, taking the limit $m \to \infty$ and appyling \cref{thm8-3} (ii) and (iii) and \ref{F1}, we can infer $\sum_{\ell} r_{j_{\ell}}^2 < \infty$. Finally, the q-linear convergence of $\{\Gamma_\ell\}$ and \eqref{eq:sec08:chi-met} yield
\[ \frac{\|E_{j_\ell}(x_{j_\ell}-\bar x)\|}{\|z_{j_{\ell}}-\bar z\|} \leq L_\chi \frac{\|E_{j_\ell}(x_{j_\ell}-\bar x)\|}{\chi(z_{j_\ell}) }= L_\chi \frac{\|E_{j_\ell}(x_{j_\ell}-\bar x)\|}{\Gamma_\ell - \Gamma_{\ell+1}}\leq \frac{L_\chi}{1-\bar\gamma} r_{j_\ell}. \]
Since $z_k$, $x_k$, and $E_k$ remain unchanged for $k\notin\mathcal{S}$, this finishes the proof of \cref{thm8-5}. 
\end{proof}

Let us note that the condition derived in \cref{thm8-5} is slightly different from the alternative and more standard Dennis-Mor{\'e} condition
\[ \lim_{k \to \infty} \frac{\|E_k(\prox{z_k+s_k}-\prox{z_k})\|}{\|s_k\|} = 0, \] 
which appears frequently in the local convergence analysis of classical trust region-type methods \cite{Pow74,Yua85,ByrKhaSch96,CarGouToi11} (for the case $\vp\equiv 0$). In our situation, this condition can only be guaranteed  for successful iterations $k \in \mathcal X$ and does not necessarily hold for \textit{all} iterations. We resolve this technical restriction and directly work with the Dennis-Mor\'{e}-type condition stated in \ref{D7} and \cref{thm8-5}.

\subsection{Summary and Superlinear Convergence}

We are now in the position to fully connect our results established in \cref{sec:KL_conv}, \cref{sec:loc-super-conv}, and \cref{sec:bfgs-dm}. In the following, we give a schematic overview of our different global and local results for \cref{algo-4} that illustrates how these results interact with each other leading to superlinear convergence.

\begin{itemize}[leftmargin=6ex]
\item[\textsf{A.}] \textit{Global Convergence and Standard KL.} First, suppose that the conditions \ref{A1}--\ref{A2}, \ref{B1}, \ref{C1}--\ref{C3}, \ref{F4}, and $\Delta_{\min} > 0$ are satisfied. 
\begin{itemize}
\item[$\bullet$] \cref{lemma:sec08:bfgs} (ii) then implies that assumption \ref{B2} has to hold. Consequently, \cref{thm:global_conv} and \cref{thm:kl-convergence} (i) are applicable and we can infer that $\{(z_k,x_k)\}$ converges to the criticality pair $(\bar z,\bar x)$ and the sequence $\{x_k\}$ has finite length, i.e., condition \ref{F1} is satisfied.
\end{itemize}
\item[\textsf{B}.] \textit{Strong KL and Dennis-Mor\'{e}}. In addition to the conditions in \textsf{A}, let the assumptions \ref{D3} and \ref{F2}--\ref{F3} hold. 
\begin{itemize}
\item[$\bullet$] \cref{thm8-3} is then applicable and the BFGS approximations $\{B_k\}$ need to stay in a compact subset of $\Spp$. Thus, assumption \ref{B3} is satisfied. Furthermore, since semismoothness implies semidifferentiability of $\proxs$ at $\bar z$, see, e.g., \cite{QiSun93}, assumption \ref{D3} and \ref{F3} imply that the second-order sufficient conditions formulated in \cref{thm:conn-sec} hold and thus, by \cref{remark:soc-kl}, the merit function $\mer$ satisfies the KL-type inequality stated in \cref{sec:KL_conv} with exponent $\half$. Since the matrices $\{B_k\}$ are now bounded, the stronger convergence results in \cref{thm:kl-convergence} (ii) are applicable guaranteeing q-linear convergence of $\{\Gamma_\ell\}$. \cref{thm8-5} hen implies that the Dennis-Mor\'{e} condition \ref{D7} is satisfied. (Notice that assumption \ref{E1} is not required here thanks to \cref{theorem:epi-semi}). Finally, the uniform positive definiteness of the BFGS matrices $\{B_k\}$ and \cref{lemma:pos-pos} ensure assumption \ref{D4}. 
\end{itemize}
\item[\textsf{C}.] \textit{Superlinear Convergence}. In addition to the assumptions in \textsf{A}--\textsf{B}, suppose that \ref{D5} and \ref{D6} hold. 
\begin{itemize}
\item[$\bullet$] All prerequisites of \cref{thm:main-local-conv} are fulfilled and hence, we can infer q-superlinear convergence of $\{z_k\}$. 
\end{itemize}
\end{itemize}

This discussion demonstrates that the quasi-Newton method is highly compatible with the KL-theory which allows to establish full and satisfactory global-to-local convergence results. We summarize our observations in the next theorem. To the best of our knowledge, this is the first result which fully links the KL-framework, the Dennis-Mor{\'e} condition, and superlinear convergence of BFGS-type schemes. 

\begin{thm} \label{thm:sec08:qn}
Let the conditions \ref{A1}--\ref{A2}, \ref{B1}, \ref{C1}--\ref{C3}, \ref{D3}, \ref{D5}--\ref{D6}, \ref{F2}--\ref{F4}, and $\Delta_{\min} > 0$ be satisfied and assume that \cref{algo-4} does not terminate after finitely many steps. Then, we have:
\begin{itemize}
\item Every trust region step is eventually successful and the sequence $\{z_k\}$ converges q-superlinearly to $\bar z$. 
\item If the proximal mapping $\proxs$ is $\beta$-order semismooth at $\bar z$ and if the error function $\mathcal E$ in \ref{D5} satisfies $\mathcal E(h) = O(\|h\|^{1+\beta})$ as $h \to 0$, then it further follows
\be \label{eq:rate-sum} \sum_k \frac{\|z_{k+1} - \bar z\|^2}{\|z_k - \bar z\|^2}  < \infty. \ee
\end{itemize}
\end{thm}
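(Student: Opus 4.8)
The plan is to follow the three-stage dependency chain sketched in items \textsf{A}--\textsf{C} above, verifying successively all hypotheses of \cref{thm:main-local-conv} and then extracting the summable rate \cref{eq:rate-sum}. The delicate point throughout is to respect the order of the implications, since the uniform boundedness of $\{B_k\}$ (needed for the sharp KL rates) itself relies on the finite-length property, which in turn only becomes available \emph{after} the weaker global analysis has been carried out.

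First I would invoke \cref{lemma:sec08:bfgs}~(ii), whose hypotheses \ref{A1}, \ref{C2}, and \ref{F4} are all in force, to conclude that the non-summability condition \ref{B2} holds. With \ref{A1}--\ref{A2}, \ref{B1}--\ref{B2}, \ref{C1}--\ref{C3}, and $\Delta_{\min}>0$ satisfied, \cref{thm:global_conv} and \cref{thm:kl-convergence}~(i) become applicable and yield convergence of $\{z_k\}$ to some $\bar z$ with $\bar x = \prox{\bar z} \in \crit(\psi)$ (so \ref{D1} holds) together with the finite-length estimate $\sum_k \|x_{k+1}-x_k\| < \infty$, i.e. \ref{F1}. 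Now \ref{F1}--\ref{F3} are available, so \cref{thm8-3} applies and both $\{\|B_k\|\}$ and $\{\|B_k^{-1}\|\}$ are uniformly bounded; hence \ref{B3} holds and $\{B_k\}$ is uniformly positive definite, say $\sigma_B I \preceq B_k$. Since semismoothness implies semidifferentiability, condition \ref{D3} together with \cref{theorem:epi-semi} gives \ref{E1}, and the curvature bound in \ref{F3} then forces the second-order sufficient condition in \cref{thm:conn-sec}~(v); by \cref{remark:soc-kl} the merit function $\mer$ obeys the KL-type inequality \ref{C1} with exponent $\tfrac12$.

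With $\{B_k\}$ bounded I can apply \cref{thm:kl-convergence}~(ii) in the case $\theta=\tfrac12$, which produces r-linear convergence of $\{z_k\}_{\mathcal S}$ and, from its proof, q-linear convergence of $\{\Gamma_\ell\}$. Feeding this into \cref{thm8-5} establishes the Dennis-Mor\'e condition \ref{D7}. Finally, using \cref{prop2-5} and the lower bound $\iprod{h}{D_k^\top M_k h} = \iprod{D_k h}{B_k D_k h} + \iprod{h}{D_k^\top\Lambda(I-D_k)h} \geq \sigma_B\lambda_M^{-1}\|D_k h\|_\Lambda^2$, the converse direction of \cref{lemma:pos-pos} delivers invertibility of $M_k$ with a uniform bound $\|M_k^{-1}\|\leq\kappa_M$, i.e. \ref{D4}. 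At this point \ref{A1}--\ref{A2}, \ref{B1}, \ref{B3}, \ref{D1}, \ref{D3}--\ref{D7} all hold (with \ref{F2} supplying the local twice differentiability used in place of \ref{D2}), so \cref{thm:main-local-conv} yields eventual success of every step and q-superlinear convergence of $\{z_k\}$.

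For the summable rate under $\beta$-order semismoothness I would return to the key bound \cref{eq:th-sq}: the $\beta$-order semismoothness of $\oFnor$ and $\proxs$ together with $\mathcal E(h)=O(\|h\|^{1+\beta})$ and the boundedness of $\{E_k\}$ reduce every term except the Dennis-Mor\'e term to $O(\|z_k-\bar z\|^{1+\beta})$, giving $\|z_{k+1}-\bar z\| \leq C\|z_k-\bar z\|^{1+\beta} + \kappa_M\|E_k(x_k-\bar x)\|$ for all large successful $k$. Dividing by $\|z_k-\bar z\|^2$, the contribution $\sum\|z_k-\bar z\|^{2\beta}$ is finite by the geometric (r-linear) decay from the previous paragraph, while $\sum_{k\in\mathcal S}\|E_k(x_k-\bar x)\|^2/\|z_k-\bar z\|^2 < \infty$ is exactly \cref{thm8-5}; since only finitely many iterations are unsuccessful, \cref{eq:rate-sum} follows. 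The principal obstacle is precisely the circular-looking interdependence just described---the sharp KL rate needs boundedness of $\{B_k\}$, boundedness needs finite length, and finite length needs the stage-one KL convergence available only under the \emph{weak} condition \ref{B2}. Disentangling this into the staged argument above, so that each result is invoked only once its own hypotheses are genuinely secured, is the crux; the remaining estimates are routine given \cref{thm8-3}, \cref{thm8-5}, and \cref{thm:main-local-conv}.
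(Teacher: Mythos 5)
Your proposal is correct and follows essentially the same route as the paper: the paper's proof of the first bullet consists precisely of the staged argument \textsf{A}--\textsf{C} you reproduce (Lemma~\ref{lemma:sec08:bfgs}(ii) $\Rightarrow$ \ref{B2} $\Rightarrow$ Theorems~\ref{thm:global_conv}/\ref{thm:kl-convergence}(i) $\Rightarrow$ \ref{F1} $\Rightarrow$ Theorem~\ref{thm8-3} $\Rightarrow$ \ref{B3}, then the second-order/KL step, Theorem~\ref{thm8-5} for \ref{D7}, Lemma~\ref{lemma:pos-pos} for \ref{D4}, and finally Theorem~\ref{thm:main-local-conv}), and the second bullet is obtained exactly as you do by mimicking \cref{eq:th-sq}, dividing by $\|z_k-\bar z\|$, squaring, and using the geometric decay of $\|z_k-\bar z\|$ together with the summability from Theorem~\ref{thm85}. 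Your write-up is in fact more explicit than the paper's (which simply points back to the discussion in \textsf{A}--\textsf{C}), and your handling of the interdependence between \ref{B2}, \ref{F1}, and \ref{B3} matches the paper's intended logic.
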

\begin{proof} The first part of \cref{thm:sec08:qn} follows from our discussion in \textsf{A}--\textsf{C}. In order to establish the summability result in the second statement, we can mimic \eqref{eq:th-sq} to obtain
\[ {\|z_{k+1}-\bar z\|}\leq O(\|z_k-\bar z\|^{1+\beta}) + \kappa_M {\|E_k(x_k - \bar x)\|}. \]
Dividing both sides by $\|z_k-\bar z\|$, taking squares, and using the q-superlinear convergence of $\{z_k\}$ this yields \eqref{eq:rate-sum}. 
 \end{proof}

\begin{remark} Given the q-superlinear convergence of $\{z_k\}$ as established in \cref{thm:sec08:qn}, it is possible to derive additional properties of the BFGS updates $\{B_k\}$. In particular, following \cite{ren1983convergence,Sto84}, we expect the matrices $B_k$ to converge to some symmetric, positive definite matrix $B_*$ (which can be different from $\nabla^2 f(\bar x)$). A detailed verification of this observation is left for future work. 
\end{remark}

\begin{remark} The summability condition \eqref{eq:rate-sum} can also be used to further specify the rate of convergence. Specifically, due to  \eqref{eq:rate-sum}, for all $\eta > 0$ there exists $k_0 \in \N$ such that $\sum_{j \geq k_0}{\|z_{j+1}-\bar z\|^2}/{\|z_j-\bar z\|^2} \leq \eta$. 
 
Applying the arithmetic-geometric mean inequality, it then follows
\be \label{eq:rate-rate} \frac{\|z_k-\bar z\|}{\|z_{k_0}-\bar z\|} = \left[\prod_{j = k_0}^{k-1} \frac{\|z_{j+1}-\bar z\|^2}{\|z_{j}-\bar z\|^2} \right]^\frac12 \leq \left [ \frac{1}{k-k_0} \cdot {\sum}_{j = k_0}^{k-1} \frac{\|z_{j+1}-\bar z\|^2}{\|z_j - \bar z\|^2} \right]^{\frac{k-k_0}{2}} = \left(\frac{\eta}{k-k_0}\right)^{\frac{k-k_0}{2}} \ee
for all $k > k_0$. The structure of this rate is similar to the ones recently derived in \cite{RodNes21,RodNes21-1,JinMok21}. Let us also note that the results in \cite{RodNes21,RodNes21-1,JinMok21} are non-asymptotic and provide a more explicit dependence on the problem parameters (Lipschitz constants, strong convexity parameter, dimension, etc.). However, this non-asymptotic analysis requires stronger assumptions such as a bounded deterioration condition or strong self-concordance. In contrast, the rate in \eqref{eq:rate-rate} is a simple consequence of our more classical convergence analysis of the BFGS method.
\end{remark}

Finally, we note that the summability condition in \cref{thm:sec08:qn} is well-known in the smooth case, see, e.g., \cite{Rit79,Rit81}. A similar result has also been established recently for Broyden-like methods in \cite[Theorem 1]{Man21}.

\subsection{Further Comments on Condition \ref{F3}} \label{sec:f3}
Finally, let us briefly discuss possible further connections between assumption \ref{F3} and the second-order optimality conditions derived in \cref{sec:sop}. As already mentioned, \ref{F3} is generally stronger than the second-order optimality condition \eqref{eq:soc-org}, since the curvature information of $\varphi$ is neglected in \ref{F3}. However, if $\varphi$ is a polyhedral function, it can be shown that this curvature information will vanish. 

Here, the function $\vp : \Rn \to \Rex$ is called polyhedral, if its epigraph $\mathrm{epi}(\vp)$ is a polyhedral set. In this case, the second-order subderivative of $\psi$ and $\vp$ reduce to
\begin{align*}  \mathrm{d}^2\psi(\bar x\vert 0)(h) & = \iprod{h}{\nabla^2 f(\bar x) h} + \mathrm{d}^2\vp(\bar x\vert -\nabla f(\bar x))(h) \\ & =  \iprod{h}{\nabla^2 f(\bar x) h} + \iota_{\mathcal C(\bar x)}(h) = \begin{cases} \iprod{h}{\nabla^2 f(\bar x) h} & \text{if } h \in \mathcal C(\bar x), \\ +\infty & \text{otherwise}, \end{cases} \end{align*}
where $\mathcal C(\bar x) := \{h : \psi^\downarrow(\bar x;h) = 0\}$ denotes the critical cone introduced in \cref{rem:crit}, see, e.g., \cite[Theorem 3.1]{Roc88}. Consequently, since $\mathcal C(\bar x)$ is a cone, we have the following equivalence:
\[ \mathrm{d}^2\psi(\bar x\vert 0)(h) > 0 \quad \forall~h \in \Rn \backslash \{0\} \quad \iff \quad \mathcal H(\mathcal C(\bar x),\nabla^2 f(\bar x)) > 0. \]
Following \cite[Proposition 12.30]{rockafellar2009variational} and \cite[section 4]{facchinei2007finite}, we can infer that the proximity operator $\oprox$ is a piecewise affine-linear, semidifferentiable mapping and there exists $\delta > 0$ such that
\[ \prox{\bar z+h}-\prox{\bar z}=(\proxs)^\prime(\bar z;h) \quad \forall~h \in B_\delta(0). \]
Hence, for all $\varepsilon \in (0,\delta)$, it follows 
\be \label{eq:con-se} S_\varepsilon \subset \mathcal R((\proxs)^\prime(\bar z;\cdot)) = \dom{\partial\Upsilon} \subset \mathcal C(\bar x), \ee
where $\Upsilon(h) := \mathrm{d}^2\vp(\bar x\vert -\nabla f(\bar x))(h)$. Notice that the result $\mathcal R((\proxs)^\prime(\bar z;\cdot)) = \dom{\partial\Upsilon}$ has been shown in the proof of \cref{thm:conn-sec} in a more general context. The condition \eqref{eq:con-se} and \cite[Proposition 6.4]{BauCom11} then imply $\mathrm{aff}(\Delta S_\epsilon) \subset \mathrm{aff}(\Delta\mathcal C(\bar x)) = \mathrm{aff}(\mathcal C(\bar x))$ and 
\[ \liminf_{\epsilon\to0}\mathcal H(\mathrm{aff}(\Delta S_\epsilon),\nabla^2 f(\bar x)) \geq \mathcal H(\mathrm{aff}(\mathcal C(\bar x)),\nabla^2f(\bar x)).  \]
Consequently, in the polyhedral case, \ref{F3} is satisfied if the following strong second-order sufficient condition holds: 
\[\iprod{h}{\nabla^2 f(\bar x)h} > 0 \quad \forall~h \in \mathrm{aff}(\mathcal C(\bar x)) \backslash \{0\}. \]
 
Thus, positive definiteness of $\nabla^2 f(\bar x)$ is only required on the affine hull of the critical cone $\mathcal C(\bar x)$.


\section{Numerical Experiments}
\label{sec:ne}
In this section, we demonstrate the efficiency of the proposed algorithm on a sparse logistic regression, a nonconvex image compression, and a constrained log-determinant problem. All experiments are performed using MATLAB R2020a on a laptop with Intel Core i7 9750h (6 cores and 12 threads) 3.5GHz and 16GB memory.

\subsection{Implementational Details}
\label{sec:parameter}

We first describe some general implementational details of \cref{algo2}. In the following, we will refer to \cref{algo2} as TRSSN. 

We start with a brief overview of the utilized parameters. Most of the parameters are fixed throughout the conducted numerical experiments. The trust region parameters used in \cref{algo3} are listed in \cref{table1}. When updating the trust region radius, we set $\Delta_{k+1}=\gamma_0\Delta_k$ if $\rho_k<\eta_1$ and $\Delta_{k+1}=\Delta_k$ if $\rho_k \in [\eta_1,\eta_2)$. In the case $\rho_k \geq \eta_2$, we choose $\Delta_{k+1}=\gamma_1\Delta_k$.

The parameter $\nu_k$ in the predicted reduction term $\mathrm{pred}_k$ is set as in \cref{eq:nuk-ak}. In particular, we select $a_k := c_a k^{p}\log^{2p}(k)$ with $p \in (0,1)$. The specific choice of $c_a$ and $p$ is given in \cref{table1}. The parameter $\nu_k$ is then chosen as:
\[   \nu_k=\min\{\nu,10^{-3}(n_{\cS}(k)\log^2(n_{\cS}(k)))^{0.2}\|\prox{z_k+s_k}-x_k\|^{0.2}\}. \]
Furthermore, $\tau$ and $\nu$ are set to satisfy \ref{B1}; we choose:
\[   \tau =\frac{2c_\tau}{L^2\lambda^2+2} ,\quad \nu= \frac12\min\left\{\tau, c_{\nu}\left[1-\frac\tau2\left(\frac{L^2\lambda^2}{2}+1\right)\right]\right\}, \quad c_{\tau}=c_{\nu}=0.05.                  \]
 The tolerances $\{\epsilon_k\}$ in the Steihaug-CG method are chosen adaptively via $\epsilon_k=\min\{ \|\Fnor{z_k}\|^{2.5},0.01\}$. The maximal number of CG-iterations is limited to 10. Since we need to compare our normal map-based algorithm with other approaches, we generate a comparable pair of initial points $x_0$ and $z_0$. For given $x_0 \in \Rn$, we determine a corresponding initial point via $z_0 = \argmin_{\prox{z}=x_0}\|\Fnor{z}\|$. In our tests, $z_0$ can be computed explicitly. 

The evaluation of the merit function $\mer$ requires an additional evaluation of the gradient $\nabla f$, which can cause higher computational costs. To avoid this computation, we first check the condition 
\be \label{eq:num-pmer} \psi(\prox{z_k+s_k})\geq \mer(z_k) \ee
and we set $\rho_k=-1$ if \eqref{eq:num-pmer} holds. Notice that \eqref{eq:num-pmer} implies $\mer(z_k+s_k)\geq \psi(\prox{z_k+s_k}) \geq \mer(z_k)$, i.e., we have $\rho_k\leq 0$ and this step would be rejected. Thus, we do not need to compute $\Fnor{z_k+s_k}$ in this case. As we mainly consider large-scale problems, we utilize L-BFGS updates to approximate $\nabla^2 f$. Based on \cite{byrd1994representations}, we implement the following compact form of the (L-)BFGS scheme
\[  B_{k}=\gamma_k I-\begin{bmatrix} S_k & Y_k  \end{bmatrix}  \begin{bmatrix} \frac{1}{\gamma_k}S_k^\top S_k  & \frac{1}{\gamma_k}\cL_k \\  \frac{1}{\gamma_k}\cL_k^\top  & -\mathcal D_k  \end{bmatrix}^{-1} \begin{bmatrix} S_k^\top  \\ Y_k^\top   \end{bmatrix} \]
where $S_k=[d_{k-m},...,d_{k-1}]$, $Y_k=[y_{k-m},...,y_{k-1}]$, $\cL_k$ is the strictly lower part of $S_k^\top Y_k$, $\mathcal D_k$ is the diagonal part of $S_k^\top Y_k$, and $m \in \N$ is a memory parameter. We choose $\gamma_k$ as $\gamma_k= \iprod{y_k}{y_k}/\iprod{s_k}{y_k}$. If an algorithm utilizes a quasi-Newton technique, then we apply L-BFGS approximations with memory $m=10$.
 
\begin{table}[t]
\centering
\begin{tabular}{ccccC{1.7cm}C{1.7cm}cccc} 
\specialrule{1pt}{0pt}{0pt} \\[-1.5ex]
\multirow{2}*{Parameter} & \multicolumn{3}{c}{TR Radius} & \multicolumn{2}{c}{Acceptance Threshold} & \multicolumn{3}{c}{$\nu_k$ and ${\mathrm{pred}}_k$} \\ \cmidrule(lr){2-4} \cmidrule(lr){5-6} \cmidrule(lr){7-9}
& $\Delta_{\min}$ & $\gamma_0$ & $\gamma_1$  & $\eta_1$ & $\eta_2$  & $c_a$  & $p$ \\[0.5ex]
 \hline \\[-1.5ex]
  Value & $10^{-2}$ & $0.25$ & $2$ & $10^{-6}$ & $0.75$ &$10^{-3}$ & $0.1$  \\[0.5ex]
\specialrule{1pt}{0pt}{0pt} \\[-1.5ex]
\end{tabular}
\caption{Parameters of the trust region method.}
\label{table1}
\end{table}

\begin{table}[t]
  \centering
   \begin{tabular}{ccccccccc}  
 \cmidrule[1pt](){1-4} \cmidrule[1pt](){6-9} 
  Algorithm & \texttt{func} & \texttt{grad} & \texttt{prox} &  & Algorithm & \texttt{func} & \texttt{grad} & \texttt{prox} \\[0.5ex]  
  \cmidrule(){1-4} \cmidrule(){6-9} \\[-1.5ex]
   \text{TRSSN} & $\ell$ &  0-$\ell$ & $\ell$ & &  \text{TRSSN-\texttt{O}} & $\ell$ & 0 & $\ell$ \\[0.5ex]
  \text{PNOPT} &  $\ell$ & 1 & multiple & & \text{ForBES} & $\ell$ & $\ell$ & $\ell$ \\[0.5ex]
  \text{FISTA} &  0 &  1 & 1  & &  \text{ASSN} & 0 & $\ell$ & $\ell$ \\[0.5ex]
  \text{iPiano} & 0 & 1 &  1 & & \text{SpaRSA} & $\ell$ & 1 & $\ell$ \\[0.5ex]
 \cmidrule[1pt](){1-4} \cmidrule[1pt](){6-9} \\[-1.5ex]
  \end{tabular}
  \caption{Theoretical iteration costs for the tested algorithms. \texttt{func}, \texttt{grad}, \texttt{prox} denote the no. of  function value, gradient, and  proximity operator evaluations during each iteration of the respective algorithm. The no. of line search steps for algorithms using line search is given by $\ell$. Similarly, the no. of trust region steps between two successful steps is assumed to be $\ell$.}
  \label{table-theoretical}
  \end{table}

\subsection{Sparse Logistic Regression}
\label{sec:logistic}
We first consider a sparse logistic regression problem of the form:
\be\label{eq:logreg-prob} \min_{x}~\psi(x)=f(x)+\varphi(x),\quad f(x):=\frac{1}{N}{\sum}_{i=1}^{N}f_i(x),\quad\varphi(x):=\mu\|x\|_1, \ee
where $f_i(x):=\log(1+\exp(-b_i\cdot\langle a_i,x \rangle))$ denotes the logistic loss function and the data pairs $(a_i,b_i)\in\mathbb{R}^n\times\{-1,1\}$ are given.  The Lipschitz constant of $\nabla f$ can be computed explicitly via $L={\|A\|_2^2}/(4N)$, where $A=(a_1,...,a_N)^\top\in\mathbb{R}^{N\times n}$. Here, the proximity operator is  the well-known shrinkage operator
\begin{align*}
  \mathrm{prox}_{\mu\lambda\|\cdot\|_1}(z) = \mathrm{sgn}(z) \odot \max\left\{0,\vert z\vert-{\mu}{\lambda}\right\},
\end{align*}
where all the operations are understood componentwisely. 
The generalized derivatives of $\mathrm{prox}_{\mu\lambda\|\cdot\|_1}$ at $z$ can be represented as diagonal matrices with
\begin{align}
\label{eq:generalizedjac}
D(z) = \mathrm{diag}(d(z)) \quad \text{and} \quad d_{i}(z) = 
\begin{cases}
0  &\text{if~~}\vert z_i\vert\leq {\mu}{\lambda}, \\
1   & \text{otherwise}, 
\end{cases}
\end{align}
see \cite{milzarek2014semismooth}. We set $\lambda=10$, $\mu =0.002$, and $x^0 = 0$ in all experiments. We compare TRSSN and its full Hessian version TRSSN-H with the following methods: 
 
\textbf{TRSSN-\texttt{O}}~\cite{mannelhybrid}. \text{TRSSN-\texttt{O}} (and its full Hessian version TRSSN-\texttt{OH}) is one of the mentioned existing normal map-based trust region method using a heuristic globalization. Compared to TRSSN, it differs in the acceptance mechanism and the generation of the direction $s_k$. We implement TRSSN-\texttt{O} based on \cite[Algorithm 2]{mannelhybrid}. As suggested in \cite[Section 3.5.2, Algorithm 2]{pieper2015finite}, we adjust the inner product from the standard Euclidean one to $\langle \cdot,\cdot \rangle_D$ in the CG solver.  
Throughout \cref{sec:ne}, we use the same parameters for the trust region algorithms \text{TRSSN} and \text{TRSSN-\texttt{O}}. As mentioned in \cite{pieper2015finite}, $\psi$ can remain unchanged at an iteration with an arbitrarily small trust region radius. In this case, we perform line search along the direction of $-\Fnors(z_k)$ to avoid stagnation.

\textbf{PNOPT}~\cite{lee2014proximal}. \text{PNOPT} is a proximal Newton method which uses a quasi-Newton approximation of $\nabla^2f$ in the proximal step. We use the source code released by the authors\footnote{\url{https://web.stanford.edu/group/SOL/software/pnopt/}}. All parameters are set to the default values.

\textbf{ASSN}~\cite{xiao2018regularized}. \text{ASSN} is a semismooth Newton method for solving monotone equations. Following \cite[section 4.1]{xiao2018regularized} and  based on the source code provided by the authors, we implement ASSN with full Hessian information to solve $F^{\lambda}_{\mathrm{nat}}(x) = 0$. We also tried an L-BFGS version of ASSN (called ASLB). However, its performance is not comparable with the base algorithm, which agrees with an observation made in \cite[section 4.2]{xiao2018regularized}. Thus, we only report the performance of ASSN. As noted in \cite{xiao2018regularized}, $\lambda$ should be no larger than ${1}/{L}$ to guarantee monotonicity of $F^{\lambda}_{\mathrm{nat}}$. Hence, we choose $\lambda = {1}/{L}$ in the \text{ASSN} code.  

\textbf{FISTA}~\cite{beck2009fast}. \text{FISTA} is a first-order method with Nesterov-type acceleration. We use the known Lipschitz constant as step size.  

\textbf{ForBES}~\cite{SteThePat17}. \text{ForBES} applies the semismooth Newton method to the natural residual $F^{\lambda}_{\mathrm{nat}}$ using a forward-backward envelope as merit function. We use the code provided by the authors\footnote{\url{https://github.com/kul-forbes/ForBES}}. We choose the Lipschitz constant $L$ as initial value for $1/\lambda$. The released MATLAB function then corrects $\lambda$ to $\lambda={0.95}/L$. All other parameters are set to be default values.

\begin{table}[t]
  \centering
   \begin{tabular}{lcclcclcc}  
 \cmidrule[1pt](){1-9}  
  Dataset & $N$ & $n$ & Dataset & $N$ & $n$ & Dataset & $N$ & $n$\\[0.5ex]  
  \cmidrule[.5pt](){1-9}\\[-1.5ex]
   \texttt{BIO} & 145\,751 & 75  & \texttt{news20} & 19\,996  & 1\,355\,191 & \texttt{epsilon} & 400\,000 & 2\,000\\[0.5ex] 
   \texttt{CINA} & 16\,033  & 132 & \texttt{rcv1} & 20\,242 & 47\,236 &  \texttt{gisette} & 6\,000 & 5\,000    \\[0.5ex] 
   \texttt{covtype} & 581\,012 & 54 & \texttt{real-sim} & 72\,309 & 20\,958  &    \\[0.5ex]    
   \cmidrule[1pt](){1-9}\\[-1.5ex]
  \end{tabular}
  \caption{Information of the different datasets.}
  \label{table2}
  \end{table}

We tested TRSSN and the mentioned algorithms on eight different datasets (\texttt{BIO}\footnote{\url{https://osmot.cs.cornell.edu/kddcup/datasets.html}}, \texttt{CINA}\footnote{\label{data}\url{http://www.causality.inf.ethz.ch/data}}, \texttt{covtype}\footnote{\label{libsvmnote}\url{https://www.csie.ntu.edu.tw/~cjlin/libsvmtools/datasets/}}, \texttt{epsilon}\footref{libsvmnote}, \texttt{gisette}\footref{libsvmnote}, \texttt{new20}\footref{libsvmnote}, \texttt{rcv1}\footref{libsvmnote}, \texttt{real-sim}\footref{libsvmnote}). More information about these datasets can be found in \cref{table2}. The results of our comparison are shown in \cref{fig2b}. Specifically, in \cref{fig2b}, we plot the relative error $\texttt{rel$\_$err} = (\psi(x)-\psi^*) /\max\{1,\psi^*\}$ with respect to cpu-time. Here, $\psi^*$ is the lowest objective function value encountered by all the algorithms during the experiment. In \cref{table-theoretical}, we list the approximate numbers of function value, gradient, and proximal evaluations each of the tested algorithms requires per iteration. Almost all of the second order algorithms need one proximity operator evaluation per line search or trust region trial step. 

The methods ForBES, PNOPT, and TRSSN generally achieve the best results and outperform the first-order approach FISTA on all datasets. PNOPT performs well in its early stage and it can quickly recover solutions with medium accuracy. However, the costs for solving the proximal Newton subproblems seem to dominate when the dimension of the problem increases. This is most apparent on \texttt{news20} and \texttt{rcv1}. 
%
On the dataset \texttt{news20}, ASSN performs well and transition to fast local convergence can be observed (we believe that a more problem dependent and tuned choice of the involved parameters can also lead to earlier fast local convergence on the other datasets). TRSSN-\texttt{O} performs similar to TRSNN for \texttt{CINA}, but is generally outperformed by TRSSN. We have observed that many of the trust region trial steps in TRSSN-\texttt{O} are rejected causing smaller trust region radii and slower convergence. 
%
TRSSN-H converges faster than TRSSN on the datasets \texttt{gisette}, \texttt{news20}, and \texttt{rcv1}. Due to the higher dimension of these problems, the L-BFGS approximations used in TRSSN provide less advantages and TRSSN-H can benefit from an early transition to fast superlinear convergence.
We notice that ForBES and PNOPT use line search to damp the second-order step and to ensure convergence. ASSN utilizes alternative projection steps if the semismooth Newton step violates a certain acceptance criterion. By contrast, the trust region-type framework (paired with the inexact CG-method) requires less computational steps and gradient and function values can be fully reused in case a trial step is unsuccessful. 

\begin{figure}[htbp]
\centering
\includegraphics[width=12.5cm]{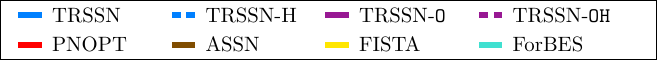}
\subfigure[\texttt{BIO}.]{
\includegraphics[width=6cm]{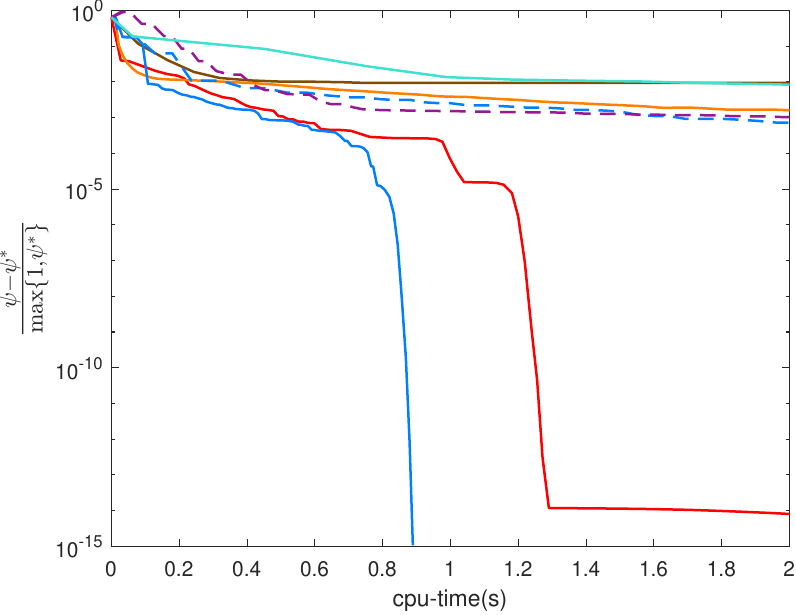}
}
\;
\subfigure[\texttt{CINA}.]{
\includegraphics[width=6cm]{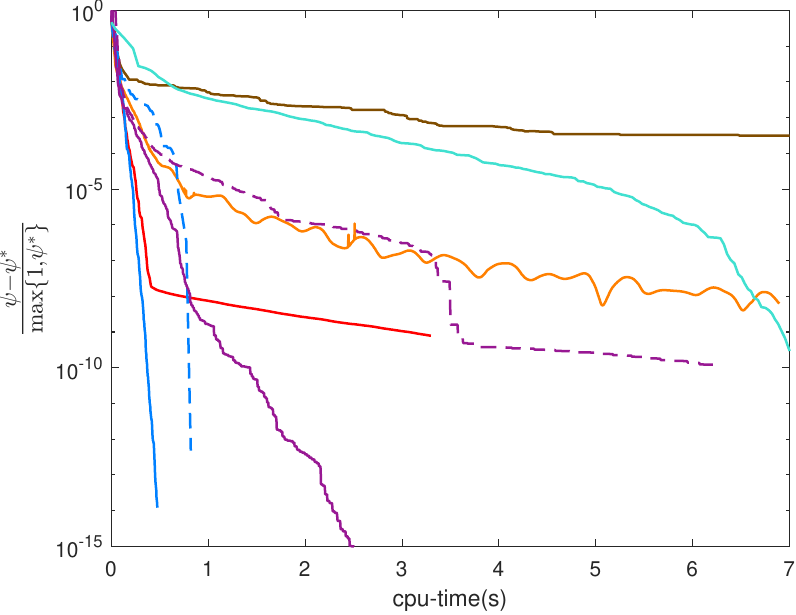} 
}
\\[-1ex]
\subfigure[\texttt{covtype}.]{
\includegraphics[width=6cm]{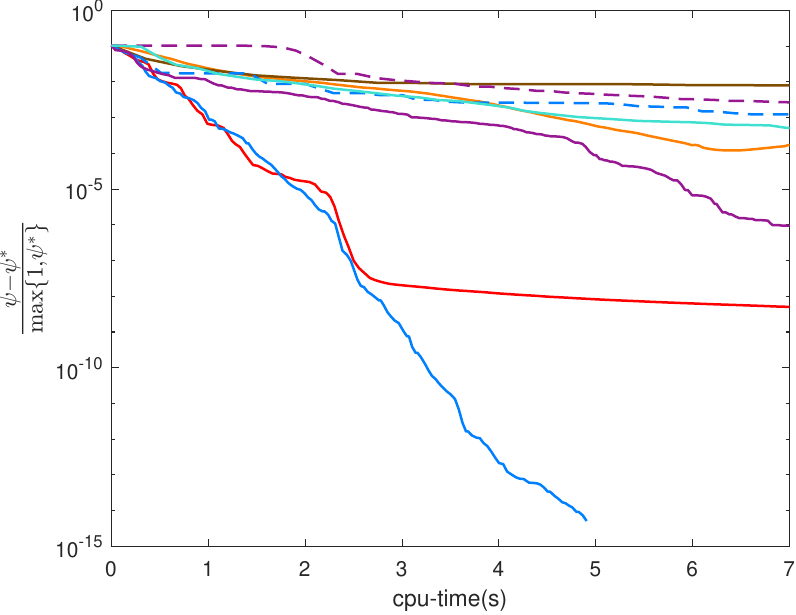}
}
\;
\subfigure[\texttt{epsilon}.]{
\includegraphics[width=6cm]{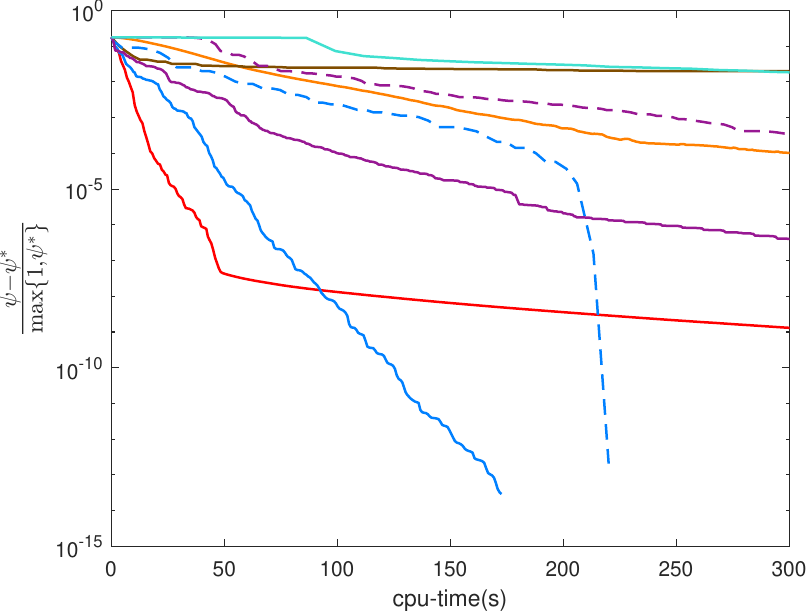}
}
\\[-1ex]
\subfigure[\texttt{gisette}.]{
\includegraphics[width=6cm]{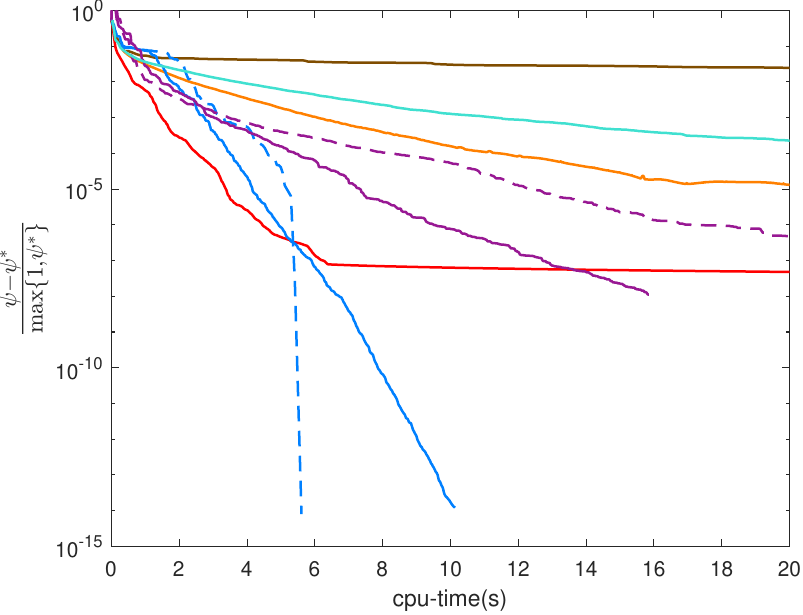}
}
\;
\subfigure[\texttt{news20}.]{
\includegraphics[width=6cm]{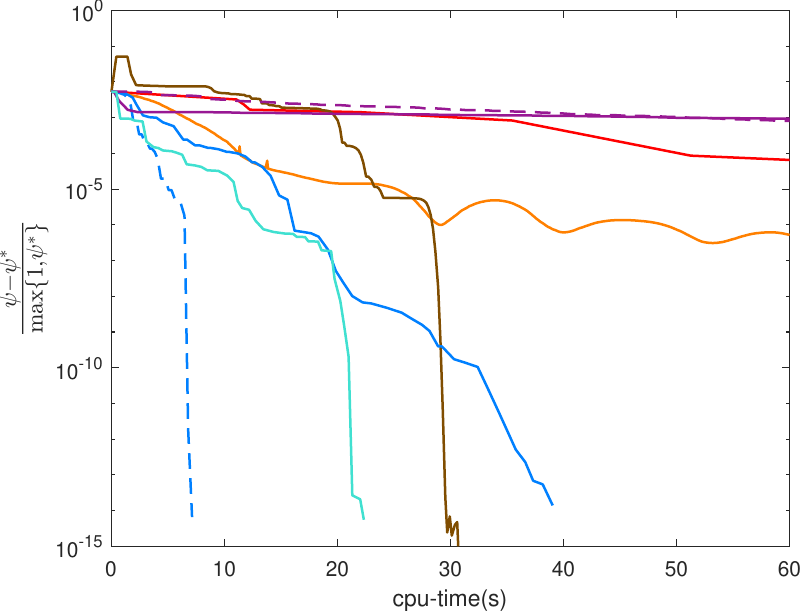} 
}
\\[-1ex]
\subfigure[\texttt{rcv1}.]{
\includegraphics[width=6cm]{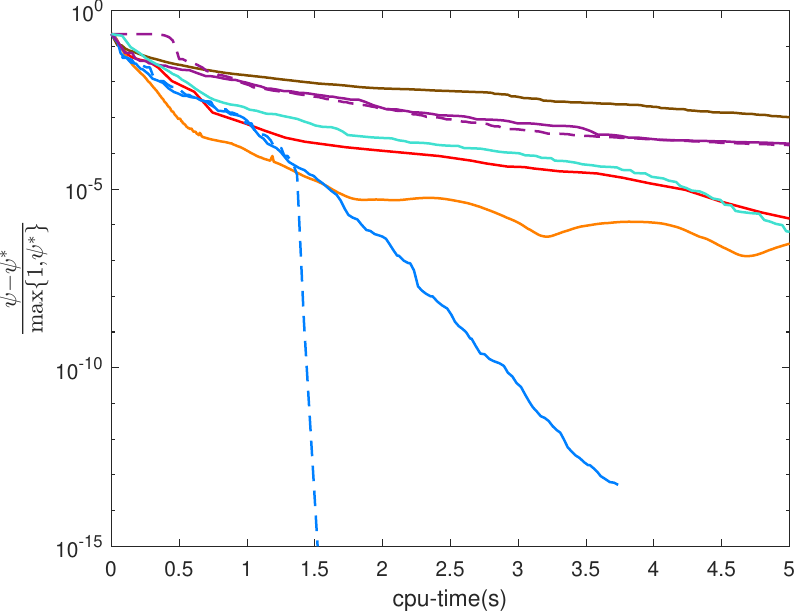}
}
\;
\subfigure[\texttt{real-sim}.]{
\includegraphics[width=6cm]{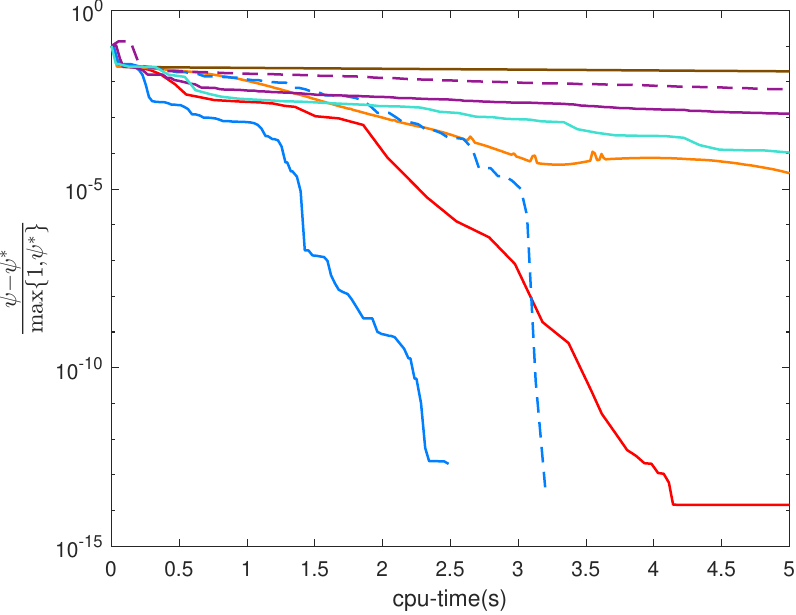}
}
\caption{Change of the relative error $\texttt{rel$\_$err}$ with respect to the cpu-time for solving the $\ell_1$-logistic
regression problem \cref{eq:logreg-prob}.}
\label{fig2b}
\end{figure} 

\begin{figure}[htbp]
\centering
\includegraphics[width=12.5cm]{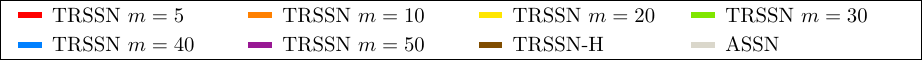}
\subfigure[\texttt{CINA} -- iterations.]{
\includegraphics[width=6cm,trim = 0 0 25 15,clip]{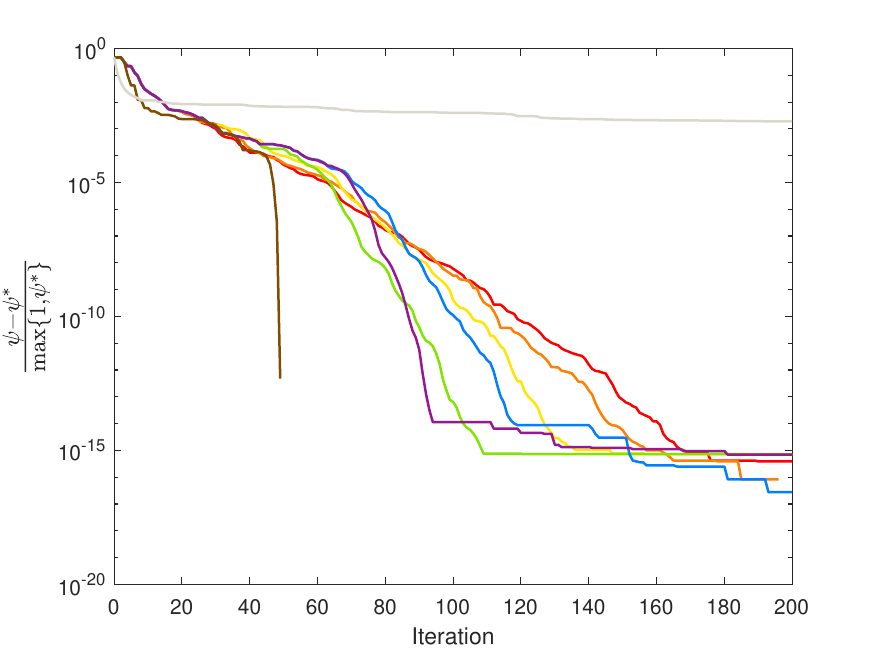}
}
\;
\subfigure[\texttt{CINA} -- cpu-time.]{
\includegraphics[width=6cm,trim = 0 0 25 15,clip]{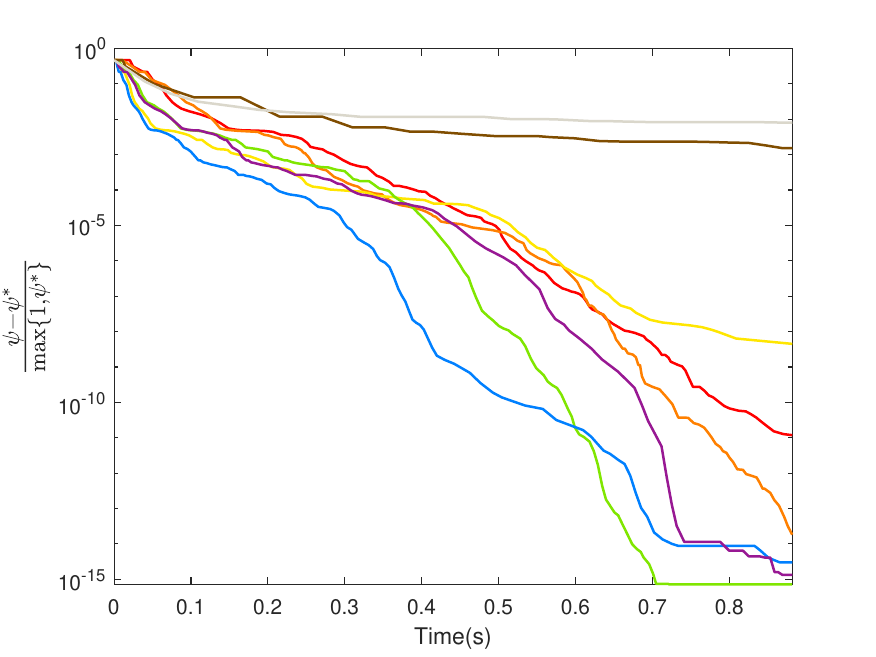}
}
\\[-1ex]
\subfigure[\texttt{rcv1} -- iterations.]{
\includegraphics[width=6cm,trim = 0 0 25 15,clip]{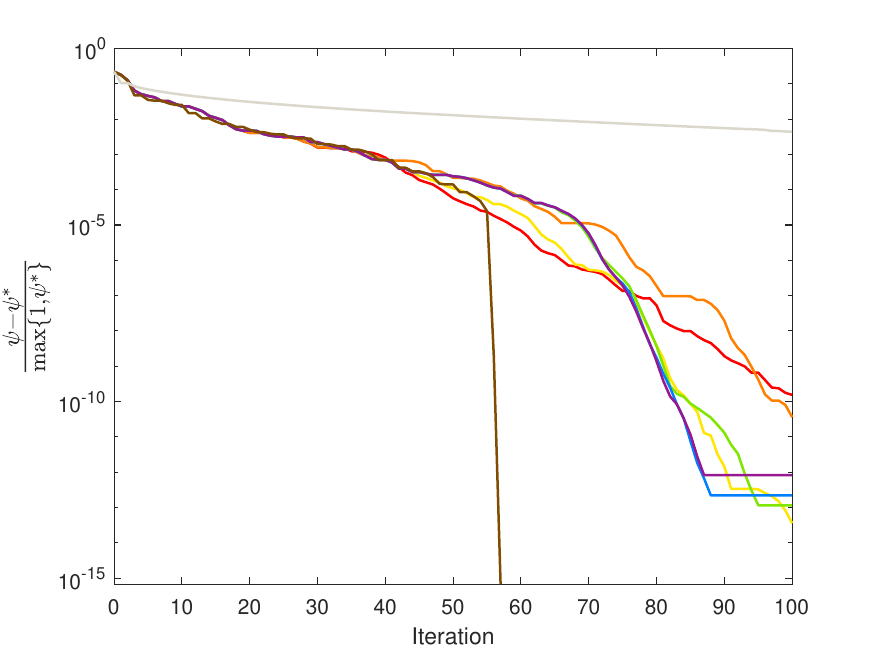}
}
\;
\subfigure[\texttt{rcv1} -- cpu-time.]{
\includegraphics[width=6cm,trim = 0 0 25 15,clip]{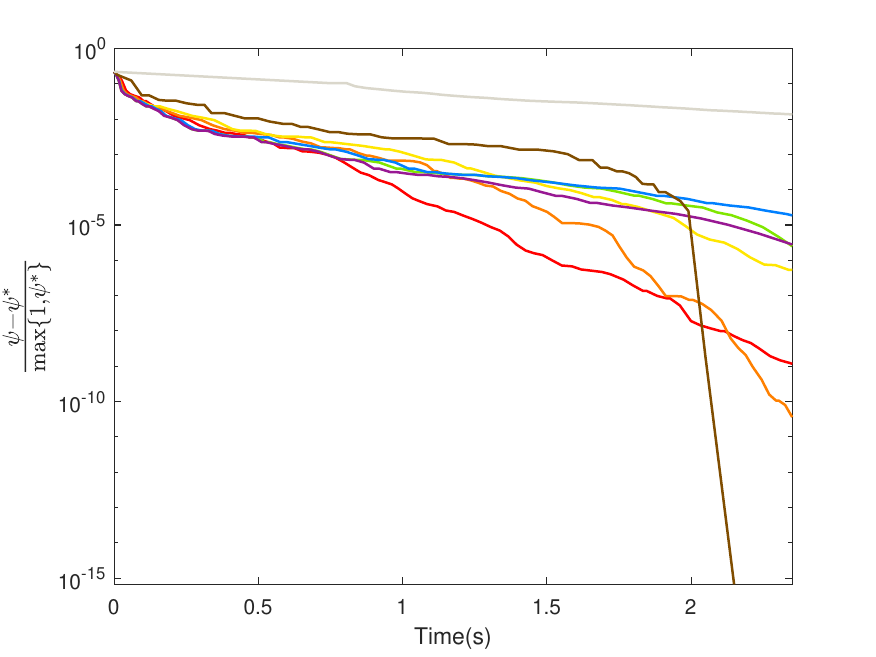}
}
 
\caption{Results for the logistic regression problem. Comparison of TRSSN on \texttt{CINA} and \texttt{rcv1} for different choices of the L-BFGS memory parameter $m$.}
\label{fig1}
\end{figure} 

In \cref{fig1}, we discuss the performance of TRSSN and TRSSN-H on the datasets \texttt{CINA} and \texttt{rcv1} for different choices of the L-BFGS memory parameter $m$. \cref{fig1} illustrates that full Hessian information can be beneficial in certain situations. However, the computational costs can also be much higher as shown in \cref{fig1} (b) and \cref{fig2b}. It turns out that the numerical performance of TRSSN with L-BFGS updates is not very sensitive to the choice of $m$.

\subsection{Linear Diffusion Based Image Compression}
\label{subsec:lineardiffusion}
Next, we test TRSSN on a linear diffusion based image compression problem. The compression model we consider has been studied in  \cite{galic2008image,schmaltz2009beating}.  
The model utilizes a homogeneous diffusion based interpolation to find the optimal data. The associated optimization formulation is given by 
\begin{align}
\min\limits_{x,c}~\frac{1}{2}\|x-u\|^2+\lambda\|c\|_1 \quad
\mathrm{s.t.}\quad \diag(c)(x-u)-(I-\diag(c))\mathcal L u=0, \label{eq9-1}
\end{align}
where $u\in\mathbb{R}^n$ denotes the (stacked) ground truth image, $x\in\mathbb{R}^n$ is the reconstructed image, $c\in\mathbb{R}^n$ denotes the inpainting or compression mask, and $\mathcal L \in \R^{n \times n}$ is the discretized Laplacian. If $c\neq 0$ and $c\in[0,1]^n$, then the matrix $A(c):=\diag(c)+(\diag(c)-I)\mathcal L$ can be shown to be invertible \cite{MaiBruWeiFor11}. Setting $x=A(c)^{-1}\diag(c)u$, problem $\cref{eq9-1}$ can be rewritten as:
\begin{align*}
\min_c~\frac{1}{2}\|A(c)^{-1}\diag(c)u-u\|^2+\mu\|c\|_1 \quad \mathrm{s.t.} \quad c\in [0,1]^n.
\end{align*}
Thus, \cref{eq9-1} reduces \cref{eq1-1}
with $f(c)=\frac{1}{2}\|A(c)^{-1}\diag(c)u-u\|^2$ and $\varphi(c)=\lambda\|c\|_1+\iota_{[0,1]^n}(c)$. By \cite[Lemma 5.2]{OchCheBroPoc14}, the gradient of $f$ can be calculated as follows:
\begin{align}
\label{eq9-2}
 \nabla f(c)=\diag(-\mathcal L x+u-x)[A(c)^{\top}]^{-1}(x-u), \quad x=A(c)^{-1}\diag(c)u. 
\end{align}
Since the Lipschitz constant of $\nabla f$ can not be computed exactly, we use an adaptive strategy to estimate $L$. 
In each trial step, we calculate $c_{k+1}=\prox{z_k+s_k}$ and choose $$L_k=\max\left\{2\cdot\frac{f(c_{k+1})-f(c_k)-\langle \nabla f(c_k),c_{k+1}-c_k\rangle}{\|c_{k+1}-c_k\|^2},2L_{k-1}\right\}.$$
We reset $\lambda=1/L_k $ and adjust $\nu$ and $\tau$ as specified in \cref{sec:parameter}. Moreover, we set $\Delta_{\min}=10^{-6}$ for all images. In all of our examples, we select the initial estimate $L_0 = 0.1$. 
According to~\cite[Theorem 1]{yu2013decomposing}, it holds that
\[  \prox{z}=\mathcal P_{[0,1]^n}\circ\mathrm{prox}_{\mu\lambda \|\cdot\|_1}(z)=\min\{0,\max\{1,\mathrm{prox}_{\mu\lambda \|\cdot\|_1}(z)\}\}.       \]
%
The corresponding generalized derivative of this proximity operator can be constructed similarly to \cref{eq:generalizedjac}. Here, we choose
\begin{align*}
D(z) = \mathrm{diag}(d(z)) \quad \text{and} \quad d_{i}(z)= 
\begin{cases}
0 &  \text{if } z_i\leq {\mu}{\lambda} \; \text{ or } \; z_i\geq {\mu}{\lambda}+1 , \\
1 & \text{otherwise},
\end{cases}
\end{align*}
see, e.g., \cite[Example 4.2.17]{milzarek2016numerical}. Since this problem is nonconvex, most of the algorithms tested in \cref{sec:logistic} are no longer directly applicable. We compare our method with the following algorithms:

\begin{figure}[thbp]
\centering
\includegraphics[width=12.5cm]{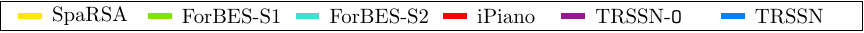}
\subfigure[\texttt{books} -- cpu-time.]{
\includegraphics[width=6cm]{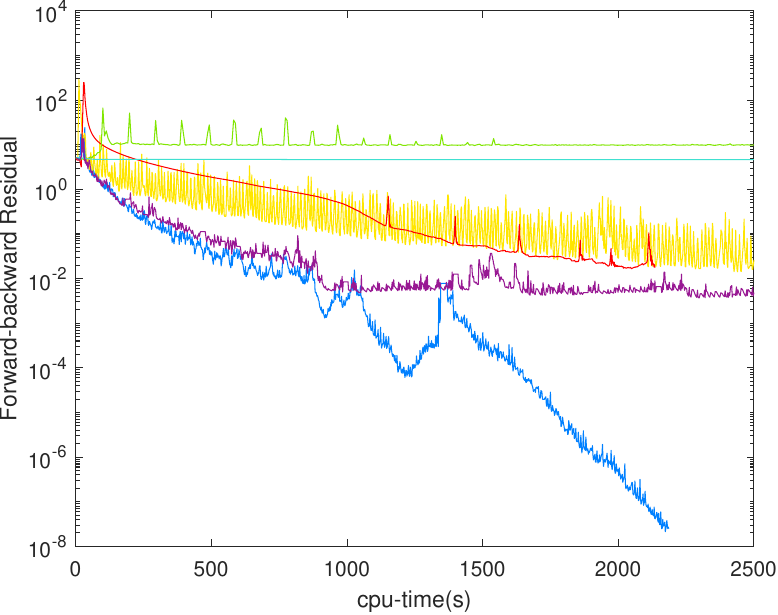}
}
\;
\subfigure[\texttt{coffee} -- cpu-time.]{
\includegraphics[width=6cm]{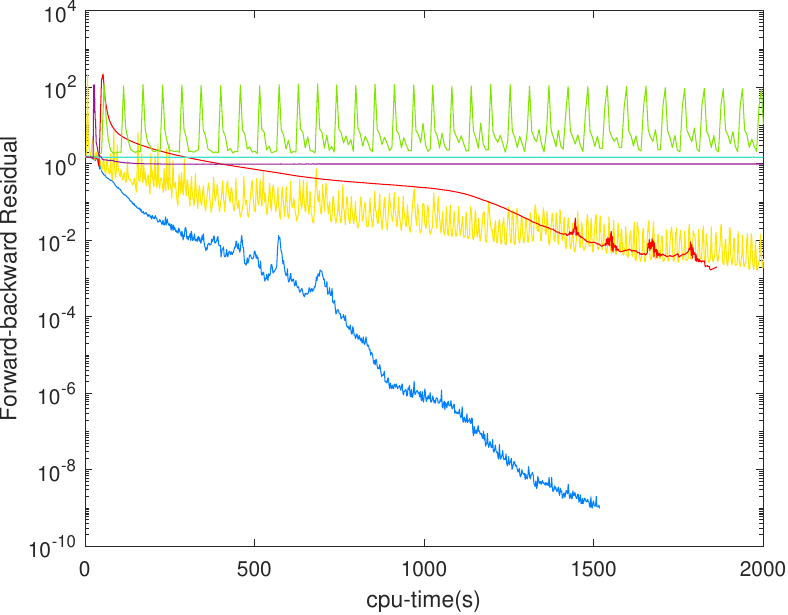}
}
\\[-1ex]
\subfigure[\texttt{mountain} -- cpu-time.]{
\includegraphics[width=6cm]{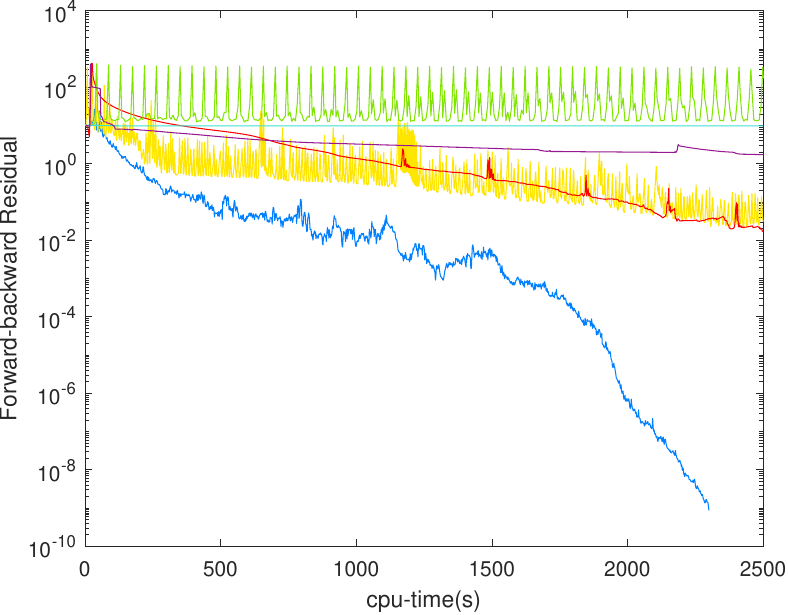}
}
\;
\subfigure[\texttt{stones} -- cpu-time.]{
\includegraphics[width=6cm]{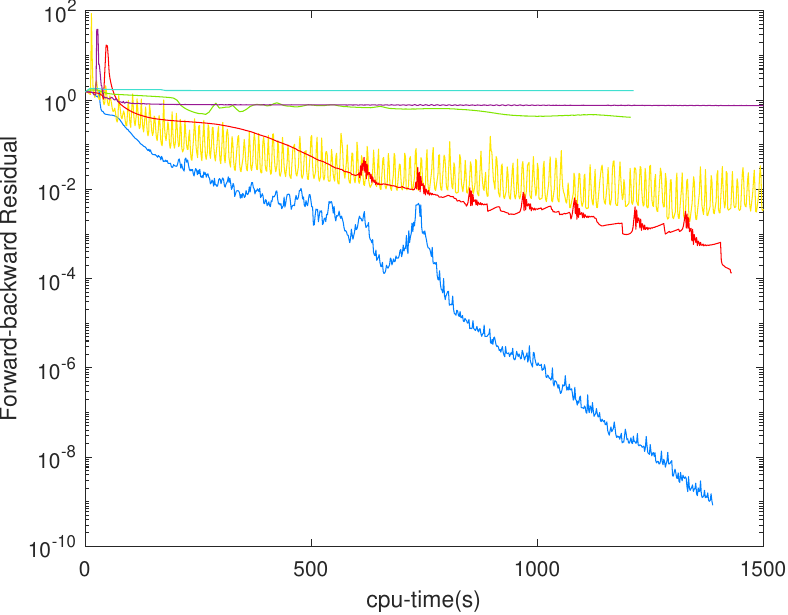}
}
\caption{Numerical comparison of iPiano, SpaRSA, ForBES, TRSSN-\texttt{O}, and TRSSN on a diffusion based image compression problem. Plot of the norm of the natural residual with respect to required cpu-time for different images.}
\label{fig3-2}
\end{figure} 

\textbf{ForBES}~\cite{SteThePat17}. We use the same code as in \cref{sec:logistic}. In ForBES, $f$ is assumed to be defined on $\Rn$ which is not the case for \cref{eq9-1}. To apply ForBES, we tested two strategies. The first variant, referred to as ForBES-S1, sets $f$ to $+\infty$ outside of $\dom{\varphi}$. In our second strategy, ForBES-S2, $\nabla f$ is computed via \cref{eq9-2} regardless of the constraint $c\in[0,1]^n$.  

\textbf{iPiano}~\cite{OchCheBroPoc14}. \text{iPiano} is a forward-backward splitting method with momentum. We implement iPiano with backtracking following the recommendations in \cite{OchCheBroPoc14}. As in  \cite{OchCheBroPoc14}, we set $\alpha={1.99(1-\beta)}/{L}$, $\eta=2$, and $\beta=0.8$. The initial Lipschitz constant $L_0$ is set to $0.1$. As suggested in \cite{OchCheBroPoc14},  $L$ is increased adaptively by $5\%$ every five steps. 

\textbf{SpaRSA}~\cite{WriNowFig09}. \text{SpaRSA} is a proximal gradient method with Barzilai-Borwein (BB) step sizes.  We implement SpaRSA based on the code provided by the authors\footnote{\url{https://www.lx.it.pt/~mtf/SpaRSA/}}. In SpaRSA, we set $\alpha_k = \iprod{r_k}{r_k}/\iprod{r_k}{s_k}$, $r_k = \nabla f(c_k)-\nabla f(c_{k-1})$, and $s_k = c_k - c_{k-1}$,  
as initial step size. As in \cite{WriNowFig09}, the lower and upper bounds on $\alpha_k$ are given by $\alpha_{\min}=10^{-30}$ and $\alpha_{\max}=10^{30}$. Following a strategy proposed in \cite{WenYinGolZha10}, we use a nonmonotone line search procedure, \cite{ZhaHag04}, to ensure convergence of the approach and stability of the BB step sizes. 



\begin{figure}[t!]
\centering
\subfigure[\texttt{books}, original figure]{
\includegraphics[width=5cm]{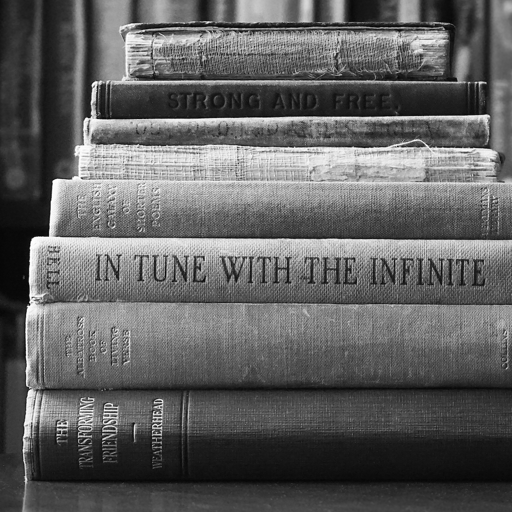}
}
\subfigure[\texttt{books}, mask (${ds}\!=\!8.43\%$)]{
\includegraphics[width=5cm]{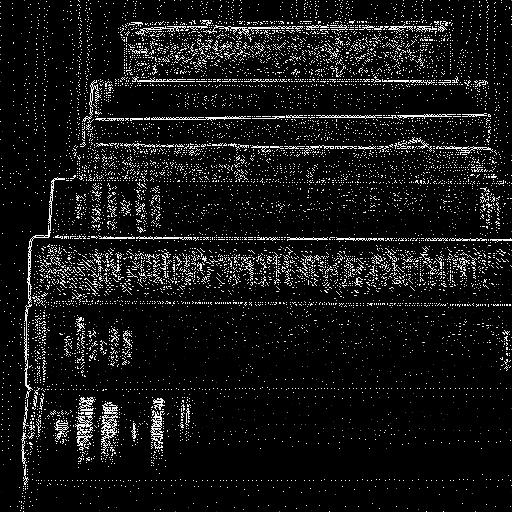}
}
\subfigure[\texttt{books}, reconstruction]{
\includegraphics[width=5cm]{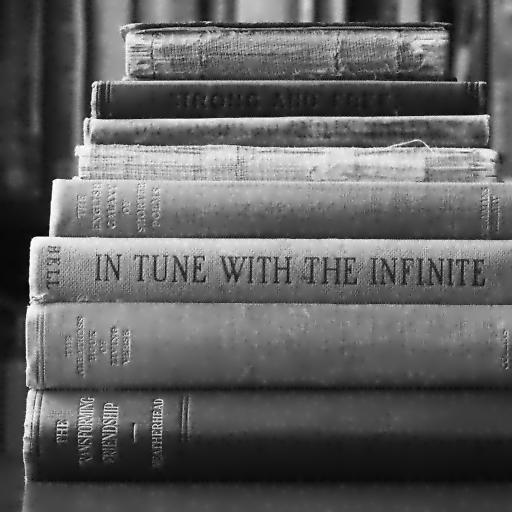}
}
\subfigure[\texttt{coffee}, original figure]{
\includegraphics[width=5cm]{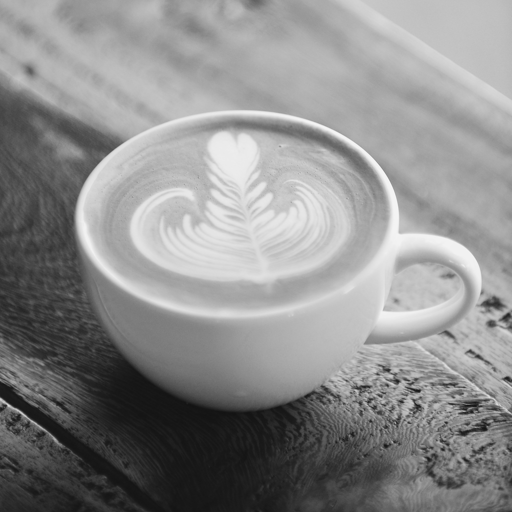}
}
\subfigure[\texttt{coffee}, mask (${ds}\!=\!4.45\%$)]{
\includegraphics[width=5cm]{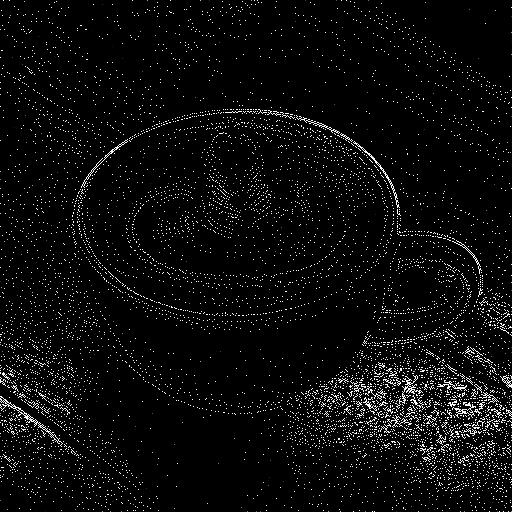}
}
\subfigure[\texttt{coffee}, reconstruction]{
\includegraphics[width=5cm]{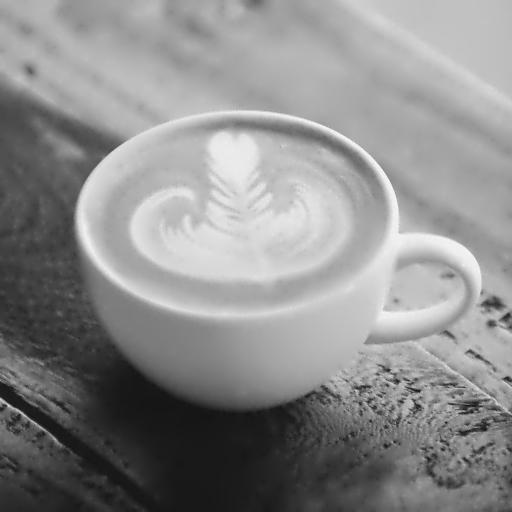}
}
\subfigure[\texttt{stones}, original figure]{
\includegraphics[width=5cm]{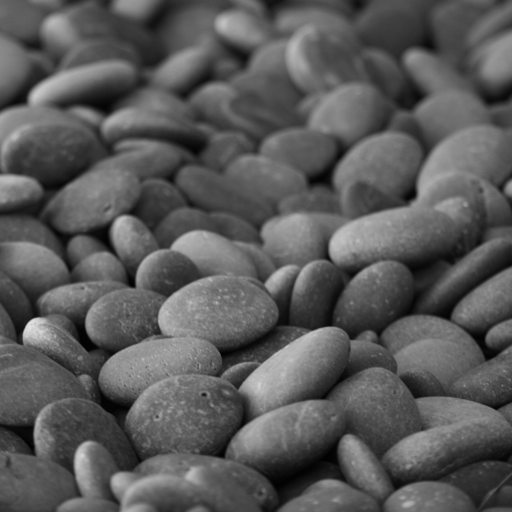}
}
\subfigure[\texttt{stones}, mask (${ds}\!=\!4.37\%$)]{
\includegraphics[width=5cm]{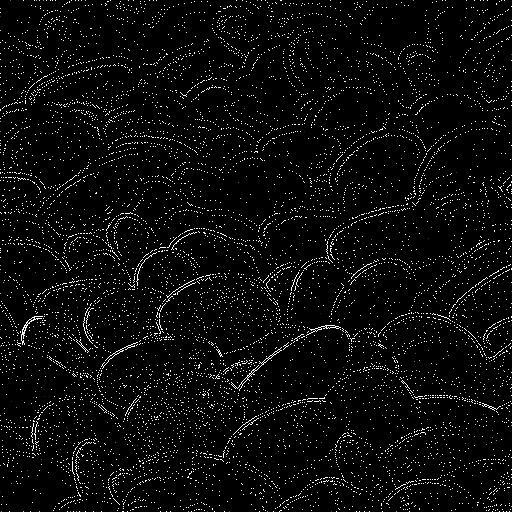}
}
\subfigure[\texttt{stones}, reconstruction]{
\includegraphics[width=5cm]{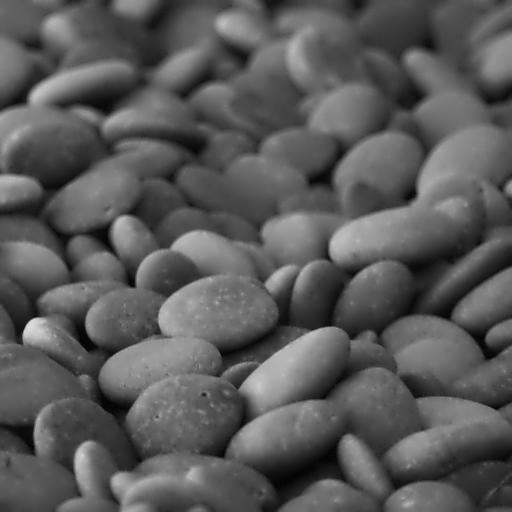}
}
\caption{Illustration of the different ground truth images, the inpainting or compression masks, and the corresponding reconstructions for \texttt{books}, \texttt{coffee}, and \texttt{stones}. The density ($ds$) of the masks $c$ is calculated via ${ds} := 100\% \cdot [ \vert\{i: c_i > 0\}\vert / (512 \times 512)]$.}
\label{fig4}
\end{figure} 

Our results are shown in \cref{fig3-2} and \cref{fig4}. We compare the performance of the algorithms on the four images \texttt{books}, \texttt{coffee}, \texttt{mountain}, and \texttt{stones}\footnote{Image credentials: \texttt{books} by Suzy Hazelwood; \texttt{coffee} by Atichart Wongubon; \texttt{mountain} by Denis Linine; \texttt{stones} by Travel Photographer; all images can be found on StockSnap.}. All images are rescaled to size $512\times 512$ and we use $\mu = 0.01$ for \texttt{books}, $\mu = 0.003$ for \texttt{coffee} and \texttt{stones}, and $\mu = 0.02$ for \texttt{mountain}. In all tested images, the performance of SpaRSA and iPiano is similar, while iPiano is generally more stable. 
ForBES-S1 always reaches the maximum number of line search steps. In such a case, it performs a first-order step as safe-guard. Hence, the performance of ForBES-S1 is similar to the other first-order methods. ForBES-S2 suffers from the sudden change of $f$ near the boundary. This causes the Lipschitz constant of $\nabla f$ to be large and $\lambda$ to be small which finally yields marginal updates in each iteration. The performance of TRSSN-\texttt{O} is  not competitive as too many trial steps are rejected causing the approach to behave like a first-order method. For the image \texttt{books},  TRSSN-\texttt{O} is comparable to TRSSN at the beginning, but is eventually outperformed by TRSSN. Overall, as demonstrated in \cref{fig3-2}, TRSSN outperforms the other approaches in terms of cpu-time and the performance of TRSSN appears to be more stable. 
 
\subsection{Constrained Log-determinant Optimization}
Finally, we consider the constrained log-determinant minimization problem
\begin{equation}
  \label{log_det}
    \min_{X\in\mathbb{S}^n_+, 0\preceq X\preceq I}~\log(\det(X+S_1))-\mu\log(\det
    (X+S_2)),
\end{equation}
where $S_1,S_2\in\mathbb{S}^{n}_{++}$ and $\mu\in(0,1)$. Problem \eqref{log_det} is studied in \cite{lau2022uniqueness,geng2014capacity} for the computation of the capacity region of a two-receiver Gaussian broadcast channel with private and common messages. 
%
Setting $f(X) :=\log(\det(X+S_1))-\mu\log(\det
(X+S_2))$, the gradient and Hessian of $f$ are given by:
\begin{align*}
  \nabla f(X) & = (X+S_1)^{-1}-\mu(X+S_2)^{-1}, \\
  \nabla^2f(X)[H] &=-(X+S_1)^{-1}H(X+S_1)^{-1}+\mu(X+S_2)^{-1}H(X+S_2)^{-1}.
\end{align*}
The Lipschitz modulus of $\nabla f$ is further given by $\|S_1^{-1}\|^2+\mu\|S_2^{-1}\|^2$. As pointed out in \cite{han2015large}, when the dimension $n$ is large, direct calculation of the determinant causes significant numerical error. Thus, we use Cholesky decompositions (of $X+S_1$ and $X+S_2$) to compute the function, gradient, and Hessian values of $f$. 
Moreover, defining $\mathcal C :=\{X\in\mathbb{S}^n: 0\preceq X\preceq I\}$, it holds that $\mathcal P_{\mathcal C}(X)=\mathcal P_{\mathbb{S}^n_+}(I-\mathcal P_{\mathbb{S}^n_+}(I-X))$. Therefore, we can apply the calculus for spectral operators \cite[Theorem 6.2]{ding2020spectral} and existing formulas for $(\mathcal P_{\mathbb{S}^n_+})^\prime(X,H)$ and $\partial \mathcal P_{\mathbb{S}^n_+}$, \cite{sun2006strong}, to characterize the generalized derivatives of $\mathcal P_{\mathcal C}$. Notice that the evaluation of $\mathcal P_{\mathcal C}(X)$ involves a (full) eigendecomposition of the matrix $X$.

Let $X=U DU^\top$ be the eigenvalue decomposition of $X$, where $D=\diag(\lambda_1,\dots,\lambda_n)$ are the ordered eigenvalues of $X$ with $\lambda_1\leq \dots \leq\lambda_{d_1}<0=\lambda_{d_1+1}=\dots=\lambda_{d_1+d_2}<\lambda_{d_1+d_2+1}\leq \dots\lambda_{d_1+d_2+d_3}<1=\lambda_{d_1+d_2+d_3+1}=\dots=\lambda_{d_1+d_2+d_3+d_4}<\lambda_{d_1+d_2+d_3+d_4+1}\leq\dots\leq \lambda_n$. Then, for $W\in\mathbb{S}^n$, we have
 \begin{equation*}
   \partial{\mathcal P}_{\mathcal C}(X)[W]=U^\top\begin{pmatrix}
      0 & 0 & \bar W_{13}\odot S_{13} & \bar W_{14}\odot S_{14} & \bar W_{15}\odot R_{15} \\
      0 & \partial{\mathcal P}_{\mathbb{S}^n_+}(0)[\bar W_{22}] & \bar W_{23} & \bar W_{24} & \bar W_{25}\odot T_{25} \\
     \bar W_{31}\odot S^\top_{13} & \bar W_{32} & \bar W_{33} & \bar W_{34} & \bar W_{35}\odot T_{35} \\
     \bar W_{41}\odot S_{14}^\top & \bar W_{42} & \bar W_{43} & \partial{\mathcal P}_{\mathbb{S}^n_+}(0)[\bar W_{44}] &  0 \\
     \bar W_{51}\odot R_{15}^{\top} & \bar W_{25} & \bar W_{35} & 0 & \bar W_{55} 
   \end{pmatrix}U,
 \end{equation*}
where $\bar W=U^\top WU$ and is decomposed into the block structure $[\bar W_{ij}]_{i,j\in [5]}$ and it holds that $[S_{13}]_{ij}=\frac{\lambda_{d_1+d_2+j}}{\lambda_{d_1+d_2+j}-\lambda_i}$, $[S_{13}]_{ij}=\frac{\lambda_{d_1+d_2+d_3+j}}{\lambda_{d_1+d_2+d_3+j}-\lambda_i}$, $[T_{25}]_{ij}=\frac{1-\lambda_{d_1+i}}{\lambda_{d_1+d_2+d_3+d_4+j}-\lambda_{d_1+i}}$, $[T_{35}]_{ij}=\frac{1-\lambda_{d_1+d_2+i}}{\lambda_{d_1+d_2+d_3+d_4+j}-\lambda_{d_1+d_2+i}}$, and $[R_{15}]_{ij}=\frac{1}{\lambda_{d_1+d_2+d_3+d_4+j}-\lambda_i}$. 

Since this problem is nonconvex, we test the same algorithms as in \cref{subsec:lineardiffusion} with the same parameter settings. (TRSSN and TRSSN-\texttt{O} use L-BFGS approximations with $m = 10$). As in \cref{sec:logistic}, TRSSN-H denotes the variant of TRSSN using the full Hessian of $f$. We test all methods on 5 randomly generated problems ($S_i = U_i^\top U_i + 10^{-4}I$, $U_i \sim \mathcal U_n[0,1]$, $i \in \{1,2\}$) with dimension $n \in \{200,500,800\}$ and report the performance in \cref{fig4-1} and \cref{table3}. The TRSSN variants use $\lambda = 0.005$ and we choose $\mu=0.5$. 

\begin{figure}[t!]
  \centering
   \includegraphics[width=16.1cm]{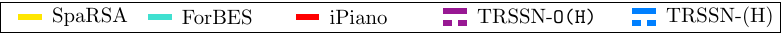}
   \subfigure[$n=200$ -- cpu-time.]{
   \includegraphics[width=5.2cm]{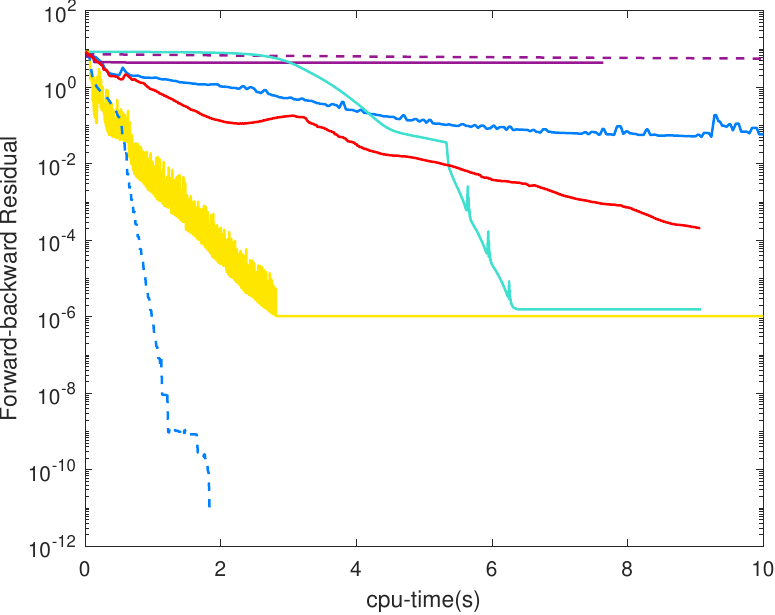}
   }
   \subfigure[$n=500$ -- cpu-time.]{
   \includegraphics[width=5.2cm]{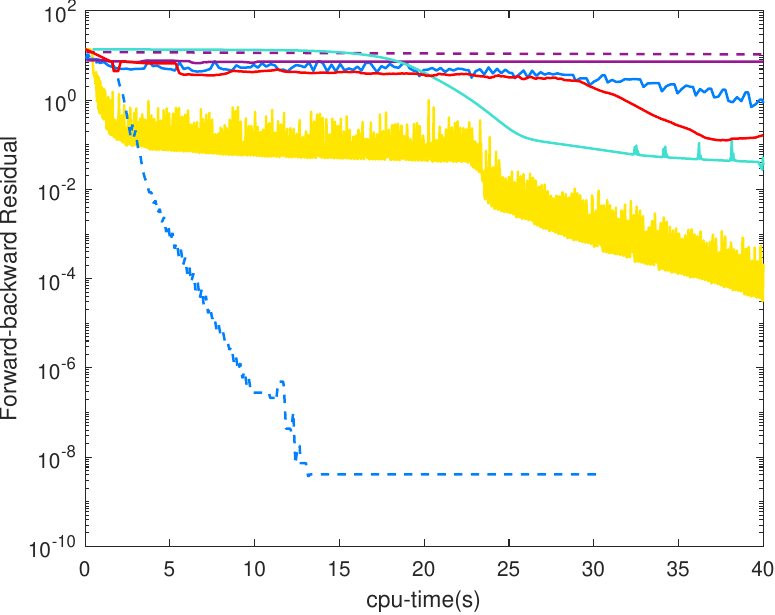}
   }
   \subfigure[$n=800$ -- cpu-time.]{
   \includegraphics[width=5.2cm]{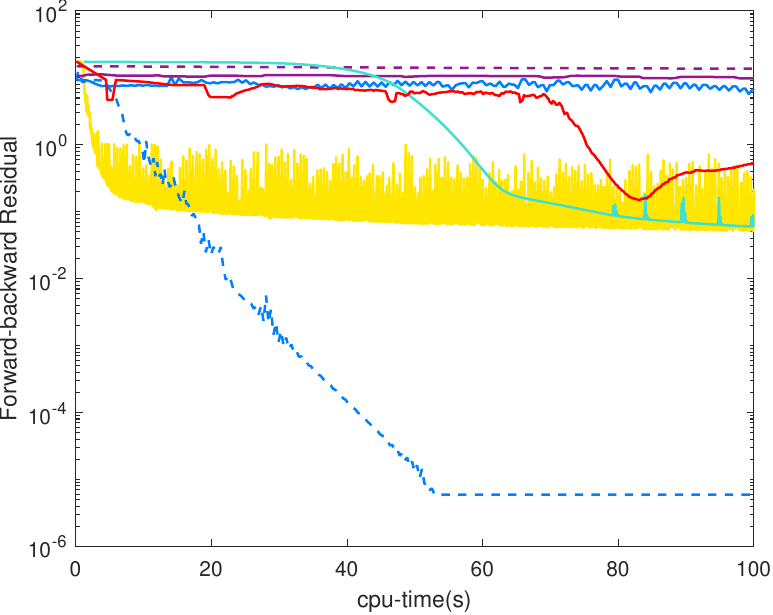}
  }
   \caption{Comparison of SpaRSA, ForBES, iPiano, TRSSN-\texttt{O(H)}, and TRSSN-(H) on problem \eqref{log_det}. Plot of the norm of the natural residual with respect to required cpu-time for different $n$.}
   \label{fig4-1}
   \end{figure} 

In most tests, TRSSN and TRSSN-\texttt{O(H)} tend to stagnate at a point where the auxiliary function $\psi\circ\proxs = \psi \circ \mathcal P_{\mathcal C}$ can no longer be improved. In fact, the projection $\mathcal P_{\mathcal C}(X)$ remains unchanged for small perturbations of $X$ if the perturbation is in the normal direction of $\mathcal C$ at $\mathcal P_{\mathcal C}(X)$. Hence, the L-BFGS approximations in TRSSN and TRSSN-\texttt{O(H)} only receive marginal updates and both algorithms eventually reduce to first-order schemes. The performance of the first-order methods iPiano and SpaRSA depends on $n$. SpaRSA outperforms iPiano when $n\in\{200,500\}$. However, iPiano seems to be more stable and performs better than SpaRSA for $n=800$. 
ForBES outperforms iPiano, but -- as indicated in \cref{fig4-1} and \cref{table3} -- is generally slower and less robust than SpaRSA and TRSSN-H. Notice that the eigendecomposition required to compute $\mathcal P_{\mathcal C}$ and to build the generalized derivative is the dominant computational cost of each iteration of TRSSN(-H) and TRSSN-\texttt{O(H)}. The evaluation of the Hessian mainly requires two (reusable) Cholesky decompositions and the inversion of triangular matrices which is less expensive than a full eigendecomposition. 
%
Thus, the cheaper L-BFGS approximations can not compensate the increased iteration numbers and prolonged convergence. Finally, as the numerical errors in the computed eigendecompositions increase with the dimension $n$, the overall achievable accuracy tends to decrease for larger choices of $n$.

\begin{table}[t!]
    \centering
    \setlength{\tabcolsep}{2pt}
    \begin{tabular}{cp{1pt}cccp{1pt}cccp{1pt}cccp{1pt}ccc}  
      \cmidrule[1pt](){1-17}  
      $n = 200$  & & \multicolumn{3}{c}{$\texttt{tol} = 10^{-2}$} & & \multicolumn{3}{c}{$\texttt{tol} = 10^{-4}$} & & \multicolumn{3}{c}{$\texttt{tol} = 10^{-6}$} & & \multicolumn{3}{c}{$\texttt{tol} = 10^{-8}$} \\ 
      \cmidrule[.5pt](){3-5} \cmidrule[.5pt](){7-9} \cmidrule[.5pt](){11-13} \cmidrule[.5pt](){15-17} 
      Algorithm  & & time & iter & $p_{\texttt{s}}$ & & time & iter & $p_{\texttt{s}}$ & & time & iter & $p_{\texttt{s}}$ & & time & iter & $p_{\texttt{s}}$  \\ 
       \cmidrule[.5pt](){1-17}
       \multicolumn{1}{l}{SpaRSA} && 0.72  & 155.8 & 100\% && 1.68 & 363.6 & 100\% && 2.73 & 586.8 & 80\% && - & - & 0\%  \\[0.5ex]   
       \multicolumn{1}{l}{ForBES} && 3.36 & 156.2  & 80\% && 4.41 & 205 & 60\% && 5.93 & 300 & 60\% && 7.17 & 395.3 & 60\%  \\[0.5ex]    
       \multicolumn{1}{l}{iPiano} && 5.72  & 1215.6  & 100\% && 6.19 & 1315.2 & 100\% && 6.33 & 1377 & 20\% && - & - & 0\%  \\[0.5ex] 
       \multicolumn{1}{l}{TRSSN} && 15.45 & 447  & 60\% && - & & 0\% && -  & - &  0\% && -  & - & 0\%  \\[0.5ex] 
       \multicolumn{1}{l}{TRSSN-H} && 0.52  & 36.4 & 100\% && 0.69 & 46.6 & 100\% && 0.88  & 57.4 & 100\%  && 1.15  & 80 & 100\% \\[2ex] 
%
        \cmidrule[1pt](){1-17}
        $n = 500$ \\
        \cmidrule[.5pt](){1-17}
        \multicolumn{1}{l}{SpaRSA} && 23.11 & 775.8 & 100\% && 37.24 & 1237.6 & 100\% && - & - & 0\% && - & - & 0\%  \\[0.5ex]   
        \multicolumn{1}{l}{ForBES} && 38.72 & 310& 40\% && 37.46 & 332 & 20\% && 46.82 & 455 & 20\% && - & - & 0\% \\[0.5ex]   
        \multicolumn{1}{l}{iPiano} && 41.25 &  1406.2 & 100\% && 49.54 & 1691.4 & 100\% && - & - & 0 \% && - & - & 0\%  \\[0.5ex] 
        \multicolumn{1}{l}{TRSSN} && 85.26 & 388.4 & 100\%  && - & -  & 0\% && - & -  & 0\% &&  - & -  & 0\%  \\[0.5ex] 
        \multicolumn{1}{l}{TRSSN-H} && 11.26 & 175 & 60\% && 13.83 &195 & 60\% && 16.93  & 219 & 60\%  && 12.40 & 120 & 20\% \\[2ex] 
%
%
        \cmidrule[1pt](){1-17}
        $n = 800$ \\
        \cmidrule[.5pt](){1-17}
        \multicolumn{1}{l}{SpaRSA} && - & - & 0\% && - & - & 0\% && - & - & 0\% && - & - & 0\% \\[0.5ex]
        \multicolumn{1}{l}{ForBES} && - & - & 0\% && - & - & 0\% && - & - & 0\% && - & - & 0\% \\[0.5ex]
        \multicolumn{1}{l}{iPiano} && 152.69 & 1976 & 80\% && - & - & 0\% && - & - & 0\% && - & - & 0\%   \\[0.5ex] 
        \multicolumn{1}{l}{TRSSN} && - & - & 0\% && - & - & 0\% && - & - & 0\% && - & - & 0\% \\[0.5ex]
        \multicolumn{1}{l}{TRSSN-H} && 21.61 & 78.2 & 100\% && 39.91 & 127.4 & 100\% && 60.53 & 195.2 & 80\% && 94.61 & 312 & 20\% \\[0.5ex] 
          \cmidrule[1pt](){1-17}\\[-1.5ex]
       \end{tabular}
    \caption{Results for \eqref{log_det} for 5 independent runs. We report the average cpu-time and it- erations required to achieve $\|F_{\mathrm{nat}}^1(X_k)\| \leq \texttt{tol}$ for different choices of \texttt{tol}. 
    The percentage $p_{\texttt{s}}$ is the success rate and shows how many runs satisfy the stopping criterion after a maxi- mum no. of iterations (2000 for first-order and 500 for other methods). The averages are taken with respect to successful runs.}
  \label{table3}
\end{table}

\section{Conclusion}
In this work, we propose a trust region-type semismooth Newton method (TRSSN) for solving a class of nonsmooth nonconvex optimization problems. Our approach is based on a reformulation of the associated first-order optimality conditions using Robinson's normal map. We construct a novel merit function and reduction ratio that allow to embed the computation of inexact semismooth Newton-type steps for the normal map in a trust region framework and that can guarantee strong convergence properties. In particular, the Kurdyka-{\L}ojasiewicz (KL) theory is applicable and transition to fast q-superlinear convergence can be established under appropriate local assumptions. We then show that the classical second-order necessary and sufficient optimality conditions for problem \cref{eq1-1} have an alternative normal map-based and explicit representation that only involves directional derivatives. This allows to link different optimality concepts, such as, strong metric subregularity for $\oFnat$ and $\oFnor$, the quadratic growth condition, and second-order sufficient conditions, and can justify some of the assumptions required for local convergence. Furthermore, motivated by its high practical relevance, we investigate the convergence properties of a quasi-Newton version of TRSSN. In particular, we show that BFGS updates and the KL framework are highly compatible and new boundedness results are derived under a significantly weaker finite length condition. We then combine our global and local results, the KL and second-order theory, and a Dennis-Mor{\'e} condition to establish superlinear convergence of the BFGS variant of TRSSN without requiring the strict complementarity condition. Finally, our numerical experiments on sparse logistic regression and an image compression problem indicate that TRSSN is a highly competitive algorithm on both convex and nonconvex problems.

\textbf{Acknowledgements}. The authors would like to thank Xiantao Xiao, Yongfeng Li, Zaiwen Wen, and Liwei Zhang for sharing their code ASSN. A. Milzarek thanks Konstantin Pieper and Florian Mannel for early discussions on normal map-based semismooth Newton approaches.

\bibliographystyle{siam}
\bibliography{sn-references}  
\appendix
\section{Proofs of Auxiliary Results}
\subsection{Proof of \cref{lemma3-8}} \label{sec:app:pf-38}

\begin{proof} \cref{algo1} obviously returns $q$ with $\|q\| \leq \Delta$. Moreover, using $D^\top = \Lambda D \Lambda^{-1}$, we have $\mathcal R(S) \subseteq \mathcal R(\Lambda D)$, $\mathrm{dim}~\mathcal R(S) \leq m$, and $g \in \mathcal R(\Lambda D)$. Let us consider the iteration $i = m-1$ and suppose that \cref{algo1} does not terminate in step 6 or 10. Following \cite{HesSti52,NocWri06}, it can be shown that the CG-method has generated a sequence of vectors $\{r_0,r_1,...,r_{m-1}\}$ and $\{p_0,p_1,...,p_{m-1}\}$ with the properties
\be \label{eq:cg-prop} \iprod{p_\ell}{Sp_j} = 0, \quad \iprod{r_\ell}{p_j} = 0, \quad \mathrm{span}\{r_0,...,r_{\ell}\} = \mathrm{span}\{p_0,...,p_{\ell}\} = \mathcal K^\ell(S,r_0), \ee
for all $j = 0,...,\ell - 1$ and all $\ell = 1,...,m-1$. Here, $\mathcal K^\ell(S,r_0)$ denotes the Krylov space $\mathrm{span}\{r_0, Sr_0,...,S^\ell r_0\}$. By construction, we have $p_\ell, r_\ell \in \mathcal R(\Lambda D)$ and $\mathcal K^\ell(S,r_0) \subseteq \mathcal R(\Lambda D)$ for all $\ell = 0,...,m-1$. Moreover, due to the $S$-orthogonality and $\iprod{p_\ell}{Sp_\ell} > 0$, $\ell = 0,...,m-1$, the vectors $\{p_0,p_1,...,p_{m-1}\}$ are linearly independent and hence, it follows 
\[ \mathrm{span}\{p_0,p_1,...,p_{m-1}\} = \mathcal R(\Lambda D). \]
Using the second equality in \cref{eq:cg-prop}, we can infer $\mathcal R(\Lambda D) \ni r_m \, \bot \, \mathrm{span}\{p_0,p_1,...,p_{m-1}\} = \mathcal R(\Lambda D)$ which implies $r_m = r_0 + S q_m = 0$. Consequently, \cref{algo1} would stop at iteration $m$ with $q = q_m$ satisfying $Sq = - g$ and $\|q\| \leq \Delta$. This finishes the proof of the first part. In order to prove the second part, we need to verify that \cref{algo1} never terminates in step 6 or 10. If $S$ is positive semidefinite and $M$ is invertible, the condition $\iprod{p_i}{Sp_i} \leq 0$ and the proof of \cref{lemma3-7} imply $Dp_i = 0$. However, by \citep[Lemma 2.2]{steihaug1983conjugate}, we have $\|r_i\|^2 = -\iprod{r_i}{p_i} = -\iprod{r_0}{p_i} = - \iprod{\Fnor{z}}{Dp_i} = 0$. Hence, \cref{algo1} would exit at the $(i-1)$-th iteration when checking the norm of the residual $\|r_i\|$ and therefore the method can not terminate at step 6. Now, due to the invertibility of $M$, we have $\mathcal R(S) = \mathcal R(\Lambda D)$, $g = r_0 \in \mathcal R(S)$, and $r_\ell, p_\ell, q_\ell \in \mathcal R(S)$ for all $\ell$. According to part (i), \cref{algo1} would generate a solution $q = q_m$ satisfying $Sq = -g$ with $q \in \mathcal R(S)$ unless it stops earlier in step 10. Since $q$ lies in $\mathcal R(S)$, it has to coincide with the minimum norm solution $\hat q = -S^+g$ where $S^+$ denotes the Moore-Penrose inverse of $S$, see, e.g., \cite{Kaa88} for comparison. Moreover, due to \citep[Theorem 2.1]{steihaug1983conjugate}, it follows 
\[ 0 = \|q_0\| < \|q_1\| < ... < \|q_j\| < ... < \|q_m\| = \|\hat q\| \leq \|M^{-1}\Fnor{z}\| \leq \Delta. \]
This shows that \cref{algo1} can only terminate in step 14 which completes the proof.
\end{proof}

\subsection{Proof of \cref{lemma:matrix_extension}} \label{sec:app:mat-ex}

\begin{proof} Let $U \in \R^{n \times r}$ and $V \in \R^{n \times n-r}$ be two orthonormal basis matrices of the subspaces $\mathcal W$ and $\mathcal W^\bot$, respectively. Thus, by definition, we have $U^\top V = 0$ and introducing the orthogonal matrix $Q = (U,V) \in \R^{n \times n}$, we can write
\[ H(x) = Q \begin{pmatrix} U^\top H(x) U & U^\top H(x) V \\ V^\top H(x) U & V^\top H(x) V \end{pmatrix} Q^\top. \]
Our assumption then implies that the matrix $U^\top H(x) U$ is positive definite with $\lambda_{\min}(U^\top H(x) U)\geq \delta$ for all $x \in B_\epsilon(w)$. We now define $G : \Rn \to \mathbb S^n$ via 
\[ G(x) := Q\begin{pmatrix} U^\top H(x) U & U^\top H(x) V \\ V^\top H(x) U & \gamma(x) I \end{pmatrix}Q^\top, \quad \gamma(x) := \frac{2}{\delta}\|U^\top H(x) V\|^2+\frac{\delta}{2}. \]
Now, for every $d \in \mathcal W$ there exists $\xi \in \R^r$ such that $d = U \xi$ and thus, we obtain
\[ G(x) d = Q \begin{pmatrix} U^\top H(x) U & U^\top H(x) V \\ V^\top H(x) U & \gamma(x) I \end{pmatrix}\begin{pmatrix} \xi \\ 0 \end{pmatrix} = Q \begin{pmatrix} U^\top H(x)d \\ V^\top H(x)d \end{pmatrix} = H(x)d. \]
Next, let $d = UU^\top d + VV^\top d \in \Rn$ be arbitrary. Using the symmetry of $H(x)$ and Young's inequality, it holds that:
\begin{align*}
d^\top G(x) d &=(U^\top d)^\top [U^\top H(x) U] U^\top d + 2 (U^\top d)^\top [ U^\top H(x) V] V^\top d + \gamma(x) \|V^\top d\|^2 \\ &\hspace{-4ex}\geq  \delta\| U^\top d\|^2-\frac{\delta}{2}\|U^\top d\|^2-\frac{2}{\delta}\|U^\top H(x) V\|^2\|V^\top d \|^2+\gamma(x) \|V^\top d\|^2 = \frac{\delta}{2} [ \|U^\top d\|^2+\|V^\top d\|^2] = \frac{\delta}{2} \|d\|^2.
\end{align*}
Finally, let us suppose that $H$ is Lipschitz continuous on $B_\epsilon(w)$ with constant $L_\mathcal H$. Since $H$ is continuous, there is a constant $C_{\mathcal H} > 0$ such that $\sup_{x \in B_\epsilon(w)} \|H(x)\| \leq C_{\mathcal H}$. Thus, for all $d \in \Rn$ and $x,y \in B_\epsilon(w)$ and using $\|UU^\top\| = \|VV^\top\| = \|U\| = \|V\| = 1$, we obtain
\begin{align*}
\|[G(x)-G(y)]d\| & = \left \|Q \begin{pmatrix} U^\top[H(x)-H(y)] U & U^\top[H(x)-H(y)] V \\ V^\top[H(x)-H(y)] U & [\gamma(x)-\gamma(y)] I \end{pmatrix} \begin{pmatrix} U^\top d \\ V^\top d \end{pmatrix} \right\| \\ & = \|UU^\top[H(x)-H(y)]d + VV^\top [H(x)-H(y)]UU^\top d + [\gamma(x)-\gamma(y)]VV^\top d\| \\ &\leq 2L_{\mathcal H} \|x-y\| \|d\| + \frac{2}{\delta}\|U^\top[H(x)-H(y)]V\| (\|U^\top H(x) V\| + \|U^\top H(y)V\|) \|d\|. \end{align*}
This shows that $G$ is Lipschitz continuous on $B_\epsilon(w)$ with constant $\left(2 + \frac{4C_{\mathcal H}}{\delta}\right)L_{\mathcal H}$. 
\end{proof}



\end{document}